\newcommand{\N}{{\mathds{N}}}
\newcommand{\Z}{{\mathds{Z}}}
\newcommand{\Q}{{\mathds{Q}}}
\newcommand{\R}{{\mathds{R}}}
\newcommand{\C}{{\mathds{C}}}
\newcommand{\T}{{\mathds{T}}}
\newcommand{\U}{{\mathds{U}}}
\newcommand{\D}{{\mathfrak{D}}}
\newcommand{\A}{{\mathfrak{A}}}
\newcommand{\B}{{\mathfrak{B}}}
\newcommand{\bigslant}[2]{{\raisebox{.2em}{$#1$}\left/\raisebox{-.2em}{$#2$}\right.}}
\newcommand{\Nbar}{\overline{\N}}
\newcommand{\Lip}{{\mathsf{L}}}
\newcommand{\TLip}{{\mathsf{T}}}
\newcommand{\Hilbert}{{\mathscr{H}}}
\newcommand{\dpropinquity}[1]{{\mathsf{\Lambda}^\ast_{#1}}}
\newcommand{\dmodpropinquity}[1]{{\mathsf{\Lambda}^{\ast\mathsf{mod}}_{#1}}}
\newcommand{\dmetpropinquity}[1]{{\mathsf{\Lambda}^{\ast\mathsf{met}}_{#1}}}
\newcommand{\spectralpropinquity}{{\mathsf{\Lambda}^{\mathsf{spec}}}}
\newcommand{\Kantorovich}[1]{{\mathsf{mk}_{#1}}}
\newcommand{\Haus}[1]{{\mathsf{Haus}_{#1}}}
\newcommand{\StateSpace}{{\mathscr{S}}}
\newcommand{\MongeKant}{{Mon\-ge-Kan\-to\-ro\-vich metric}}
\newcommand{\gQVB}{metrized quantum vector bundle}
\newcommand{\mcc}[3]{{\mathrm{metCor}\left({#1},{#2},{#3}\right)}}
\newcommand{\qcms}{quantum compact metric space}
\newcommand{\unit}{1}
\newcommand{\sa}[1]{{\mathfrak{sa}\left({#1}\right)}}
\newcommand{\inner}[3]{{\left<{#1},{#2}\right>_{#3}}}
\newcommand{\dom}[1]{{\operatorname*{dom}\left({#1}\right)}}
\newcommand{\norm}[2]{\left\|{#1}\right\|_{#2}}
\newcommand{\range}[1]{{\mathrm{ran}\left({#1}\right)}}
\newcommand{\grad}[2]{{\nabla_{#1}{#2}}} 
\newcommand{\CDN}{{\mathsf{DN}}}
\newcommand{\TDN}{{\mathsf{TN}}}
\newcommand{\cocycle}[1]{{\mathrm{\varsigma}_{#1}}}
\newcommand{\worknote}[1]{}
\newcommand{\opnorm}[3]{{\left|\mkern-1.5mu\left|\mkern-1.5mu\left| {#1} \right|\mkern-1.5mu\right|\mkern-1.5mu\right|_{#3}^{#2}}}
\newcommand{\tunnelmagnitude}[2]{{\mu\left({#1}\middle\vert{#2}\right)}}
\newcommand{\tunnelextent}[1]{{\chi\left({#1}\right)}}
\newcommand{\alg}[1]{{\mathfrak{#1}}}
\newcommand{\module}[1]{{\mathscr{#1}}}
\newcommand{\qt}[2]{{C^\ast\left( \Z^d_{#1}, {#2} \right)}}
\newcommand{\qtd}[3]{{C^\ast\left( \Z^{#1}_{#2},{#3} \right)}}
\newcommand{\dil}[1]{{\mathrm{dil}\left({#1}\right)}}
\newcommand{\length}[1]{{\mathsf{len}\left({#1}\right)}}
\theoremstyle{plain}
\newtheorem{theorem}{Theorem}[section]
\newtheorem{corollary}[theorem]{Corollary}
\newtheorem{lemma}[theorem]{Lemma}
\newtheorem{theorem-definition}[theorem]{Theorem-Definition}
\theoremstyle{definition}
\newtheorem{definition}[theorem]{Definition}
\newtheorem{example}[theorem]{Example}
\newtheorem{convention}[theorem]{Convention}
\newtheorem{hypothesis}[theorem]{Hypothesis}
\theoremstyle{remark}
\newtheorem{remark}[theorem]{Remark}
\newtheorem{notation}[theorem]{Notation}
\renewcommand{\geq}{\geqslant}
\renewcommand{\leq}{\leqslant}
\newcommand{\conv}[1]{*_{#1}\,}
\newcommand{\Dirac}{{\slashed{D}}}
\numberwithin{equation}{section}
\begin{document}

\title[]{Convergence of Spectral Triples on Fuzzy Tori to Spectral Triples on Quantum Tori}
\author{Fr\'{e}d\'{e}ric Latr\'{e}moli\`{e}re}
\email{frederic@math.du.edu}
\urladdr{http://www.math.du.edu/\symbol{126}frederic}
\address{Department of Mathematics \\ University of Denver \\ Denver CO 80208}

\date{\today}
\subjclass[2020]{Primary:  46L89, 46L87, 46L30, 58B34, Secondary: 34L40, 47D06, 47L30, 47L90, 81Q10, 81R05, 81R15, 81R60, 81T75.}
\keywords{Noncommutative metric geometry, Gromov-Hausdorff convergence, Spectral Triples, Monge-Kantorovich distance, Quantum Metric Spaces, quantum tori, fuzzy tori, spectral propinquity, matrix approximations of continuum.}

\begin{abstract}
  Fuzzy tori are finite dimensional C*-algebras endowed with an appropriate notion of noncommutative geometry inherited from an ergodic action of a finite closed subgroup of the torus, which are meant as finite dimensional approximations of tori and more generally, quantum tori. A mean to specify the geometry of a noncommutative space is by constructing over it a spectral triple. We prove in this paper that we can construct spectral triples on fuzzy tori which, as the dimension grows to infinity and under other natural conditions, converge to a natural spectral triple on quantum tori, in the sense of the spectral propinquity. This provides a formal assertion that indeed, fuzzy tori approximate quantum tori, not only as quantum metric spaces, but as noncommutative differentiable manifolds --- including convergence of the state spaces as metric spaces and of the quantum dynamics generated by the Dirac operators of the spectral triples, in an appropriate sense.
\end{abstract}
\maketitle

\let\oldtocsection=\tocsection
\let\oldtocsubsection=\tocsubsection
\let\oldtocsubsubsection=\tocsubsubsection
 
\renewcommand{\tocsection}[2]{\hspace{0em}\oldtocsection{#1}{#2}}
\renewcommand{\tocsubsection}[2]{\hspace{2em}\oldtocsubsection{#1}{#2}}
\renewcommand{\tocsubsubsection}[2]{\hspace{2.5em}\oldtocsubsubsection{#1}{#2}}

\tableofcontents


\section{Introduction}

\subsection{The Problem}

Matrix models for quantum field theories and string theories (in particular, involving the geometry of so-called compactified dimensions, or the geometry on $D$-branes) have become an interesting tool for the study of such fundamental questions as the search for a quantum theory of gravitation, e.g. \cite{Kimura01,Schreivogl13,Barrett15,Connes97,Seiberg99}. The asymptotic behaviors of such models, as the dimension of the matrix algebras involved grows to infinity, is of central interest. We have led a research program where the study of these asymptotic behaviors uses the formalism of convergence for certain distance functions on quantum spaces, starting with a distance on the class of {\qcms s} \cite{Connes89,Rieffel98a,Rieffel99,Rieffel00,Rieffel10b,Latremoliere12b,Latremoliere13,Latremoliere13b,Latremoliere15} which is a noncommutative analogue of the \emph{Gromov-Hausdorff distance}. In particular, we have recently introduced in \cite{Latremoliere18g} a generalization of the Gromov-Hausdorff distance on the class of metric \emph{spectral triples}, which are the structures introduced by Connes \cite{Connes89,Connes} to describe the noncommutative analogues of Riemannian manifolds. Our distance on metric spectral triples is called the \emph{spectral propinquity} \cite{Latremoliere18g}. In this paper, we construct metric spectral triples on \emph{fuzzy tori}, which are a particularly relevant family of matrix models used in physics \cite{Kimura01,Schreivogl13,Barrett15,Connes97}, and we prove their convergences to metric spectral triples on \emph{quantum tori}, in the sense of the spectral propinquity. Our construction is inspired by discussions in the literature in mathematical physics, cited above, of what a spectral triple on a fuzzy torus should be, in analogy with the construction of certain natural Dirac operators on classical tori.

\bigskip

Fuzzy tori are twisted convolution C*-algebras of finite products of finite cyclic groups. In general, a fuzzy $d$-torus associated to a real $d\times d$ antisymmetric matrix $(\theta_{js})_{1\leq j,s \leq d}$ and natural numbers $k_1,\ldots,k_d$, with the condition that $\gcd(k_j,k_s)\theta_{js} \in \Z$ for all $j,s \in \{1,\ldots,d\}$, is the universal C*-algebra generated by $d$ unitaries $U_1,\ldots,U_d$ such that
\begin{equation}\label{fuzzy-torus-eq}
  \forall j,s \in \{1,\ldots,d\} \quad U_j U_s = \exp(2i\pi\theta_{js}) U_s U_j
\end{equation}
and
\begin{equation*}
  \forall j \in \{1,\ldots,d\} \quad U_j^{k_j} = 1 \text.
\end{equation*}
Fuzzy tori, or, at least, families of unitaries in finite dimension with the above commutation relation, can for instance be found in the discussion of quantum mechanics in finite dimension in \cite[pp. 272--280]{Weyl}.

An example of a fuzzy torus is given by the C*-algebra generated by the so-called clock ($C_n$) and shift ($S_n$) matrices, defined by:
\begin{equation}\label{Clock-Shift-eq}
  S_n =
  \begin{pNiceMatrix}
    0      & 1 & 0      & \Cdots &  0 \\
    \Vdots & \Ddots & \Ddots & \Ddots  & \Vdots \\
           &        &        &         & 0\\
    0      & \Cdots &        &         0 & 1 \\
    1      & 0      & \Cdots &           & 0
  \end{pNiceMatrix} \text{ and }C_n =
  \begin{pNiceMatrix}
    1 & & & \\ 
    & \exp\left(\frac{2i\pi}{n}\right) & & \\
    & & \Ddots & \\
    & & & \exp\left(\frac{2i(n-1)\pi}{n}\right)
  \end{pNiceMatrix} \text{,}
\end{equation}
for any $n \in \N\setminus\{0\}$. These matrices appear in many physically-motivated work (see, e.g., \cite{Santhanam78},\cite{Schreivogl13},\cite{Kimura01},\cite{Vourdas04}), as well as in such work as t'Hooft work \cite{tHooft02} on using an underlying dynamics to provide a model for quantum physics, where the clock and shift matrices are of course associated with the C*-crossed-product of the rotation on the cyclic group of $n$-roots of unity by itself via translation (again, see \cite{Weyl}).

\bigskip

Fuzzy tori are often seen as finite dimensional approximations of quantum tori --- including approximations of the classical tori. Quantum tori are the twisted convolution C*-algebras for the groups $\Z^d$. Equivalently, the quantum $d$-torus associated with some real, $d\times d$, antisymmetric matrix $\theta=(\theta_{js})_{1\leq j,s \leq d}$, is the universal C*-algebra generated by $d$ unitaries $U_1$,\ldots,$U_d$ such that
\begin{equation}\label{quantum-torus-eq}
  \forall j,s \in \{1,\ldots,d\} \quad U_j U_s = \exp(2i\pi \theta_{js}) U_s U_j \text.
\end{equation}
Classical tori are particular cases of quantum tori, when $\theta$ is the zero matrix. The analogy between Equation (\ref{fuzzy-torus-eq}) and Equation (\ref{quantum-torus-eq}) is obvious. However, fuzzy tori are finite dimensional (their unitary generators have finite orders) while quantum tori are always infinite dimensional (and are not approximately finite dimensional C*-algebras either \cite{Glimm60},\cite{Bratteli72}). As a finite dimensional C*-algebra is seen as a sort of quantum analogue of a finite set, the idea of approximating quantum tori with fuzzy tori has been a common heuristics. The question for us is: how do we make these heuristics formal?

\bigskip

Fuzzy tori, as finite dimensional C*-algebras, are finite products of full matrix algebras. In particular, $C^\ast(S_n,C_n)$ is simply the C*-algebra of $n\times n$ matrices. Thus, one may ask what makes the algebra of $n\times n$ matrices a finite dimensional quantum analogue of a torus rather than, say, the finite dimensional quantum analogue of a sphere \cite{Rieffel01},\cite{Rieffel09},\cite{Rieffel10b},\cite{Rieffel15}. The answer is, informally, given by introducing some noncommutative, or quantum, geometry on these algebras. Quantum tori, in particular, have a long history as prototypes for noncommutative Riemannian manifolds, starting with Connes' first proposal for an operator-algebra based form of noncommutative geometry in \cite{Connes80}. Since fuzzy tori should be geometric approximations of quantum tori, we thus have some guidance on what the geometry of a fuzzy torus should be. A core proposal of noncommutative geometry is that the analogue of a Riemannian geometry of a quantum space is encoded in a structure called a \emph{spectral triple} \cite{Connes80},\cite{Connes}, introduced by Connes as early as 1985 in his inauguration lecture at the Coll{\`e}ge de France.

A \emph{spectral triple} is a far--reaching generalization of the Dirac operator acting on the spinor bundle of a compact connected spin manifold \cite{Connes89}, given by the following data:
\begin{definition}
  A \emph{spectral triple} $(\A,\Hilbert,\Dirac)$ over a unital C*-algebra $\A$ is given by a Hilbert space $\Hilbert$ and, on a dense subspace $\dom{\Dirac}$ of $\Hilbert$, a self--adjoint operator $\Dirac$ with compact resolvent, such that:
  \begin{enumerate}
  \item there exists a *-representation $\pi$ of $\A$ on $\Hilbert$,
  \item there exists a dense *-subalgebra $A \subseteq \A$ such that
    \begin{equation*}
      \forall a \in A \quad \pi(a) \dom{\Dirac} \subseteq \dom{\Dirac}
    \end{equation*}
    and
    \begin{equation*}
      \forall a \in A \quad [\Dirac,\pi(a)] \text{ is a bounded operator on $\dom{\Dirac}$ \text.}
    \end{equation*}
  \end{enumerate}
  The operator $\Dirac$ is referred to as the \emph{Dirac} operator of the spectral triple $(\A,\Hilbert,\Dirac)$.
\end{definition}

Various spectral triples have been constructed on quantum tori (e.g. \cite{Connes80},\cite{Rieffel98a},\cite{Rieffel01}, \cite{Sitarz13},\cite{Sitarz15},\cite{Gabriel16}). Most of these constructions employ, as a starting point, the dual action of the tori on quantum tori, and the Lie group structure of the tori. Our own presentation will follow this path as well.

The formalism of spectral triples is flexible enough that it is perfectly reasonable to define spectral triples on finite dimensional C*-algebras --- i.e. quantum the analogues of finite sets --- thus providing a unified framework for differential structures and their discrete analogues. In fact, any self-adjoint operator acting on the Hilbert space of some finite dimensional *-representation of a finite dimensional C*-algebra automatically gives us a spectral triple! Various spectral triples have been discussed on fuzzy tori \cite{Kimura01},\cite{Schreivogl13},\cite{Barrett15}, usually constructed in analogy with a classical Dirac operator on a torus. Once more, the heuristic behind these constructions is that the spectral triples on fuzzy tori should converge to some spectral triple on a torus (these papers only consider a commutative limit, though we actually want to allow the limits to be any quantum torus), but no formalism of what such a statement could mean is provided. It is, indeed, not a trivial matter.

\bigskip

The main justifications for our construction of spectral triples on fuzzy tori below are that, at once, they are analogous to natural spectral triples on classical and quantum tori (especially, quite similar to suggestions found in \cite{Schreivogl13},\cite{Barrett15}), and that indeed, our scheme will give convergent sequences of spectral triples. While the complete description of our spectral triples on arbitrary fuzzy and quantum tori requires that we first lay down some notations regarding these C*-algebras, we now give a general idea of the form of our spectral triples on fuzzy and quantum tori (up to a unitary equivalence), to provide an informal introduction to our main result. We begin with an important special case.

The C*-algebra $C^\ast(C_n,S_n)$ is a fuzzy torus, where $S_n C_n = z_n C_n S_n$ and $z_n = \exp\left(\frac{2i\pi}{n}\right)$. Heuristically, and in fact, formally for certain quantum metrics \cite{Latremoliere05},\cite{Latremoliere13c}, $C^\ast(C_n,S_n)$ approximates $C(\T^2)$.

The fuzzy torus carries a natural action of $\left(\bigslant{\Z}{n\Z}\right)^2$, by setting, for all $z \in \bigslant{\Z}{n\Z}$:
\begin{equation*}
  \alpha_n^{(z,1)}(a) = S_n^z a S_n^{-z} \text{ and }\alpha_n^{(1,z)}(a) = C_n^{-z} a C_n^{z}
\end{equation*}
noting that $C_n^z$ is defined as $C_n^{m}$ for any $m$ whose class in $\bigslant{\Z}{n\Z}$ is $z$, since $C_n^n = 1$; moreover we also note that $C_n^{-z} = (C_n^\ast)^z$. The same comment applies to $S_n$. Since $\alpha_{n}^{z,1}$ and $\alpha_{n}^{1,w}$ commute as *-automorphisms of $C^\ast(C_n,S_n)$, for all $z,w \in \bigslant{\Z}{n\Z}$, we define an action of $\left(\bigslant{\Z}{n\Z}\right)^2$ on $C^\ast(C_n,S_n)$, by setting $(z,w)\in\left(\bigslant{\Z}{n\Z}\right)^2\mapsto \alpha_n^{z,w} =\alpha_n^{z,1}\alpha_n^{1,w}$. The action $\alpha_n$ is called the dual action on $C^\ast(C_n,S_n)$. The dual action is what gives $C^\ast(C_n,S_n)$ its quantum geometry.

The dual action of $\left(\bigslant{\Z}{n\Z}\right)^2$ on $C^\ast(C_n,S_n)$ converges, in a way which can be formalized (see \cite{Latremoliere18b},\cite{Latremoliere18c}), to the action by translation of $\T^2$ on the C*-algebra $C(\T^2)$ of $\C$-valued continuous functions over the $2$-torus $\T^2$. This action is again called the dual action of $\T^2$ on $C(\T^2)$. The C*-algebra $C(\T^2)$ is the universal C*-algebra generated by two commuting unitaries $U$ and $V$, and the dual action is uniquely characterized as the action of $\T^2$ on $C(\T^2)$ by *-automorphisms, such that $(z,w) \in \T^2$ is sent to the unique *-automorphism $\alpha_\infty^{(z,w)}$ of $C(U,V)$ such that:
\begin{equation*}
  \alpha_\infty^{(z,w)}(U) = z U \text{ and }\alpha_\infty^{(z,w)}(V) = w V \text.
\end{equation*}
Since $\T^2$ is a Lie group, a general consequence of the existence of this dual action $\alpha_\infty$ implies the existence of a dense *-subalgebra $C^1(\T^2)$ of $C^\ast(U,V)$ which carries an action of the Lie algebra of $\T^2$ in a natural way; for our purpose, we focus on the two canonical derivations on $C^1(\T^2)$, defined, for all $a\in C^1(\T^2)$, by:
\begin{equation*}
  \partial_{\infty,1}(a) = \lim_{t\rightarrow 0} \frac{\alpha_\infty^{(\exp(it),1)}(a) - a}{t} \text{ and }\partial_{\infty,2}(a) = \lim_{t\rightarrow 0} \frac{\alpha_\infty^{(1,\exp(it))}(a) - a}{t} \text.
\end{equation*}
These two derivations are, of course, the usual vector fields on $\T^2$ used as the canonical moving frame for $\T^2$ as a differential manifold.

\medskip

It is thus natural, at first glance, to consider, for $n\in\N$, that the maps
\begin{equation*}
  \partial_{n,1} : a \in C^\ast(C_n,S_n) \mapsto \frac{S_n a S_n^\ast - a}{\frac{2\pi}{n}}\text{ and }\partial_{n,2} : a \in C^\ast(C_n,S_n) \mapsto \frac{C_n^\ast a C_n - a}{\frac{2\pi}{n}}
\end{equation*}
are analogous to the derivations $\partial_{\infty,1}$ and $\partial_{\infty,2}$ of the torus --- the normalization by $\frac{2\pi}{n}$ chosen to give the desired asymptotic behavior, and it could be replaced by any other sequence with the same asymptotic behavior.

Unfortunately, thus defined, $\partial_{n,1}$ and $\partial_{n,2}$ are not derivations of $C^\ast(C_n,S_n)$. In fact, all derivations of $C^\ast(C_n,S_n)$ are given as commutators. Nonetheless, we then observe that
\begin{multline*}
  \frac{n}{2\pi}[S_n,a] = \frac{\alpha_n^{(z,1)}(a) - a}{\frac{2\pi}{n}} S_n = \partial_{n,1}(a) S_n  \\ \text{ and } \frac{n}{2\pi}[C_n^\ast,a] = \frac{\alpha_n^{(1,z)}(a) - a}{\frac{2\pi}{n}} C_n^\ast = \partial_{n,2}(a) C_n^\ast \text,
\end{multline*}
thus connecting our initial guess and a more formally appropriate approach to a discrete quantized calculus for $C^\ast(C_n,S_n)$. Similar computations hold for $[C_n,\cdot]$ and $[S_n^\ast,\cdot]$.

The C*-algebra of $n\times n$ matrices $C^\ast(C_n,S_n)$ is naturally a Hilbert space for the inner product $a,b \in C^\ast(C_n,S_n) \mapsto \mathrm{tr}(a^\ast b)$, with $\mathrm{tr}$ the normalized trace on $n\times n$ matrices. Let $\Hilbert_n$ denote this Hilbert space. Thus, $C^\ast(C_n,S_n)$ naturally acts by left and by right multiplication on $\Hilbert_n$. If $a\in C^\ast(C_n,S_n)$ and $\xi \in \Hilbert_n$, then $[a,\xi] = a\xi - \xi\cdot a$.

This leads us to propose the following self-adjoint operator as a Dirac operator for the fuzzy torus $C^\ast(C_n,S_n)$ --- the self-adjointness is why we use the real part and imaginary part of $C_n$ and $S_n$:
\begin{multline*}
  \Dirac_n = \frac{n}{2 i \pi} \bigg( \left[\frac{S_n+S_n^\ast}{2},\cdot\right]\otimes \gamma_1 + \left[\frac{S_n-S_n^\ast}{2i},\cdot\right]\otimes \gamma_2 \\ +\left[\frac{C_n+C_n^\ast}{2},\cdot\right]\otimes \gamma_3 + \left[\frac{C_n^\ast - C_n}{2i},\cdot\right]\otimes \gamma_4 \bigg)\text,
\end{multline*}
where $\gamma_1,\gamma_2,\gamma_3,\gamma_4$ are $4\times 4$ skew-adjoint matrices such that $\gamma_j\gamma_s+\gamma_s\gamma_j = 0$ if $s\neq j$ and $\gamma_j^2=-1$, for all $j,s\in\{1,\ldots,4\}$. Indeed, we easily compute that
\begin{multline*}
  [\Dirac_n,a] = \frac{n}{2 i \pi} \bigg( \left[\frac{S_n+S_n^\ast}{2},a\right]\otimes \gamma_1 + \left[\frac{S_n-S_n^\ast}{2i},a\right]\otimes \gamma_2 \\ +\left[\frac{C_n+C_n^\ast}{2},a\right]\otimes \gamma_3 +\left[\frac{C_n^\ast - C_n}{2i},a\right]\otimes \gamma_4 \bigg)\text.
\end{multline*}

With this in mind, our heuristics suggests that the limit Dirac operator on $C(\T^2)$ should be given by the following operator acting on a dense subspace of $L^2(\T^2)\otimes\C^4$,
\begin{equation*}
  \Dirac_\infty = \frac{V+V^\ast}{2}\partial_U \otimes \gamma_1 + \frac{V-V^\ast}{2i}\partial_U \otimes \gamma_2 + \frac{U+U^\ast}{2}\partial_V \otimes \gamma_3 + \frac{U^\ast-U}{2i}\partial_V \otimes \gamma_4 \text,
\end{equation*}
where $\partial_U$ and $\partial_V$ are the Sobolev derivatives on $L^2(\T^2)$ which are the closures, respectively, of the operators $\partial_{\infty,1}$ and $\partial_{\infty,2}$, seeing $C(\T^2)$ canonically as a dense subspace of $L^2(\T^2)$.

\medskip

The operator $\Dirac_\infty$, as we shall prove, is indeed a densely defined self-adjoint operator with compact resolvent on $L^2(\T^2,\C^4)$, and $(C(\T^2),L^2(\T^2,\C^4),\Dirac_\infty)$ is indeed a spectral triple on $C(\T^2)$. \emph{In this paper, we will prove, in particular, that $(C(C_n,S_n),\C^n\otimes\C^4,\Dirac_n)$ converges, in the sense of the spectral propinquity, to $(C(\T^2),L^2(\T^2,\C^4),\Dirac_\infty)$.} This Dirac operator is natural, though it involves a sort of rotating frame of spinors, which is the cost of using commutators in defining our spectral triples on fuzzy tori.

\bigskip

Our work goes well beyond the special case discussed above. We allow for any reasonable approximation of any quantum torus by fuzzy torus (and even by mixtures of quantum and fuzzy tori) in our work. To this end, the scheme explained above needs some adjustment. For instance, $C(\T^2)$ can also be seen as the limit of $C(\U^2_n)$, with $\U_n = \left\{ z \in \C : z^n = 1 \right\}$, yet the only derivation of $C(\U^2_n)$ is the zero function. On the other extreme, if $\A$ is a simple quantum torus, which is basically the generic case, then we can not find any non-central unitaries, and thus, our approximation scheme would again not be possible as is. Both these situations illustrate that, in general, we can not find, as in the case of $C^\ast(C_n,S_n)$, all the basic ingredients to carry our scheme within the algebras of interest. But this can be remedied.

The idea is that  we can always embed a fuzzy torus or a quantum torus in a C*-crossed product which contains the unitaries needed to perform our task above. For instance, if we embed $C(\U^2_n)$ in $C(\U^2_n)\rtimes_{\alpha_n} \U_n^2$, where $\alpha_n$ is the action by translation (again, the canonical dual action), then there are by constructions two unitaries $U_{n,3}$ and $U_{n,4}$ in $C(\U^2_n)\rtimes_{\alpha_n} \U_n^2$ such that $U_{n,3} \, a U_{n,3}^\ast = \alpha_n^{(z,1)}(a)$ and $U_{n,4} \, a U_{n,4}^\ast = \alpha_n^{(1,z)}(a)$ for all $a\in C(\U_n^2)$. We are then able to follow a path analogous to our work on $C^\ast(C_n,S_n)$, constructing a spectral triple on $C(\U_n^2)$ which converges, for the propinquity, to a spectral triple on $C(\T^2)$ --- constructed using some elements from $C(\T^2)\rtimes\Z^2$ for the trivial action.

\bigskip

Thus, in this paper, we will start with a sequence $(\A_n)_{n\in\N}$ of fuzzy or quantum tori, and a quantum torus $\A_\infty$, subject to the following natural condition. For all $n\in \N\cup\{\infty\}$, let $\theta_n = (\theta_n^{js})_{1\leq j,s\leq d}$ be some $d\times d$ antisymmetric, real, matrix chosen so that the canonical, generating, unitaries $U_{n,1}$,\ldots,$U_{n,d}$ of $\A_n$ satisfy $U_{n,j} U_{n,s} = \exp(2i\pi\theta_n^{js}) U_{n,s} U_{n,j}$ for all $j,s \in \{1,\ldots,d\}$. We assume that $(\theta_n)_{n\in\N}$ converges to $\theta_\infty$.

For each $n\in\N\cup\{\infty\}$, we will embed $\A_n$ in a fuzzy or quantum $d'$-torus $\B_n$ with $d'\geq d$, in order to carry out the scheme described above for the clock and shift matrices fuzzy tori --- we will explain this embedding later in the paper, and we note that in general, several choices are possible. We denote by $\Hilbert_n$ the Hilbert space of the Gelfand-Naimark-Segal representation of $\B_n$ for the canonical trace.

We fix some gamma matrices $\gamma_1$,\ldots,$\gamma_{d+d'}$ acting on some finite dimensional spaces ---i.e. we fix some *-representation of a Clifford algebra on a finite dimensional space $\mathscr{C}$.  We will prove that natural spectral triples on fuzzy tori, whose Dirac operators are the form
\begin{equation*}
  \Dirac_n = \sum_{j=1}^d [F_{n,j},\cdot]\otimes \gamma_j +  \sum_{j=1}^d [R_{n,j},\cdot]\otimes \gamma_{d+j} + \mathrm{term} \text,
\end{equation*}
acting on some dense subspace of a Hilbert space $\Hilbert_n\otimes \mathscr{C}$, converge to spectral triples on tori of the form
\begin{equation*}
  \Dirac_\infty = \sum_{j=1}^d R_{\infty,j} \partial_j \otimes\gamma_j +  \sum_{j=1}^d F_{\infty,j}\partial_j \otimes\gamma_{d+j} + \mathrm{term} \text,
\end{equation*}
acting on a dense subspace of $\Hilbert_\infty\otimes\mathscr{C}$, where, for all $n\in\N\cup\{\infty\}$, and for all $j\in\{1,\ldots,d\}$, the operators $R_{n,j}$ and $F_{n,j}$ are appropriately scaled version of, respectively, the real part and the imaginary part of some of the generating unitaries in $\B_n$, with the additional condition, when $n=\infty$, that $R_{\infty,j}$ and $F_{\infty,j}$ commute with $\A_\infty$ (in fact, $R_{\infty,j}$ and $F_{\infty,j}$ are the real parts and the imaginary parts of some central canonical unitary generators of $\B_\infty$, up to a $\pm 1$ factor). The additional ``term'', which we leave for the main formal description, is needed in order to ensure that $\Dirac_\infty$ has a compact resolvent --- indeed, the operator $\Dirac_\infty$ is defined on $\Hilbert_\infty\otimes \mathscr{C}$ where $\Hilbert_\infty$ carries a *-representation of $\B_\infty$, not just $\A_\infty$. In turn, this term in $\Dirac_\infty$ must have a discrete form in $\Dirac_n$ ($n\in\N$), so that the desired convergence occurs. The formal description of these spectral triples will be given once we have the needed notation about quantum tori.

\medskip

Our spectral triples on $C^\ast(C_n,S_n)$ can also be found in \cite{Schreivogl13}, so our construction extends the construction in \cite{Schreivogl13} in various, far reaching ways (as they can be defined for any fuzzy or quantum torus). A related construction of a different, yet similar, spectral triple over $C^\ast(C_n,S_n)$ is found in \cite{Barrett15}, though, again, our techniques in this paper are applicable to a much wider family of examples. Using the commutators with the clock and shift matrices as discrete versions of the canonical moving frame on the torus is also discussed, for instance, in \cite{Kimura01}. Our spectral triples also share some commonalities with the spectral triples in \cite{Sitarz13},\cite{Sitarz15}, since they involve modifying the usual, ``flat'' spectral triple on quantum tori by elements which commute with the quantum torus. However, our approach requires, in general, an additional term to compensate for the introduction of these elements, which in turn, is caused by the form of the finite dimensional spectral triples on fuzzy tori --- a matter foreign to the discussion in \cite{Sitarz13},\cite{Sitarz15}.

While our spectral triples have nice properties --- the symbol of their square give the familiar symbol of the Laplacian, and the dual actions act by Lipschitz functions, and in fact, isometries in many cases (we can always choose, with our scheme, that the dual action should act by quantum isometries) --- their primary value is in the fact that they form convergent sequence for the spectral propinquity.

\subsection{The Spectral Propinquity}
    
We now must of course explain what the convergence of metric spectral triples actually means, since it is the very reason for the present work. Such a notion is certainly involved: for instance, convergence of the spectra of the Dirac operators of spectral triples is vastly insufficient, since the spectrum of the Dirac operator of a spectral triple can not distinguish even between non-homeomorphic spaces.  Our notion of convergence will now take us to the realm of noncommutative metric geometry.

\bigskip

Our idea for convergence for spectral triples begins, as a first step, by exploiting the crucial observation by Connes that a spectral triple induces an extended metric on the state space of its underlying C*-algebra. Indeed, it may help to motivate our idea with a simple observation. Informally, it is natural to look upon the sets $\U_n = \{ z \in \C : z^n = 1\}$ as approximations of the circle $\T = \{z\in\C : |z|=1\}$. However, $\U_n$ is, topologically, just a set of $n$ elements, and thus it is homeomorphic to $\mathds{S}_n = \{\frac{1}{n},\ldots,\frac{n}{n} =1\}$. Yet the latter set is, informally, an approximation of $[0,1]$. Underlying our intuition here is not topology, but metric geometry. Indeed, as $n$ tends to $\infty$, the sequence $(\U_n)_{n\in\N}$, where $\U_n$ is endowed with its usual metric as a subspace of $\C$, does converges to $\T$ for the \emph{Hausdorff distance} \cite{Hausdorff} induced by the Euclidean metric on $\C$, whereas $(\mathds{S}_n)_{n\in\N}$ converges, for the same metric, to $[0,1]$.

We seek a similar framework to formalize that some sequences of ``fuzzy tori'' converge to quantum tori: at a minimum, we want to endow these spaces with a quantum analogue of a metric, and prove the convergence of the resulting structure for an analogue of the Hausdorff distance, adapted to our noncommutative geometric setting. Since many reasonable spectral triples do give rise to metrics on state spaces \cite{Connes89}, we have the starting point for our definition of a convergence of spectral triples: metrics they induce should converge in a generalized Hausdorff distance. This certainly seems a reasonably physical concept. The generalized Hausdorff distance we shall use is the Gromov-Hausdorff propinquity on the class of {\qcms s}, which we introduced in \cite{Latremoliere13},\cite{Latremoliere13b},\cite{Latremoliere14}, and we now describe. As we shall see shortly, convergence of spectral triples involve more than the convergence of the underlying metric, but this is the first step.

\bigskip

The idea of a quantum metric begins with the  well-known and profound (contravariant) equivalence of category between the category of compact Hausdorff spaces and the category of unital \emph{Abelian} C*-algebras, established by Gelfand and Naimark. In general, a category of quantum spaces consists of algebras which generalizes (and include) some algebras of functions over certain types of spaces, with morphisms the reversed arrows from the natural morphisms between these algebras. For instance, a quantum compact Hausdorff space is described by a unital C*-algebra, which is no longer assumed to be commutative, and seen as an object in the dual category of C*-algebras.

Quantum compact metric spaces, in turn, are described by generalizations of the algebras of \emph{Lipschitz functions} over a compact metric space. If $(X,d)$ is a compact metric space, then the \emph{Lipschitz constant} $\Lip_d(f)$ of an $\R$-valued function $f:X\rightarrow \R$ is given by
  \begin{equation}\label{Lipschitz-eq}
    \Lip_d(f) = \sup\left\{ \frac{|f(x)-f(y)|}{d(x,y)} : x,y \in X, x\neq y \right\} \text,
  \end{equation} allowing $\Lip_d(f) = \infty$.
  
  A function from $X$ to $\C$ with a finite Lipschitz constant is called a Lipschitz function over $(X,d)$ --- and it is always an element of $C(X)$. This definition gives a seminorm $\Lip_d$ defined on a dense subalgebra of $C(X)$.

  \begin{convention}
    If $L$ is a seminorm defined on a dense subspace $\dom{L}$ of a normed vector space $E$, then we set $L(x) = \infty$ whenever $x\notin \dom{L}$, so that $\dom{L} = \{x \in E : L(x) < \infty\}$. Within this convention, we will use $0\infty = 0$, $t\leq\infty$, and $\infty + t = t + \infty = \infty$ for all $t\in\R$.
  \end{convention}

  Kantorovich, motivated by his work on Monge's transportation problem, defined, in \cite{Kantorovich40},\cite{Kantorovich58}, a distance $\Kantorovich{\Lip_d}$ on the state space $\StateSpace(C(X))$ of $C(X)$, i.e. the set of integrals with respect to Radon probability measures over $X$, by setting, for all $\mu,\nu \in \StateSpace(C(X))$:
  \begin{equation*}
    \Kantorovich{\Lip_d} (\mu,\nu) = \sup\left\{ \left| \int_X f\, d\mu - \int_X f\, d\nu\right| : f \in C(X), f=\overline{f}, \Lip_d(f) \leq 1\right\}\text.
  \end{equation*}
  This metric, known as the {\MongeKant}, induces the weak* topology on $\StateSpace(C(X))$, making it a fundamental tool not only in optimal transport theory, but also probability, statistics, and even fractal theory. It has been often renamed (for instance, Dobrushin \cite{Dobrushin70} called this metric the Wasserstein metric, following works in probability by Wasserstein \cite{Wasserstein69}). Moreover, the map which sends points of $X$ to their corresponding Dirac point measures over $X$ becomes an isometry from $(X,d)$ to $(\StateSpace(C(X)),\Kantorovich{\Lip_d})$; therefore, $\Lip_d$ allows us to recover the metric over $X$.

  Rieffel \cite{Rieffel98a},\cite{Rieffel99}, motivated by Connes' original proposal about quantum metric spaces \cite{Connes89}, proposed that quantum metrics could be metrics on state spaces induced by duality from seminorms on (no longer necessarily commutative) unital C*-algebras which share basic properties with the Lipschitz seminorms. The exact list of which properties of the Lipschitz seminorms should be kept has evolved in time. We settle on the definition which we used in \cite{Latremoliere13},\cite{Latremoliere15}.

  \begin{notation}
    If $\A$ is a normed vector space, the norm of $\A$ is denoted by $\norm{\cdot}{\A}$, unless otherwise specified.
  \end{notation}
  
\begin{definition}\label{qcms-def}  
  A \emph{\qcms} $(\A,\Lip)$ is an ordered pair of a unital C*-algebra and a seminorm $\Lip$, defined on a dense subspace $\dom{\Lip}\subseteq \sa{\A}$ of the space $\sa{\A} = \{ a \in \A : a^\ast = a \}$ of self-adjoint elements in $\A$, such that:
  \begin{enumerate}
  \item $\forall a \in \dom{\Lip} \quad \Lip(a) = 0 \iff a \in \R \unit_\A$, where $\unit_\A$ is the unit of $\A$,
  \item the \emph{\MongeKant} $\Kantorovich{\Lip}$, defined between any two states $\varphi$, $\psi$ of $\A$ by
    \begin{equation*}
      \Kantorovich{\Lip}(\varphi,\psi) = \sup\left\{ |\varphi(a) - \psi(a)| : a\in\dom{\Lip}\text{ and }\Lip(a) \leq 1 \right\}\text,
    \end{equation*}
    is a metric on the state space $\StateSpace(\A)$ of $\A$ which induces the weak* topology on $\StateSpace(\A)$,
  \item for all $a,b \in \dom{\Lip}$, the Jordan product $\frac{ab + ba}{2}$ of $a$ with $b$, and the Lie product $\frac{a b - b a}{2i}$ of $a$ with $b$, are both elements of $\dom{\Lip}$, and moreover:
    \begin{equation*}
      \max\left\{ \Lip\left(\frac{ab+ba}{2}\right),\Lip\left(\frac{ab-ba}{2i}\right)\right\} \leq \norm{a}{\A} \Lip(b) + \Lip(a)\norm{b}{\A} \text,
    \end{equation*}
  \item $\{a \in \dom{\Lip} : \Lip(a) \leq 1 \}$ is closed in $\A$.
  \end{enumerate}
\end{definition}

It can indeed be checked that if $(X,d)$ is a compact metric space, then $(C(X),\Lip_d)$ is a {\qcms}. Noncommutative examples include C*-algebras endowed with a strongly continuous ergodic action of a compact group equipped with a continuous length function \cite{Rieffel98a} --- which includes quantum tori, fuzzy tori \cite{Rieffel98a}, and noncommutative solenoids \cite{Latremoliere16}; C*-algebras of word hyperbolic group \cite{Ozawa05}, C*-algebras of nilpotent discrete groups \cite{Rieffel12}, Podles spheres \cite{Kaad18}, certain C*-crossed-products \cite{Hawkins13}, certain quantum groups \cite{Voigt14}, and more (e.g., \cite{Rieffel02},\cite{Li09},\cite{Li04},\cite{Lapidus08}). It is possible to relax the Leibniz relation (i.e. Condition (3) in Definition (\ref{qcms-def})) to obtain even more examples, such as AF algebras \cite{Latremoliere15d}. A generalization of the theory of {\qcms s} to locally compact quantum metric spaces can be found in \cite{Latremoliere05b},\cite{Latremoliere12b}.

\medskip

The relationship between spectral triples and {\qcms s} is captured in the following definition, which owes to the original observation by Connes \cite{Connes89} that a spectral triple induces an extended metric on the state space of its underlying C*-algebras.

\begin{notation}
If $T : E\rightarrow F$ is a linear map between two normed spaces, we write its norm as $\opnorm{T}{E}{F}$ (allowing for $\infty$). When $E=F$, we simply write $\opnorm{T}{}{F}$.
\end{notation}

\begin{definition}
  A spectral triple $(\A,\Hilbert,\Dirac)$, with the *-representation of $\A$ on $\Hilbert$ denoted by $\pi$, is a \emph{metric spectral triple} when, if we set, for all $a\in\sa{\A}$ such that $\pi(a) \dom{\Dirac}\subseteq \dom{\Dirac}$,
  \begin{equation*}
    \Lip_\Dirac(a) = \opnorm{[\Dirac,\pi(a)]}{}{\Hilbert}\text,
  \end{equation*}
  then $(\A,\Lip_\Dirac)$ is a {\qcms}.
\end{definition}  

\begin{remark}
  An immediate consequence of the definition of a metric spectral triple $(\A,\Hilbert,\pi)$ is that the *-representation of $\A$ on $\Hilbert$ used to define such a spectral triple must be faithful; we thus can identify $\A$ with its image by this *-representation, and will do so whenever it introduces no risk of confusion.
\end{remark}

An example of metric spectral triple is given on quantum tori, and on certain quantum homogeneous spaces, by Rieffel in \cite{Rieffel98a}. Another family of examples is given by the curved quantum tori \cite{Sitarz13},\cite{Sitarz15}, as proven in \cite{Latremoliere15c}. Other examples include spectral triples from length functions on certain groups \cite{Ozawa05},\cite{Rieffel02},\cite{Rieffel15b}, spectral triples over Podles spheres \cite{Kaad18}, or spectral triples over certain fractals \cite{Lapidus08},\cite{Latremoliere20a}, for instance.

In this paper, we will prove that our proposed spectral triples on fuzzy and quantum tori are all metric spectral triples. Therefore, it becomes possible to apply to them our theory of {\qcms s} and their convergence, which we now turn to.

\medskip

The class of {\qcms s} form a category for an appropriate notion of Lipschitz morphisms \cite{Latremoliere16b}. We focus here on the subcategory whose objects are {\qcms s} and morphisms are quantum isometries, in the following sense (note that strictly speaking, quantum isometries should be given by the opposite arrows, but it will be clearer to keep our arrows in the same direction as the actual morphisms). The idea behind the following notion of quantum isometries is due to Rieffel, and is motivated by McShane's theorem \cite{McShane34} for the extension of \emph{real-valued} Lipschitz functions.

\begin{definition}[{\cite{Rieffel99},\cite{Rieffel00},\cite{Latremoliere13}}]
  A \emph{quantum isometry} $\pi : (\A,\Lip_\A) \rightarrow (\B,\Lip_\B)$ is a surjective *-morphism $\pi : \A \twoheadrightarrow \B$ such that
  \begin{equation*}
    \forall b \in \sa{\B} \quad \Lip_\B(b) = \inf\left\{ \Lip_\A(a) : \pi(a) = b \right\} \text.
  \end{equation*}
\end{definition}

In particular, we will consider two {\qcms s} to be isomorphic when they are fully quantum isometric, in the following sense.
\begin{definition}[{\cite{Latremoliere13}}]
  A \emph{full quantum isometry} $\pi :(\A,\Lip_\A) \rightarrow (\B,\Lip_\B)$ is a *-isomorphism $\pi : \A \rightarrow \B$ such that $\Lip_\B\circ\pi = \Lip_\A$.
\end{definition}

\medskip

Our principal interest is the construction, on the class of {\qcms s}, of a distance function, called the \emph{Gromov-Hausdorff propinquity}, which generalizes the Gromov-Hausdorff distance \cite{Gromov},\cite{Gromov81},\cite{Edwards75}, itself a generalization of the Hausdorff distance \cite{Hausdorff}.

\begin{notation}
  The \emph{Hausdorff distance} \cite{Hausdorff} induced on the class of closed subsets of a compact metric space $(X,d)$ is denoted by $\Haus{d}$. We recall that if $A_1,A_2\subseteq X$ are two closed subsets of $(X,d)$, then
  \begin{equation*}
    \Haus{d}(A_1,A_2) = \max_{\{j,s\}=\{1,2\}} \sup_{x\in A_j} \inf_{y \in A_s} d(x,y) \text.
  \end{equation*}
\end{notation}

The construction of the propinquity, upon which all of our work is built, goes as follows. Motivated by Edwards \cite{Edwards75} and Gromov \cite{Gromov81} constructions, we define the propinquity by first considering any quantum analogues of an isometric embedding of two {\qcms s} into a third one.

\begin{figure}[t]
  \begin{equation*}\xymatrixcolsep{0pc}
   \xymatrix{
      & (\D, \TLip)  \ar_{\pi_\A}[dl] \ar^{\pi_\B}[dr]& \\
      (\A,\Lip_\A) &   & (\B,\Lip_\B) 
      }
    \end{equation*}
    \caption{A tunnel}
\end{figure}
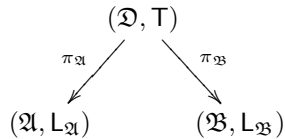

\begin{definition}\label{tunnel-def}
  A \emph{tunnel} $\tau = (\D,\Lip_\D,\pi_1,\pi_2)$ between two {\qcms s} $(\A_1,\Lip_1)$ and $(\A_2,\Lip_2)$ is given by a {\qcms} $(\D,\Lip_\D)$ and, for each $j\in\{1,2\}$, a quantum isometry $\pi_j$ from $(\D,\Lip_\D)$ onto $(\A_j,\Lip_j)$.
\end{definition}

\begin{notation}
  If $\pi : \D \rightarrow \A$ is a *-epimorphism between two unital C*-algebras $\D$ and $\A$, then we denote the transpose linear map $\varphi \in \A^\ast \mapsto \varphi\circ\pi \in \D^\ast$ by $\pi^\ast$. In particular, $\pi^\ast$ is an injective map from $\StateSpace(\A)$ into $\StateSpace(\D)$. When $\pi$ is actually a quantum isometry from $(\D,\Lip_\D)$ onto $(\A,\Lip_\A)$, for some L-seminorms $\Lip_\D$ and $\Lip_\A$, then $\pi^\ast$ is an isometry from $(\StateSpace(\A),\Kantorovich{\Lip_\A})$ to $(\StateSpace(\D),\Kantorovich{\Lip_\D})$ \cite{Rieffel99},\cite{Rieffel00}.
\end{notation}

Our metric is then defined as follows.
\begin{definition}
  Let $(\A,\Lip_\A)$ and $(\B,\Lip_\B)$ be two {\qcms s}. If $\tau = (\D,\Lip_\D,\pi_1,\pi_2)$ is a tunnel from $(\A,\Lip_\A)$ to $(\B,\Lip_\B)$, then the \emph{extent} $\tunnelextent{\tau}$ of $\tau$ is the non-negative real number
  \begin{equation*}
    \tunnelextent{\tau} = \max\left\{ \Haus{\Kantorovich{\Lip_\D}}(\StateSpace(\D),\pi_1^\ast\StateSpace(\A)), \Haus{\Kantorovich{\Lip_\D}}(\StateSpace(\D),\pi_2^\ast\StateSpace(\B)) \right\} \text.
  \end{equation*}
  
  The \emph{propinquity} $\dpropinquity{}((\A,\Lip_\A),(\B,\Lip_\B))$ is the non-negative number
  \begin{equation*}
    \dpropinquity{}((\A,\Lip_\A),(\B,\Lip_\B)) = \inf\left\{ \tunnelextent{\tau} : \text{ $\tau$ is a tunnel from $(\A,\Lip_\A)$ to $(\B,\Lip_\B)$} \right\}\text.
  \end{equation*}
\end{definition}

We were able to prove that the dual propinquity satisfies the following set of desirable properties, which, in particular, extend the properties of the Gromov-Hausdorff distance from the class of compact metric spaces to the class of {\qcms s}:
\begin{theorem}[{\cite{Latremoliere13},\cite{Latremoliere13b},\cite{Latremoliere14}}]
  The propinquity $\dpropinquity{}$ is a complete distance (class) function over the class of {\qcms s}, up to full quantum isometry. Moreover, the (class) function which sends any compact metric space $(X,d)$ to the {\qcms} $(C(X),\Lip_d)$, with $\Lip_d$ defined by Expression (\ref{Lipschitz-eq}), is a homeomorphism from the class of compact metric spaces, endowed with the topology of the Gromov-Hausdorff distance, to its range, with the topology induced by the Gromov-Hausdorff propinquity.
\end{theorem}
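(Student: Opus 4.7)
The plan is to separate the theorem into two assertions: that $\dpropinquity{}$ is a complete distance on {\qcms s} modulo full quantum isometry, and that the assignment $(X,d) \mapsto (C(X), \Lip_d)$ is a homeomorphism onto its range. Symmetry of $\dpropinquity{}$ is immediate from Definition \ref{tunnel-def}, since swapping the two quantum isometries of a tunnel yields another tunnel with the same extent. Finiteness follows by exhibiting, for any pair $(\A,\Lip_\A)$ and $(\B,\Lip_\B)$, a single tunnel with underlying C*-algebra $\A \oplus \B$ and with a seminorm given by a suitably scaled maximum of $\Lip_\A$, $\Lip_\B$, and a term depending on the distance of each coordinate to the scalars, so that the coordinate projections realize quantum isometries onto the two factors. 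For the triangle inequality, given tunnels $\tau_1 = (\D_1, \Lip_{\D_1}, \pi_1, \pi_1')$ from $(\A,\Lip_\A)$ to $(\B,\Lip_\B)$ and $\tau_2 = (\D_2, \Lip_{\D_2}, \pi_2, \pi_2')$ from $(\B,\Lip_\B)$ to a third {\qcms} $(\mathfrak{C}, \Lip_{\mathfrak{C}})$, I would amalgamate them into the fibered C*-subalgebra
\begin{equation*}
\D = \{(d_1, d_2) \in \D_1 \oplus \D_2 : \pi_1'(d_1) = \pi_2(d_2)\}
\end{equation*}
equipped with $\Lip_\D(d_1, d_2) = \max\{\Lip_{\D_1}(d_1), \Lip_{\D_2}(d_2)\}$, modified by a Leibniz-preserving correction. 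The compositions with the obvious projections onto $\A$ and $\mathfrak{C}$ are quantum isometries because a Lipschitz lift in $\D_1$ of an element of $\A$, together with a Lipschitz lift in $\D_2$ of its image in $\B$, can be matched at the common fiber up to arbitrarily small seminorm error; the extent of the composite tunnel is then at most $\tunnelextent{\tau_1} + \tunnelextent{\tau_2}$ by chaining the two Hausdorff estimates on the state spaces.

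Coincidence and completeness are the deepest parts. If $\dpropinquity{}((\A,\Lip_\A),(\B,\Lip_\B))=0$, I would pick tunnels $\tau_n$ of extent at most $2^{-n}$ and, for each self-adjoint $a \in \A$ with $\Lip_\A(a) \leq 1$, lift it to an element $d_n$ in the middle {\qcms} of $\tau_n$ with seminorm at most $1$, then push forward to some $b_n \in \B$. By Rieffel's total-boundedness theorem, unit Lipschitz balls of {\qcms s} modulo scalars are norm-precompact, so a diagonal extraction over a countable dense subset of the unit Lipschitz ball of $\A$ yields a map $\Phi$ from $\dom{\Lip_\A}$ to $\B$ which preserves norm, seminorm, unit, and the Jordan and Lie products in the limit; since these jointly determine the C*-algebraic structure on the self-adjoint part, $\Phi$ extends to a full quantum isometry. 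Completeness is proved by the same diagonal scheme applied to a Cauchy sequence $(\A_n, \Lip_n)$: pick tunnels $\tau_{n,m}$ of summably small extent, build for each element of a dense subset of the unit Lipschitz ball of any one $\A_n$ a compatible sequence of near-lifts in the later $\A_m$, and assemble the limits into a normed *-algebra with seminorm whose completion is the candidate {\qcms} $(\A_\infty, \Lip_\infty)$; the axioms of Definition \ref{qcms-def} pass to the limit thanks to the uniform control supplied by the tunnels.

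For the comparison with Gromov-Hausdorff, the forward inequality $\dpropinquity{}((C(X_1),\Lip_{d_1}),(C(X_2),\Lip_{d_2})) \leq 2\, \gh((X_1,d_1),(X_2,d_2))$ is obtained by embedding $X_1$ and $X_2$ isometrically into a common compact metric space $(Y,e)$ realizing the Gromov-Hausdorff distance up to $\varepsilon$, and taking as tunnel $(C(Y),\Lip_e, r_1, r_2)$ where $r_j$ is restriction of continuous functions to $X_j$; McShane's theorem guarantees that each $r_j$ is a quantum isometry, and the extent is controlled by the Hausdorff distance of $X_1$ and $X_2$ in $Y$. For the reverse inequality, given a tunnel $(\D, \Lip_\D, \pi_1, \pi_2)$ between $(C(X_1),\Lip_{d_1})$ and $(C(X_2),\Lip_{d_2})$, the pullbacks under $\pi_1^\ast$ and $\pi_2^\ast$ of the point evaluations at points of each $X_j$ produce isometric copies of $X_1$ and $X_2$ in $(\StateSpace(\D),\Kantorovich{\Lip_\D})$ whose Hausdorff distance is at most $2\tunnelextent{\tau}$, yielding $\gh((X_1,d_1),(X_2,d_2)) \leq 2 \dpropinquity{}((C(X_1),\Lip_{d_1}),(C(X_2),\Lip_{d_2}))$; combining the two bounds gives the bi-Lipschitz, and hence topological, equivalence on the range of the embedding. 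The main obstacle throughout is maintaining the Leibniz property (Condition (3) of Definition \ref{qcms-def}) when forming maxima and amalgams of seminorms: the naive maximum of two Leibniz seminorms need not be Leibniz, and one must add a carefully tuned correction, typically involving an auxiliary ``bridge'' norm designed to absorb the cross-term defect, so that the Leibniz estimate is preserved without inflating the extent of the tunnel.
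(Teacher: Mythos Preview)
The paper does not prove this theorem: it is recalled as background, with the proof deferred entirely to \cite{Latremoliere13}, \cite{Latremoliere13b}, \cite{Latremoliere14}. There is thus no in-paper argument to compare against; your sketch can only be measured against those references. It is broadly in their spirit, but contains one definite error and one genuine gap.

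The error is your final claim that the maximum of two Leibniz seminorms need not be Leibniz. It always is: if $L_1$ and $L_2$ each satisfy Condition~(3) of Definition~\ref{qcms-def} and $L=\max(L_1,L_2)$, then $L_j\left(\frac{ab+ba}{2}\right) \leq \|a\|L_j(b)+L_j(a)\|b\| \leq \|a\|L(b)+L(a)\|b\|$ for $j=1,2$, whence the same bound holds for $L$. The Leibniz difficulty in this theory lies elsewhere, chiefly in the fact that \emph{quotients} of Leibniz seminorms need not be Leibniz; this is why the tunnel formalism demands that the endpoint L-seminorms be given in advance and recovered as quotients, rather than produced by the tunnel.

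The gap is in your reverse comparison with the Gromov--Hausdorff distance. From a tunnel $(\D,\Lip_\D,\pi_1,\pi_2)$ of extent $\chi$, the state $\pi_1^\ast\delta_x$ is within $\chi$ (for $\Kantorovich{\Lip_\D}$) of some $\pi_2^\ast\mu$ with $\mu\in\StateSpace(C(X_2))$; but $\mu$ is a general probability measure, not a Dirac measure, and there is no uniform bound on $\Kantorovich{\Lip_{d_2}}(\mu,\delta_y)$ for $y$ in its support. Your asserted inequality $\Haus{}(\pi_1^\ast X_1,\pi_2^\ast X_2)\leq 2\chi$ therefore does not follow. The cited papers obtain the reverse continuity by comparing $\dpropinquity{}$ with Rieffel's quantum Gromov--Hausdorff distance, which was already known to dominate the classical $\gh$ on commutative {\qcms s}.
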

Since, in particular, a sequence $(X_n,d_n)_{n\in\N}$ of compact metric spaces converges to a compact metric space $(X,d)$ for the Gromov-Hausdorff distance if, and only if, the sequence $(C(X_n),\Lip_{d_n})_{n\in\N}$ converges to $(C(X),\Lip_d)$ for the propinquity $\dpropinquity{}$, the propinquity allows us to extend the topology of the Gromov-Hausdorff distance to noncommutative geometry. The propinquity allows us, for instance, to prove an analogue of Gromov's compactness theorem in noncommutative geometry \cite{Latremoliere15}.

Our main application for the propinquity, so far, is to obtain continuous families of {\qcms s}, and in particular, to construct rigorous finite dimensional approximations of {\qcms s}, often motivated by mathematical physics. The physical relevance of our metric is that it indeed implies the convergence of the state space of a C*-algebra. Examples of convergence of {\qcms s} for the propinquity include any convergence in the sense of the Gromov-Hausdorff distance for classical compact metric spaces (such as convergence of some fractals for their geodesic distances \cite{Latremoliere20a}), finite dimensional approximation of AF algebras and continuous families of AF algebras (such as UHF algebras and Effros-Shen algebras) \cite{Latremoliere15d}, convergence of matrix algebras to the sphere \cite{Rieffel15}, finite dimensional approximation of {\qcms s} built over nuclear quasi-diagonal C*-algebras \cite{Latremoliere15}, and, notably for our purpose, the convergence of sequences of fuzzy tori to quantum tori \cite{Latremoliere05},\cite{Latremoliere13c}, for the quantum metrics introduced by Rieffel in \cite{Rieffel98a}.

\medskip

In this work, we will prove, as a first step toward the convergence of spectral triples, that \emph{the quantum metrics induced by our spectral triples on fuzzy tori converge to quantum metrics on quantum tori under natural conditions}. The quantum metrics induced by the spectral triples in this paper are not the same as the quantum metrics used in \cite{Latremoliere13c} (which were not constructed using a spectral triple), so this result is new to the present work; however, our strategy consists in establishing properties for the quantum metrics of our spectral triples which then enables us to apply the techniques of \cite{Latremoliere13c}, which we will not repeat here. We will, however, record the main ingredients of the proof of \cite[Theorem 5.2.5]{Latremoliere13c} which we need to further study the convergence of spectral triples, beyond the metric aspects.

\bigskip

Now, a spectral triple contains more information than the metric it induces. A proper notion of convergence for spectral triples should also account for this extra information. Our idea, which we developed in \cite{Latremoliere18g}, begins with the observation that if $(\A,\Hilbert,\Dirac)$ is a spectral triple, then $t \in \R \mapsto S^t = \exp(i t \Dirac)$ is a continuous group action by unitaries of $\R$ on $\Hilbert$, i.e. it is a \emph{quantum dynamics} (since $\Dirac$ is self-adjoint). This quantum dynamics is, of course, tightly related to $\Dirac$, which is its infinitesimal generator. Moreover, for any operator $a$ in a dense *-subalgebra of $\A$, we also note that $[\Dirac,a] = \lim_{t\rightarrow 0} \frac{S^t a S^{-t} - a}{i t}$. Thus, we propose to define the convergence of a spectral triple by adding, to the convergence of the underlying quantum metrics, the convergence of the associated quantum dynamics, seen as the natural physical object associated with a spectral triple. To this end, we proceed in two steps. First, since the quantum dynamics for two different spectral triples typically act on different Hilbert spaces (in the present work, spectral triples for fuzzy tori act on finite dimensional Hilbert spaces, while spectral triples on quantum tori act on infinite dimensional Hilbert spaces), and since spectral triples involve *-representations as well, we need a mean to compare these Hilbert spaces, as modules over {\qcms s}. Second, we will indeed use a covariant version of the propinquity generalized to modules over {\qcms s}.

\medskip

The construction of the spectral propinquity, which we now describe, is thus based on various new distances, based upon the propinquity, but defined for various classes of quantum objects: we shall use the ideas of \cite{Latremoliere16c},\cite{Latremoliere17a},\cite{Latremoliere18a},\cite{Latremoliere18d}, where we define the modular propinquity --- a metric between appropriately defined modules over {\qcms s}, as defined below. We also use the ideas of \cite{Latremoliere17c},\cite{Latremoliere18b},\cite{Latremoliere18c},\cite{Latremoliere18g}, where we define a covariant version of the propinquity and the modular propinquity, which enables us to discuss convergences of group actions. We present a synthesis of the tools from these works which are needed to define the spectral propinquity below.

\medskip

Let $(\A,\Hilbert,\Dirac)$ be a metric spectral triple. By \cite{Latremoliere18g}, if we set:
\begin{equation*}
  \forall \xi \in \dom{\Dirac} \quad \CDN(\xi) = \norm{\xi}{\Hilbert} + \norm{\Dirac \xi}{\Hilbert}
\end{equation*}
and
\begin{equation*}
  \forall a \in \{ b\in\sa{\A}:b\,\dom{\Dirac}\subseteq \dom{\Dirac} \} \quad \Lip_{\Dirac}(a) = \opnorm{[\Dirac,a]}{}{\Hilbert}
\end{equation*}
allowing for the value $\infty$, the tuple
\begin{equation}\label{mcc-eq}
  \mcc{\A}{\Hilbert}{\Dirac} = (\Hilbert,\CDN,\C,0,\A,\Lip_\Dirac)
\end{equation}
is an example of a \emph{metrical $C^\ast$-correspondence} in the following sense:

\begin{definition}[{\cite{Latremoliere16c,Latremoliere18d}}]\label{mcc-def}
  A \emph{$K$-metrical $C^\ast$-correspondence}
  \begin{equation*}
    (\module{M},\CDN,\B,\Lip_\B,\A,\Lip_\A)\text,
  \end{equation*}
  for $K\geq 1$, is given by:
  \begin{itemize}
  \item two {\qcms s} $(\A,\Lip_\A)$ and $(\B,\Lip_\B)$,
  \item a right Hilbert C*-module  $\module{M}$ over $\B$, which also carries a left action of $\A$ (by adjoinable operators), i.e. $\module{M}$ is an $\A$-$\B$ C*-correspondence,
  \item a norm $\CDN$ defined on a dense $\A$-submodule $\dom{\CDN}$ of $\module{M}$
  \end{itemize}
  such that, denoting the $\B$-inner product on $\module{M}$ by $\inner{\cdot}{\cdot}{\module{M}}$ and the associated norm by $\norm{\cdot}{\module{M}}$, the following properties hold:
  \begin{enumerate}
  \item $\forall \omega \in \dom{\CDN} \quad \norm{\omega}{\module{M}} \leq \CDN(\omega)$,
  \item $\{ \omega\in\dom{\CDN} : \CDN(\omega) \leq 1 \}$ is compact in $\norm{\cdot}{\module{M}}$,
  \item for all $\omega,\eta\in\module{M}$, denoting $b = \inner{\omega}{\eta}{\module{M}} \in \B$, we have:
    \begin{equation*}
      \max\left\{ \Lip_\B\left(\frac{b+b^\ast}{2}\right), \Lip_\B\left(\frac{b-b^\ast}{2i}\right)\right\} \leq 2 \CDN(\omega)\CDN(\eta)\text{,}
    \end{equation*}
    (we refer to this inequality as the inner Leibniz inequality),
  \item for all $a\in\dom{\Lip_\A}$ and $\omega\in \dom{\CDN}$ we have:
    \begin{equation*}
      \CDN(a\omega) \leq K \left(\norm{a}{\A} + \Lip_\A(a)\right)\CDN(\omega)\text{.}
    \end{equation*}
    (we refer to this inequality as the modular Leibniz inequality).
  \end{enumerate}
  The norm $\CDN$ is called a \emph{D-norm}.
  
  When $(\module{M},\CDN,\B,\Lip_\B,\C,0)$ is a metrical $C^\ast$-correspondence, $(\module{M},\CDN,\B,\Lip_\B)$ is called a \emph{\gQVB}. 
\end{definition}

\begin{remark}
  We called metrical C*-correspondence by the different name metrical quantum vector bundles in \cite{Latremoliere18g}; however we prefer to use a terminology more in line with common usage in C*-algebra theory.
\end{remark}

Examples of {\gQVB s} include Hermitian bundles over Riemannian manifolds, with the D-norm constructed from any metric connection \cite{Latremoliere16c}, and Heisenberg modules over quantum tori \cite{Latremoliere17a}, where the D-norms also arise naturally from a noncommutative analogue of a metric connection. For our purpose, the main example of metrical C*-correspondence is given by metric spectral triples \cite{Latremoliere18g}, using Expression (\ref{mcc-def}). We thus will apply this construction to the spectral triples on fuzzy and quantum tori we introduce in this paper.

\bigskip

Thus, the question is: how do we define the convergence of metrical C*-correspondences? The idea, actually, is rather natural. For each $j \in \{1,2\}$, let:
\begin{equation*}
  \left( \module{M}_j, \CDN_j, \B_j, \Lip_j, \A_j, \Lip'_j \right)
\end{equation*}
be a metrical $C^\ast$-correspondence. Our first task, of course, is to determine how close $(\A_1,\Lip'_1)$ and $(\A_2,\Lip'_2)$ are, and how close $(\B_1,\Lip_1)$ and $(\B_2,\Lip_2)$ are. This means that we will need two tunnels, $\tau_\A$ and $\tau_\B$ (see Definition (\ref{tunnel-def}) above).  To deal with the modules $\module{M}_1$ and $\module{M}_2$, we then mimic our idea from the propinquity. Remarkably, once the proper notion of tunnel between metrical C*-correspondence is introduced, the tools introduced for the propinquity can be applied directly. Formally, it is convenient to proceed in two steps: first, we work with the ``modular quantum vector bundle'' part.

\begin{definition}
  Let $\A$,$\B$ be two unital C*-algebras. A \emph{module morphism} $(\Pi,\pi)$ from a right $\A$-module $\module{M}$ to a right $\B$-module $\module{N}$ is given by the following data:
  \begin{itemize}
  \item a unital *-morphism $\pi : \A\rightarrow\B$,
  \item a linear map $\Pi : \module{M}\rightarrow\module{N}$ such that
    \begin{equation*}
      \forall a \in \A, \quad \forall \omega \in \module{M}, \quad \Pi(\omega a) = \Pi(\omega)\pi(a) \text{,}
    \end{equation*}
  \end{itemize}
  with the analogue definition for a module morphism between left modules.
  
  The module morphism $(\Pi,\pi)$ is said to be \emph{surjective} when both $\Pi$ and $\pi$ are surjective maps, and it is said to be an \emph{isomorphism} when both $\Pi$ and $\pi$ are bijections.
  
  A Hilbert module morphism $(\Pi,\pi)$ from a right $\A$ C*-Hilbert module $\module{M}$ to a right $\B$ C*-Hilbert module $\module{N}$ is a module morphism when
  \begin{equation*}
    \forall \omega,\xi \in \module{M}, \quad \inner{\Pi(\omega)}{\Pi(\xi)}{\module{N}} = \pi\left(\inner{\omega}{\xi}{\module{M}}\right) \text{.}
  \end{equation*}
\end{definition}

We refer to \cite{Latremoliere16c} for examples of such a structure and for its motivations.

\bigskip

We defined the modular propinquity in \cite{Latremoliere16c,Latremoliere18d} by extending the notion of a tunnel between {\qcms s} to the notion of a tunnel between {\gQVB s}.

\begin{definition}[{\cite{Latremoliere18d}}]
  Let $(\module{M}_j,\CDN^j,\A_j,\Lip_j)$ be a {\gQVB}, for $j\in\{1,2\}$. A \emph{modular tunnel} $(\mathds{D},(\Pi_1,\pi_1),(\Pi_2,\pi_2))$ is given by
  \begin{enumerate}
  \item a {\gQVB} $\mathds{D} = (\module{P}, \CDN', \D, \Lip_\D)$,
  \item a tunnel $(\D,\Lip_\D,\pi_1,\pi_2)$ from $(\A_1,\Lip_1)$ to $(\A_2,\Lip_2)$,
  \item for each $j\in\{1,2\}$, $(\Pi_j,\pi_j)$ is a surjective Hilbert module morphism from $\module{P}$ (over $\D$) to $\module{M}_j$ such that
    \begin{equation*}
      \forall \omega \in \module{M}_j, \quad \CDN^j(\omega) = \inf\left\{ \CDN'(\eta) : \Pi_j(\eta) = \omega \right\} \text{.}
    \end{equation*}
  \end{enumerate}
\end{definition}

The extent of a modular tunnel is computed just like the extent of the underlying tunnel between {\qcms s}, as we see in the following definition.

\begin{definition}[{\cite{Latremoliere18d}}]
  The \emph{extent}, $\tunnelextent{\tau}$, of a modular tunnel
  \begin{equation*}
    \tau = (\mathds{D},(\Pi_1,\pi_1),(\Pi_2,\pi_2))\text,
  \end{equation*}
  with $\mathds{D} = (\module{P},\CDN',\D,\Lip_\D)$, is the extent of the tunnel $(\D,\Lip_\D,\pi_1,\pi_2)$.
\end{definition}

The modular propinquity is then defined along the same lines as the propinquity.

\begin{definition}[{\cite{Latremoliere16c,Latremoliere18d}}]
  The \emph{modular propinquity}, $\dmodpropinquity{}(\mathds{A},\mathds{B})$, between two {\gQVB s} $\mathds{A}$ and $\mathds{B}$ is the nonnegative number given by
  \begin{equation*}
    \dmodpropinquity{}(\mathds{A},\mathds{B}) = \inf\left\{\tunnelextent{\tau} : \text{ $\tau$ is a modular tunnel from $\mathds{A}$ to $\mathds{B}$} \right\}\text{.}
  \end{equation*}
\end{definition}

We now record a few fundamental properties of the modular propinquity.

\begin{theorem}[{\cite{Latremoliere16c,Latremoliere18d}}]
  The modular propinquity is a complete metric, up to full isometric isomorphism, where two {\gQVB s} $(\module{M},\CDN,\A,\Lip_\A)$ and $(\module{N},\CDN',\B,\Lip_\B)$ are fully isometrically isomorphic if and only if there exists a Hilbert module isomorphism $(\Pi,\pi)$ from $\module{M}$ to $\module{N}$ such that
  \begin{itemize}
  \item $\Lip_\B\circ\pi = \Lip_\A$,
  \item $\CDN'\circ\Pi = \CDN$,
  \item $\pi$ is a full quantum isometry from $(\A,\Lip_\A)$ to $(\B,\Lip_\B)$.
  \end{itemize}
\end{theorem}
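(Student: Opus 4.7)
The plan is to verify each metric axiom in turn, adapting the strategy that already works for the propinquity on {\qcms s} to the modular setting, using the extra compactness and Leibniz axioms of {\gQVB s} to handle the Hilbert module layer. Symmetry is immediate by swapping the two arms of any modular tunnel, and non-negativity is built into the definition of extent.

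For the triangle inequality, I would compose modular tunnels $\tau_{12}$ from $\mathds{A}_1$ to $\mathds{A}_2$ and $\tau_{23}$ from $\mathds{A}_2$ to $\mathds{A}_3$ by taking fibered products over $\mathds{A}_2$. The base is the pullback $\{(d,d'): \pi^{12}_2(d) = \pi^{23}_1(d')\}$ with the maximum L-seminorm (already known to furnish a tunnel of {\qcms s} whose extent is bounded by the sum of the two extents), and the module is the analogous fibered product $\{(\omega,\eta): \Pi^{12}_2(\omega) = \Pi^{23}_1(\eta)\}$ equipped with the maximum of the two D-norms. One then checks the axioms of a {\gQVB} (closed-ball compactness, inner Leibniz, modular Leibniz) and that the extent of the composite is at most $\tunnelextent{\tau_{12}} + \tunnelextent{\tau_{23}}$.

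For the coincidence axiom, suppose $\dmodpropinquity{}(\mathds{A},\mathds{B}) = 0$ and pick a sequence of modular tunnels $(\tau_n)_{n\in\N}$ with $\tunnelextent{\tau_n} \to 0$. Since D-norm closed balls are compact, a diagonal extraction applied to a countable dense subset of each D-norm unit ball produces, along a subsequence, consistent limits in both directions. These limits assemble into a candidate Hilbert module isomorphism $(\Pi,\pi)$. The coincidence property of the propinquity on {\qcms s} delivers that $\pi$ is a full quantum isometry; passage to the limit in the inner Leibniz inequality ensures that $\Pi$ preserves the inner product; and the modular Leibniz inequality combined with the compactness axiom yields equality of D-norms rather than mere domination, which also forces left-action compatibility.

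For completeness, given a Cauchy sequence $(\mathds{A}_n)_{n\in\N}$ in $\dmodpropinquity{}$, I would choose modular tunnels between successive terms with rapidly summable extents, form the space of coherent sequences $(\omega_n)$ that project through the surjective Hilbert module morphisms, and complete it under the supremum of the pulled-back D-norms. Taking an appropriate quotient produces the limit {\gQVB}, whose base {\qcms} is the limit already supplied by completeness of the propinquity. The main obstacle throughout is the coincidence step: extracting an exact rather than approximate isomorphism requires carefully balancing the compactness of D-norm balls against the uniform modular Leibniz bound, so that the inner product, the left action, and the D-norm all pass exactly to the limit, rather than only up to infinitesimal error.
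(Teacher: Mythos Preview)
The paper does not prove this theorem at all: it is stated with citation to \cite{Latremoliere16c,Latremoliere18d} and no proof is given in the present paper. There is therefore nothing in this paper to compare your attempt against; the result is imported wholesale from the author's earlier work on the modular propinquity.

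That said, your sketch is broadly in the spirit of how such results are proven in the cited references, but a few points deserve caution. For the triangle inequality, the direct fibered-product composition of tunnels is delicate: in the original development of the (dual) propinquity the triangle inequality was first obtained via \emph{journeys} (finite sequences of tunnels), and only later was it shown that tunnels themselves compose with controlled extent. You would need to verify that the maximum of two D-norms on the fibered product of Hilbert modules again satisfies all the axioms of a {\gQVB}, in particular that the quotient D-norms on each factor are recovered exactly --- this is not automatic and is where the lower-semicontinuity/closedness of the D-norm unit ball is used. For the coincidence axiom, the actual argument in \cite{Latremoliere16c} proceeds through \emph{target sets} (multivalued maps induced by tunnels) rather than a direct diagonal extraction on dense subsets; the target-set machinery is what makes the limit maps linear, module-morphic, and inner-product-preserving in a clean way. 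Your diagonal-extraction idea could be made to work, but the bookkeeping to get an exact Hilbert module isomorphism (rather than an approximate one) is substantial, and the cited papers organize it differently.
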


We extend a tunnel between {\gQVB s} to a tunnel between metrical C*-correspondences as follows. The idea behind the following definition is just to bring the ideas described up to now together in a single package. We thus obtain the following construction, and basic properties, for a metrical propinquity.

\begin{definition}[{\cite{Latremoliere18d}}]\label{metrical-tunnel-def}
  Let $\mathds{A}^j = (\module{M}_j,\CDN_{\module{M}_j},\B_j,\Lip_{\B_j},\A_j,\Lip_{\A_j})$ be a metrical C*-correspondence, for each $j\in\{1,2\}$.

  A \emph{$K$-metrical tunnel} $(\tau,\tau')$ from $\mathds{A}^1$ to $\mathds{A}^2$, for some $K\geq 1$, is given by the following data:
   \begin{enumerate}
   \item a modular tunnel $\tau = (\mathds{D},(\theta_1,\Theta_1),(\theta_2,\Theta_2))$ from $(\module{M}_1,\CDN_{\module{M}_1},\B_1,\Lip_{\B_1})$ to $(\module{M}_2,\CDN_{\module{M}_2},\B_2,\Lip_{\B_2})$, where we write $\mathds{D} = (\module{P},\CDN,\D,\Lip_\D)$,
     \item a tunnel $\tau' = (\D',\Lip',\pi^1,\pi^2)$ from $(\A_1,\Lip_{\A_1})$ to $(\A_2,\Lip_{\A_2})$,
     \item there exists a *-morphism from $\D$' to the C*-algebra of adjoinable,
       $\D$-linear operators on $\module{P}$; we will write this *-representation as a left action on $\module{P}$,
     \item $\forall \omega \in \module{P}, \forall d \in \D',\quad \CDN(d\omega)\leq K (\Lip'(d)+\norm{d}{\D'})\CDN(\omega)$,
     \item for all $j\in\{1,2\}$, the pair $(\pi^j, \Theta^j)$ is a left module morphism from the left $\D'$-module $\module{P}$ to the left $\A_j$-module $\module{M}_j$.
   \end{enumerate}
\end{definition}

\begin{definition}[{\cite{Latremoliere18d}}]
  The \emph{extent}, $\tunnelextent{\tau,\tau'}$, of a metrical tunnel $(\tau,\tau')$ is given by
    \begin{equation*}
      \tunnelextent{\tau,\tau'} = \max\left\{\tunnelextent{\tau},\tunnelextent{\tau'}\right\} \text{.}
    \end{equation*}
\end{definition}

\begin{definition}[{\cite{Latremoliere18d}}]\label{metrical-prop-def}
  Fix $K\geq 1$. The \emph{metrical $K$-propinquity}, $\dmetpropinquity{K}(\mathds{A},\mathds{B})$, between two {\gQVB s} $\mathds{A}$ and $\mathds{B}$ is the nonnegative number given by
  \begin{equation*}
    \dmetpropinquity{K}(\mathds{A},\mathds{B}) = \inf\left\{\tunnelextent{\tau} : \text{ $\tau$ is a metrical $K$-tunnel from $\mathds{A}$ to $\mathds{B}$} \right\}\text{.}
  \end{equation*}
\end{definition}

\begin{theorem}[{\cite{Latremoliere18d}}]
  Fix $K \geq 1$. The metrical $K$-propinquity is a complete metric, up to full isometry, on the class of $K$-metrical C*-correspondence, where two metrical C*-correspondences
\begin{equation*}
  \left( \module{M}_j, \CDN_j, \B_j, \Lip_j, \A_j, \Lip'_j \right)
\end{equation*}
  (where $j\in\{1,2\}$) are fully isometric when:
  \begin{itemize}
  \item the underlying {\gQVB s} $(\module{M}_1,\CDN_1,\B_1,\Lip_1)$ and $(\module{M}_2,\allowbreak \CDN_2,\B_2,\Lip_2)$ are isometrically isomorphic,
  \item the {\qcms s} $(\A_1,\Lip'_1)$ and $(\A_2,\Lip'_2)$ are fully quantum isometric,
  \item we can choose the above quantum isometries so that the actions of $\A_1$ on $\module{M}_1$ and $\A_2$ on $\module{M}_2$ are intertwined by these morphisms.
  \end{itemize}
\end{theorem}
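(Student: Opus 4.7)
The plan is to verify the axioms of a complete metric up to full isometry—symmetry, the triangle inequality, the coincidence axiom, and completeness—by building layer upon layer on the analogous results already established for the dual propinquity $\dpropinquity{}$ on {\qcms s} and the modular propinquity $\dmodpropinquity{}$ on {\gQVB s}. Symmetry and nonnegativity are immediate from Definition \ref{metrical-tunnel-def} and Definition \ref{metrical-prop-def}, since swapping the indices of a metrical $K$-tunnel produces another metrical $K$-tunnel of the same extent. For the triangle inequality, given metrical $K$-tunnels $(\tau_{12},\tau'_{12})$ and $(\tau_{23},\tau'_{23})$ between $\mathds{A}^1$-$\mathds{A}^2$ and $\mathds{A}^2$-$\mathds{A}^3$ respectively, I would compose $\tau'_{12}$ and $\tau'_{23}$ via the fiber-product construction used for $\dpropinquity{}$ to produce $\tau'_{13}$, and similarly compose the modular tunnels $\tau_{12}$ and $\tau_{23}$ to produce $\tau_{13}$; standard estimates give $\tunnelextent{\tau_{13},\tau'_{13}}\leq \tunnelextent{\tau_{12},\tau'_{12}}+\tunnelextent{\tau_{23},\tau'_{23}}$. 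The only additional verification is that the composite left action of the coefficient algebra of $\tau'_{13}$ on the composite module satisfies the modular Leibniz inequality with constant $K$; this follows because each constituent tunnel already carries this bound and quantum isometries preserve L-seminorms.

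The coincidence axiom asserts that $\dmetpropinquity{K}(\mathds{A}^1,\mathds{A}^2)=0$ implies full isometry. I would take a sequence $(\tau_n,\tau'_n)$ of metrical $K$-tunnels whose extents tend to zero and exploit, on the one hand, the compactness of the unit balls $\{\omega:\CDN_j(\omega)\leq 1\}$ required by axiom (2) of Definition \ref{mcc-def} and, on the other, the compactness of $\Lip$-balls modulo scalars in state space that drives the coincidence axiom for $\dmodpropinquity{}$ and $\dpropinquity{}$. Passing to a subsequence via a standard diagonal argument, I would extract compatible limit morphisms giving simultaneously a full quantum isometry between the coefficient algebras, a full isometric Hilbert module isomorphism of the underlying {\gQVB s}, and a full quantum isometry between the $\A_j$. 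The new ingredient is that the modular Leibniz inequality from item (4) of Definition \ref{metrical-tunnel-def} forces the limits to intertwine the left actions of $\A_1$ and $\A_2$, since along the sequence the bound $\CDN(d\omega)\leq K(\Lip'(d)+\norm{d}{\D'})\CDN(\omega)$ prevents the actions from drifting apart.

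Completeness is the main obstacle, and the argument follows the blueprint of the completeness proofs for $\dpropinquity{}$ and $\dmodpropinquity{}$, but with extra bookkeeping to track the left action along the whole chain. Given a Cauchy sequence of $K$-metrical C*-correspondences, I would first pass to a subsequence with summable $\dmetpropinquity{K}$-distances and then select a coherent chain of metrical $K$-tunnels $(\tau_n,\tau'_n)$ between consecutive terms. The target-set machinery associates, to any element that can be threaded consistently through the tunnels, a limit element, and the various limit structures (limit $\A_\infty$, limit $\B_\infty$, limit Hilbert module $\module{M}_\infty$, limit L-seminorms, and limit D-norm $\CDN_\infty$) are assembled from infima of $\liminf$s along threads. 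The delicate point is checking that $(\module{M}_\infty,\CDN_\infty,\B_\infty,\Lip_\infty,\A_\infty,\Lip'_\infty)$ is itself a $K$-metrical C*-correspondence: the compactness of the unit $\CDN_\infty$-ball comes from the uniformity in $K$ and a diagonal extraction, the inner Leibniz inequality survives because L-seminorm balls are closed, and the modular Leibniz inequality is preserved because $K$ is fixed throughout the sequence. Finally, the tunnels $(\tau_n,\tau'_n)$ extend to metrical $K$-tunnels from each term of the sequence to the limit, with extent controlled by the tail of the summable series, yielding convergence in $\dmetpropinquity{K}$.
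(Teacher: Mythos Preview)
The paper does not contain a proof of this theorem: it is stated as a background result cited from \cite{Latremoliere18d}, with no argument given here. Your proposal sketches a plausible strategy---verifying symmetry, the triangle inequality via tunnel composition, the coincidence axiom via compactness and diagonal extraction, and completeness via the target-set machinery---and this is indeed the general architecture used in the cited reference. However, since the present paper provides nothing to compare against, there is no meaningful way to assess whether your approach matches ``the paper's own proof''; for that you would need to consult \cite{Latremoliere18d} directly.
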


We should address the constant $K$ which is a part of the definition of the metrical propinquity. In general, the properties of our various propinquity metrics depend on the choice of some uniform Leibniz properties, common to all the spaces under considerations. There is some useful flexibility as to what this Leibniz property needs to be. For instance, in this work, we will see that $K = 4$, or anything larger, is an appropriate choice to accommodate our examples. The value of $K$ is not important, as long as we can find some value which works for the entire sequence we wish to prove converge in the sense of the propinquity. Moreover, there is an obvious relation between metrical propinquities for different values of $K$ --- the propinquity for a smaller $K$ dominates all the ones with a larger choice of $K$, making the picture easy to understand.

\medskip

We now can describe the last step in the definition of the convergence of spectral triples, i.e., the convergence of the associated quantum dynamics, which occur on different metrical C*-correspondences. Let us assume that we are given two metric spectral triples $(\A_1,\Hilbert_1,\Dirac_1)$ and $(\A_2,\Hilbert_2,\Dirac_2)$. In Equation (\ref{mcc-eq}), we have defined the metrical C*-correspondences $\mcc{\A_1}{\Hilbert_1}{\Dirac_1}$ and $\mcc{\A_2}{\Hilbert_2}{\Dirac_2}$, associated respectively with the metric spectral triples $(\A_1,\Hilbert_1,\Dirac_1)$ and $(\A_2,\Hilbert_2,\Dirac_2)$. Let $(\tau,\tau')$ be a metrical tunnel between $\mcc{\A_1}{\Hilbert_1}{\Dirac_1}$ and $\mcc{\A_2}{\Hilbert_2}{\Dirac_2}$.

    In particular, let us write $\tau = (\mathds{P},(\Phi_1,\phi_1),(\Phi_2,\phi_2))$ --- where $\mathds{P}$ is a {\gQVB} --- and note that by Definition \ref{metrical-tunnel-def} for metrical tunnels, $\tau$ is a modular tunnel  between $(\Hilbert_1,\CDN_1,\C,0)$ and $(\Hilbert_2,\CDN_2,\C,0)$, where $\CDN_1$ and $\CDN_2$ are, respectively, the graph norms of $\Dirac_1$ and $\Dirac_2$. Furthermore, let us write $\mathds{P} = (\mathscr{P},\CDN_\D,\D,\Lip_\D)$.

    Since $\Dirac_1$ and $\Dirac_2$ are self-adjoint, we can define two strongly continuous actions of $\R$ by unitaries on $\Hilbert_1$ and $\Hilbert_2$, by letting
    \begin{equation*}
      \forall j \in \{1,2\}, \quad \forall t \in \R, \quad T_j^t = \exp(i t \Dirac_j) \text{.}
    \end{equation*}

    The \emph{spectral propinquity} is defined by extending the metrical propinquity of Definition \ref{metrical-prop-def} to include the actions $T_1$ and $T_2$ of $\R$.
    
    With this in mind, let $\varepsilon > 0$ and assume that $\tunnelextent{\tau} \leq \varepsilon$. Let us call a pair $(\varsigma_1,\varsigma_2)$ of maps from $\R$ to $\R$ an $\varepsilon$-\emph{iso-iso}, for some $\varepsilon > 0$, whenever
    \begin{multline*}
      \forall \{j,k\} = \{1,2\}, \quad \forall x,y,z \in \left[-\frac{1}{\varepsilon},\frac{1}{\varepsilon}\right], \\ \big| |\left(\varsigma_j(x)+\varsigma_j(y)\right)-z| - |(x+y) - \varsigma_k(z)| \big| \leq \varepsilon \text{,}
    \end{multline*}
    and $\varsigma_1(0) = \varsigma_2(0) = 0$.
    
    As discussed in \cite{Latremoliere18b}, such maps can be used to define a distance on the class of proper monoids, but for our purpose, as we only work with the proper group $\R$, the definition simplifies somewhat. In fact, we only recall the definition so that we may properly define the spectral propinquity below: for our purposes, the only iso-iso map we will work with is simply the identity of $\R$.

    Thus, suppose that we are given an $\varepsilon$-iso-iso $(\varsigma_1,\varsigma_2)$ from $\R$ to $\R$, as above. We call $(\tau,\tau',\varsigma_1,\varsigma_2)$ an $\varepsilon$-covariant metrical tunnel.

The \emph{$\varepsilon$-covariant reach} of $(\tau,\varsigma_1,\varsigma_2)$ is then defined as follows:
\begin{multline*}
  \max_{\{j,s\} = \{1,2\}} \sup_{\substack{\xi \in \Hilbert_j\\ \CDN_j(\xi)\leq 1}}\inf_{\substack{\xi'\in\Hilbert_s \\ \CDN_s(\xi')\leq 1}}\sup_{|t| \leq \frac{1}{\varepsilon}} \\
  \sup_{\substack{\omega\in\module{P}\\\CDN_\D(\omega)\leq 1}} \left|\inner{T_j^t\xi}{\Pi_j(\omega)}{\Hilbert_j} - \inner{T_s^{\varsigma_s(t)}\xi'}{\Pi_s(\omega)}{\Hilbert_s}\right| \text{.}
\end{multline*}

We define the \emph{$\varepsilon$-magnitude} $\tunnelmagnitude{\tau,\tau',\varsigma_1,\varsigma_2}{\varepsilon}$ of an $\varepsilon$-covariant tunnel $(\tau,\tau',\varsigma_1,\varsigma_2)$ to be the maximum of the extent of $\tau$, the extent of $\tau'$, and the $\varepsilon$-covariant reach of $(\tau,\varsigma_1,\varsigma_2)$. 

The \emph{spectral propinquity}
\begin{equation*}
  \spectralpropinquity{}((\A_1,\Hilbert_1,\Dirac_1),(\A_2,\Hilbert_2,D_2))
\end{equation*}
is the nonnegative number
\begin{multline*}
  \min\bigg\{\frac{\sqrt{2}}{2},\inf\{\varepsilon>0 : \exists \text{ $\varepsilon$-covariant metrical tunnel $\tau$, }
  \\ \text{$\tau$ from $\mcc{\A_1}{\Hilbert_1}{\Dirac_1}$ to $\mcc{\A_2}{\Hilbert_2}{\Dirac_2}$ such that} \\ \tunnelmagnitude{\tau}{\varepsilon} \leq \varepsilon \} \bigg\} \text{.}
\end{multline*}

We refer to \cite{Latremoliere18b,Latremoliere18d} for a discussion of the fundamental properties of the covariant propinquity, including a discussion of sufficient conditions for completeness (on certain classes, including when the group is Abelian, such as in the present case). We record here the following property of the spectral propinquity.

\begin{theorem}[{\cite{Latremoliere18g}}]
  The spectral propinquity $\spectralpropinquity{}$ is a metric on the class of metric spectral triples, up to the following coincidence property: for any metric spectral triples $(\A_1,\Hilbert_1,\Dirac_1)$ and $(\A_2,\Hilbert_2,\Dirac_2)$,
  \begin{equation*}
    \spectralpropinquity{}((\A_1,\Hilbert_1,\Dirac_1),(\A_2,\Hilbert_2,\Dirac_2)) = 0
  \end{equation*}
  if and only if there exists a unitary map $V : \Hilbert_1\rightarrow \Hilbert_2$ such that $V\dom{\Dirac_1} = \dom{\Dirac_2}$,
  \begin{equation*}
    V \Dirac_1 V^\ast = \Dirac_2
  \end{equation*}
  and
  \begin{equation*}
    \mathrm{Ad}_V = V (\cdot) V^\ast \text{ is a *-isomorphism from $\A_1$ onto $\A_2$},
  \end{equation*}
  where (as is customary) we identify $\A_1$ and $\A_2$ with their images by their representations on $\Hilbert_1$ and $\Hilbert_2$, respectively. In particular, the *-isomorphism from $\A_1$ onto $\A_2$ implemented by the adjoint action of $V$ is a full quantum isometry from $(\A_1,\opnorm{[\Dirac_1,\cdot]}{}{\Hilbert_1})$ onto $(\A_2,\opnorm{[\Dirac_2,\cdot]}{}{\Hilbert_2})$.
\end{theorem}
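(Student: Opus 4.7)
\emph{Proof plan.} The three verifications are symmetry, triangle inequality, and the coincidence property. Symmetry is a direct inspection: reversing an $\varepsilon$-covariant metrical tunnel $(\tau,\tau',\varsigma_1,\varsigma_2)$ by swapping endpoints in $\tau$ and $\tau'$ and swapping $\varsigma_1,\varsigma_2$ preserves the extent (the Hausdorff distance is symmetric), preserves the $\varepsilon$-iso-iso condition (which is explicitly symmetric in the indices $\{j,k\}$), and preserves the $\varepsilon$-covariant reach (the $\sup/\inf$ are over both index choices). The triangle inequality follows by composition: given $\varepsilon_1$- and $\varepsilon_2$-covariant metrical tunnels chained at a middle {\gQVB}, I would compose the modular tunnels via the usual pullback construction underlying the modular propinquity, compose the tunnels between the coefficient {\qcms s}, and compose the iso-iso maps by function composition. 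Each ingredient enjoys its own triangle estimate for the Hausdorff extent, and the covariant reach composes with an additive bound because the inner products telescope through a vector in the middle module after one applies the Hilbert module morphisms.

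The coincidence property is the substantive content. Assume $\spectralpropinquity{}((\A_1,\Hilbert_1,\Dirac_1),(\A_2,\Hilbert_2,\Dirac_2))=0$ and fix a sequence of $\varepsilon_n$-covariant metrical tunnels $(\tau_n,\tau_n',\varsigma_1^n,\varsigma_2^n)$ with magnitude at most $\varepsilon_n\to 0$. Dropping the dynamical data, the metrical tunnels have extents going to zero, and therefore the metrical propinquity between $\mcc{\A_1}{\Hilbert_1}{\Dirac_1}$ and $\mcc{\A_2}{\Hilbert_2}{\Dirac_2}$ vanishes. By the completeness/coincidence theorem for the metrical propinquity recalled earlier in the excerpt, there is a Hilbert module isomorphism $(V,\mathrm{Ad}_V)$ where $V:\Hilbert_1\rightarrow \Hilbert_2$ is unitary, $\mathrm{Ad}_V:\A_1\rightarrow\A_2$ is a full quantum isometry for the Lip-seminorms $\Lip_{\Dirac_1}$ and $\Lip_{\Dirac_2}$, and the D-norms (i.e. the graph norms of $\Dirac_1$ and $\Dirac_2$) satisfy $\CDN_2\circ V=\CDN_1$. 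The last equality immediately yields $V\dom{\Dirac_1}=\dom{\Dirac_2}$.

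It remains to promote this metrical identification to $V\Dirac_1 V^\ast=\Dirac_2$. The plan is to leverage the $\varepsilon_n$-covariant reach bounds. The only iso-iso the definition needs is the identity on $\R$, so I may take $\varsigma_1^n=\varsigma_2^n=\mathrm{id}$. For fixed $|t|\leq 1/\varepsilon_n$ and vectors $\xi,\eta$ in the $\CDN_1$-unit ball, the covariant reach, after using $(V,\mathrm{Ad}_V)$ to compare the modular tunnels $\tau_n$ to a trivial ``identity'' tunnel between $\Hilbert_1$ and $V^\ast\Hilbert_2$, forces
\begin{equation*}
  \bigl|\inner{\exp(it\Dirac_1)\xi}{\eta}{\Hilbert_1}-\inner{\exp(it\Dirac_2)V\xi}{V\eta}{\Hilbert_2}\bigr|\longrightarrow 0
\end{equation*}
as $n\to\infty$, uniformly in $t$ on any compact interval of $\R$. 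Since $\dom{\Dirac_1}$ is dense in $\Hilbert_1$ and the $\CDN_1$-unit ball linearly spans it, this gives $V\exp(it\Dirac_1)V^\ast=\exp(it\Dirac_2)$ for every $t\in\R$. Differentiating at $t=0$ via Stone's theorem finishes the proof.

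The main obstacle is the last step: one must actually extract, from a sequence of modular tunnels whose middle {\gQVB s} vary with $n$, a single unitary $V$ that simultaneously intertwines the \emph{dynamics} of all tunnels in the limit. The technical device for this is to note that, once the metrical propinquity gives a distinguished $V$, one may compose the tunnels $\tau_n$ with $V$ to compare everything on a common Hilbert space $\Hilbert_2$; then the covariant reach bound becomes an honest numerical statement between $\exp(it\Dirac_2)$ and $V\exp(it\Dirac_1)V^\ast$ which is controlled on a norm-dense, graph-norm-compact ball. This convergence on a dense set, paired with uniform unitarity of the propagators, upgrades to pointwise-in-$t$ equality of the one-parameter groups, which is exactly what Stone's theorem needs.
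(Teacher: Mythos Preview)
First, note that the paper does not supply its own proof of this statement: it is quoted from \cite{Latremoliere18g} and recorded without argument, so there is no in-paper proof to compare against.

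On your proposal: the overall architecture is right, and your identification of the main obstacle is accurate, but the ``technical device'' you offer does not close it. You obtain $V$ from the metrical-propinquity coincidence theorem as a black box and then propose to compose the covariant tunnels $\tau_n$ with $V$. The problem is that the condition $\CDN_2\circ V=\CDN_1$ only gives $\norm{\Dirac_2 V\xi}{\Hilbert_2}=\norm{\Dirac_1\xi}{\Hilbert_1}$, which does \emph{not} force $V\Dirac_1 V^\ast=\Dirac_2$: on $\ell^2(\Z)$ with $\Dirac_1=\Dirac_2$ equal to multiplication by $n$, the flip $W\delta_n=\delta_{-n}$ preserves the graph norm yet $W\Dirac_1 W^\ast=-\Dirac_2$. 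So there may be several full metrical isometries, only some of which intertwine the one-parameter groups, and the metrical-propinquity theorem alone does not single out the right one. Composing the $\tau_n$ with a ``wrong'' $V$ yields transported tunnels from $(\Hilbert_2,\CDN_2)$ to itself whose extent still tends to zero, yet whose target-set correspondence need not converge to the identity; the covariant reach bound then does not reduce to comparing $\exp(it\Dirac_2)$ and $V\exp(it\Dirac_1)V^\ast$ at the \emph{same} vector $\xi$, because the $\xi'$ realizing the infimum may drift away from $\xi$.

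The argument in \cite{Latremoliere18g} builds $V$ directly from the \emph{covariant} tunnels: the target sets of vectors through the $\tau_n$ shrink (via compactness of the $\CDN$-unit balls and a diagonal extraction) and their limits define $V$; because the same tunnels carry the covariant-reach bound, the $\xi'$ appearing in that bound is automatically close to $V\xi$, and your displayed inner-product identity then follows in the limit. In short, $V$ and the dynamical intertwining must be extracted \emph{simultaneously} from the covariant data, not sequentially via the metrical propinquity first.
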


An interesting example of convergence of spectral triples for the spectral propinquity is given by approximations of spectral triples on certain fractals by spectral triples on their natural approximating, finite, graphs \cite{Latremoliere20a}.

In this paper, \emph{we will prove that sequences of spectral triples on fuzzy tori converge, under very natural assumptions, to spectral triples on quantum tori, in the sense of the spectral propinquity.} Physically, this implies that the state spaces of the physical models described by fuzzy tori converge, as metric spaces, to the state spaces of the quantum tori, for the metric induced by the spectral triples, and that the quantum dynamics associated with these spectral triples converge as well. We refer to \cite{Latremoliere13,Latremoliere13b,Latremoliere16c,Latremoliere18b,Latremoliere18g} for a detailed discussion of this metric, but an informal understanding can be glanced with the following construction from these papers; we simply state that, informally, if two spectral triples are close, then, in particular, there exists a compact-set valued map from the Hilbert space of one of these spectral triples to the other, which behaves as a set-valued version of a unitary, as well as almost intertwining the actions by unitaries induced by the Dirac operators of the spectral triples.

\bigskip

The structure of the paper follows the strategy described above. We first set our notation by describing the formal background on quantum and fuzzy tori. We then introduce our spectral triples --- which include the proof that indeed, our proposed Dirac operators are self-adjoint with compact resolvent. We then establish core technical results about the metric properties of these spectral triples, including an analogue of the mean value theorem. This allows us to prove that our spectral triples are indeed, metric spectral triples. We can then prove the convergence of the quantum metrics induced by these spectral triples on fuzzy tori, for the propinquity. We then prove that the underlying Hilbert spaces of our spectral triples, seen as modules over fuzzy and quantum tori, converge as metrical C*-correspondence for the metrical propinquity. We conclude by proving the convergence of the quantum dynamics associated with our spectral triples.

\section{The Geometry of Quantum and Fuzzy Tori}

We begin with a description of fuzzy and quantum tori, and then introduce the spectral triples on fuzzy tori which we will consider, as well as the spectral triples on quantum tori which will be their limit for the spectral propinquity.

\subsection{Background: the quantum and fuzzy tori}

A torus is the Gelfand spectrum of the convolution C*-algebra of a finite product of infinite cyclic groups. A \emph{quantum torus} is a deformation of a classical torus, defined as a \emph{twisted} convolution algebra of a finite product of \emph{infinite} cyclic groups, or equivalently, as a deformation-quantization of the torus for certain Poisson structures \cite{Rieffel81,Rieffel90}. We adopt the former description. A \emph{fuzzy torus} is then a finite dimensional version of a quantum torus, namely, a twisted convolution C*-algebra for a finite product of \emph{finite} cyclic groups. Since our work in this paper focuses on spectral triples, we will give a presentation of quantum and fuzzy tori which stresses a particular *-representation for these C*-algebras.

\begin{notation}
  Let $\N_\ast = \N\setminus\{0,1\}$. Let $\Nbar_\ast = \N_\ast \cup\{ \infty \}$ be the one point compactification of $\N_\ast$.

  Let $d \in \N_\ast$. For all $k = (k(1),\ldots,k(d)) \in \Nbar_\ast^d$, we set:
  \begin{equation*}
    k\Z^d = \prod_{j=1}^d k(j) \Z \;\text{ and }\;  \Z^d_k = \bigslant{\Z^d}{k\Z^d} = \prod_{j=1}^d \bigslant{\Z}{k(j)\Z} \text{,}
  \end{equation*}
  with the convention that $\infty\Z = \{ 0 \}$, so that $\Z^d_{(\infty,\ldots,\infty)} = \Z^d$. We also write $\infty^d$ for $(\infty,\ldots,\infty) \in \Nbar_\ast^d$.
\end{notation}

A $2$-cocycle of a discrete Abelian group $G$, with values in the group $\T = \{ z \in \C : |z|=1 \}$ \cite{Mackey58}, is a map $\sigma : G \times G \mapsto \T$ such that
  \begin{equation}\label{cocycle-id}
    \forall x,y,z \in G \quad \sigma(x,y)\sigma(x+y,z) = \sigma(x,y+z)\sigma(y,z)\text,
  \end{equation}
  and $\sigma(e,x)=\sigma(x,e) =1$ for all $x\in G$, with $e\in G$ the unit of $G$. Two $2$-cocycles $\sigma$ and $\sigma'$ are cohomologous when there exists a function $f : G\rightarrow \T$ such that
  \begin{equation*}
    \forall x,y \in G \quad \sigma(x,y) = f(x)f(y)\overline{f(x+y)} \sigma'(x,y) \text.
  \end{equation*}
  In particular, a $2$-cocycle is trivial when it is cohomologous to the constant function $1$. Now, any $2$-cocycle of $G$ is cohomologous to a \emph{normalized} $2$-cocycle of $G$ \cite{Kleppner74}, where $\sigma$ is a normalized $2$-cocycle when
  \begin{equation*}
    \forall x \in G \quad \sigma(x,-x) = 1 \text.
  \end{equation*}

  Since two cohomologous $2$-cocycles of a discrete group $G$ give rise to *-isomorphic twisted convolution C*-algebras of $G$, we will only work with normalized $2$-cocycles of the groups $\Z^d_k$.

  We record that, if $\sigma$ is a normalized $2$-cocycle of $G$, then $\overline{\sigma(x,y)} = \sigma(-y,-x)$ for all $x,y \in G$ \cite{Kleppner74}.

We will follow a helpful convention when working with $2$-cocycles of quotient groups.

\begin{convention}\label{quotient-cocycle}
  Assume $H$ is a quotient of a discrete Abelian group $G$, and let $q : G \rightarrow H$ be the canonical surjection. If $\sigma$ is a normalized $2$-cocycle on $H$, then $(g,g') \in G^2 \mapsto \sigma(q(g),q(g'))$ is a normalized $2$-cocycle over $G$, which we will henceforth still denote by $\sigma$. Thus, we identify the space of all $2$-cocycle over $H$ with a subspace of $2$-cocycle over $G$.
\end{convention}

\begin{notation}\label{cocycle-notation}
  Let $d\in\N_\ast$. Let $\mathcal{C}^d_{\infty^d}$ be the space of all normalized $2$-cocycles $\sigma$ over $\Z^d$, endowed with the topology of pointwise convergence (which makes $\mathcal{C}^d_{\infty^d}$ compact, by Tychonoff theorem). For each $k\in \Nbar_\ast^d$, we let $\mathcal{C}^d_k$ be the space of of all normalized $2$-cocycles of $\Z^d_k$, seen as a subspace of $\mathcal{C}^d_{\infty^d}$ via Convention (\ref{quotient-cocycle}).

  We then let $\Xi^d = \left\{ (k,\sigma) \in \Nbar_\ast^d\times\mathcal{C}^d_{\infty^d} : \sigma\in\mathcal{C}^d_k \right\}$, endowed with the product topology. The space $\Xi^d$ is actually compact.
\end{notation}

We note that, for any $d \in \N_\ast$, using \cite[Theorem 7.1]{Kleppner65}, any $2$-cocycle of $\Z^d$ is cohomologous to a skew bicharacter, i.e. a $2$-cocycle of the form
\begin{equation}\label{sigma-eq}
  \cocycle{\theta} : (x,y) \in \Z^d \times \Z^d \longmapsto \exp\left( i \pi \inner{\theta x}{y}{} \right)
\end{equation}
for some real $d\times d$ antisymmetric matrix $\theta \in \alg{M}_d(\R)$, with the convention that we write elements in $\Z^d$ as $d\times 1$ matrices, and with $\inner{\cdot}{\cdot}{}$ the usual inner product of $\R^d$. Skew bicharacters are, in particular, normalized. However, we may not obtain such a convenient formula for all normalized $2$-cocycles of $\Z^d_k$ (up to cocycle equivalence) unless all components of $k$ are odds (since every element of $\Z^d_k$ can then be halved) --- otherwise, we could use a generic expression as found \cite{Kleppner74} which depends on a measurable choice of a square root function over $\C$, but we will not need it. This, however, brings up to the following remark.

\begin{remark}
  We will rely on the proofs in \cite{Latremoliere13c} later in this paper, and we choose our notations to match this reference. However, there is a small correction which we need to point out.

  in \cite[Notation 3.4]{Latremoliere13c}, the definition of $\Xi^d$ is stated improperly: it should be defined as we did in Notation (\ref{cocycle-notation}), and \emph{not} be restricted to pairs of the form $(k,\sigma)$ with $\sigma$ a skew bicharacter of $\Z^d_k$. Indeed, the entire work in \cite{Latremoliere13c} only uses the $2$-cocycle property and the fact that it is normalized.

  This unfortunate misprint of ours has no impact on \cite{Latremoliere13c}, with the single exception of one computation in the proof of \cite[Theorem 3.8]{Latremoliere13c}, which we reprove below as Lemma (\ref{projective-rep-lemma}) --- and \cite[Theorem 3.8]{Latremoliere13c} is true and well-known anyway. Thus, past this unfortunate terminology error of ours in \cite{Latremoliere13c}, we will use \cite{Latremoliere13c} unchanged.
\end{remark}

\bigskip

Quantum and fuzzy tori are twisted convolution C*-algebras of the groups $\Z_k^d$, for any $d\in\N_\ast$, for any $k \in \Nbar_\ast^d$ and for any $2$-cocycle $\sigma$ of $\Z^d_k$ --- and indeed, up to a *-isomorphism, all fuzzy and quantum tori are obtained using the $2$-cocycle $\sigma$ with $(k,\sigma)\in\Xi^d$. We begin our presentation of these C*-algebras by introducing certain natural projective unitary representations of $\Z^d_k$ on the Hilbert space $\ell^2(\Z^d_k)$, where we use the following notation.

\begin{notation}
For any (nonempty) set $E$ and any $p \in [1,\infty)$, the set $\ell^p(E)$ is the set of all absolutely $p$-summable (for the counting measure) complex valued functions over $E$, endowed with the norm:
\begin{equation*}
\|\xi\|_{\ell^p(E)} = \left(\sum_{x \in E} |\xi(x)|^p\right)^{\frac{1}{p}}
\end{equation*}
for all $\xi \in \ell^p(E)$.

For all $x \in E$, we define $\delta_x$ by
\begin{equation*}
  \delta_x : y \in E \mapsto \begin{cases}
    1 \text{ if $y=x$,} \\
    0 \text{ otherwise;}
  \end{cases}
\end{equation*}
of course $\delta_x \in \ell^p(\Z^d_k)$.  

Moreover, if $p = 2$ then $(\ell^2(E),\|\cdot\|_{\ell^2(E)})$ is a Hilbert space, where the inner product is given by
\begin{equation*}
  \forall \xi,\eta\in\ell^2\left(E\right) \quad\quad \inner{\xi}{\eta}{\ell^2(E)} = \sum_{x\in E} \overline{\xi(x)}\eta(x)
\end{equation*}
\end{notation}

\begin{lemma}\label{projective-rep-lemma}
  Let $d\in\N_\ast$ and $(k,\sigma) \in \Xi^d$. For all $m \in \Z^d_k$, the operator $W_{k,\sigma}^m$ defined, for all $\xi \in \ell^2(\Z^d_k)$, by:
  \begin{equation*}
    W_{k,\sigma}^m \xi : n \in \Z^d_k \mapsto \sigma(m,n-m) \xi(n-m) \text,
  \end{equation*}
  is a unitary operator on $\ell^2(\Z^d_k)$, and moreover, for all $m,n \in \Z^d_k$,
  \begin{equation}\label{w-commutation-eq}
    W_{k,\sigma}^m \, W_{k,\sigma}^n = \sigma(m,n) W_{k,\sigma}^{m+n} \text.
  \end{equation}
\end{lemma}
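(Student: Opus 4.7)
The proof is a direct computation that reduces to the cocycle identity; the plan breaks into three short steps.

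First, I would verify that each $W_{k,\sigma}^m$ is an isometry on $\ell^2(\Z^d_k)$. Since $\sigma$ takes values in $\T$, we have $|\sigma(m,n-m)| = 1$ for every $n$, and the map $n \mapsto n-m$ is a bijection of $\Z^d_k$; hence a change of variables yields $\|W_{k,\sigma}^m \xi\|_{\ell^2(\Z^d_k)}^2 = \sum_{n} |\xi(n-m)|^2 = \|\xi\|_{\ell^2(\Z^d_k)}^2$. In particular, $W_{k,\sigma}^m$ is bounded with operator norm $\leq 1$, and is well-defined on all of $\ell^2(\Z^d_k)$.

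Next, I would establish the multiplication rule \eqref{w-commutation-eq} by a direct computation. For $\xi \in \ell^2(\Z^d_k)$ and $p \in \Z^d_k$, unfolding the definition gives
\begin{equation*}
  (W_{k,\sigma}^m W_{k,\sigma}^n \xi)(p) = \sigma(m,p-m)\,\sigma(n,p-m-n)\,\xi(p-m-n)\text,
\end{equation*}
while the right-hand side of \eqref{w-commutation-eq} applied to $\xi$ at $p$ equals $\sigma(m,n)\,\sigma(m+n,p-m-n)\,\xi(p-m-n)$. So the identity to check is $\sigma(m,p-m)\sigma(n,p-m-n) = \sigma(m,n)\sigma(m+n,p-m-n)$. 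This is exactly the cocycle identity \eqref{cocycle-id}, applied with the substitution $x=m$, $y=n$, $z=p-m-n$, once one notices that $n+(p-m-n) = p-m$. This finishes \eqref{w-commutation-eq}.

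Finally, I would deduce unitarity from \eqref{w-commutation-eq} together with the normalization hypotheses on $\sigma$. Since $\sigma(0,\cdot) = 1$, we have $W_{k,\sigma}^0 = \mathrm{id}_{\ell^2(\Z^d_k)}$, and setting $n=-m$ in \eqref{w-commutation-eq} combined with the recorded identity $\sigma(m,-m) = 1$ yields $W_{k,\sigma}^m W_{k,\sigma}^{-m} = W_{k,\sigma}^0 = \mathrm{id}$; the symmetric computation with $m$ and $-m$ swapped gives $W_{k,\sigma}^{-m} W_{k,\sigma}^m = \mathrm{id}$. Thus $W_{k,\sigma}^m$ is a surjective isometry, hence a unitary operator, with inverse $W_{k,\sigma}^{-m}$. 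The main (mild) obstacle is only bookkeeping: choosing the substitution in the cocycle identity so that the four factors line up, and invoking normalization in exactly the form ``$\sigma(0,\cdot) = 1$ and $\sigma(m,-m)=1$'' that the paper records right after \eqref{cocycle-id}.
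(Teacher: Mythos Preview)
Your proof is correct and follows essentially the same route as the paper: both reduce the multiplication rule \eqref{w-commutation-eq} to a direct application of the cocycle identity \eqref{cocycle-id} with the substitution $x=m$, $y=n$, $z=p-m-n$. The only minor difference is in how unitarity is obtained: the paper computes the adjoint $(W_{k,\sigma}^m)^\ast$ directly via inner products (using $\overline{\sigma(m,q)} = \sigma(-q,-m)$ and another instance of the cocycle identity) to show $(W_{k,\sigma}^m)^\ast = W_{k,\sigma}^{-m}$, whereas you first note each $W_{k,\sigma}^m$ is an isometry and then use \eqref{w-commutation-eq} together with $\sigma(m,-m)=1$ to exhibit $W_{k,\sigma}^{-m}$ as a two-sided inverse. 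Your route is slightly shorter; the paper's explicit adjoint computation has the mild advantage of identifying $(W_{k,\sigma}^m)^\ast$ without appealing to the normalization $\sigma(m,-m)=1$ at that step.
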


\begin{proof}
  By Expression (\ref{cocycle-id}), if $m,n,p \in \Z^d_k$, and if $\xi \in \ell^2(\Z^d_k)$, then
  \begin{align*}
    \left(W_{k,\sigma}^m \, W_{k,\sigma}^n \xi\right) (p)
    &= \sigma(m,p-m) \left(W_{k,\sigma}^n\xi\right)(p-m)\\
    &= \sigma(\underbracket[1pt]{m}_{\small =x},\underbracket[1pt]{p-m}_{\small = y+z}) \sigma(\underbracket[1pt]{n}_{\small =y},\underbracket[1pt]{p-m-n}_{\small =z}) \xi(p-m-n) \\
    &= \sigma(\underbracket[1pt]{m}_{\small x},\underbracket[1pt]{n}_{\small y}) \sigma(\underbracket[1pt]{n+m}_{\small =x+y},\underbracket[1pt]{p-(m+n)}_{\small z}) \xi(p-(m+n)) \\
    &= \left(\sigma(m,n) W_{k,\sigma}^{m+n} \xi\right)(p) \text.
  \end{align*}
  Hence, Expression (\ref{w-commutation-eq}) holds. An easy computation shows that $W_{k,\sigma}^0$ is the identity. Last, let $\xi,\eta\in\ell^2(\Z^d_k)$ and $m \in \Z^d_k$. We compute:
  \begin{align*}
    \inner{W_{k,\sigma}^m\xi}{\eta}{\ell^2(\Z^d_k)}
    &= \sum_{p \in \Z^d_k} \overline{\sigma(m,p-m)\xi(p-m)}\eta(p) \\
    &= \sum_{q \in \Z^d_k} \overline{\sigma(m,q)} \, \overline{\xi(q)}\eta(q+m) \\
    &= \sum_{q\in\Z^d_k} \sigma(-q,-m) \overline{\xi(q)} \eta(q+m) \\
    &= \sum_{q\in\Z^d_k} \overline{\xi(q)} \, \sigma(\underbracket[1pt]{-q}_{\small =x},\underbracket[1pt]{-m}_{\small =y}) \overbracket{\sigma(\underbracket[1pt]{-q-m}_{\small x+y},\underbracket[1pt]{q+m}_{\small =z})}^{\small =1} \eta(q+m) \\
    &= \sum_{q\in\Z^d_k} \overline{\xi(q)} \, \overbracket[1pt]{\sigma(\underbracket[1pt]{-q}_{\small x},\underbracket[1pt]{q}_{\small y+z})}^{\small =1} \sigma(\underbracket[1pt]{-m}_{\small y},\underbracket[1pt]{q+m}_{\small z}) \eta(q+m) \\
    &= \inner{\xi}{W_{k,\sigma}^{-m}\eta}{\ell^2(\Z^d_k)} \text.
  \end{align*}
  Therefore, $\left(W_{k,\sigma}^{m}\right)^\ast = W_{k,\sigma}^{-m}$; our lemma is thus proven.
\end{proof}

The quantum and fuzzy tori are the C*-algebras generated by the projective *-representations of $\Z^d_k$ defined in Lemma (\ref{projective-rep-lemma}).

\begin{definition}
Let $d\in\N_\ast$ and let $(k,\sigma) \in \Xi^d$. The C*-algebra $\qt{k}{\sigma}$ is the completion of the *-algebra generated by $\{ W_{k,\sigma}^m : m \in \Z^d_k \}$ (using the notation of Lemma (\ref{projective-rep-lemma})), for the operator norm $\opnorm{\cdot}{}{\ell^2(\Z^d_k)}$. 

When $k = \infty^d$, the C*-algebra $\qt{\infty^d}{\sigma} = C^\ast(\Z^d,\sigma)$ is called a \emph{quantum torus}. When $k\in\N_\ast^d$, the C*-algebra $\qt{k}{\sigma}$ is called a \emph{fuzzy torus}.
\end{definition}

\begin{remark}
  We are not aware of a common term to name the C*-algebras $\qt{k}{\sigma}$ when $k \in \Nbar_\ast^d$, with $k\notin\N_\ast^d$ and $k\neq \infty^d$, i.e. when $k$ contains some finite and some infinite values. Our work in this paper applies to these mixed objects just as well as to quantum tori and fuzzy tori.
\end{remark}

The C*-algebra $\qt{k}{\sigma}$, for any $(k,\sigma)\in\Xi^d$, is *-isomorphic to a C*-algebra constructed as a completion of $\ell^1(\Z^d_k)$, for an appropriate product, adjoint, and norm. This presentation of quantum and fuzzy tori will also be helpful.

\begin{lemma}\label{conv-lemma}
  Let $d\in\N_\ast$ and $(k,\sigma)\in \Xi^d$. We use the notation of Lemma (\ref{projective-rep-lemma}). For all $f \in \ell^1(\Z^d_k)$, the operator
  \begin{equation*}
    \pi_{k,\sigma}(f) = \sum_{m\in \Z^d_k} f(m) W_{k,\sigma}^m
  \end{equation*}
  is bounded on $\ell^2(\Z^d_k)$, with $\opnorm{\pi_{k,\sigma}(f)}{}{\ell^2(\Z^d_k)} \leq \norm{f}{\ell^1(\Z^d_k)}$, and moreover, $\pi_{k,\sigma}(f) \in \qt{k}{\sigma}$. Moreover,
  \begin{enumerate}
  \item $\pi_{k,\sigma}$ is injective;
  \item for all $f,g \in \ell^1(\Z^d_k)$,
    \begin{equation*}
      \pi_{k,\sigma}(f) \pi_{k,\sigma}(g) = \pi_{k,\sigma}(f\conv{k,\sigma} g)
    \end{equation*}
    where
    \begin{equation}\label{conv-eq}
      f \conv{k,\sigma} g : n \in \Z^d_k \mapsto \sum_{m \in \Z^d_k} f(m) g(n-m) \sigma(m,n-m) \text{;}
    \end{equation}
  \item for all $f \in \ell^1(\Z^d_k)$,
    \begin{equation*}
      \pi_{k,\sigma}(f)^\ast = \pi_{k,\sigma}(f^\ast)
    \end{equation*}
    where
    $f^\ast : m \in \Z^d_k \mapsto \overline{f(-m)}$.
  \end{enumerate}
\end{lemma}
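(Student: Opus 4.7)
My plan is to verify each claim of the lemma in order, using only the facts established in Lemma \ref{projective-rep-lemma}, namely that each $W_{k,\sigma}^m$ is unitary with $(W_{k,\sigma}^m)^\ast = W_{k,\sigma}^{-m}$, and that $W_{k,\sigma}^m W_{k,\sigma}^n = \sigma(m,n) W_{k,\sigma}^{m+n}$.

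First, for the boundedness and the inclusion $\pi_{k,\sigma}(f)\in\qt{k}{\sigma}$, I would let $(F_N)_{N\in\N}$ be an increasing sequence of finite subsets of $\Z^d_k$ exhausting $\Z^d_k$ and consider the finite partial sums $\pi_{k,\sigma}(f\,\indicator{F_N})$. Each partial sum is a finite complex linear combination of the unitaries $W_{k,\sigma}^m$, hence lies in the *-algebra whose closure defines $\qt{k}{\sigma}$. Since each $W_{k,\sigma}^m$ is unitary, the triangle inequality gives $\opnorm{\pi_{k,\sigma}(f\,\indicator{F_N}) - \pi_{k,\sigma}(f\,\indicator{F_M})}{}{\ell^2(\Z^d_k)} \leq \sum_{m\in F_N\triangle F_M}|f(m)|$, so $(\pi_{k,\sigma}(f\,\indicator{F_N}))_{N}$ is operator-norm Cauchy, and in fact converges to a bounded operator with norm at most $\norm{f}{\ell^1(\Z^d_k)}$. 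The limit, still denoted $\pi_{k,\sigma}(f)$, is consistent with the pointwise definition and lies in $\qt{k}{\sigma}$.

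For injectivity, I would evaluate on the basis vector $\delta_0\in\ell^2(\Z^d_k)$. By the defining formula, $W_{k,\sigma}^m\delta_0(n) = \sigma(m,n-m)\delta_0(n-m)$, which is nonzero only for $n=m$, where it takes the value $\sigma(m,0)=1$ since $\sigma$ is normalized. Hence $W_{k,\sigma}^m\delta_0 = \delta_m$, and by continuity the application $\pi_{k,\sigma}(f)\delta_0 = \sum_{m\in\Z^d_k} f(m)\delta_m$ (convergent in $\ell^2$ because $f\in\ell^1\subseteq\ell^2$, after noting absolute summability in $\ell^2$ follows from the $\ell^1$-norm bound). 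If $\pi_{k,\sigma}(f)=0$, then this vector is zero, forcing $f(m)=0$ for all $m$.

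For the multiplicativity and the adjoint formula, I would first verify both for finitely supported $f,g$, where all sums are finite and manipulation is unproblematic. Using $W_{k,\sigma}^m W_{k,\sigma}^n = \sigma(m,n) W_{k,\sigma}^{m+n}$ and reindexing with $p=m+n$ gives $\pi_{k,\sigma}(f)\pi_{k,\sigma}(g) = \pi_{k,\sigma}(f\conv{k,\sigma} g)$, where $f\conv{k,\sigma}g$ is the expression of Equation (\ref{conv-eq}); Fubini, combined with $|\sigma|\equiv 1$, shows that $\norm{f\conv{k,\sigma}g}{\ell^1(\Z^d_k)}\leq \norm{f}{\ell^1(\Z^d_k)}\norm{g}{\ell^1(\Z^d_k)}$, so $f\conv{k,\sigma}g\in\ell^1(\Z^d_k)$. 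Similarly, $(W_{k,\sigma}^m)^\ast = W_{k,\sigma}^{-m}$ from Lemma \ref{projective-rep-lemma} gives, after the substitution $m\leftrightarrow -m$, the identity $\pi_{k,\sigma}(f)^\ast = \pi_{k,\sigma}(f^\ast)$ for finitely supported $f$. In both cases, I would extend to general $f,g\in\ell^1(\Z^d_k)$ by truncating to finite supports and passing to the limit in operator norm, using the first part of the lemma to control all relevant norms.

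The only real subtlety is the limiting argument for general $\ell^1$ data when $\Z^d_k$ is infinite: the point is that finitely supported functions are dense in $\ell^1(\Z^d_k)$, that $\pi_{k,\sigma}$ is $\ell^1$-to-operator-norm continuous by the boundedness estimate, and that the convolution $\conv{k,\sigma}$ and the involution $f\mapsto f^\ast$ are continuous on $\ell^1(\Z^d_k)$. Once these continuity statements are noted, the identities extend from the finitely supported case by bilinearity (resp.\ antilinearity) and density.
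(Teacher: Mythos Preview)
Your proof is correct and follows essentially the same approach as the paper. The paper's argument is terser---it declares the boundedness and membership in $\qt{k}{\sigma}$ ``immediate'' from unitarity of the $W_{k,\sigma}^m$, proves injectivity by the identical trick of applying $\pi_{k,\sigma}(f)$ to $\delta_0$ and computing $\norm{\pi_{k,\sigma}(f)\delta_0}{\ell^2(\Z^d_k)}^2 = \sum_m |f(m)|^2$, and leaves the multiplicativity and adjoint identities as ``direct computations''---but your proposal simply spells out those details, including the density-and-continuity extension from finitely supported functions that the paper implicitly relies on.
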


\begin{proof}
  Since $W_{k,\sigma}^m$ is unitary for all $m\in \Z^d_k$, it is immediate that $\opnorm{\pi_{k,\sigma}(f)}{}{\ell^2(\Z^d_k)} \leq \norm{f}{\ell^1(\Z^d_k)}$; moreover it then follows immediately that $\pi_{k,\sigma}(f) \in \qt{k}{\sigma}$.
  If $\pi_{k,\sigma}(f) = 0$, then
  \begin{equation*}
    0 = \pi_{k,\sigma}(f)\delta_0 = \sum_{m\in\Z^d_k} f(m)\underbracket[1pt]{\sigma(m,0)}_{\small =1} \delta_{m}
  \end{equation*}
  so $\sum_{m\in\Z^d_k} |f(m)|^2 = \norm{\pi_{k,\sigma}(f)\delta_0}{\ell^2(\Z^d_k)}^2 = 0$, and thus $f = 0$. So $\pi_{k,\sigma}$ is injective. The other assertions follow from direct computations.
\end{proof}

\begin{remark}
  We note that, for any $d\in \N_\ast$ and $k\in\Nbar_\ast^d$, the adjoint operation of $(\ell^1(\Z^d_k),\conv{k,\sigma},\cdot^\ast)$ does \emph{not} depend on $\sigma$. This will be a very helpful property for us, and it follows from our choice to work with normalized $2$-cocycles.
\end{remark}

We now recall:
\begin{lemma}\label{pi-k-sigma-lemma}\cite{Zeller-Meier68}
  Let $d\in\N_\ast$, and let $(k,\sigma)\in\Xi^d$. Using the notation of Lemma (\ref{conv-lemma}), the triple $(\ell^1(\Z^d_k),\conv{k,\sigma},\cdot^\ast)$ is a Banach *-algebra. Moreover, the function
  \begin{equation*}
    f \in \ell^1\left(\Z^d_k\right) \mapsto \opnorm{\pi_{k,\sigma}(f)}{}{\ell^2\left(\Z^d_k\right)}
  \end{equation*}
  is a C*-norm on $(\ell^1(\Z^d_k),\conv{k,\sigma},\cdot^\ast)$; the completion of $(\ell^1(\Z^d_k),\conv{k,\sigma},\cdot^\ast)$ for this norm is a C*-algebra, and the unique extension of $\pi_{k,\sigma}$ to this completion is a *-isomorphism onto $\qt{k}{\sigma}$.
\end{lemma}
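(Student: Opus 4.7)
The plan is to bootstrap everything from Lemma \ref{conv-lemma}, which already hands us the multiplicativity $\pi_{k,\sigma}(f\conv{k,\sigma}g)=\pi_{k,\sigma}(f)\pi_{k,\sigma}(g)$, the identity $\pi_{k,\sigma}(f^\ast) = \pi_{k,\sigma}(f)^\ast$, injectivity, and the bound $\opnorm{\pi_{k,\sigma}(f)}{}{\ell^2(\Z^d_k)}\leq\norm{f}{\ell^1(\Z^d_k)}$.

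First, I would establish that $(\ell^1(\Z^d_k),\conv{k,\sigma},\cdot^\ast)$ is a Banach *-algebra. Associativity of $\conv{k,\sigma}$ and the anti-multiplicativity $(f\conv{k,\sigma} g)^\ast = g^\ast \conv{k,\sigma} f^\ast$ would follow for free from injectivity of $\pi_{k,\sigma}$ together with associativity of composition and anti-multiplicativity of the adjoint in $B(\ell^2(\Z^d_k))$. The involution identity $(f^\ast)^\ast = f$ is immediate from the definition (using $\overline{\overline{z}}=z$ and $-(-m)=m$), and $\norm{f^\ast}{\ell^1(\Z^d_k)}=\norm{f}{\ell^1(\Z^d_k)}$ follows by reindexing. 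Submultiplicativity $\norm{f\conv{k,\sigma}g}{\ell^1(\Z^d_k)}\leq \norm{f}{\ell^1(\Z^d_k)}\norm{g}{\ell^1(\Z^d_k)}$ follows from $|\sigma(m,n-m)|=1$ and a direct Fubini-style estimate on Expression (\ref{conv-eq}). Completeness of $\ell^1(\Z^d_k)$ in its natural norm is classical.

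Next, I would verify that $N : f\in \ell^1(\Z^d_k)\mapsto \opnorm{\pi_{k,\sigma}(f)}{}{\ell^2(\Z^d_k)}$ is a C*-norm. Since $\pi_{k,\sigma}$ is a *-homomorphism into the C*-algebra $B(\ell^2(\Z^d_k))$, the operator norm automatically satisfies $N(f\conv{k,\sigma}g)\leq N(f)N(g)$, $N(f^\ast)=N(f)$, and the C*-identity $N(f^\ast\conv{k,\sigma}f) = N(f)^2$. That $N$ is a norm rather than a seminorm is precisely the injectivity of $\pi_{k,\sigma}$ proved in Lemma \ref{conv-lemma}.

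Finally, I would identify the completion with $\qt{k}{\sigma}$. Let $\alg{A}$ be the completion of $\ell^1(\Z^d_k)$ with respect to $N$. By the standard extension principle for bounded linear maps on dense subspaces, $\pi_{k,\sigma}$ extends uniquely to an isometric *-homomorphism $\widetilde{\pi}_{k,\sigma}:\alg{A}\rightarrow B(\ell^2(\Z^d_k))$; isometric maps of Banach spaces have closed range, so its image is a C*-subalgebra of $B(\ell^2(\Z^d_k))$. Since $\widetilde{\pi}_{k,\sigma}(\delta_m)=\pi_{k,\sigma}(\delta_m)=W_{k,\sigma}^m$ for every $m\in\Z^d_k$, the image of $\widetilde{\pi}_{k,\sigma}$ contains the *-algebra generated by $\{W_{k,\sigma}^m:m\in\Z^d_k\}$, hence by closedness it contains $\qt{k}{\sigma}$. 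Conversely, as finite linear combinations of the $\delta_m$ are $N$-dense in $\alg{A}$ and their images are finite linear combinations of the $W_{k,\sigma}^m$, the image of $\widetilde{\pi}_{k,\sigma}$ is contained in $\qt{k}{\sigma}$. Thus $\widetilde{\pi}_{k,\sigma}:\alg{A}\rightarrow\qt{k}{\sigma}$ is a surjective isometric *-homomorphism, hence a *-isomorphism.

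No step is especially hard: the main obstacle, if any, is the bookkeeping in submultiplicativity of $\norm{\cdot}{\ell^1(\Z^d_k)}$ under $\conv{k,\sigma}$, but even this is a one-line estimate once $|\sigma|\equiv 1$ is used. The substantive algebraic identities (associativity and anti-multiplicativity of $\cdot^\ast$) are obtained painlessly by transporting them from $B(\ell^2(\Z^d_k))$ through the injective *-homomorphism $\pi_{k,\sigma}$, rather than verifying the cocycle manipulations by hand a second time.
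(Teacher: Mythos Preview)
Your proof is correct. The paper does not actually supply a proof of this lemma: it is stated with a citation to Zeller-Meier and no proof environment follows, so there is no ``paper's own proof'' to compare against. Your argument is the standard one and relies exactly on the properties established in Lemma~\ref{conv-lemma} (injectivity, multiplicativity, *-preservation, and the $\ell^1$ bound), which is the natural way to fill in the details that the paper delegates to the reference.
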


For all $d\in\N_\ast$, and for all $(k,\sigma)\in\Xi^d$, the C*-algebra $\qt{k}{\sigma}$ is finitely generated by a subset of the unitaries $\left\{ W_{k,\sigma}^m : m \in \Z^d_k \right\}$, subject to a natural commutation relation.

\begin{notation}\label{e-j-notation}
  Let $d \in \N_\ast$. For all $j \in \{1,\ldots,d\}$, we set
  \begin{equation*}
    e_j = \begin{pNiceMatrix}[last-col] 0 & \\ \vdots &  \\ 0 &  \\ 1 & \leftarrow j^{\mathrm{th}} \text{ row} \\ 0 & \\ \vdots & \\ 0 & \end{pNiceMatrix} \in \Z^d \text,
  \end{equation*}
 and we \emph{identify} $e_j$ with its class in $\Z^d_k$ for \emph{any} $k\in\Nbar_\ast^d$.
\end{notation}

Let $d \in \N_\ast$ and $(k,\sigma) \in \Xi^d$. For each $j\in\{1,\ldots,d\}$, note that $\pi_{k,\sigma}(\delta_{e_j}) = W_{k,\sigma}^{e_j}$. By Expression (\ref{w-commutation-eq}), we also note that for all $j,s\in\{1,\ldots,d\}$:
\begin{equation}\label{u-commutation-eq}
  W_{k,\sigma}^{e_j} W_{k,\sigma}^{e_s} = \sigma(e_j,e_s)\overline{\sigma(e_s,e_j)} W_{k,\sigma}^{e_s} W_{k,\sigma}^{e_j}  \text.
\end{equation}
By Expression (\ref{w-commutation-eq}), we conclude that $\qt{k}{\sigma}$ is the closure of the *-algebra generated by $\{W_{k,\sigma}^{e_1},\ldots,W_{k,\sigma}^{e_d}\}$. We thus refer to the unitaries $W_{k,\sigma}^{e_1}$,\ldots,$W_{k,\sigma}^{e_d}$ as the \emph{canonical unitaries} of $\qt{k}{\sigma}$.

\medskip

We now look at a few specific examples, and constructions, of quantum and fuzzy tori.

\begin{example}
 For all $n\in\Nbar_\ast$, if $k=\left(\underbrace{n,\ldots,n}_{d \text{ times}}\right)$, then the C*-algebra $\qt{k}{1}$ is the C*-algebra of $\C$-valued (continuous) functions over the finite group $\{ (z_1,\ldots,z_d)\in \C : z_j^n = 1 \}$. Moreover, $\qt{\infty^d}{1}$ is the C*-algebra $C(\T^d)$ of $\C$-valued, continuous functions over the $d$-torus $\T^d$.
\end{example}

On the other hand, quantum and fuzzy tori can be simple.

\begin{example}
  If $\theta\in\R\setminus\Q$ and if
  \begin{equation*}
    \sigma : m,m' \in \Z^2 \longmapsto \exp\left(i \pi \inner{\begin{pmatrix} 0 & \theta \\ -\theta & 0 \end{pmatrix}m}{m'}{}\right)
  \end{equation*}
  then $C^\ast(\Z^2,\sigma)$ is a simple C*-algebra \cite{Davidson}.
\end{example}

\begin{example}\label{Clock-Shift-odd-ex}
  Let $n\in\N_\ast$ be an \emph{odd} natural number, and write $n = 2 p + 1$ for $p\in\N$. The C*-algebra $\A_n = \qtd{2}{(n,n)}{\sigma_n}$, with
  \begin{equation*}
    \sigma_n : m,m' \mapsto \exp\left( 2 i \pi \inner{ \begin{pmatrix} 0 & -\frac{p+1}{n} \\ \frac{p+1}{n} & 0 \end{pmatrix} m}{m'}{} \right)
  \end{equation*}
  is the C*-algebra $\A_n$ generated by the canonical unitaries $W_{(n,n),\sigma_n}^{e_1}$, $W_{(n,n),\sigma_n}^{e_2}$ subject to the commutation relation:
  \begin{equation*}
    W_{(n,n),\sigma_n}^{e_1} W_{(n,n),\sigma_n}^{e_2} = \exp\left(\frac{2i\pi}{n}\right) W_{(n,n),\sigma_n}^{e_2} W_{(n,n),\sigma_n}^{e_1} \text,
  \end{equation*}
  since $2p \equiv 1 \mod n$, and $\left(W_{(n,n),\sigma_n}^{e_j}\right)^n = 1$ for each $j\in\{1,2\}$. So there exists a *-isomorphism from $\A_n$ to the C*-algebra generated by the clock and shift matrices $S_n$ and $C_n$ (see Expression (\ref{Clock-Shift-eq})), sending $W_{(n,n),\sigma_n}^{e_1}$ to $S_n$, and $W_{(n,n),\sigma_n}^{e_2}$ to $C_n$.

  We note, as will prove helpful later, that $(\sigma_n)_{n\in\N}$, thus defined, converges to $((m_1,m_2),(m'_1,m'_2))\in\Z^2\times\Z^2 \mapsto(-1)^{m_1 m'_2 - m'_1 m_2}$ in $\mathcal{C}_{\infty^2}^2$, which, in turn, is cohomologous to the trivial cocycle $1$.
\end{example}

In this work, we will construct approximations of quantum tori by fuzzy tori. The heuristics behind our approximations, very informally stated, is that we approximate a quantum torus $\A$ by fuzzy tori, whose canonical unitaries satisfy ``almost'' the commutation relation between the canonical unitaries defining $\A$. The first, and important, step toward our result is the following result.

\begin{theorem}\label{Cocycle-approx-thm}
  Let $d\in \N_\ast$, and let $k\in \Nbar_\ast^d$. If $\theta$ is an antisymmetric $d\times d$ real matrix such that $\inner{\theta m}{m'}{} \in \Z$ for all $m,m' \in k\Z^d$, then there exists a normalized $2$-cocycle $\sigma$ of $\Z^d_k$ such that, for all $m,n \in \Z^d_k$
  \begin{equation*}
    W_{k,\sigma}^m W_{k,\sigma}^{n} = \exp\left(2i\pi\inner{\theta m}{n}{}\right) W_{k,\sigma}^n W_{k,\sigma}^m \text.
  \end{equation*}
  Moreover, if $\theta$ is an antisymmetric $d\times d$ matrix, if $(k_n)_{n\in\N}$ is any sequence in $\N_\ast^d$ such that $\lim_{n\rightarrow\infty} k_n = \infty^d$, and if $(\theta_n)_{n\in\N}$ is a sequence of $d\times d$ antisymmetric matrices such that $\lim_{n\rightarrow\infty} \theta_n = \theta$, and
  \begin{equation*}
    \forall n\in\N \quad \left\{ \inner{\theta_n m}{m'}{} : m,m' \in k_n \Z^d \right\} \subseteq \Z\text,
  \end{equation*}
  then there exists a sequence $(k_n,\sigma_n)_{n\in\N}$ in $\Xi^d$, and a normalized $2$-cocycle $\sigma'$ of $\Z^d$ cohomologous to:
  \begin{equation*}
    \cocycle{\theta}: (m,m')\in\Z^d\times\Z^d \mapsto \exp\left(i\pi \inner{\theta m}{m'}{}\right)\text,
  \end{equation*}
  such that, for all $n\in\N$:
  \begin{equation}\label{main-approx-comm-eq}
    W_{k_n,\sigma_n}^m W_{k_n,\sigma_n}^{m'} = \exp\left(2i\pi\inner{\theta_n m}{m'}{}\right) W_{k_n,\sigma_n}^{m'} W_{k_n,\sigma_n}^{m}
  \end{equation}
  and $\lim_{n\rightarrow\infty} (k_n,\sigma_n) = (\infty^d,\sigma')$.
\end{theorem}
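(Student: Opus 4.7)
The plan is to construct $\sigma$ by restricting the skew bicharacter $\cocycle{\theta}$ on $\Z^d$ to a fundamental domain for the quotient $\Z^d_k$, and then to verify normalization, the cocycle identity, and the commutation character. For each $j$, set $R_j = \{0,1,\ldots,k_j-1\}$ when $k_j < \infty$ and $R_j = \Z$ when $k_j = \infty$; let $R = \prod_j R_j$ and let $r : \Z^d_k \to R$ be the representative map. Define
\[
\sigma(m, n) = \exp\!\left(i\pi \inner{\theta\, r(m)}{r(n)}{}\right).
\]
Normalization $\sigma(0,n) = \sigma(n,0) = 1$ is immediate from $r(0)=0$, and antisymmetry of $\theta$ gives $\sigma(m,n)/\sigma(n,m) = \exp(2i\pi \inner{\theta\, r(m)}{r(n)}{})$, so applying Equation (\ref{w-commutation-eq}) twice yields the commutation relation $W_{k,\sigma}^m W_{k,\sigma}^n = \exp(2i\pi\inner{\theta m}{n}{})\, W_{k,\sigma}^n W_{k,\sigma}^m$.

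The key technical point is the cocycle identity on $\Z^d_k$. Writing $r(x+y) = r(x)+r(y) - k\cdot q(x,y)$ for the carry vector $q(x,y)\in\Z^d$ (and similarly for $r(y+z)$), bilinearity of $(a,b)\mapsto\inner{\theta a}{b}{}$ together with the cocycle identity for $\cocycle{\theta}$ on $\Z^d$ reduces the discrepancy between $\sigma(x,y)\sigma(x+y,z)$ and $\sigma(x,y+z)\sigma(y,z)$ to
\[
\exp\!\left(i\pi\inner{\theta\, r(x)}{k\,q(y,z)}{} - i\pi \inner{\theta\, k\,q(x,y)}{r(z)}{}\right).
\]
The integrality hypothesis controls such pairings once both arguments are multiplied by factors of $k$; a bookkeeping argument using the associativity of addition in $\Z^d_k$, which relates $q(x,y), q(y,z), q(x+y,z)$ and $q(x,y+z)$, shows that the resulting $\pm 1$ factor equals $1$, possibly after correcting $\sigma$ by an explicit coboundary $(m,n)\mapsto f(m)f(n)\overline{f(m+n)}$ that vanishes under the commutation character.

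For the second assertion, apply the construction above to each $(k_n, \theta_n)$ to produce $\sigma_n$ on $\Z^d_{k_n}$ satisfying Equation (\ref{main-approx-comm-eq}) by construction. Convergence in $\Xi^d$ reduces to pointwise convergence of the cocycles. Given $(m,m')\in\Z^d\times\Z^d$, once $n$ is so large that $k_n(j) > \max(|m_j|,|m'_j|)$ for each coordinate $j$, the representatives satisfy $r_{k_n}(m)=m$ and $r_{k_n}(m')=m'$, hence
\[
\sigma_n(m,m') = \exp\!\left(i\pi \inner{\theta_n m}{m'}{}\right) \longrightarrow \exp\!\left(i\pi \inner{\theta m}{m'}{}\right) = \cocycle{\theta}(m,m'),
\]
using $\theta_n\to\theta$ and continuity of the exponential. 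Thus $\sigma_n \to \cocycle{\theta}$ pointwise, and if a coboundary correction $\delta f_n$ was required in Part~1, compactness of $\T$ allows extraction of a subsequence along which $f_n$ converges pointwise to some $f$, in which case $\sigma' = \cocycle{\theta}\cdot\delta f$ is cohomologous to $\cocycle{\theta}$.

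The main obstacle is the cocycle identity of Part~1: the hypothesis guarantees integrality of $\inner{\theta\cdot}{\cdot}{}$ only when both arguments lie in $k\Z^d$, whereas the identity involves mixed pairings $\inner{\theta\, k\,q(x,y)}{r(z)}{}$ in which only one argument is in $k\Z^d$. Resolving this mismatch, either by a combinatorial cancellation using the structure of the carries, or by an explicit coboundary correction that behaves well in the limit along the sequence $(k_n,\theta_n)$, is the essential technical work, and it is what ultimately places the limit cocycle $\sigma'$ in the cohomology class of $\cocycle{\theta}$.
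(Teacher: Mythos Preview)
Your proposal has a genuine gap: you correctly identify the cocycle identity as the central difficulty, but you do not actually prove it. Writing that the discrepancy reduces to a mixed pairing $\exp(i\pi\inner{\theta\,r(x)}{k\,q(y,z)}{} - i\pi\inner{\theta\,kq(x,y)}{r(z)}{})$, and then saying that ``a bookkeeping argument'' or ``an explicit coboundary correction'' resolves it, is not a proof --- it is a restatement of the problem. As you yourself note in the final paragraph, the hypothesis only controls $\inner{\theta\cdot}{\cdot}{}$ when \emph{both} arguments lie in $k\Z^d$, and your mixed terms have only one argument there. There is no general reason these terms are integers, and no coboundary correction is exhibited. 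A second, smaller error: what you verify (``$\sigma(0,n)=\sigma(n,0)=1$'') is the unit axiom of a $2$-cocycle, not normalization; the paper's definition of \emph{normalized} is $\sigma(m,-m)=1$, and with representatives in $\{0,\ldots,k_j-1\}$ your $\sigma$ does not satisfy this.

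The paper avoids your obstruction entirely by a different construction. Rather than trying to push $\cocycle{\theta}$ down to the quotient, it writes $\theta = \omega - \omega^\intercal$ with $\omega$ strictly upper triangular and sets $\varsigma(m,m') = \exp(2i\pi\inner{\omega m}{m'}{})$. The point is twofold: first, $\varsigma$ is a \emph{bicharacter}, so the cocycle identity is automatic and there is nothing to verify; second, the commutation character is $\varsigma(m,m')\overline{\varsigma(m',m)} = \exp(2i\pi\inner{(\omega-\omega^\intercal)m}{m'}{}) = \exp(2i\pi\inner{\theta m}{m'}{})$, exactly as required. Normalization is then obtained by the standard Kleppner trick: choose a branch $\mathrm{sqrt}$ of the square root avoiding the values of $\varsigma$, set $f(m) = \overline{\mathrm{sqrt}(\varsigma(m,-m))}$, and pass to the cohomologous cocycle $\sigma = (\delta f)\cdot\varsigma$. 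For the convergence assertion, the same branch $\mathrm{sqrt}$ is used uniformly in $n$ (the branch cut is chosen at an irrational angle avoiding all the relevant values), so the $\sigma_n$ converge pointwise to a limit $\sigma'$ cohomologous to $\cocycle{\theta}$ --- no subsequence extraction is needed, whereas your compactness argument would only give convergence along a subsequence.
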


\begin{proof}
  Let $\omega$ be the unique upper triangular matrix, with zero diagonal, such that $\theta=\omega-\omega^\intercal$, where $\omega^\intercal$ is the transpose matrix of $\omega$.

  Let $\varsigma$ be the bicharacter of $\Z^d_k$ induced by the bicharacter of $\Z^d$:
  \begin{equation*}
    (m,m')\in\Z^d\times\Z^d \mapsto \exp\left(2i\pi\inner{\omega m}{m'}{}\right)
  \end{equation*}
  using Convention (\ref{quotient-cocycle}). Now, let $\beta \in \R$ such that $\beta\notin \Z + \{ \inner{\theta m}{m'}{} : m,m' \in \Z^d \}$. Let $\mathrm{sqrt}$ be the function which maps $r\exp(2i\pi t)$, with $r > 0$ and $t \in (\beta,\beta+1)$, to $\sqrt{r} \exp(i\pi t)$. We then let
  \begin{equation*}
    f : m \in \Z^d_k \mapsto \overline{\mathrm{sqrt}(\varsigma(m,-m))} \text.
  \end{equation*}
  We then define
  \begin{equation*}
    \sigma:(m,m')\in\Z^d_k\times\Z^d_k \mapsto f(m)f(m')\overline{f(m+m')} \varsigma(m,m') \text.
  \end{equation*}
  By construction, $\sigma$ is a normalized $2$-cocycle of $\Z^d_k$, cohomologous to $\varsigma$. Moreover, we compute that:
  \begin{align*}
    W_{k,\sigma}^m W_{k,\sigma}^{m'}
    &= \sigma(m,m')\overline{\sigma(m',m)} W_{k,\sigma}^{m'} W_{k,\sigma}^m \\
    &= f(m)f(m')\overline{f(m+m')} \varsigma(m,m') \\
    &\quad \quad \overline{f(m) f(m')} f(m+m') \overline{\varsigma(m',m)} W_{k,\sigma}^{m'} W_{k,\sigma}^m \\
    &= \varsigma(m,m')\overline{\varsigma(m',m)} W_{k,\sigma}^{m'} W_{k,\sigma}^m \\
    &= \exp\left(2i\pi\left( \inner{\omega m}{m'}{} - \inner{\omega m'}{m}{} \right) \right) W_{k,\sigma}^{m'} W_{k,\sigma}^m \\
    &= \exp\left(2i\pi\left( \inner{(\omega-\omega^\intercal) m}{m'}{} \right) \right) W_{k,\sigma}^{m'} W_{k,\sigma}^m \\
    &= \exp\left(2i\pi\left( \inner{\theta m}{m'}{} \right) \right) W_{k,\sigma}^{m'} W_{k,\sigma}^m \text,
  \end{align*}
  as desired.

  Now, assume $k = \infty^d$ --- so $\theta$ is any antisymmetric $d\times d$ matrix. Fix:
  \begin{equation*}
    \beta \in \R \setminus \left(\Q + \{ \inner{\theta m}{m'}{} : m,m' \in \Z^d\}  \right)\text.
  \end{equation*}

  Let $(k_n)_{n\in\N}$ and $(\theta_n)_{n\in\N}$ be given, as in the hypothesis of our theorem. Note that, in particular, $\inner{\theta_n m}{m'}{} \in \Q$ for all $m,m' \in \Z^d$ and for all $n\in\N$. For each $n\in\N$, let $\sigma_n$ be the normalized $2$-cocycle of $\Z^d_{k_n}$, constructed from the matrix $\theta_n$, as in the first part of the proof, for the same fixed $\beta$ we have chosen. In particular, the commutation relation (\ref{main-approx-comm-eq}) holds for each $n\in\N$.

  Last, let:
  \begin{multline*}
    \sigma' :(m,m') \in \Z^d\times\Z^d \mapsto \overline{\mathrm{sqrt}(\varsigma(m,-m))\mathrm{sqrt}(\varsigma(m',-m'))}\\ \mathrm{sqrt}(\varsigma(m+m',-(m+m'))) \varsigma(m,m') \text.
  \end{multline*}

  Since $\mathrm{sqrt}$ is continuous on $\C\setminus\{ r\exp(2i\pi\beta) : r\geq 0\}$ by construction, we then note that, for any $m,m' \in \Z^d$, since $\lim_{n\rightarrow\infty}\exp\left(2i\pi\inner{\theta_{k_n} m}{m'}{}\right) = \exp(2i\pi\inner{\theta m}{m'}{})$, we also have $\lim_{n\rightarrow\infty}\sigma_{k_n}(m,m') = \sigma'(m,m')$. It remains to show that $\sigma'$ is cohomologous to $\cocycle{\theta}$.

  For each $m\in\Z^d$, let $s(m)\in\{-1,1\}$ be defined by
  \begin{equation*}
    \mathrm{sqrt}(\exp(2i\pi \inner{\omega m}{-m}{})) = s(m) \exp(i\pi \inner{\omega m}{-m}{}) \text.
  \end{equation*}
  We thus compute, for all $m,m' \in \Z^d$:
  \begin{multline*}
    s(m)s(m')s(m+m')\sigma'(m,m') \\
    \begin{split}
      &= \overline{s(m)\mathrm{sqrt}(\varsigma(m,-m))}\overline{s(m')\mathrm{sqrt}(\varsigma(m',-m'))} \\
      &\quad\quad s(m+m')\mathrm{sqrt}(\varsigma(m+m',-m-m')) \varsigma(m,m') \\
      &=\exp(i\pi\inner{\omega m}{m}{})\exp(i\pi\inner{\omega m'}{m'}{}) \exp(-i\pi\inner{\omega(m+m')}{(m+m')}{}) \\
      &\quad\quad \exp(2i\pi\inner{\omega m}{m'}{}) \\
      &=\exp(i\pi\inner{\theta m}{m'}{}) = \sigma(m,m') \text.
    \end{split}
  \end{multline*}
  Thus, $\cocycle{\theta}$ and $\sigma'$ are cohomologous, as desired.
\end{proof}

By \cite{Kleppner65}, every $2$-cocycle of $\Z^d$ is cohomologous to a skew bicharacter of the form $\cocycle{\theta}$, for an antisymmetric $d\times d$ matrix, using the notation of Theorem (\ref{Cocycle-approx-thm}). Thus, Theorem (\ref{Cocycle-approx-thm}) may be used to find approximations of any $2$-cocycle of $\Z^d$, up to cohomology.

We now can give a few more relevant examples of fuzzy tori, illustrating Theorem (\ref{Cocycle-approx-thm}).

\begin{example}\label{Clock-Shift-ex}
  Let $x\in\R\setminus\Q$. Let $\mathrm{sqrt}$ be the branch of the square root over $\C$, defined by sending any complex number $r \exp(2i\pi t)$, with $t \in (x,x+1)$ and $r> 0$, to $\sqrt{r}\exp(i\pi t)$. Note that $\mathrm{sqrt}$ is continuous at $1$.

  For each $n\in\N_\ast$, let $\varsigma_n$ be the $2$-cocycle on $\Z^2_{(n,n)}$ given by using Convention (\ref{quotient-cocycle}) on the $2$-cocycle
  \begin{equation*}
    m,m' \in \Z^2\times \Z^2 \mapsto \exp\left(2 i \pi \inner{\begin{pmatrix} 0 & \frac{-1}{n} \\ 0 & 0 \end{pmatrix}m}{m'}{}\right) \text.
  \end{equation*}

  For all $m,m' \in \Z^2_{(n,n)}$, we then set, following \cite{Kleppner74}:
  \begin{equation*}
    \sigma_n(m,m') = \overline{\mathrm{sqrt}(\varsigma_n(m,-m))\mathrm{sqrt}(\varsigma_n(m',-m'))}\mathrm{sqrt}(\varsigma_n(m+m',-m-m')) \varsigma_n(m,m') \text.
  \end{equation*}
  
  The map $\sigma_n$ is a normalized $2$-cocycle of $\Z^2_{(n,n)}$. Moreover, by construction, $(\sigma_n)_{n\in\N}$ converges to $1$ in $\mathcal{C}^2_{\infty^2}$.

  Last, a quick computation shows that
  \begin{equation*}
    W_{(n,n),\sigma_n}^{e_1} W_{(n,n),\sigma_n}^{e_2} = \exp\left(\frac{2i\pi}{n}\right)W_{(n,n),\sigma_n}^{e_2} W_{(n,n),\sigma_n}^{e_1} \text,
  \end{equation*}
  while $\left(W_{(n,n),\sigma_n}^{e_1}\right)^n = \left(W_{(n,n),\sigma_n}^{e_1}\right)^n = 1$. Thus, we define a (faithful) *-representation of $\qtd{2}{(n,n)}{\sigma_n}$ by sending $W_{(n,n),\sigma_n}^{e_1}$ to $S_n$ and $W_{(n,n),\sigma_n}^{e_2}$ to $C_n$, where we used the notation of Expression (\ref{Clock-Shift-eq}).
\end{example}

In this paper, we will construct operators modeled after the Dirac operator construction from Riemannian geometry, and thus, we will make use of Clifford algebras. These algebras are actually examples of fuzzy tori, and this will prove helpful. We will invoke, for this example, the following universal property of quantum and fuzzy tori.

\begin{theorem}[{\cite{Zeller-Meier68}}]
  Let $d\in\N_\ast$ and $k = (k(1),\ldots,k(d)) \in\Nbar_\ast^d$. Let $\theta$ be an antisymmetric $d\times d$ matrix such that
  \begin{equation*}
    \left\{ \inner{\theta m}{m'}{} : m,m' \in k\Z^d \right\} \subseteq \Z \text.
  \end{equation*}
  Let $\sigma$ be a normalized $2$-cocycle of $\Z^d_k$ such that, for all $m,m' \in \Z^d_k$, the following commutation relation holds:
  \begin{equation*}
    W_{k,\sigma}^{m} W_{k,\sigma}^{m'} = \exp\left(2i\pi\inner{\theta m}{m'}{}\right) W_{k,\sigma}^{m'} W_{k,\sigma}^m \text.
  \end{equation*}
  Note that such a cocycle of $\Z^d_k$ exists by Theorem (\ref{Cocycle-approx-thm}).
  
  For all family $V_1$,\ldots,$V_d$ of $d$ unitaries in a C*-algebra, such that, for all $j,s \in \{1,\ldots,d\}$,
  \begin{equation*}
    V_j^{k(j)} = 1\text{, and } V_j V_s = \exp(2i\pi\inner{\theta e_j}{e_s}{}) V_s V_j \text,
  \end{equation*}
  there exists a *-morphism $\pi$ from $\qt{k}{\sigma}$ to $C^\ast(V_1,\ldots,V_d)$ such that $\pi(W_{k,\sigma}^{e_j}) = V_j$ for all $j\in\{1,\ldots,d\}$.
\end{theorem}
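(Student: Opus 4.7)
My strategy is to construct, inside $C^\ast(V_1, \ldots, V_d)$, a projective unitary representation $\rho$ of $\Z^d_k$ whose cocycle is exactly $\sigma$ and which sends $e_j$ to $V_j$, then extend it to the desired *-morphism on $\qt{k}{\sigma}$ using Lemma \ref{pi-k-sigma-lemma} together with the amenability of $\Z^d_k$.

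First, I would define $V^m := V_1^{m_1} V_2^{m_2} \cdots V_d^{m_d}$ for $m = (m_1, \ldots, m_d) \in \Z^d_k$, taking exponents in arbitrary representatives. This is well-defined as a function on the quotient because $V_j^{k(j)} = 1$ and because the hypothesis $\inner{\theta m}{m'}{} \in \Z$ for $m, m' \in k\Z^d$ trivialises the phases produced when reducing exponents modulo each $k(j)$ via the commutation relations. A straightforward reordering computation using $V_j V_s = \exp(2i\pi \inner{\theta e_j}{e_s}{}) V_s V_j$ and the antisymmetry of $\theta$ then yields $V^m V^n = \eta(m, n) V^{m+n}$, where $\eta$ is a normalized $\T$-valued $2$-cocycle of $\Z^d_k$ whose commutator matches that of $\sigma$: $\eta(m,n)\overline{\eta(n,m)} = \exp(2i\pi\inner{\theta m}{n}{}) = \sigma(m,n)\overline{\sigma(n,m)}$.

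The next step is to adjust by a scalar. Since $\sigma \overline{\eta}$ is then a symmetric normalized $2$-cocycle on $\Z^d_k$ with values in $\T$, and since $\T$ is divisible (hence injective in the category of abelian groups) so that $\mathrm{Ext}^1(\Z^d_k, \T) = 0$, any such symmetric cocycle is a coboundary; pick $\alpha : \Z^d_k \to \T$ with $\sigma(m,n)\overline{\eta(m,n)} = \alpha(m)\alpha(n)\overline{\alpha(m+n)}$. Iterating this relation along each cyclic subgroup $\langle e_j \rangle \subseteq \Z^d_k$ and invoking both $V_j^{k(j)} = 1$ and the matching identity $(W^{e_j}_{k,\sigma})^{k(j)} = 1$ forces $\alpha(e_j)^{k(j)} = 1$, so the formula $\chi(m) := \prod_{j=1}^d \alpha(e_j)^{m_j}$ defines a character of $\Z^d_k$; replacing $\alpha$ by $\alpha\overline{\chi}$, which preserves the associated coboundary, enforces $\alpha(e_j) = 1$ for all $j$. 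Putting $\rho(m) := \overline{\alpha(m)} V^m$, one verifies directly from these choices that $\rho(m)\rho(n) = \sigma(m, n) \rho(m+n)$, $\rho(0) = 1$, $\rho(-m) = \rho(m)^\ast$ (from normalization of $\sigma$), and $\rho(e_j) = V_j$.

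Finally, I would define $\pi_0 : \ell^1(\Z^d_k) \to C^\ast(V_1, \ldots, V_d)$ by $\pi_0(f) := \sum_{m} f(m) \rho(m)$. The cocycle relation for $\rho$ and the normalization of $\sigma$ make $\pi_0$ a contractive *-morphism of Banach *-algebras out of $(\ell^1(\Z^d_k), \conv{k,\sigma}, \cdot^\ast)$. Because $\Z^d_k$ is abelian, hence amenable, the full and reduced twisted group C*-algebras coincide, so $\norm{\pi_0(f)}{C^\ast(V_1,\ldots,V_d)} \leq \opnorm{\pi_{k,\sigma}(f)}{}{\ell^2(\Z^d_k)}$ for every $f \in \ell^1(\Z^d_k)$; by Lemma \ref{pi-k-sigma-lemma}, $\pi_0$ extends by continuity to the desired *-morphism $\pi : \qt{k}{\sigma} \to C^\ast(V_1, \ldots, V_d)$, which by construction satisfies $\pi(W^{e_j}_{k,\sigma}) = \pi_0(\delta_{e_j}) = \rho(e_j) = V_j$. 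The main obstacle is the cohomological step: producing $\alpha$ witnessing the coboundary and then normalizing it so that $\alpha(e_j) = 1$, which combines the vanishing of $\mathrm{Ext}^1(\Z^d_k, \T)$ with a character adjustment whose consistency rests on the two independent power relations $V_j^{k(j)} = 1$ and $(W^{e_j}_{k,\sigma})^{k(j)} = 1$; once $\rho$ is in hand, extending to the C*-completion is the standard amenability argument.
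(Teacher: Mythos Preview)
The paper does not supply its own proof of this statement; it is quoted from Zeller-Meier as the universal property of twisted group C*-algebras, and no argument is given. So there is nothing in the paper to compare your argument against. Your overall strategy --- assemble a $\sigma$-projective unitary representation out of the $V_j$ and then extend to the C*-completion via amenability of $\Z^d_k$ --- is the standard route, and is essentially how such universal properties are established.

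There is, however, a genuine gap at the character-adjustment step. You assert that $(W_{k,\sigma}^{e_j})^{k(j)} = 1$, but this does not follow from $\sigma$ being a normalized $2$-cocycle with the prescribed antisymmetrization. From $W_{k,\sigma}^{m} W_{k,\sigma}^{n} = \sigma(m,n)\,W_{k,\sigma}^{m+n}$ one obtains $(W_{k,\sigma}^{e_j})^{k(j)} = \prod_{l=1}^{k(j)-1}\sigma(e_j, l e_j)$, a scalar $c_j \in \T$ that need not equal $1$: already on $\Z/3\Z$ with $\theta = 0$, the normalized cocycles are parametrized by $\omega := \sigma(1,1)\in\T$, and one computes $(W^{e_1}_{k,\sigma})^3 = \omega$. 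Your telescoping along $\langle e_j\rangle$ actually yields $\alpha(e_j)^{k(j)} = c_j$ (using $V_j^{k(j)} = 1$), not $\alpha(e_j)^{k(j)} = 1$, and without $c_j = 1$ you cannot manufacture a character $\chi$ of $\Z^d_k$ with $\chi(e_j) = \alpha(e_j)$. In fact the $\Z/3\Z$ example shows that the statement, as literally written, fails for such $\sigma$: any *-morphism with $\pi(W^{e_1}_{k,\sigma}) = V_1$ would force $\omega = V_1^3 = 1$. So what is really being used is the additional compatibility $(W_{k,\sigma}^{e_j})^{k(j)} = 1$ for each finite $k(j)$ --- equivalently, that the $V_j$'s with the stated relations genuinely extend to a $\sigma$-projective representation --- and under that hypothesis your argument (and the Zeller-Meier universal property) does go through. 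The remaining pieces of your sketch, namely the $\mathrm{Ext}^1(\Z^d_k,\T) = 0$ argument for trivializing a symmetric cocycle and the amenability step passing from $\ell^1$ to $\qt{k}{\sigma}$, are correct.
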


\begin{example}\label{Clifford-ex}
  Let $d\in\N_\ast$, and write $k = (2,\ldots,2)\in\N_\ast^d$. By Theorem (\ref{Cocycle-approx-thm}), there exists a normalized $2$-cocycle $\varsigma$ of $\Z^d_k$ such that
  \begin{equation*}
    \forall m,m'\in\Z^d_k \quad W_{k,\varsigma}^{m}W_{k,\varsigma}^{m'} = \exp\left(i\pi\inner{\begin{pNiceMatrix}
          0       & 1      & \Cdots & 1 \\
          -1        & \Ddots & \Ddots & \Vdots \\
          \Vdots         &  \Ddots      &   &  1            \\
          -1        & \Cdots & -1 &  0 
        \end{pNiceMatrix}m}{m'}{} \right) W_{k,\varsigma}^{m'}W_{k,\varsigma}^{m} \text.
  \end{equation*}
  Thus, $C^\ast(\Z^d_k,\varsigma)$ is the universal C*-algebra generated by $d$ unitaries $W_{k,\varsigma}^{e_1}$,\ldots,$W_{k,\varsigma}^{e_d}$ such that, for all $j,s \in \{1,\ldots,d\}$:
  \begin{equation*}
    W_{k,\varsigma}^{e_j} W_{k,\varsigma}^{e_s} + W_{k,\varsigma}^{e_s} W_{k,\varsigma}^{e_j} = \begin{cases} 2 \text{ if $j=s$,} \\ 0 \text{ otherwise,} \end{cases}
  \end{equation*}
  and $\left(W_{k,\varsigma}^{e_j}\right)^2 = 1$.

  The fuzzy torus $\qt{k}{\varsigma}$ is, therefore, the \emph{Clifford algebra} $\alg{Cl}(\C^d)$ of $\C^d$, since $W_{k,\varsigma}^{e_1}$,\ldots,$W_{k,\varsigma}^{e_d}$ satisfy the universal conditions of the generators of $\alg{Cl}(\C^d)$. By the universal properties of both $\qtd{d}{k}{\varsigma}$ and $\alg{Cl}(\C^d)$, the two algebras are thus isomorphic as associative algebras.

  We will make use of this identification. We will however use the notation
  \begin{equation*}
    \forall j \in \{1,\ldots,d\} \quad \gamma_j = i W_{k,\varsigma}^{e_j} \text,
  \end{equation*}
  so that
  \begin{equation*}
    \forall j,s \in \{1,\ldots,d\} \quad \gamma_j \gamma_s + \gamma_s \gamma_j =
    \begin{cases}
      -2 \text{ if $j=s$,}\\
      0 \text{ if $j\neq s$,}
    \end{cases}
  \end{equation*}
  as it will simplify some notation further in this work --- of course, $\qt{k}{\varsigma} = C^\ast(\gamma_1,\ldots,\gamma_d)$. We note that $\gamma_j^\ast = -\gamma_j$ for all $j\in\{1,\ldots,d\}$.
\end{example}

\bigskip

Let $(k,\sigma)\in\Xi^d$. The representation $\pi_{k,\sigma}$ of Theorem (\ref{pi-k-sigma-lemma}) is actually simply the GNS representation of $\qt{k}{\sigma}$ for the trace which is the unique continuous extension of $f \in \ell^1(\Z^d_k) \mapsto f(0)$. Even more specifically, if $f \in \ell^1(\Z^d_k)$, and if $\xi \in \ell^2(\Z^d_k)$, then, by Young's inequality, the product $f\conv{k,\sigma} \xi$ is again in $\ell^2(\Z^d_k)$, and
\begin{equation*}
  \pi_{k,\sigma}(f)\xi = f\conv{k,\sigma} \xi \text.
\end{equation*}
From this observation, we note that $\ell^2(\Z^d_k)$ is a natural bimodule over the C*-algebra $\qt{k}{\sigma}$.

\begin{lemma}
  Let $(k,\sigma) \in \Xi^d$. For all $\xi \in \ell^2(\Z^d_k)$, let
  \begin{equation*}
    J_k \xi : m \in \Z^d_k \mapsto \overline{\xi(-m)}\text.
  \end{equation*}
  The map $J_k : \ell^2(\Z^d_k)\rightarrow\ell^2(\Z^d_k)$ is a conjugate linear, involutive, isometry. Moreover, if we set, for all bounded linear operator $a$ on $\ell^2(\Z^d_k)$, and for all $\xi \in \ell^2(\Z^d_k)$:
  \begin{equation*}
    \xi \cdot a = J_k a^\ast J_k \xi \text,
  \end{equation*}
  then, in particular,
  \begin{equation*}
    a \in \qt{k}{\sigma} \longmapsto \left( \xi\in\ell^2(\Z^d_k)\mapsto \xi\cdot a \right)
  \end{equation*}
  is a right action of $\qt{k}{\sigma}$ on $\ell^2(\Z^d_k)$ (by bounded linear operators) such that
  \begin{equation*}
    \forall a,b \in \qt{k}{\sigma} \quad \forall \xi \in \ell^2(\Z^d_k) \quad (a\xi)\cdot b = a(\xi \cdot b) \text,
  \end{equation*}
  i.e., $\ell^2(\Z^d_k)$ is a bimodule over $\qt{k}{\sigma}$. Moreover, the adjoint of $\xi\in\ell^2(\Z^d_k)\mapsto a\cdot \xi$ is $\xi \in \ell^2(\Z^d)\mapsto \xi \cdot (a^\ast)$.

  Last, if $f \in \ell^1(\Z^d_k)$ and $g\in \ell^2(\Z^d_k)$, then
  \begin{equation*}
    g \cdot \pi_{k,\sigma}(f) = g \conv{k,\sigma} f \in \ell^2(\Z^d_k) \text.
  \end{equation*}
\end{lemma}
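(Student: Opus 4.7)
The plan is to establish the lemma's claims in the order they appear: verifying that $J_k$ has the stated properties, defining the right action as conjugation by an antiunitary, checking that left and right actions commute (this is the main step), and finally identifying the right action with twisted convolution on the right.

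First, a direct computation shows that $J_k$ is conjugate-linear and that $J_k^2\xi(m)=\overline{\overline{\xi(m)}}=\xi(m)$, while $\|J_k\xi\|_{\ell^2}^2=\sum_m|\overline{\xi(-m)}|^2=\|\xi\|_{\ell^2}^2$, so $J_k$ is a conjugate-linear involutive isometry. Next, I would set $\rho(a)=J_ka^\ast J_k$ for any bounded operator $a$, so that $\xi\cdot a=\rho(a)\xi$. Because $J_k$ is antilinear, the map $a\mapsto J_kaJ_k$ is antilinear in $a$, and $a\mapsto a^\ast$ is antilinear as well; their composition $\rho$ is therefore complex-linear. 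The identity $J_k^2=\mathrm{id}$ then gives $\rho(ab)=J_kb^\ast a^\ast J_k=\rho(b)\rho(a)$, i.e., $\rho$ is an antimultiplicative linear map, which is precisely the right-action property $\xi\cdot(ab)=(\xi\cdot a)\cdot b$. Boundedness of $\rho(a)$ and $\rho^2=\mathrm{id}$ are immediate.

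The main obstacle is showing that the right action commutes with the left $\qt{k}{\sigma}$-action, i.e., that $\rho(\qt{k}{\sigma})$ lies in the commutant of $\qt{k}{\sigma}$. By linearity, norm continuity, and the fact that $\qt{k}{\sigma}$ is generated by the $W_{k,\sigma}^m$, it suffices to verify that $\rho(W_{k,\sigma}^m)$ commutes with $W_{k,\sigma}^n$ for all $m,n\in\Z^d_k$. Unwinding definitions, using $(W_{k,\sigma}^m)^\ast=W_{k,\sigma}^{-m}$, and applying the identity $\overline{\sigma(x,y)}=\sigma(-y,-x)$ (valid for normalized $2$-cocycles) yields the explicit formula
\begin{equation*}
(\rho(W_{k,\sigma}^m)\xi)(n)=\sigma(n-m,m)\,\xi(n-m)\text.
\end{equation*}
Evaluating both $W_{k,\sigma}^n\rho(W_{k,\sigma}^m)\xi$ and $\rho(W_{k,\sigma}^m)W_{k,\sigma}^n\xi$ at a point $p$, the required equality reduces to
\begin{equation*}
\sigma(n,p-n)\,\sigma(p-n-m,m)=\sigma(p-m,m)\,\sigma(n,p-n-m)\text,
\end{equation*}
which is exactly the $2$-cocycle identity (\ref{cocycle-id}) applied with $x=n$, $y=p-n-m$, $z=m$. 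This is the technical heart of the proof.

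The adjoint statement follows from antiunitarity: the relation $\langle J_k\xi,\eta\rangle=\overline{\langle\xi,J_k\eta\rangle}$ (a straightforward consequence of the definition of $J_k$) yields, after a short computation, $\langle\rho(a)\xi,\eta\rangle=\langle\xi,\rho(a^\ast)\eta\rangle$, so $\rho(a)^\ast=\rho(a^\ast)$; thus the right action is a $\ast$-action, and the left action is as well since $\pi_{k,\sigma}$ is a $\ast$-representation. Finally, the convolution formula is obtained by first verifying $g\cdot W_{k,\sigma}^m=g\conv{k,\sigma}\delta_m$ directly from the explicit formula above and the definition (\ref{conv-eq}) of the twisted convolution, and then extending by $\ell^1$-continuity: both $f\mapsto g\cdot\pi_{k,\sigma}(f)$ and $f\mapsto g\conv{k,\sigma}f$ are bounded maps from $\ell^1(\Z^d_k)$ into $\ell^2(\Z^d_k)$ (the latter by Young's inequality), so agreement on the dense span of $\{\delta_m:m\in\Z^d_k\}$ extends to all $f\in\ell^1(\Z^d_k)$.
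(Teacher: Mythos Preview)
Your proof is correct, and it reaches the same conclusions as the paper, but the logical order and the argument for the bimodule property differ genuinely.

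The paper first establishes the convolution formula $g\cdot\pi_{k,\sigma}(f)=g\conv{k,\sigma}f$ by observing that $J_k f=f^\ast$ on $\ell^1(\Z^d_k)$ and then computing $J_k\pi_{k,\sigma}(f^\ast)J_k\,g=(f^\ast\conv{k,\sigma}g^\ast)^\ast=g\conv{k,\sigma}f$ for $f,g\in\ell^1$, extending to $g\in\ell^2$ by continuity. The bimodule identity $(a\xi)\cdot b=a(\xi\cdot b)$ is then obtained as a consequence: it reduces to associativity of the twisted convolution $(f\conv{k,\sigma}\xi)\conv{k,\sigma}g=f\conv{k,\sigma}(\xi\conv{k,\sigma}g)$, which is already built into the Banach $\ast$-algebra structure of $\ell^1(\Z^d_k)$. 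Your route instead proves commutation of the left and right actions directly on generators, reducing $[W_{k,\sigma}^n,\rho(W_{k,\sigma}^m)]=0$ to a single instance of the cocycle identity, and only afterwards derives the convolution formula from the generator case $g\cdot W_{k,\sigma}^m=g\conv{k,\sigma}\delta_m$ by $\ell^1$-continuity. Your approach makes the role of the cocycle identity in the bimodule structure completely explicit and avoids invoking the $\ast$-algebra identity $(f^\ast\conv{k,\sigma}g^\ast)^\ast=g\conv{k,\sigma}f$; the paper's approach is slightly shorter because it recycles associativity of $\conv{k,\sigma}$ rather than recomputing it at the Hilbert-space level.
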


\begin{proof}
  It is immediate that $J_k$ is involutive, conjugate linear, and isometric on $\ell^2(\Z^d_k)$. It is then easy to check that $\xi\in\ell^2(\Z^d_k)\mapsto \xi\cdot a$ is linear, for all bounded linear operator $a$ on $\ell^2(\Z^d_k)$. We thus compute, for all bounded linear operators $a,b$ on $\ell^2(\Z^d_k)$, and for all $\xi \in \ell^2(\Z^d_k)$,
  \begin{equation*}
    \xi\cdot (ab) = J_k (ab)^\ast J_k \xi = J_k b^\ast J_k^2 a^\ast J_k \xi = \left(\xi\cdot a\right) \cdot b \text,
  \end{equation*}
  and, for all $\lambda\in\C$,
  \begin{equation*}
    \xi\cdot(\lambda a + b) = J_k(\lambda a + b)^\ast J_k \xi = \lambda J_k a^\ast J_k\xi + J_k b^\ast J_k\xi = \lambda \xi \cdot a + \xi \cdot b \text.
  \end{equation*}

  Therefore,
  \begin{equation*}
    a\in\qt{k}{\sigma} \longmapsto \left( \xi \in \ell^2(\Z^d_k) \mapsto \xi\cdot a \right)
  \end{equation*}
  is a right action of $\qt{k}{\sigma}$ on $\ell^2(\Z^d_k)$ by bounded linear operator.
  Of course, $(J_k a^\ast J_k)^\ast = J_k (a^\ast)^\ast J_k$ for all $a\in \qt{k}{\sigma}$.

   Let $f,g  \in \ell^1(\Z^d_k)$. Note that $J_k f=f^\ast$ and $J_k g = g^\ast$. We thus compute:
    \begin{align*}
      g\cdot\pi_{k,\sigma}(f)
      &= J_k \pi_{k,\sigma}(f^\ast) J_k g \\
      &= \left( f^\ast \conv{k,\sigma} g^\ast \right)^\ast \\
      &= g \conv{k,\sigma} f \text.
    \end{align*}
    This relation extends to $g \in \ell^2(\Z^d_k)$, and gives a vector in $\ell^2(\Z^d_k)$ by Young's inequality (or simply by continuity of the map $\xi\in\ell^2(\Z^d_k)\mapsto \xi\cdot \pi_{k,\sigma}(f)$).

    Therefore, for all $f,g \in \ell^1(\Z^d_k)$ and $\xi \in \ell^2(\Z^d_k)$, we compute:
    \begin{align*}
      \left(\pi_{k,\sigma}(f) \xi\right) \cdot \pi_{k,\sigma}(g)
      &= \left( f \conv{k,\sigma} \xi \right) \conv{k,\sigma} g \\
      &= f\conv{k,\sigma} \left(\xi \conv{k,\sigma} g\right) \\
      &= \pi_{k,\sigma}(f)(\xi\cdot \pi_{k,\sigma}(g)) \text.
    \end{align*}

    Therefore, by continuity, for all $a,b \in \qt{k}{\sigma}$, for all $\xi \in \ell^2(\Z^d_k)$, we conclude $(a\xi)\cdot b = a(\xi\cdot b)$. This completes our proof.
\end{proof}

\begin{notation}
  If $\xi \in \ell^2(\Z^d_k)$, and if $a\in\qt{k}{\sigma}$, then we set
  \begin{equation*}
    \left[ a, \xi \right] = a\xi - \xi\cdot a \text.
  \end{equation*}

  Note that, if $f,\xi \in \ell^1(\Z^d_k)$, then
  \begin{equation*}
    [\pi_{k,\sigma}(f),\xi] = f\conv{k,\sigma} \xi - \xi \conv{k,\sigma} f
  \end{equation*}
  so there will be little risk of confusion by using the same notation for this commutator and the usual commutator on quantum and fuzzy tori.
\end{notation}

\bigskip

Quantum tori are prototypes of noncommutative manifolds, whose geometry \cite{Connes80} derives from a natural action of torus $\T^d$ on quantum tori, called the \emph{dual action}. The dual action of the Lie group $\T^d$ induces an action of the Lie algebra $\R^d$ of $\T^d$ by *-derivations on quantum tori. Thus, the starting point for the geometric considerations in this paper are the following actions of the group $\U_k^d$ on fuzzy and quantum tori. As we will construct a spectral triple on both quantum and fuzzy tori, we start with natural actions of a closed subgroup of $\T^d$ on the Hilbert spaces $\ell^2(\Z^d_{k})$, which then defines the dual action on our C*-algebras by conjugation.

\begin{notation}    
  Let $k=(k(1),\ldots,k(d)) \in \Nbar_\ast^d$. The Pontryagin dual $\widehat{\Z^d_k}$ of $\Z^d_k$ is identified with the closed subgroup
  \begin{equation*}
    \U_k^d = \left\{ (z_1,\ldots,z_d) \in \T^d : \forall j \in \{1,\ldots,d\} \quad k(j) < \infty \implies z_j^{k(j)} = 1 \right\} \text,
  \end{equation*}
  of the $d$-torus $\T^d$, where $\T = \{ z \in \C : |z| = 1 \}$. To make our identification explicit, we will use a simple notation for the dual pairing between $\Z^d_k$ and $\U^d_k$. If $z = (z_1,\ldots,z_d) \in \T^d$ and $n = (n_1,\ldots,n_d) \in \Z^d$, then we set:
  \begin{equation*}
    z^n = \prod_{j=1}^d z_j^{n_j} \text{.}
  \end{equation*}
  Now, if $z \in \U^d_k$ and $n\in\Z^d_k$, then, for any $w,w'\in\Z^d$ such that $n = w + k\Z^d = w' + k\Z^d$, we easily observe that $z^w = z^{w'}$, and we denote this element of $\U^d_k$ simply by $z^n$.

  Every character of $\Z^d_k$ is of the form $\chi_z : n\in\Z^d_k \mapsto z^n$ for a unique $z \in \U^d_k$, and the map $z\in\U_k^d\mapsto \chi_z \in \widehat{\Z^d_k}$ is indeed a topological group isomorphism, as can easily be checked.
\end{notation}

\begin{notation}\label{v-notation}
  Let $d\in\N_\ast$ and $k \in \Nbar_\ast^d$. For all $z\in\U^d_k$ and $\xi \in \ell^2(\Z^d_k)$, we define
  \begin{equation*}
    v_k^z \xi : m \in \Z^d_k \mapsto z^{m} \xi(m) \text.
  \end{equation*}
  The map $z\in\U^d_k \mapsto v_k^z$ thus defined is a strongly continuous action of the group $\U^d_k$ on $\ell^2(\Z^d_k)$ by unitaries.
\end{notation}

The dual action of $\U^d_k$ on $\qt{k}{\sigma}$ is defined by conjugation with the action $v_k$. First, note that $v_k$ restricted to $\ell^1(\widehat{G_n}) \subseteq \ell^2(\widehat{G_n})$ is an isometry of $\ell^1(\widehat{G_n})$. With this in mind:

\begin{lemma}
  Let $d\in\N_\ast$. Let $(k,\sigma)\in\Xi^d$. For all $f \in \ell^1(\Z^d_k)$,
  \begin{equation*}
    v_k^z \, \pi_{k,\sigma}(f) \, v_k^{\overline{z}} = \pi_{k,\sigma}(v_k^z(f))
  \end{equation*}
  The restriction of $v_k^z$ to $\ell^1(\widehat{G_n})$ thus defined is a *-automorphism of $(\ell^1(\Z^d_k),\conv{k,\sigma},\cdot^\ast)$, and the map $z\in \U^d_k \mapsto v_k^z$ is a strongly continuous action of $\U^d_k$ on $\ell^1(\Z^d_k)$.
\end{lemma}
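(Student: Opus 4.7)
The plan is to dispose of the three assertions in sequence, with the core work being a direct computation of the conjugation action on the basic unitaries $W_{k,\sigma}^m$.

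First, I would prove the conjugation identity on the generators and then extend by linearity and continuity. The key computation, for $\xi \in \ell^2(\Z^d_k)$ and $n \in \Z^d_k$, is
\begin{equation*}
  (v_k^z W_{k,\sigma}^m v_k^{\overline{z}} \xi)(n) = z^n \sigma(m,n-m) \overline{z}^{\,n-m} \xi(n-m) = z^m \sigma(m,n-m) \xi(n-m) = z^m (W_{k,\sigma}^m \xi)(n),
\end{equation*}
yielding $v_k^z W_{k,\sigma}^m v_k^{\overline{z}} = z^m W_{k,\sigma}^m$. Since, by Lemma \ref{conv-lemma}, the series $\pi_{k,\sigma}(f) = \sum_{m \in \Z^d_k} f(m) W_{k,\sigma}^m$ converges in operator norm with norm bounded by $\norm{f}{\ell^1(\Z^d_k)}$, conjugation by the unitary $v_k^z$ may be passed through the sum to give $v_k^z \pi_{k,\sigma}(f) v_k^{\overline{z}} = \pi_{k,\sigma}(v_k^z f)$.

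Second, to prove the $*$-automorphism claim on $\ell^1(\Z^d_k)$, I would first observe that $v_k^z$ is pointwise multiplication by the modulus-one function $m \mapsto z^m$, hence a linear isometry of $\ell^1(\Z^d_k)$ onto itself with inverse $v_k^{\overline{z}}$. Compatibility with the involution follows from $\overline{z^{-m}} = z^m$, giving $(v_k^z f)^\ast(m) = \overline{z^{-m} f(-m)} = z^m \overline{f(-m)} = v_k^z(f^\ast)(m)$; and compatibility with $\conv{k,\sigma}$ follows from the factorization $z^n = z^m \cdot z^{n-m}$ applied under the convolution sum. Alternatively, one can deduce the $*$-automorphism property structurally from injectivity of $\pi_{k,\sigma}$ (Lemma \ref{conv-lemma}) together with the conjugation identity already proved, which exhibits $v_k^z$ on $\ell^1(\Z^d_k)$ as the pullback of a $*$-automorphism of $B(\ell^2(\Z^d_k))$.

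Finally, for the strong continuity of $z \mapsto v_k^z$ on $\ell^1(\Z^d_k)$, I would apply dominated convergence. For $f \in \ell^1(\Z^d_k)$ and $z \to z_0$ in $\U^d_k$,
\begin{equation*}
  \norm{v_k^z f - v_k^{z_0} f}{\ell^1(\Z^d_k)} = \sum_{m \in \Z^d_k} |z^m - z_0^m|\,|f(m)|,
\end{equation*}
where each summand tends to $0$ by continuity of the pairing $z \mapsto z^m$, and is uniformly dominated by the summable function $2|f(m)|$. No step presents a genuine obstacle: the one place requiring care is the bookkeeping of powers of $z$ and $\overline{z}$ in the first computation, which telescope to the expected factor $z^m$.
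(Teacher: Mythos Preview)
Your proof is correct and follows essentially the same approach as the paper: compute the conjugation $v_k^z W_{k,\sigma}^m v_k^{\overline{z}} = z^m W_{k,\sigma}^m$ on the generators, then extend. The paper's own proof gives only this one computation and dismisses the rest with ``similarly easy computations,'' so your write-up of the $\ast$-automorphism and strong-continuity claims (via the character factorization and dominated convergence, respectively) is in fact more complete than the paper's, while remaining entirely aligned with it.
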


\begin{proof}
  Let $z\in \U_k^d$, and $p \in \ell^2(\Z^d_k)$. By definition, we thus compute:
  \begin{align*}
    v_k^z \, W_{k,\sigma}^m \, v_k^{\overline{z}} \; \xi(p)
    &= z^p \; W_{k,\sigma}^m \, v_k^{\overline{z}} \; \xi(p) \\
    &= z^p \sigma(m,p-m) (v_k^{\overline{z}}\xi)(p-m) \\
    &= z^p \sigma(m,p-m) z^{m-p} \xi(p-m) = z^m W_{k,\sigma}^m \, \xi(p) \text.
  \end{align*}
  The rest of the lemma is proven with similarly easy computations.
\end{proof}

\begin{notation}
  The unit of $\qt{k}{\sigma}$ --- which is the identity on $\ell^2(\Z^d_k)$ --- for any $d\in \N_\ast$, and for any $(k,\sigma)\in\Xi^d$, is denoted by $\unit_k$, or even $\unit$ if no confusion may arise.
\end{notation}

\begin{corollary}\cite{Zeller-Meier68}\label{dual-action-thm}
  Let $d\in\N_\ast$. Let $(k,\sigma)\in\Xi^d$. For all $a\in \qt{k}{\sigma}$, the element
  \begin{equation*}
    \alpha_{k,\sigma}^z(a) = v_k^z \, a \, v_k^{\overline{z}}
  \end{equation*}
  is in $\qt{k}{\sigma}$. Thus defined, $z\in\U^d_k\mapsto \alpha_{k,\sigma}^z$ is a strongly continuous action of $\U^d_k$ on $\qt{k}{\sigma}$ by *-automorphisms, called the \emph{dual action} of $\U^d_k$  on $\qt{k}{\sigma}$.
  The action $\alpha_{k,\sigma}$ thus defined is ergodic, i.e.
  \begin{equation*}
    \left\{ a \in \qt{k}{\sigma} : \forall z \in \U^d_k \quad \alpha_{k,\sigma}^z(a) = a \right\} = \C \unit_k \text.
  \end{equation*}
\end{corollary}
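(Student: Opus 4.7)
The plan is to establish each claim in turn, using the lemma that $v_k^z \pi_{k,\sigma}(f) v_k^{\overline{z}} = \pi_{k,\sigma}(v_k^z(f))$ as the essential ingredient together with the fact that the $v_k^z$ are unitaries forming a strongly continuous group representation.

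First, I would verify that $\alpha_{k,\sigma}^z$ preserves $\qt{k}{\sigma}$. Since each $v_k^z$ is unitary, $a \mapsto v_k^z a v_k^{\overline{z}}$ is a *-automorphism of $B(\ell^2(\Z^d_k))$ which is isometric for the operator norm. The preceding lemma shows that this conjugation sends the dense *-subalgebra $\pi_{k,\sigma}(\ell^1(\Z^d_k))$ of $\qt{k}{\sigma}$ into itself; since $\qt{k}{\sigma}$ is norm-closed and $\alpha_{k,\sigma}^z$ is norm-continuous, it maps $\qt{k}{\sigma}$ into itself. The group law $\alpha_{k,\sigma}^{zw} = \alpha_{k,\sigma}^z \circ \alpha_{k,\sigma}^w$ follows immediately from $v_k^{zw} = v_k^z v_k^w$.

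Next, I would prove the strong (pointwise norm) continuity of $z \mapsto \alpha_{k,\sigma}^z(a)$ by an approximation argument. On the canonical unitaries we have $\alpha_{k,\sigma}^z(W_{k,\sigma}^m) = z^m W_{k,\sigma}^m$, so continuity in $z$ is transparent. For $a = \pi_{k,\sigma}(f)$ with $f \in \ell^1(\Z^d_k)$, the previous lemma gives $\|\alpha_{k,\sigma}^z(\pi_{k,\sigma}(f)) - \alpha_{k,\sigma}^w(\pi_{k,\sigma}(f))\| \leq \sum_{m \in \Z^d_k} |z^m - w^m|\,|f(m)|$, which tends to $0$ as $w \to z$ by dominated convergence. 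For arbitrary $a \in \qt{k}{\sigma}$, a standard $\varepsilon/3$-argument using isometry of $\alpha_{k,\sigma}^z$ and density of $\pi_{k,\sigma}(\ell^1(\Z^d_k))$ yields continuity.

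For ergodicity, I would use the Haar integral $E_0(a) = \int_{\U^d_k} \alpha_{k,\sigma}^z(a)\,dz$, well-defined as a Bochner integral by strong continuity and compactness of $\U^d_k$. On generators one computes $E_0(W_{k,\sigma}^m) = \left(\int_{\U^d_k} z^m\,dz\right) W_{k,\sigma}^m$; since the character $z \mapsto z^m$ of $\U^d_k$ is trivial if and only if $m = 0$ in $\Z^d_k$, the integral equals $1$ when $m=0$ and $0$ otherwise. Thus $E_0(W_{k,\sigma}^m) = \delta_{m,0}\unit_k$. By contractivity of $E_0$ and density, $E_0(a) = \tau(a)\unit_k$ where $\tau$ is the continuous extension to $\qt{k}{\sigma}$ of $f \in \ell^1(\Z^d_k) \mapsto f(0)$. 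If $\alpha_{k,\sigma}^z(a) = a$ for every $z \in \U^d_k$, then $a = E_0(a) = \tau(a)\unit_k \in \C\unit_k$, establishing ergodicity. The main step requiring care is justifying that the scalar $\int_{\U^d_k} z^m\,dz$ really is $\delta_{m,0}$ as $m$ ranges over $\Z^d_k$ --- this amounts to recognizing $\Z^d_k$ as the Pontryagin dual of $\U^d_k$, already recorded in the identification preceding Notation \ref{v-notation}, so it presents no obstacle.
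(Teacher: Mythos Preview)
Your proposal is correct and follows the natural route suggested by the preceding lemma; the paper itself does not give a proof of this corollary, instead citing \cite{Zeller-Meier68} for the result. Your argument is exactly the standard one: extend the conjugation formula $v_k^z \pi_{k,\sigma}(f) v_k^{\overline{z}} = \pi_{k,\sigma}(v_k^z f)$ from the dense subalgebra by continuity, check strong continuity by dominated convergence on $\ell^1$, and establish ergodicity via the Haar average and orthogonality of characters --- so there is nothing to compare beyond noting that you have supplied the details the paper leaves to the reference.
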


\medskip

Let now $k = (k(1),\ldots,k(d)) \in \Nbar_\ast^d$, for $d\in\N_\ast$. The action of the Lie group $\U^d_k$ on $\ell^2(\Z^d_k)$ defines, in turn, a natural action by its Lie algebra. The Lie algebra $\alg{u}^d_k$ of the Lie group $\U^d_k$ is given by:
\begin{equation*}
  \alg{u}^d_k = \left\{ (x_1,\ldots,x_d) \in \R^d : \forall j \in \{1,\ldots,d\} \quad k(j) < \infty \implies x_j = 0 \right\}\text{,}
\end{equation*}
with the exponential function given by:
\begin{equation*}
  \exp_{\U^d_k} : (x_1,\ldots,x_d)\in\alg{u}^d_k \mapsto \left(\exp(i x_1),\ldots,\exp(i x_d)\right)\text{.}
\end{equation*}

Using the actions defined in Theorem (\ref{dual-action-thm}), we have actions of the Lie algebra $\alg{u}^d_k$ on $\ell^1(\Z^d_k)$, defined as follows.

\begin{notation}\label{Lie-action-notation}
  Let $d\in\N_\ast$. Let $k = (k(1),\ldots,k(d)) \in \Nbar_\ast^d$. For all $n = (n_1,\ldots,n_d) \in \Z^d_k$, and for all $X = (X_1,\ldots,X_d) \in \alg{u}_k$, we set:
  \begin{equation*}
    \inner{X}{n}{k} = \sum_{\substack{j \in \{1,\ldots,d\} \\ k(j) = \infty}} X_j n_j \text{.}
  \end{equation*}
\end{notation}

\begin{notation}
  Let $k\in\Nbar_\ast^d$. For all $X \in \alg{u}^d_{k}$, let:
  \begin{equation*}
    \dom{\partial_{k,X}} = \left\{ f \in \ell^2\left(\Z^d_k\right) : \left(\inner{X}{n}{k} f(n) \right)_{n\in\Z^d_k} \in \ell^2\left(\Z^d_k\right) \right\}
  \end{equation*}
  and for all $f \in \dom{\partial_{k,X}}$, we set:
  \begin{equation}\label{Lie-derivation-eq}
    \partial_{k,X}(f) : m \in \ell^2(\Z^d_k) \mapsto  i \inner{X}{m}{k} f(m) \text.
  \end{equation}
\end{notation}

\begin{lemma}
  Let $k\in\Nbar_\ast^d$. The operator $i\partial_{k,X}$ is self-adjoint from $\dom{\partial_{k,X}}$ to $\ell^2(\Z^d_k)$, and for any $\xi \in \dom{\partial_{k,X}}$, we have
  \begin{equation*}
    \forall t \in \R \quad v_k^{\exp_{\U^d_k}(t X)}\xi = \exp\left(t \partial_{k,X}\right)\xi \text.
  \end{equation*}
\end{lemma}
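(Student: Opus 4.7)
The plan is to view $\partial_{k,X}$ as a diagonal operator in the canonical orthonormal basis $\{\delta_m\}_{m \in \Z^d_k}$ of $\ell^2(\Z^d_k)$, so that the statement reduces to a well-known structural fact about multiplication operators. Concretely, by Equation (\ref{Lie-derivation-eq}), the operator $i\partial_{k,X}$ sends each $\delta_m$ to the real multiple $-\inner{X}{m}{k}\delta_m$, so on the maximal domain
\begin{equation*}
\dom{\partial_{k,X}} = \left\{ \xi \in \ell^2(\Z^d_k) : \sum_{m \in \Z^d_k} |\inner{X}{m}{k}|^2\, |\xi(m)|^2 < \infty \right\}
\end{equation*}
the operator $i\partial_{k,X}$ coincides with the multiplication operator $M_\varphi$ associated to the real-valued function $\varphi : m \mapsto -\inner{X}{m}{k}$ on the counting-measure space $\Z^d_k$.

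Next, I would invoke the standard fact that, for any real-valued measurable function $\varphi$ on a $\sigma$-finite measure space $(\Omega,\mu)$, the multiplication operator $M_\varphi$ on $L^2(\Omega,\mu)$, endowed with its natural maximal domain $\{\xi : \varphi\xi \in L^2\}$, is self-adjoint. Specializing to $(\Omega,\mu) = (\Z^d_k,\text{counting})$ yields self-adjointness of $i\partial_{k,X}$, and hence skew-adjointness of $\partial_{k,X}$. This part should be routine.

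The remaining work is to check that the one-parameter unitary group generated by $\partial_{k,X}$ agrees with $t\mapsto v_k^{\exp_{\U^d_k}(tX)}$. Since $\partial_{k,X}$ is diagonal with (purely imaginary) eigenvalues $i\inner{X}{m}{k}$, the bounded-Borel functional calculus for $i\partial_{k,X}$ gives at once that $\exp(t\partial_{k,X})$ is the diagonal unitary acting on each $\delta_m$ by multiplication by $\exp(it\inner{X}{m}{k})$. On the other hand, by definition of the Lie algebra $\alg{u}^d_k$, the component $X_j$ vanishes whenever $k(j) < \infty$; therefore, writing $\exp_{\U^d_k}(tX) = (\exp(itX_1),\ldots,\exp(itX_d)) \in \U^d_k$, we have
\begin{equation*}
\left(\exp_{\U^d_k}(tX)\right)^m = \prod_{j : k(j)=\infty} \exp(itX_j m_j) = \exp\left( it\, \inner{X}{m}{k} \right),
\end{equation*}
by Notation \ref{Lie-action-notation}. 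Consulting Notation \ref{v-notation}, we thus see that $v_k^{\exp_{\U^d_k}(tX)}$ is the same diagonal unitary. The two strongly continuous one-parameter groups agree on each $\delta_m$, hence on the dense subspace they span, and therefore on all of $\ell^2(\Z^d_k)$; in particular they agree on $\dom{\partial_{k,X}}$.

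I do not anticipate a true obstacle in this proof; the only mild subtlety is a bookkeeping check that the condition $X_j = 0$ whenever $k(j) < \infty$ is exactly what makes both $\exp_{\U^d_k}(tX)$ a well-defined element of $\U^d_k$ and the pairing $\inner{X}{m}{k}$ depend only on the class $m \in \Z^d_k$, so that the two computations of the multiplier agree unambiguously.
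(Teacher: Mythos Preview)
Your proof is correct and rests on the same observation as the paper's: $i\partial_{k,X}$ is the multiplication operator by the real-valued function $m\mapsto -\inner{X}{m}{k}$ on $\ell^2(\Z^d_k)$. The only difference is one of packaging. You invoke the general fact that a real multiplication operator on $L^2$ of a $\sigma$-finite measure space is self-adjoint on its maximal domain; the paper instead verifies this directly in the case at hand by writing down the bounded resolvent $R\xi(m) = (-\inner{X}{m}{k}+i)^{-1}\xi(m)$, checking that $(i\partial_{k,X}\pm i)$ are surjective, and appealing to the standard criterion (\cite[Theorem VIII.3]{ReedSimon}). For the one-parameter group, the paper simply declares the identification an ``easy computation'', whereas you actually carry it out, including the bookkeeping that $X_j=0$ for $k(j)<\infty$ makes the pairing well-defined on $\Z^d_k$; your version is more complete on that point.
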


\begin{proof}
  It is immediate that $i \partial_{k,X}$ is symmetric. Moreover, the operator $R$ defined, for all $\xi \in \ell^2(\Z^d_k)$, by
  \begin{equation*}
    R\xi : m \in \Z^d_k \mapsto \left(-\inner{X}{m}{k} + i\right)^{-1} \xi(m) \text,
  \end{equation*}
  is bounded, and maps $\ell^2(\Z^d_k)$ onto $\dom{\partial_{k,X}}$. Furthermore, for all $\xi \in \ell^2(\Z^d_k)$, an easy computation shows that $(i\partial_{k,X}+i) R \xi = \xi$. Thus, $i\partial_{k,X}+i$ is surjective. Similarly, $i\partial_{k,X}-i$ is surjective as well. Since $R \, (i\partial_{k,X} + i)\xi=\xi$ for all $\xi \in \dom{\partial_{k,X}}$ as well, and $R$ is bounded, we conclude that $i \partial_{k,X}$ is closed, and thus by \cite[Theorem VIII.3]{ReedSimon}, it is self-adjoint.

  It is an easy computation to check that $i \partial_{k,X}$ is indeed the generator of the unitary action $t \in \R \mapsto v_k^{\exp_{\U^d_k}(tX)}$.
\end{proof}

The dual action of $\U^d_k$ also defines an action of the Lie algebra $\alg{u}^d_k$ of $\U^d_k$ on $\qt{k}{\sigma}$; in fact the following observation holds.
\begin{definition}
  For all $d\in\N_\ast$, for all $p \in [1,\infty)$, and for all $k \in \Nbar_\ast^d$, an element $f \in \ell^p(\Z^d_k)$ is \emph{finitely supported} when the \emph{support} $\{ m \in \Z^d_k : f(m) \neq 0 \}$ of $f$ is finite.
\end{definition}
  
\begin{lemma}\label{partial-commutator-lemma}
  Let $d\in\N_\ast$. Let $(k,\sigma)\in\Xi^d$. For all $f \in \ell^1(\Z^d_k)$ such that
  \begin{equation*}
    \left(\inner{X}{m}{k} f(m)\right)_{m\in\Z^d_k} \in \ell^1(\Z^d_k) \text,
  \end{equation*}
  we have
  \begin{equation*}
    \left[ \partial_{k,X}, \pi_{k,\sigma}(f) \right] = \pi_{k,\sigma}(\partial_k^X f)
  \end{equation*}
  where
  \begin{equation*}
    \partial_k^X f : m \in \Z^d_k \mapsto i \inner{X}{m}{k} f(m) \text.
  \end{equation*}
  Moreover, for any finitely supported $f \in \ell^1(\Z^d_k)$, we have
  \begin{equation}\label{partial-eq}
    \partial_k^X f = \lim_{t\rightarrow 0} \frac{\alpha_{k,\sigma}^{\exp_{\U^d_k}(tX)}(f)-f}{t} \text.
  \end{equation}
\end{lemma}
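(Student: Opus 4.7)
The plan is to verify the first identity on the core of finitely supported vectors in $\ell^2(\Z^d_k)$, and then extend by closure. For such a test vector $\xi$, the convolution $\pi_{k,\sigma}(f)\xi(p) = \sum_{m} f(m)\sigma(m,p-m)\xi(p-m)$ is an absolutely convergent sum. Since $X_j = 0$ whenever $k(j) < \infty$, the pairing $\inner{X}{\cdot}{k}$ descends from $\Z^d$ to $\Z^d_k$ as a well-defined additive homomorphism, so $\inner{X}{p}{k} = \inner{X}{m}{k} + \inner{X}{p-m}{k}$. Applying $\partial_{k,X}$ to $\pi_{k,\sigma}(f)\xi$ and splitting the sum along this additive decomposition yields two pieces: one is $(\partial_k^X f)\conv{k,\sigma}\xi = \pi_{k,\sigma}(\partial_k^X f)\xi$, while the other is $f \conv{k,\sigma} \partial_{k,X}\xi = \pi_{k,\sigma}(f)\partial_{k,X}\xi$. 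This gives the commutator identity on finitely supported $\xi$. The hypothesis $\partial_k^X f \in \ell^1(\Z^d_k)$ ensures via Lemma (\ref{conv-lemma}) that $\pi_{k,\sigma}(\partial_k^X f)$ is a bounded operator, and since the finitely supported vectors form a core for $\partial_{k,X}$, a standard closure argument extends the identity to the natural domain of $[\partial_{k,X},\pi_{k,\sigma}(f)]$.

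For the second identity (\ref{partial-eq}), the plan is to combine the formula from the previous lemma, $\alpha_{k,\sigma}^z(\pi_{k,\sigma}(f)) = \pi_{k,\sigma}(v_k^z f)$, with Notation (\ref{v-notation}). For $z = \exp_{\U^d_k}(tX)$, one has $z^m = \exp\left(it\inner{X}{m}{k}\right)$, and hence $(v_k^{\exp_{\U^d_k}(tX)}f)(m) = \exp\left(it\inner{X}{m}{k}\right)f(m)$. The difference quotient is therefore pointwise multiplication of $f$ by $\frac{\exp(it\inner{X}{m}{k})-1}{t}$, which converges to $i\inner{X}{m}{k}$ as $t\to 0$ for every $m$. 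Because the support of $f$ is finite, the pointwise convergence is uniform on the support of $f$, which yields $\ell^1$-norm convergence of the difference quotient to $\partial_k^X f$; by Lemma (\ref{conv-lemma}), this also gives operator norm convergence in $\qt{k}{\sigma}$.

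The only genuine subtlety lies in the first part: the commutator $[\partial_{k,X},\pi_{k,\sigma}(f)]$ is a priori only defined as an unbounded expression, and the content of the identity is that the hypothesis on $f$ forces it to extend to a bounded operator (namely $\pi_{k,\sigma}(\partial_k^X f)$). Once the algebraic splitting above is carried out on the core, everything else is routine, and the underlying mechanism is simply the discrete Leibniz rule coming from the additivity of the pairing $\inner{X}{\cdot}{k}$ under translation on $\Z^d_k$.
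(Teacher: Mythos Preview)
Your proof is correct and follows essentially the same approach as the paper, which simply records ``This is a direct computation.'' Your write-up spells out the Leibniz-rule computation on finitely supported vectors and the finite-support argument for the limit in (\ref{partial-eq}), which is exactly what the paper leaves implicit.
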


\begin{proof}
  This is a direct computation.
\end{proof}

\begin{remark}
  Expression (\ref{partial-eq}) holds for certain elements besides finitely supported ones, but this is the degree of generality which we need.
\end{remark}

\begin{remark}
  Let $(k,\sigma) \in \Xi^d$. Using the Leibniz property, a simple computation shows that, if $f \in \ell^1(\Z^d)$ is finitely supported, and $\xi \in \dom{\partial_{k,X}}$, then
    \begin{align*}
      \partial_{k,X}(\pi_{k,\sigma}(f)\xi)
      &= \partial_{k,X}(f\conv{k,\sigma} \xi) \\
      &= \partial_{k,X}(f)\conv{k,\sigma}\xi + f\conv{k,\sigma}\xi \\
      &= \partial_k^X(\pi_{k,\sigma}(f)) \xi + \pi_{k,\sigma}(f)\xi \text.
    \end{align*}
\end{remark}

\bigskip

Now, let $(k,\sigma)\in\Xi^d$. If $k \in \N_\ast^d$, then the Lie algebra $\alg{u}^d_k$ of $\U^d_k$ is $\{ 0 \}$. Thus, our work so far does not provide us with any sort of differential calculus on fuzzy tori. It does however give us some suggestions on how to proceed to define such a calculus, based upon a discrete version of Expression (\ref{partial-eq}), which we will explain in our next subsection.

\subsection{Spectral Triples on Fuzzy and Quantum Tori}

We now define the family of spectral triples on the fuzzy and quantum tori which will be the focus of our present work.

\bigskip

Our construction here aims at finding a natural family of spectral triples on fuzzy tori which approximate spectral triples on quantum tori, under the following natural conditions, which we will use throughout the remainder of this paper.
\begin{hypothesis}\label{metric-cv-hyp}
  Fix $d\in\N_\ast$. For each $n\in\Nbar$, let us be given $(k_n,\sigma_n) \in \Xi^d$ such that $(k_n,\sigma_n)_{n\in\N}$ converges to $(k_\infty,\sigma)$ in $\Xi^d$, and $k_\infty = \infty^d$. 
  
  For each $n\in\Nbar$, we write $k_n = (k_n(1),\ldots,k_n(d))$ and $\A_n = \qt{k_n}{\sigma_n}$. To simplify our notations, we assume that, for each $j\in\{1,\ldots,d\}$, we also have
  \begin{equation*}
    \left(\forall n \in \N \quad k_n(j) \in \N\right) \text{ or }\left(\forall n\in \N \quad k_n(j) = \infty\right) \text.
  \end{equation*}
  
\end{hypothesis}

\begin{remark}
  If $(x_n)_{n\in\N}$ is a sequence in $\Nbar$ converging to $\infty$, then either it is equal to $\infty$ after some $N$, or we can find a subsequence $(x_{y(n)})_{n\in\N}$ such that, for all $n\in\N$, we have $x_n \in \N \iff\exists m \in \N \quad y(m) = n$; of course $(x_{y(n)})_{n\in\N}$ converges to $\infty$. Therefore, our simplification in Hypothesis (\ref{metric-cv-hyp}) can be done without loss of generality; we only include it to make our notation easier throughout this paper.
\end{remark}

Hypothesis (\ref{metric-cv-hyp}) can, in fact, be met for any possible choice of a normalized $2$-cocycle of $\Z^d$, up to replacing the given $2$-cocycle by a normalized cohomologous $2$-cocycle, as seen in Theorem (\ref{Cocycle-approx-thm}).

\bigskip

As explained in our introduction, we generally need to embed our fuzzy tori in larger fuzzy tori, in order to construct our spectral triples. What matters is the overall consistency of this scheme along a sequence of fuzzy tori approximating some quantum torus, so we summarize the needed conditions in the following hypothesis, which we will again use throughout this paper.

\begin{hypothesis}\label{innerification-hyp}
  Assume Hypothesis (\ref{metric-cv-hyp}). Let $d'\in\N_\ast$, with $d'\geq d$. Let $f$ be an order $2$ permutation of $\{1,\ldots,d'\}$ with no fixed point. For each $n \in \Nbar$, let $(k'_n,\sigma'_n) \in \Xi^{d'}$, such that, if we write $\B_n = \qtd{d'}{k'_n}{\sigma'_n}$ and
  \begin{equation*}
    U_{n,j} \coloneqq W_{k'_n,\sigma'_n}^{e_j}
  \end{equation*}
  for all $j\in\{1,\ldots,d'\}$, then
  \begin{enumerate}
  \item the following limit holds in $\Xi^{d'}$:
    \begin{equation*}
      \lim_{n\rightarrow\infty} (k'_n,\sigma'_n) = (\infty^{(d')}, \sigma'_{\infty}) \text.
    \end{equation*}
  \item $\{d+1,\ldots,d'\} \subseteq f(\{1,\ldots,d\})$,
  \item we write $k'_n = (k'_n(1),\ldots,k'_n(d')) \in \Nbar^{d'}$, and the following holds:
    \begin{equation*}
      \forall j \in \{1,\ldots,d\} \quad k'_n(f(j)) = k'_n(j) = k_n(j)\text,
    \end{equation*}
  \item the restriction of $\sigma'_n$ to the subgroup $\Z^d_{k_n}\times\{\underbracket[1pt]{(0,\ldots,0)}_{d'-d \text{ times}}\}$ of $\Z^{d'}_{k'_n}$ is cohomologous to $\sigma_n$ (as $2$-cocycles of $\Z^d_{k_n}$),
  \item for all $j \in \{1,\ldots,d\}$, the following commutation relation holds, for all $s \in \{1,\ldots,d'\}$:
    \begin{equation*}
      U_{n,f(j)} U_{n,s} = \begin{cases}
        \exp\left(\frac{2i\pi}{k_n(j)}\right) U_{n,j} U_{n,f(j)} \text{ if $k_n(j)<\infty$, $j<f(j)$, and $j=s$,}\\
        \exp\left(\frac{-2i\pi}{k_n(j)}\right) U_{n,j} U_{n,f(j)} \text{ if $k_n(j)<\infty$, $j>f(j)$, and $j=s$,}\\
        U_{n,s} U_{n,f(j)} \text{ otherwise.}
      \end{cases}
    \end{equation*}
  \end{enumerate}
    
  By construction, the C*-subalgebra $C^\ast(U_{n,1},\ldots,U_{n,d})$ of $\B_n$ is canonically *-isomorphic to $\A_n$, with the *-isomorphism sending $U_{n,j}$ to $W_{k_n,\sigma_n}^{e_j}$ for all $j\in\{1,\ldots,d\}$. We henceforth identify $\A_n$ with $C^\ast(U_{n,1},\ldots,U_{n,d})$ in $\B_n$, and without loss of generality, we also assume that $\sigma_n$ is equal (rather than merely cohomologous) to the restriction of $\sigma'_n$ to $\Z^d_{k_n}\times\{(0,\ldots,0)\}$.

 We write $G_n = \U^{d'}_{k'_n}$ and $\widehat{G_n} = \Z^{d'}_{k'_n}$ --- of course, $\widehat{G_n}$ is the Pontryagin dual of $G_n$. The dual action $\alpha_{k'_n,\sigma'_n}$ of $G_n$ on $\B_n$, defined in Corollary (\ref{dual-action-thm}), is simply denoted by $\alpha_n$. For each $z\in G_n$, we also simply denote by $v_n^z$ the unitary $v_{k'_n}^{z}$ acting on $\ell^2(\widehat{G_n})$, as defined in Notation (\ref{v-notation}).

  Moreover, to further ease our notation, if $z\in \U^{d}_{k_n}$, then we identify $z$ with the element $\left(z,\underbracket[1pt]{1,\ldots,1}_{d'-d \text{ times}}\right)$ in $G_n$ without further mention.

  The Hilbert space $\ell^2(\widehat{G_n})$ is denoted by $\Hilbert_n$. The *-representation $\pi_{k'_n,\sigma'_n}$ on $\Hilbert_n$ is simply denoted by $\pi_n$.
  
  For all $j \in \{1,\ldots,d'\}$, we also write
  \begin{equation*}
    \partial_{n,j} = \partial_{k'_n,e_j} \text{ and }\partial_n^j = \partial_{k'_n}^{e_j} \text.
  \end{equation*}
  
  Last, let $J_n = J_{k'_n}$ be the conjugate linear isometric involution mapping $\xi \in \Hilbert_n$ to $m \in \widehat{G_n} \mapsto \overline{\xi(-m)}$.
\end{hypothesis}

We make a few easy observations. First, note that since $\{d+1,\ldots,d'\}\subseteq f(\{1,\ldots,d\})$, and since $f$ is a permutation, we must have $d'\leq 2d$. Moreover, if $j\in\{d+1,\ldots,d'\}$, then $j=f(s)$ for $s\in \{1,\ldots,d\}$, so that $f(j) = s \in \{1,\ldots,d\}$. Therefore, if $j=f(s)$ and $j>d$ then $s\leq d$. Moreover, we then see that $\{1,\ldots,d'\}=\{1,\ldots,d\}\cup f(\{1,\ldots,d\})$, and thus, Condition (3) in Hypothesis (\ref{innerification-hyp}) completely describes $k'_n$. 

We also record, by definition, that $[\partial_{n,j},U_{n,s}] = \partial_n^j(U_{n,s}) = 0$ if $j,s\in\{1,\ldots,d'\}$ with $j\neq s$, and of course, $[\partial_{n,j}, U_{n,j}] = \partial_{n}^j U_{n,j} = i U_{n,j}$ for all $j\in\{1,\ldots,d'\}$.

\bigskip

The purpose of Hypothesis (\ref{innerification-hyp}) is to implement certain *-automorphisms via unitary conjugation, in a consistent manner along a sequence of fuzzy tori. Indeed, we observe the following.

\begin{remark}\label{innerification-rmk}
  If $j \in \{1,\ldots,d\}$, and if $k_n(j) = \infty$, then our computation shows that $U_{n,f(j)}$ is central in $\B_\infty$.
\end{remark}

\begin{lemma}\label{znj-notation}
   For all $n\in\Nbar$, we define, using the notation of Hypothesis (\ref{innerification-hyp}), and for $j\in\{1,\ldots,d'\}$:
 \begin{equation*}
   z_{n,j} \coloneqq
   \begin{cases}
     \left(\underbracket[1pt]{1,\ldots,1,\exp\left(\frac{2 i \pi}{k_n(j)}\right)}_{\text{indices }1 \text{ to }j},1,\ldots,1 \right) \in G_n \text{ if $k_n(j)<\infty$,}\\
     (\underbracket[1pt]{1,\ldots,1}_{d' \text{ times}}) \in G_n \text{ otherwise.}
   \end{cases}
\end{equation*}

If $j \in \{1,\ldots,d\}$ and $j < f(j)$, then
\begin{equation*}
  \forall a \in \B_n \quad U_{n,f(j)} \, a U_{n,f(j)}^\ast = \alpha_n^{z_{n,j}}(a) \text,
\end{equation*}
while, if $f(j) < j$, then
\begin{equation*}
  \forall a \in \B_n \quad U_{n,f(j)} \, a U_{n,f(j)}^\ast = \alpha_n^{\overline{z_{n,j}}}(a) \text.
\end{equation*}
\end{lemma}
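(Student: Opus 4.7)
The plan is to verify both identities by checking them on the canonical unitaries $U_{n,1},\ldots,U_{n,d'}$, since each side defines a continuous *-automorphism of $\B_n$ and $\B_n$ is generated as a C*-algebra by these unitaries.

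First, I compute $\alpha_n^{z}(U_{n,s})$ for $s\in\{1,\ldots,d'\}$. From the identity $v_n^z W_{k'_n,\sigma'_n}^{m} v_n^{\overline z} = z^m W_{k'_n,\sigma'_n}^{m}$ (established in the proof of the lemma immediately preceding Corollary (\ref{dual-action-thm})), one obtains $\alpha_n^z(U_{n,s}) = z^{e_s} U_{n,s}$. Now, by the very definition of $z_{n,j}$: if $k_n(j) < \infty$, then $z_{n,j}^{e_s}$ equals $\exp(2i\pi/k_n(j))$ when $s=j$ and equals $1$ otherwise; if $k_n(j) = \infty$, then $z_{n,j}$ is the unit of $G_n$, so $\alpha_n^{z_{n,j}}$ (and likewise $\alpha_n^{\overline{z_{n,j}}}$) is the identity automorphism of $\B_n$.

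Second, I compute $U_{n,f(j)} U_{n,s} U_{n,f(j)}^\ast$ using Hypothesis (\ref{innerification-hyp})(5). When $k_n(j) < \infty$ and $s = j$, the relation reads $U_{n,f(j)} U_{n,j} = \exp(\pm 2i\pi/k_n(j)) U_{n,j} U_{n,f(j)}$, the sign being $+$ when $j<f(j)$ and $-$ when $j>f(j)$; multiplying on the right by $U_{n,f(j)}^\ast$ gives
\[
U_{n,f(j)} U_{n,j} U_{n,f(j)}^\ast = \exp(\pm 2i\pi/k_n(j))\, U_{n,j}.
\]
For $s\neq j$, only the ``otherwise'' case of condition (5) applies, so $U_{n,f(j)}$ commutes with $U_{n,s}$ and the conjugation is trivial. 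When $k_n(j) = \infty$, Remark (\ref{innerification-rmk}) (or equivalently, condition (5) with its ``otherwise'' clause active for every $s$) shows $U_{n,f(j)}$ is central in $\B_n$, so again the conjugation is trivial.

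Comparing these two computations case by case yields the claimed identities on every canonical generator $U_{n,s}$, and they extend to all of $\B_n$ by linearity, multiplicativity, and norm-continuity. I do not expect any substantive obstacle: this is a direct bookkeeping check against the commutation relations postulated in Hypothesis (\ref{innerification-hyp})(5), the only mild care needed being to track the sign of the exponent according to whether $j<f(j)$ or $j>f(j)$ --- which is precisely what distinguishes $\alpha_n^{z_{n,j}}$ from $\alpha_n^{\overline{z_{n,j}}}$ in the statement.
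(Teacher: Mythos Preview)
Your proof is correct and follows essentially the same approach as the paper: verify the identity on the canonical generators $U_{n,s}$ using the commutation relations of Hypothesis~(\ref{innerification-hyp})(5), then extend by continuity and multiplicativity. Your write-up is in fact slightly more careful than the paper's, since you check all $s\in\{1,\ldots,d'\}$ (matching the statement's claim for $a\in\B_n$) and explicitly treat the degenerate case $k_n(j)=\infty$.
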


\begin{proof}
  By construction, for all $j\in\{1,\ldots,d\}$, if $j < f(j)$, then we have
  \begin{equation}\label{action-eq}
    \forall s \in \{1,\ldots,d\} \quad U_{n,f(j)} \, U_{n,s} \, U_{n,f(j)}^\ast = \alpha_n^{z_{n,j}}(U_{n,s}) \text.
  \end{equation}
  Therefore, if $j\in\{1,\ldots,d\}$, and $j<f(j)$, then for all $a\in \A_n$, we conclude that
\begin{equation*}
  U_{n,f(j)} \, a \, U_{n,f(j)}^\ast = \alpha_n^{z_{n,j}}(a) \text.
\end{equation*}

A similar computation shows that $U_{n,f(j)} \, a \, U_{n,f(j)}^\ast = \alpha_n^{\overline{z_{n,j}}}(a)$ if $j\in\{1,\ldots,d\}$ and $f(j) < j$.
\end{proof}

\begin{corollary}\label{znj-cor}
  If $j \in \{1,\ldots,d\}$ and $j < f(j)$, then
  \begin{equation*}
    \forall \xi \in \Hilbert_n \quad U_{n,f(j)} \, \xi \cdot U_{n,f(j)}^\ast = v_n^{z_{n,j}}\xi \text,
  \end{equation*}
  while, if $f(j) < j$, then
  \begin{equation*}
    \forall \xi \in \Hilbert_n \quad U_{n,f(j)} \, \xi \cdot U_{n,f(j)}^\ast = v_n^{\overline{z_{n,j}}}\xi \text.
  \end{equation*}
\end{corollary}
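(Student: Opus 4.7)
The plan is to derive this corollary directly from Lemma \ref{znj-notation} by exploiting the fact that $\Hilbert_n$ is the GNS Hilbert space of $\B_n$ for the canonical trace, so $\delta_0$ is a cyclic vector whose behavior under both the left and right actions is particularly simple. Both sides of the identity we wish to prove depend continuously and linearly on $\xi \in \Hilbert_n$: the left side is the composition of the bounded left-multiplication by $U_{n,f(j)}$ with the bounded right action of $U_{n,f(j)}^\ast$, and the right side is just a unitary acting on $\xi$. Therefore it suffices to verify the identity on the dense subspace $\B_n\delta_0 \subseteq \Hilbert_n$.

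First I would record two simple observations that make the reduction work. The first is that $\delta_0 \cdot b = b\,\delta_0$ for every $b \in \B_n$: for $b = W_{k'_n,\sigma'_n}^m$ one checks directly that $\delta_0 \cdot W_{k'_n,\sigma'_n}^m = J_n W_{k'_n,\sigma'_n}^{-m} J_n \delta_0 = J_n\delta_{-m} = \delta_m = W_{k'_n,\sigma'_n}^m\delta_0$, using $J_n\delta_0 = \delta_0$ and $\sigma'_n(m,-m) = 1$; density and continuity then extend this to all of $\B_n$. The second observation is that $v_n^{z}\delta_0 = \delta_0$ for every $z \in G_n$, since $(v_n^z\delta_0)(m) = z^m\delta_0(m)$ vanishes unless $m=0$.

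Next, assume $j<f(j)$ and let $\xi = a\,\delta_0$ for some $a \in \B_n$. Using that the left and right actions of $\B_n$ on $\Hilbert_n$ commute, followed by the first observation above, I compute:
\begin{equation*}
U_{n,f(j)}\,\xi \cdot U_{n,f(j)}^\ast
\;=\; U_{n,f(j)}\bigl(a\,(\delta_0 \cdot U_{n,f(j)}^\ast)\bigr)
\;=\; U_{n,f(j)}\,a\,U_{n,f(j)}^\ast\,\delta_0 \text.
\end{equation*}
Applying Lemma \ref{znj-notation} to the operator $a$, together with $\alpha_n^{z_{n,j}}(a) = v_n^{z_{n,j}}\,a\,v_n^{\overline{z_{n,j}}}$ from Corollary \ref{dual-action-thm}, and finally the second observation $v_n^{\overline{z_{n,j}}}\delta_0=\delta_0$, yields
\begin{equation*}
U_{n,f(j)}\,a\,U_{n,f(j)}^\ast\,\delta_0 \;=\; v_n^{z_{n,j}}\,a\,v_n^{\overline{z_{n,j}}}\,\delta_0 \;=\; v_n^{z_{n,j}}\,a\,\delta_0 \;=\; v_n^{z_{n,j}}\xi \text.
\end{equation*}
The identity thus holds on the dense subspace $\B_n\delta_0$ and extends to all of $\Hilbert_n$ by continuity. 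The case $f(j) < j$ is entirely parallel: Lemma \ref{znj-notation} delivers $\alpha_n^{\overline{z_{n,j}}}(a)$ in place of $\alpha_n^{z_{n,j}}(a)$, and the same cancellation against the fixed vector $\delta_0$ produces $v_n^{\overline{z_{n,j}}}\xi$.

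There is essentially no obstacle here: the only step that is not a formal manipulation is the identity $\delta_0\cdot b = b\,\delta_0$, which is the GNS avatar of the fact that the trace is tracial, and which is verified by direct computation on the generators $W_{k'_n,\sigma'_n}^m$ using the normalization of the cocycle $\sigma'_n$.
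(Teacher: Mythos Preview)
Your proof is correct, but it takes a genuinely different route from the paper's. The paper verifies the identity directly on the orthonormal basis $(\delta_m)_{m\in\widehat{G_n}}$: using the convolution description of the left and right actions, it computes $U_{n,f(j)}\,\delta_m\cdot U_{n,f(j)}^\ast$ explicitly from the commutation relations in Hypothesis~(\ref{innerification-hyp}) and obtains $z_{n,j}^m\delta_m = v_n^{z_{n,j}}\delta_m$, then extends by linearity and continuity. Your argument instead exploits the GNS structure: you single out the cyclic trace vector $\delta_0$, observe that it is fixed by both $v_n^z$ and the right action (in the sense $\delta_0\cdot b = b\,\delta_0$), and thereby reduce the Hilbert-space identity to the already-established algebra identity of Lemma~\ref{znj-notation}. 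The paper's approach is a bit more self-contained and elementary; yours is more conceptual, cleanly exhibiting the corollary as the GNS transfer of Lemma~\ref{znj-notation} rather than as an independent computation. One minor quibble: in your verification that $\delta_0\cdot W^m = W^m\delta_0$, the relevant cocycle identities are $\sigma'_n(\pm m,0)=1$ rather than the normalization $\sigma'_n(m,-m)=1$ you cite, though of course both hold.
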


\begin{proof}
  This follows from the observation that, if $j<f(j)$, then
  \begin{align*}
    U_{n,f(j)}\,\delta_m \cdot U_{n,f(j)}
    &= \delta_{f(j)} \conv{k'_n,\sigma'_n} \delta_m \conv{k'_n,\sigma'_n} \delta_{f(j)} \\
    &= z_{n,j}^m \delta_m = v_n^{z_{n,j}}\delta_m \text{ by Hyp. (\ref{innerification-hyp}),}
  \end{align*}
  and thus, by linearity and continuity, we get $U_{n,f(j)}\xi\cdot U_{n,f(j)} = v_n^{z_{n,j}}\xi$ for all $\xi \in \ell^2(\widehat{G_n})$. A similar argument shows the desired expression when $j < f(j)$.
\end{proof}

We note that it may be possible to choose $(k'_n,\sigma'_n) = (k_n,\sigma_n)$ for all $n\in\Nbar$ in Hypothesis (\ref{innerification-hyp}), though this will be in rather exceptional situations. On the other hand, we now prove that we can always meet Hypothesis (\ref{innerification-hyp}) given a sequence satisfying Hypothesis (\ref{metric-cv-hyp}), if we choose $d'=2d$. 

\begin{lemma}\label{generic-innerification-lemma}
  If we assume Hypothesis (\ref{metric-cv-hyp}), then there exist an order $2$ permutation of $\{1,\ldots,2d\}$, and for all $n\in\Nbar$, an element $(k'_n,\sigma'_n) \in \Xi^{2d}$, such that Hypothesis (\ref{innerification-hyp}) holds as well.
\end{lemma}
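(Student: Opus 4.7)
My plan is to take $d'=2d$ and let $f:\{1,\ldots,2d\}\to\{1,\ldots,2d\}$ be the involution with $f(j)=j+d$ for $j\leq d$ and $f(j)=j-d$ for $j>d$; this is fixed-point-free and gives $\{d+1,\ldots,2d\}=f(\{1,\ldots,d\})$, which is Condition (2) of Hypothesis \ref{innerification-hyp}. Setting $k'_n(j)=k'_n(j+d)=k_n(j)$ for $j\in\{1,\ldots,d\}$ then satisfies Condition (3) and ensures $k'_n\to\infty^{2d}$. The substance of the proof is producing the cocycles $\sigma'_n$.

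To build $\sigma'_n$, I would invoke Theorem \ref{Cocycle-approx-thm} applied to a suitable block antisymmetric $2d\times 2d$ matrix. First, by Kleppner's theorem, I would pick antisymmetric $d\times d$ matrices $\theta_n$ so that $\sigma_n$ is cohomologous to $\cocycle{\theta_n}$; since $\sigma_n$ descends to $\Z^d_{k_n}$, its commutator pairing $\exp(2i\pi\inner{\theta_n\cdot}{\cdot}{})$ must be trivial on $k_n\Z^d$, which forces $\inner{\theta_n m}{m'}{}\in\Z$ for $m\in k_n\Z^d$ and $m'\in\Z^d$. By adjusting representatives mod $\Z$ (which preserves this integrality since each $k_n(j)\cdot\mathrm{integer}\in\Z$), the $\theta_n$ can be chosen with $\theta_n\to\theta_\infty$, where $\theta_\infty$ realizes the commutator pairing of $\sigma_\infty$. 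Next I set
\[
\theta'_n=\begin{pmatrix}\theta_n & D_n\\ -D_n & 0\end{pmatrix},\qquad (D_n)_{jj}=\tfrac{1}{k_n(j)}\text{ (with }\tfrac{1}{\infty}=0\text{)},
\]
which is antisymmetric. A short block calculation shows $\inner{\theta'_n m}{m'}{}\in\Z$ for all $m,m'\in k'_n\Z^{2d}$: the top-left contribution is integral by the choice of $\theta_n$, each cross term pairs a multiple of $k_n(j)$ with $1/k_n(j)$, and the bottom-right block vanishes. Theorem \ref{Cocycle-approx-thm} then yields a normalized $2$-cocycle $\sigma'_n$ of $\Z^{2d}_{k'_n}$ whose commutator pairing is $\exp(2i\pi\inner{\theta'_n\cdot}{\cdot}{})$. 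Reading off block entries verifies Condition (5): since $j\leq d$ gives $j<f(j)$, each $U_{n,j+d}$ commutes with every $U_{n,s}$ for $s\neq j$ (because the bottom-right block is zero and the off-diagonal entries of $D_n$ vanish), while $U_{n,j+d}U_{n,j}=\exp(2i\pi(\theta'_n)_{j,j+d})U_{n,j}U_{n,j+d}=\exp(2i\pi/k_n(j))U_{n,j}U_{n,j+d}$ when $k_n(j)<\infty$, and $U_{n,j+d}$ commutes with $U_{n,j}$ when $k_n(j)=\infty$.

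For Condition (1), since $D_n\to 0$ and $\theta_n\to\theta_\infty$, we have $\theta'_n\to\theta'_\infty:=\begin{pmatrix}\theta_\infty & 0\\ 0 & 0\end{pmatrix}$; fixing once and for all a branch parameter $\beta\in\R$ avoiding the countable set of forbidden values for the entire sequence (as the proof of Theorem \ref{Cocycle-approx-thm} allows), the ``moreover'' clause of that theorem produces $(k'_n,\sigma'_n)\to(\infty^{2d},\sigma'_\infty)$ in $\Xi^{2d}$ for some $\sigma'_\infty$ cohomologous to $\cocycle{\theta'_\infty}$. Condition (4) follows from the observation that the restriction of $\sigma'_n$ to $\Z^d_{k_n}\times\{0\}^d$ and $\sigma_n$ share the same commutator pairing, namely that of $\cocycle{\theta_n}$, combined with the fact that on the finitely generated Abelian group $\Z^d_{k_n}$ --- a product of cyclic factors together with a free part --- the second cohomology with $\T$-coefficients is completely classified by the commutator pairing (Kleppner on the free factor, the standard calculation on each cyclic factor, assembled by a K\"unneth argument). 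The main bookkeeping hurdle I anticipate is arranging the simultaneous lifts $\theta_n\to\theta_\infty$ compatibly with the integrality constraints and fixing a uniform $\beta$ for the whole sequence so that the continuous dependence of the construction in Theorem \ref{Cocycle-approx-thm} can be invoked en bloc; once that is done, every clause of Hypothesis \ref{innerification-hyp} reduces to a direct inspection of $\theta'_n$.
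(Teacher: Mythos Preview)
Your approach is correct but takes a different route from the paper's. You encode both the original cocycle $\sigma_n$ and the ``innerification'' data into a single block matrix $\theta'_n=\begin{pmatrix}\theta_n & D_n\\ -D_n & 0\end{pmatrix}$ and invoke Theorem~\ref{Cocycle-approx-thm} once. The paper instead separates the two ingredients: it defines $\sigma'_n(m,m')=\sigma_n(q_n(m),q_n(m'))\,c_n(m,m')$, where $q_n$ is the projection onto the first $d$ coordinates and $c_n$ is obtained by applying Theorem~\ref{Cocycle-approx-thm} only to the off-diagonal matrix $\omega_n=\begin{pmatrix}0 & r_n\\ -r_n & 0\end{pmatrix}$ with $r_n=\mathrm{diag}(1/k_n(j))$. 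This factored construction buys several simplifications you have to work for: no lift of $\sigma_n$ to a matrix $\theta_n$ is ever needed (so the ``bookkeeping hurdle'' of arranging $\theta_n\to\theta_\infty$ compatibly with integrality disappears entirely); the entries of $\omega_n$ are explicit rationals, so the branch parameter $\beta$ in Theorem~\ref{Cocycle-approx-thm} can be chosen exactly as in that theorem's proof; and Condition~(4) is almost immediate, since the restriction of $\sigma'_n$ to $\Z^d_{k_n}\times\{0\}$ is $\sigma_n$ times the restriction of $c_n$, and the latter has trivial commutator pairing on that subgroup. Your route works, and your verifications (integrality of $\theta'_n$, commutation relations from the block entries, the cohomology classification for Condition~(4)) are sound, but the paper's decomposition avoids precisely the technicalities you flag at the end of your proposal.
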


\begin{proof}
  We will work with $d' = 2d$. Let $f : \{1,\ldots,2d\}\rightarrow\{1,\ldots,2d\}$ be defined as the order $2$ permutation such that $f(j) = d+j$ for all $j\in\{1,\ldots,d\}$. The permutation $f$ has no fixed point by construction.

  For each $n\in\N$, with the convention that $\frac{1}{\infty}=0$, we let
  \begin{equation*}
    r_n = \begin{pNiceMatrix}
      \frac{1}{k_n(1)} & 0           & \Cdots & 0 \\
      0                & \Ddots      & \Ddots & \Vdots \\
      \Vdots           & \Ddots      &        &  0 \\
      0                & \Cdots      &  0     &   \frac{1}{k_n(d)}
    \end{pNiceMatrix}
  \end{equation*}
  and
  \begin{equation*}
    \omega_n = \begin{pNiceArray}{c|c}
      0_{d\times d} & r_n \\ \hline -r_n & 0_{d\times d}
    \end{pNiceArray}
  \end{equation*}

  Note that $\lim_{n\rightarrow\infty}\omega_n = 0$. Now, using Theorem (\ref{Cocycle-approx-thm}), we obtain a sequence $c_n$ of normalized $2$-cocycles of $\Z^{2d}_{k'_n}$ such that $(c_n)_{n\in\N}$ converges in $\mathcal{C}^{2d}_{\infty^{2d}}$ to a $2$-cocycle of $\Z^{2d}$, cohomologous to $1$, and, for all $n\in\N$, and for all $m,m'\in\Z^{2d}_{k'_n}$:
  \begin{equation*}
    W_{k'_n,c_n}^{m} W_{k'_n,c_n}^{m'} = \exp\left(2 i \pi \inner{\omega_n m}{m'}{} \right) W_{k'_n,c_n}^{m'} W_{k'_n,c_n}^{m} \text.
  \end{equation*}

  Let $q_n : (m_1,\ldots,m_{2d}) \in \Z^{2d}_{k'_n}\mapsto (m_1,\ldots,m_d) \in \Z^d_{k_n}$. We then define, for all $n\in\Nbar$:
  \begin{equation*}
    \sigma'_n:m,m'\in\Z^{2d}_{k'_n}\mapsto \sigma_n(q_n(m),q_n(m')) c_n(m,m') \text.
  \end{equation*}
  By construction, $\sigma'_n$ is a normalized $2$-cocycle of $\Z^{2d}_{k'_n}$, and $(k'_n,\sigma'_n)_{n\in\N}$ converges to $(\infty^{2d},\sigma'_\infty)$. It is now easy to check that $(k'_n,\sigma'_n)_{n\in\Nbar}$ and $f$ satisfies the conditions of Hypothesis (\ref{innerification-hyp}).
\end{proof}

\begin{remark}
  The construction of Lemma (\ref{generic-innerification-lemma}) is, in fact, a C*-crossed-product construction. Namely, assume Hypothesis (\ref{metric-cv-hyp}) and fix $n\in\Nbar$. We canonically embed $\qt{k_n}{\sigma_n}$ in the C*-crossed-product $\qt{k_n}{\sigma_n}\rtimes_\beta \Z^d_{k_n}$, where the action $\beta$ of $\Z^d_{k_n}$ is defined as follows. Let $z=(z_1,\ldots,z_d) \in \U^d_{k_n}$. For each $j\in\{1,\ldots,d\}$, let
  \begin{equation*}
    w_j = \begin{cases} z_{n,j} \text{ if $k_n(j)<\infty$, }\\
      1 \text{ otherwise.}
    \end{cases}
  \end{equation*}
  Then we set $\beta^z = \alpha_{k_n,\sigma_n}^{(w_1,\ldots,w_d)}$. The C*-algebra $\qt{k_n}{\sigma_n}\rtimes_\beta \Z^d_{k_n}$ is the fuzzy or quantum torus constructed in Lemma (\ref{generic-innerification-lemma}).
\end{remark}

Lemma (\ref{generic-innerification-lemma}) gives a general mean to implement Hypothesis (\ref{innerification-hyp}), and indeed, it is the basic scheme we have in mind for our construction. However, there are situations where we can work within the given fuzzy tori, without invoking the scheme of Lemma (\ref{generic-innerification-lemma}). This is, in fact, a very commonly used example in physics \cite{Kimura01,Schreivogl13,Barrett15,Connes97}.

\begin{example}
  We give an example where $d=d' (=2)$. Let $C_n$ and $S_n$ be the clock and shift matrices given by Expression (\ref{Clock-Shift-eq}). Note that Example (\ref{Clock-Shift-ex}), the C*-algebra $C^\ast(C_n,S_n)$ is *-isomorphic to a fuzzy torus, of the form $C^\ast(\Z^2_{(n,n)},\sigma_n)$, where $(\sigma_n)_{n\in\N}$ converges to $1$ in $\mathcal{C}^2_{\infty^2}$, and where a *-isomorphism sends $S_n$ to $U_{n,1}$ and $C_n$ to $U_{n,2}$. Since:
  \begin{equation}\label{clock-shift-comm-eq}
    U_{n,1}U_{n,2} = \exp\left(\frac{2 i \pi}{n}\right) U_{n,2} U_{n,1}\text.
  \end{equation}
  we let $f : \{1,2\} \mapsto \{1,2\}$ be given by  $f(1)=2$ and $f(2)=1$, $\sigma'_n = \sigma_n$ for all $n\in\N$, $\sigma_\infty = 1$, and note that we now meet the conditions of Hypothesis (\ref{innerification-hyp}).
\end{example}

To illustrate our Hypothesis (\ref{innerification-hyp}) further, note that there is another, obvious, approximation of the classical torus by fuzzy tori. Let $n\in\Nbar$. The fuzzy torus $C(\U_{(n,n)}^2)$ is Abelian, and thus all commutators are zero. This time, we do need to use some technique, such as Lemma (\ref{generic-innerification-lemma}), to meet Hypothesis (\ref{innerification-hyp}).

We can generally mix and match the technique of Lemma (\ref{generic-innerification-lemma}) and Example (\ref{Clock-Shift-eq}), if the quantum torus $\A_\infty$ has a non-trivial center, giving us many possible examples of the convergence found in this work. When the limit quantum torus is simple, then Lemma (\ref{generic-innerification-lemma}) should be used.

\bigskip

We now define the domains of the Dirac operators for our sequence of spectral triples, based upon Hypothesis (\ref{innerification-hyp}).

\begin{hypothesis}\label{domain-hyp}
  Assume Hypothesis (\ref{innerification-hyp}). We also use Notation (\ref{znj-notation}). We fix a faithful *-representation $c$ of the Clifford algebra $\alg{Cl}(\C^{d+d'})$ of $\C^{d+d'}$ on a \emph{finite dimensional} Hilbert space $\mathscr{C}$. Let $n\in\Nbar$.

  We write
  \begin{equation*}
    \mathscr{J}_n = \Hilbert_n \otimes \mathscr{C} \text.
  \end{equation*}
  We identify $\mathscr{J}_n$ with the Hilbert space $\ell^2(\widehat{G_n},\mathscr{C})$ of functions from $\widehat{G_n}$ to $\mathscr{C}$ such that
  \begin{equation*}
    \sum_{m\in\widehat{G_n}} \norm{\xi(m)}{\mathscr{C}}^2 < \infty \text.
  \end{equation*}

  Writing $\unit_{\mathscr{C}}$ for the identity on $\mathscr{C}$, we then define for all bounded linear operator $a$ on $\Hilbert_n$ (including $a\in \B_n$):
  \begin{equation*}
     a^\circ = a \otimes \unit_{\mathscr{C}}\text.
  \end{equation*}

  For all $n\in\Nbar$, and for all $m=(m_1,\ldots,m_{d'}),m'=(m'_1,\ldots,m'_{d'}) \in \widehat{G_n}$, we set
  \begin{equation*}
    \inner{m}{m'}{n} \coloneqq \inner{m}{m'}{k'_n} = \sum_{\substack{j \in \{1,\ldots,d'\}\\ k'_n(j)=\infty}} m_j m'_j  \text.
  \end{equation*}

  We define the subspace
  \begin{equation*}
    \dom{\Dirac_n} = \left\{ \xi \in \mathscr{J}_n : \left(\left(\sqrt{1 + \inner{m}{m}{n}}\right) \xi(m)\right)_{m\in\widehat{G_n}} \in \mathscr{J}_n \right\}\text.
  \end{equation*}
\end{hypothesis}

Hypothesis (\ref{domain-hyp}) defines the domain on which the Dirac operators of our spectral triples will be defined. We establish a few properties which will be helpful toward this goal.

\begin{lemma}\label{dense-domain-lemma}\label{partial-n-j-lemma}
  Assume Hypothesis (\ref{domain-hyp}). If $n\in\Nbar$, then, for all linear map $t:\mathscr{C}\rightarrow\mathscr{C}$, then
  \begin{equation*}
    \forall \xi \in \dom{\Dirac_n} \quad (\partial_{n,j}\otimes t)\xi \in \mathscr{J}_n\text.
  \end{equation*}
  Moreover,  $\dom{\Dirac_n}$ contains all finitely supported functions from $\widehat{G}_n$ to $\mathscr{C}$. Therefore, $\dom{\Dirac_n}$ is dense in $\mathscr{J}_n$.
\end{lemma}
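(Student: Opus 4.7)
The proof is essentially a routine summability check, so the plan is to unpack definitions carefully and apply a crude pointwise domination.

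First, I would rewrite the membership criterion for $\dom{\Dirac_n}$ in summable form: $\xi \in \dom{\Dirac_n}$ if and only if
\begin{equation*}
\sum_{m \in \widehat{G_n}} \bigl(1 + \inner{m}{m}{n}\bigr) \, \norm{\xi(m)}{\mathscr{C}}^2 < \infty\text.
\end{equation*}
Because $\inner{m}{m}{n} = \sum_{j : k'_n(j)=\infty} m_j^2 \geq 0$, this is just the statement that both $\xi$ and the weighted sequence lie in $\mathscr{J}_n \cong \ell^2(\widehat{G_n},\mathscr{C})$.

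For the first assertion, I recall that by the definition preceding the hypothesis (see Expression \eqref{Lie-derivation-eq} applied to $X=e_j$ on the group $\Z^{d'}_{k'_n}$), the operator $\partial_{n,j}$ acts as pointwise multiplication by $i\,\inner{e_j}{m}{k'_n}$, which equals $i\, m_j$ when $k'_n(j)=\infty$ and is $0$ otherwise. Since $\mathscr{C}$ is finite-dimensional, any linear $t:\mathscr{C}\rightarrow\mathscr{C}$ is bounded, say with operator norm $\norm{t}{}$. Writing $((\partial_{n,j}\otimes t)\xi)(m) = i\,\inner{e_j}{m}{k'_n} \, t(\xi(m))$ and using the obvious bound $|\inner{e_j}{m}{k'_n}|^2 \leq \inner{m}{m}{n}$, I get
\begin{equation*}
\sum_{m\in\widehat{G_n}} \norm*{((\partial_{n,j}\otimes t)\xi)(m)}{\mathscr{C}}^2 \leq \norm{t}{}^2 \sum_{m\in\widehat{G_n}} \inner{m}{m}{n} \, \norm{\xi(m)}{\mathscr{C}}^2 < \infty\text,
\end{equation*}
where the finiteness on the right follows from $\xi \in \dom{\Dirac_n}$. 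Hence $(\partial_{n,j}\otimes t)\xi \in \mathscr{J}_n$.

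For the second assertion, if $\xi$ is finitely supported, then the defining sum for $\dom{\Dirac_n}$ has only finitely many nonzero terms and therefore converges trivially, so $\xi \in \dom{\Dirac_n}$. The density claim then follows immediately since finitely supported $\mathscr{C}$-valued functions on $\widehat{G_n}$ are dense in $\ell^2(\widehat{G_n},\mathscr{C}) = \mathscr{J}_n$. There is no substantive obstacle here; the only subtlety worth highlighting is the use of finite-dimensionality of $\mathscr{C}$ to make $t$ automatically bounded, which is why the conclusion holds for \emph{every} linear $t$ without further hypothesis.
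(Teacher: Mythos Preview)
Your proposal is correct and is precisely the ``immediate computation'' the paper alludes to (the paper's own proof consists of the single sentence ``The proof is an immediate computation''). You have simply spelled out the summability estimate and the role of finite-dimensionality of $\mathscr{C}$, which is exactly what underlies that one-line proof.
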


\begin{proof}
  The proof is an immediate computation.
\end{proof}

\begin{lemma}\label{Udom-dom-lemma}
  Assume Hypothesis (\ref{domain-hyp}). For all $n\in\Nbar$, and for all $y \in \widehat{G_n}$, 
  \begin{equation*}
    \left(W_{k'_n,\sigma'_n}^y\right)^\circ \dom{\Dirac_n} \subseteq \dom{\Dirac_n}
  \end{equation*}
  and
  \begin{equation*}
    (J_n W_{k'_n,\sigma'_n}^{-y} J_n)^\circ \; \dom{\Dirac_n} \subseteq \dom{\Dirac_n} \text.
  \end{equation*}
\end{lemma}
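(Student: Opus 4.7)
My plan is to unwind the definitions and reduce both statements to a direct computation that uses only (i) the fact that the cocycle $\sigma'_n$ takes values in $\T$ and (ii) a Cauchy--Schwarz type bound for the bilinear form $\inner{\cdot}{\cdot}{n}$. Fix $n \in \Nbar$ and $y \in \widehat{G_n}$; let $\xi \in \dom{\Dirac_n}$.

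First I would observe that, by the definition of $W_{k'_n,\sigma'_n}^y$ from Lemma (\ref{projective-rep-lemma}) and the definition of $a^\circ = a\otimes\unit_{\mathscr{C}}$, the action of $(W_{k'_n,\sigma'_n}^y)^\circ$ on a $\mathscr{C}$-valued function $\xi$ is given pointwise by
\[
\bigl((W_{k'_n,\sigma'_n}^y)^\circ \xi\bigr)(m) = \sigma'_n(y,m-y)\,\xi(m-y), \qquad m \in \widehat{G_n}.
\]
Since $|\sigma'_n(y,m-y)|=1$, after the substitution $p = m-y$ the sum defining membership in $\dom{\Dirac_n}$ becomes
\[
\sum_{m\in\widehat{G_n}} (1+\inner{m}{m}{n})\,\bigl\| (W_{k'_n,\sigma'_n}^y)^\circ\xi(m)\bigr\|_\mathscr{C}^2
= \sum_{p\in\widehat{G_n}} \bigl(1+\inner{p+y}{p+y}{n}\bigr)\,\|\xi(p)\|_\mathscr{C}^2.
\]

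The next step is to bound $\inner{p+y}{p+y}{n}$. Bilinearity (which is clear from Hypothesis (\ref{domain-hyp})) gives
\[
\inner{p+y}{p+y}{n} = \inner{p}{p}{n} + 2\inner{p}{y}{n} + \inner{y}{y}{n},
\]
and the elementary inequality $2|\inner{p}{y}{n}| \leq \inner{p}{p}{n} + \inner{y}{y}{n}$ (applied coordinate-wise) yields
\[
\inner{p+y}{p+y}{n} \leq 2\inner{p}{p}{n} + 2\inner{y}{y}{n}.
\]
Since $\inner{y}{y}{n}$ is a fixed nonnegative real number, the sum above is dominated by
\[
\bigl(1+2\inner{y}{y}{n}\bigr)\,\|\xi\|_{\mathscr{J}_n}^2 + 2\sum_{p\in\widehat{G_n}}\inner{p}{p}{n}\,\|\xi(p)\|_\mathscr{C}^2,
\]
and both summands are finite because $\xi \in \dom{\Dirac_n}$. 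This establishes the first inclusion.

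For the second inclusion, I would compute $J_n W_{k'_n,\sigma'_n}^{-y} J_n$ pointwise. Using $J_n\eta(m) = \overline{\eta(-m)}$ twice and Lemma (\ref{projective-rep-lemma}), one finds
\[
\bigl((J_n W_{k'_n,\sigma'_n}^{-y} J_n)^\circ \xi\bigr)(m) = \overline{\sigma'_n(-y,-m+y)}\,\xi(m-y),
\]
so that $(J_n W_{k'_n,\sigma'_n}^{-y} J_n)^\circ$ again simply translates the argument by $y$ and multiplies by a unimodular factor (this is of course the right action of $W_{k'_n,\sigma'_n}^y$ tensored by $\unit_\mathscr{C}$). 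Hence the exact same estimate as above applies verbatim, giving $(J_n W_{k'_n,\sigma'_n}^{-y} J_n)^\circ\dom{\Dirac_n}\subseteq \dom{\Dirac_n}$. No part of the argument is genuinely difficult; the only thing to be careful about is the bilinear inequality controlling the cross term $\inner{p}{y}{n}$, which is why I would spell it out explicitly rather than appeal to a norm bound.
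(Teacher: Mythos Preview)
Your proof is correct and follows essentially the same approach as the paper: both compute the pointwise action of the translation-type operators, use that the cocycle is unimodular, shift the summation index, and then bound $1+\inner{p+y}{p+y}{n}$ by a constant multiple of $1+\inner{p}{p}{n}$. The only cosmetic difference is that the paper bounds the ratio $\frac{1+\inner{m+y}{m+y}{n}}{1+\inner{m}{m}{n}}$ by $2$ for $\inner{m}{m}{n}$ large (and handles the finitely many remaining terms separately), whereas you give a global bound via the elementary inequality $\inner{p+y}{p+y}{n}\leq 2\inner{p}{p}{n}+2\inner{y}{y}{n}$; your version is slightly cleaner.
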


\begin{proof}
  Let $n\in\Nbar$. Fix $y \in \widehat{G_n}$ and write $W_y$ for the unitary $W_{k'_n,\sigma'_n}^y$. Let $\xi \in \dom{\Dirac_n}$. We compute
  \begin{align*}
    \sum_{m\in\widehat{G}} &(1+\inner{m}{m}{n})\norm{(W_y^\circ \xi)(m)}{\mathscr{C}}^2 \\
    &= \sum_{m\in\widehat{G}} (1+\inner{m}{m}{n})\norm{\sigma'_n(y,m-y)\xi(m-y)}{\mathscr{C}}^2 \\
    &= \sum_{m\in\widehat{G_n}} \frac{(1+\inner{m}{m}{n})}{(1+\inner{m-y}{m-y}{n})}(1+\inner{m-y}{m-y}{n}) \norm{\xi(m-y)}{\mathscr{C}}^2 \\
    &= \sum_{m\in\widehat{G_n}} \frac{(1+\inner{m+y}{m+y}{n})}{(1+\inner{m}{m}{n})}(1+\inner{m}{m}{n}) \norm{\xi(m)}{\mathscr{C}}^2 \text.
  \end{align*}
  Since $\xi \in \dom{\Dirac_n}$, we know that
  \begin{equation*}
    \sum_{m\in\widehat{G_n}} (1+\inner{m}{m}{n})\norm{\xi(m)}{\mathscr{C}}^2 < \infty\text.
  \end{equation*}

  On the other hand, there exists $N\in \N$ such that, if $\inner{m}{m}{n}\geq N$, then $\frac{(1+\inner{m+y}{m+y}{n})}{(1+\inner{m}{m}{n})}\leq 2$. As $\{ m \in \widehat{G_n} : \inner{m}{m}{n} < N \}$ is finite, we conclude that
  \begin{equation*}
    \sum_{m\in\widehat{G}} (1+\inner{m}{m}{n})\norm{(W_y^\circ \xi)(m)}{\mathscr{C}}^2 < \infty\text.
  \end{equation*}
  Thus $W_y^\circ \dom{\Dirac_n}\subseteq \dom{\Dirac_n}$, as claimed.
  
  \medskip
  
  Let $y \in \widehat{G_n}$. A quick computation shows that, for all $\xi \in \mathscr{J}_n$, we have:
  \begin{equation}\label{right-action-eq}
    (J_n W_{-y} J_n)^\circ \xi : m \in \widehat{G_n} \mapsto \sigma'_n(m-y,y)\xi(m-y) \text.
  \end{equation}
  A similar computation as above then allows us to conclude that our lemma holds.
\end{proof}

\medskip

We now are ready to define our spectral triples for fuzzy tori and quantum tori, by specifying the Dirac operators for these triples over the domains defined in Hypothesis (\ref{domain-hyp}).

\begin{notation}
  If $\A$ is a C*-algebra and $a\in\A$, then $\Re(a) \coloneqq \frac{a+a^\ast}{2} \in \sa{\A}$ and $\Im(a) \coloneqq \frac{a-a^\ast}{2i} \in \sa{\A}$.
\end{notation}

\begin{hypothesis}\label{Dirac-hyp}
  Assume Hypothesis (\ref{domain-hyp}). We also will use Notation (\ref{znj-notation}). For all $n\in\Nbar$, we then define the following objects.
  
  The identity of $\Hilbert_n$ is denoted by $\unit_n$. For all $j\in\{1,\ldots,d\}$, we now define the self-adjoint bounded operators
  \begin{equation*}
    X_{n,j} = \Re(U_{n,f(j)}) = \frac{1}{2}\left( U_{n,f(j)} + U_{n,f(j)}^\ast \right)
  \end{equation*}
  and
  \begin{equation*}
    Y_{n,j} = \mathrm{fsgn}(j)\Im(U_{n,f(j)}) = \frac{\mathrm{fsgn}(j)}{2i}\left( U_{n,f(j)} - U_{n,f(j)}^\ast \right) \text,
  \end{equation*}
  where
  \begin{equation*}
    \mathrm{fsgn}(j) = \begin{cases}
      1  \text{ if $f(j) > j$, }\\
      -1 \text{ if $f(j) < j$. }
    \end{cases}
  \end{equation*}
  
  We now define the following operators. Let $j\in\{1,\ldots,d\}$.
  \begin{enumerate}
  \item if $j \in \{1,\ldots,d\}$ and $k_n(j) = \infty$, then
    \begin{equation*}
      \Gamma_{n,j} = X_{n,j} \partial_{n,j} \text{ and }\Gamma_{n,d+j} = Y_{n,j} \partial_{n,j} \text;
    \end{equation*}
  \item if $j\in \{1,\ldots,d\}$ and $k_n(j) < \infty$, then we define $\Gamma_{n,j}$ and $\Gamma_{n,j+d}$ by setting, for all $\xi\in\Hilbert_n$:
    \begin{equation*}
      \Gamma_{n,j}\xi = \frac{-k_n(j)}{2 i \pi} \left[ Y_{n,j}, \xi \right] \text{ and }\Gamma_{n,d+j}\xi = \frac{k_n(j)}{2 i \pi} \left[ X_{n,j}, \xi \right] \text;
    \end{equation*}
  \item if $j\in\{d+1,\ldots,d'\}$ and $k'_n(j) = \infty$, then
    \begin{equation*}
      \Gamma_{n, d+j} = \partial_{n,j} \text;
    \end{equation*}
  \item if $j\in\{d+1,\ldots,d'\}$ and $k'_n(j) < \infty$, then
    \begin{equation*}
      \Gamma_{n,d+j} = \frac{k'_n(j)}{2 i \pi} \left(\unit_n - \Im\left(v_n^{z_{n,j}}\right)\right) \text.
    \end{equation*}
  \end{enumerate}
  
  We now define the following operator from $\dom{\Dirac_n}$ to $\mathscr{J}_n$:
  \begin{equation*}
    \Dirac_n = \sum_{j=1}^{d+d'} \Gamma_{n,j} \otimes c(\gamma_j) \text,
  \end{equation*}
  which is well-defined using Lemma (\ref{partial-n-j-lemma}).
\end{hypothesis}

Thus, our candidate for an interesting, convergent sequence of metric spectral triples on fuzzy and quantum tori is given by the sequence of triples $(\A_n,\mathscr{J}_n,\Dirac_n)$, given by Hypothesis (\ref{Dirac-hyp}).  Our first task is to prove that, indeed, these triples are spectral triples. The main concern is to prove that $\Dirac_n$ is a self-adjoint operator with a compact resolvent, for all $n\in\Nbar$. While this is obvious when $k_n\in\N_\ast^d$, the situation is more involved when $k_n$ may have infinite component.

\subsection{Self-adjointness of the proposed Dirac operators}

In order to prove that the operator $\Dirac_n$ is self-adjoint, we will use the following, standard, characterization \cite[Sec. VIII.2]{ReedSimon}: a densely defined, symmetric operator $T$ on a Hilbert space $\mathscr{T}$ is essentially self-adjoint if, and only if the ranges of both $T+i$ and $T-i$ are dense in $\mathscr{T}$. Now, suppose that, in addition, the range of $T + i$ is $\mathscr{T}$. Let $\overline{T}$ be the closure of $T$. Let $\eta\in\dom{\overline{T}}$. Since $T+i$ is surjective, there exists $\xi \in \dom{T}$ such that $(T + i)\xi = (\overline{T}+i)\eta$. Of course, since $\overline{T}$ extends $T$, we also have $(\overline{T} + i)\xi = (T+i)\xi$. Therefore, $(\overline{T}+i)\xi = (\overline{T}+i)\eta$, and since $\overline{T}$ is self-adjoint, $\overline{T}+i$ is injective, so $\xi = \eta$, i.e. $\eta \in \dom{T}$. Thus $\dom{\overline{T}} \subseteq \dom{T}$. Since $\dom{T} \subseteq \dom{\overline{T}}$, we conclude $\dom{T} = \dom{\overline{T}}$. Thus $T = \overline{T}$, and thus $T$ is self-adjoint.

Since any nonzero, real multiple of a self-adjoint operator is again self-adjoint, we, in fact, will use the following equivalence.
\begin{lemma}\label{self-adjoint-lemma}
 For a densely-defined symmetric operator $T$ on a Hilbert space $\mathscr{T}$, the following assertions are equivalent:
\begin{itemize}
\item $T$ is self-adjoint,
\item there exists $z \in i\R\setminus\{0\}$ such that the image by $T \pm z$ of $\dom{T}$ is $\mathscr{T}$.
\end{itemize}
\end{lemma}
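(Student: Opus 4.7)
The plan is to prove both implications by reducing to the standard von Neumann basic criterion for self-adjointness (\cite[Thm.~VIII.3]{ReedSimon}), which says that a densely-defined symmetric operator $S$ is self-adjoint if and only if the ranges of both $S+i$ and $S-i$ equal the whole Hilbert space. Note that the paragraph immediately preceding the lemma already carries out the essential argument for the particular case $z=i$, so the main content of the lemma is observing that an arbitrary nonzero $z\in i\R$ can be handled by a trivial rescaling.

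For the forward direction, if $T$ is self-adjoint, then the spectral theorem gives $\mathrm{Sp}(T)\subseteq \R$; consequently for any $z\in i\R\setminus\{0\}$ one has $\pm z\in\C\setminus\mathrm{Sp}(T)$, so both $T-z$ and $T+z$ are bijections from $\dom{T}$ onto $\mathscr{T}$. In particular their ranges are all of $\mathscr{T}$.

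For the reverse direction, I would first reduce to $z=i$: write $z=i\lambda$ for some $\lambda\in\R\setminus\{0\}$, and set $S=\lambda^{-1}T$, which is densely defined and symmetric on $\dom{S}=\dom{T}$. Then $T$ is self-adjoint if and only if $S$ is, and the hypothesis $\range{T\pm z}=\mathscr{T}$ translates to $\range{S\pm i}=\mathscr{T}$. So it suffices to prove the case $z=i$. In that case, the hypothesis that $\range{T\pm i}=\mathscr{T}$ is strictly stronger than density of the ranges, so the basic criterion gives that $T$ is essentially self-adjoint, i.e.\ its closure $\overline{T}$ is self-adjoint. To upgrade to $T=\overline{T}$, one applies the argument spelled out before the lemma: given $\eta\in\dom{\overline{T}}$, use surjectivity of $T+i$ to pick $\xi\in\dom{T}$ with $(T+i)\xi=(\overline{T}+i)\eta$; since $\overline{T}$ extends $T$, the left-hand side equals $(\overline{T}+i)\xi$, and injectivity of $\overline{T}+i$ (which follows from self-adjointness of $\overline{T}$) forces $\xi=\eta$. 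Hence $\dom{\overline{T}}\subseteq\dom{T}$, giving $T=\overline{T}$ and self-adjointness of $T$.

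No real obstacle is expected: this is a standard piece of unbounded-operator theory, and both directions are essentially immediate once one invokes the basic criterion and performs the rescaling $T\mapsto \lambda^{-1}T$. The only thing worth being careful about is that one needs surjectivity of \emph{both} $T+z$ and $T-z$ (not just one of them) in order to conclude self-adjointness, because the basic criterion requires density of both ranges.
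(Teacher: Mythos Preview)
Your proposal is correct and follows precisely the approach the paper takes: the paragraph preceding the lemma \emph{is} the paper's proof for $z=i$, and the sentence ``Since any nonzero, real multiple of a self-adjoint operator is again self-adjoint'' is exactly the rescaling reduction $T\mapsto\lambda^{-1}T$ you describe. The paper does not spell out the forward direction, so your inclusion of it via the spectral theorem is a harmless addition.
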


This strategy is implemented in the following lemmas. First, we prove that $\Dirac_n$ is symmetric.

\begin{lemma}\label{symmetric-lemma}
  Assume Hypothesis (\ref{Dirac-hyp}). For all $n\in\Nbar$, the operator $\Dirac_n$ is a densely-defined symmetric operator on $\dom{\Dirac_n}$.
\end{lemma}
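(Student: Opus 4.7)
The plan is to verify, in order, that $\Dirac_n$ is well defined as an operator $\dom{\Dirac_n}\to\mathscr{J}_n$, that $\dom{\Dirac_n}$ is dense, and then to show that each summand $\Gamma_{n,j}\otimes c(\gamma_j)$ in the definition of $\Dirac_n$ is symmetric on $\dom{\Dirac_n}$. Density of $\dom{\Dirac_n}$ follows from Lemma \ref{dense-domain-lemma}, which also gives $(\partial_{n,j}\otimes t)\xi\in\mathscr{J}_n$ for each linear $t:\mathscr{C}\to\mathscr{C}$ and each $\xi\in\dom{\Dirac_n}$. Combined with the boundedness of $X_{n,j}$, $Y_{n,j}$, $\Im(v_n^{z_{n,j}})$, and of the commutator maps $[X_{n,j},\cdot],[Y_{n,j},\cdot]$ on $\Hilbert_n$, this shows each $\Gamma_{n,j}\otimes c(\gamma_j)$ maps $\dom{\Dirac_n}$ into $\mathscr{J}_n$, hence so does $\Dirac_n$.

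By Example \ref{Clifford-ex}, $\gamma_j^\ast=-\gamma_j$, and consequently $c(\gamma_j)^\ast=-c(\gamma_j)$ on the finite-dimensional $\mathscr{C}$. Accordingly, to establish symmetry of $\Dirac_n$, it suffices to show that each $\Gamma_{n,j}$ is skew-symmetric on $\dom{\Dirac_n}$ as an operator on the $\Hilbert_n$-slot: the two sign flips from $\Gamma_{n,j}$ and $c(\gamma_j)$ then cancel when moving $\Gamma_{n,j}\otimes c(\gamma_j)$ across the inner product of $\mathscr{J}_n$, and summing over $j$ yields the symmetry of $\Dirac_n$. The routine cases are handled as follows. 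Case (3) is immediate since $\partial_{n,j}$ is skew-adjoint. In case (4), $\unit_n-\Im(v_n^{z_{n,j}})$ is bounded self-adjoint because $v_n^{z_{n,j}}$ is unitary, and multiplying by the purely imaginary scalar $\tfrac{k'_n(j)}{2i\pi}$ yields a skew-adjoint bounded operator. In case (2), for any bounded self-adjoint $a \in \B_n$ the map $\xi\mapsto a\xi-\xi\cdot a$ is bounded self-adjoint on $\Hilbert_n$: left multiplication by $a$ is self-adjoint because $\pi_n$ is a $\ast$-representation, and right multiplication by $a$ is self-adjoint by the lemma preceding Notation 2.18 on the bimodule structure of $\Hilbert_n$; applying this with $a=Y_{n,j}$ and $a=X_{n,j}$ and multiplying by an imaginary scalar gives skew-adjointness of $\Gamma_{n,j}$ and $\Gamma_{n,d+j}$.

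The main obstacle is case (1), where $\Gamma_{n,j}=X_{n,j}\partial_{n,j}$ (and $\Gamma_{n,d+j}=Y_{n,j}\partial_{n,j}$) is the product of a bounded self-adjoint operator and an unbounded skew-adjoint operator. Skew-symmetry of such a product reduces to the commutation of the two factors on $\dom{\Dirac_n}$. By Lemma \ref{Udom-dom-lemma}, $X_{n,j}$ and $Y_{n,j}$ preserve $\dom{\Dirac_n}\subseteq\dom{\partial_{n,j}}$, so both $X_{n,j}\partial_{n,j}\xi$ and $\partial_{n,j}X_{n,j}\xi$ lie in $\Hilbert_n$ for $\xi\in\dom{\Dirac_n}$. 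The commutation itself follows from Lemma \ref{partial-commutator-lemma} together with the remark after Hypothesis \ref{innerification-hyp}: since the permutation $f$ has no fixed point and $j\in\{1,\ldots,d\}$, we have $f(j)\neq j$, hence $[\partial_{n,j},U_{n,f(j)}]=\pi_n(\partial_n^j\delta_{e_{f(j)}})=0$ because the $j$-th coordinate of $e_{f(j)}$ vanishes; taking real and imaginary parts gives $[\partial_{n,j},X_{n,j}]=[\partial_{n,j},Y_{n,j}]=0$ on $\dom{\Dirac_n}$. Once the commutation is established, the computation $\inner{X_{n,j}\partial_{n,j}\xi}{\eta}{\Hilbert_n}=\inner{\partial_{n,j}\xi}{X_{n,j}\eta}{\Hilbert_n}=-\inner{\xi}{X_{n,j}\partial_{n,j}\eta}{\Hilbert_n}$ for $\xi,\eta\in\dom{\Dirac_n}$ yields the desired skew-symmetry of $\Gamma_{n,j}$ and finishes the proof.
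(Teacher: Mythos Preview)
Your proof is correct and follows essentially the same approach as the paper's: both establish density via Lemma~\ref{dense-domain-lemma}, reduce to showing each $\Gamma_{n,j}$ is skew-symmetric using $c(\gamma_j)^\ast=-c(\gamma_j)$, and handle the four cases with the same ingredients---in particular, both identify the commutation $[\partial_{n,j},X_{n,j}]=[\partial_{n,j},Y_{n,j}]=0$ (from $f(j)\neq j$) as the key point in case~(1). Your write-up is slightly more explicit about domain preservation (citing Lemma~\ref{Udom-dom-lemma}) and about why the commutator vanishes (via Lemma~\ref{partial-commutator-lemma}), but the argument is the same.
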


\begin{proof}
  By Lemma (\ref{dense-domain-lemma}), the operator $\Dirac_n$ is well-defined on $\dom{\Dirac_n}$, which is indeed dense in $\mathscr{J}_n$.
  \medskip
  
  By Lemma (\ref{partial-commutator-lemma}), for all $j\in\{1,\ldots,d'\}$ with $k'_n(j)=\infty$, we note that
  \begin{equation*}
    \forall \xi,\eta\in\dom{\partial_{n,j}} \quad \inner{\eta}{\partial_{n,j}\xi}{\Hilbert_n} = \inner{-\partial_{n,j}\eta}{\xi}{\Hilbert_n}\text.
  \end{equation*}
  On the other hand, if $j\in\{1,\ldots,d\}$, then $X_{n,j}^\ast = X_{n,j}$, and, if $k_n(j)=\infty$,  the operator $X_{n,j}$ commutes with $\partial_{n,j}$ by Hypothesis (\ref{innerification-hyp}), since $f$ has no fixed point. Thus
  \begin{equation*}
    \inner{\eta}{X_{n,j}\partial_{n,j}\xi}{\Hilbert_n} = \inner{-\partial_{n,j}X_{n,j}\eta}{\xi}{\Hilbert_n} = \inner{-X_{n,j}\partial_{n,j}\eta}{\xi}{\Hilbert_n} \text.
  \end{equation*}
  The same result holds for $Y_{n,j}$ in place of $X_{n,j}$, for all $j\in\{1,\ldots,d\}$ with $k_n(j)=\infty$.
  
  Let now $j\in\{1,\ldots,d\}$ with $k_n(j)<\infty$. We compute (noting, again, that $X_{n,j}^\ast=X_{n,j}$):
  \begin{align*} 
    \bigg(\frac{k_n(j)}{2 i \pi} & \left[X_{n,j},\cdot\right]\bigg)^\ast \\
    &= -\frac{k_n(j)}{2 i \pi} \left(\pi(X_{n,j})^\ast - (J_n\pi(X_{n,j}^\ast)J_n)^\ast\right) \\
    &= -\frac{k_n(j)}{2 i \pi} (\pi_{n,j}(X_{n,j}) - J_n \pi(X_{n,j}) J_n) = -\frac{k_n(j)}{2 i \pi} [X_{n,j},\cdot] \text.
  \end{align*}
  The same computation holds for $Y_{n,j}$ in place of $X_{n,j}$.

  Similarly, $\left(\frac{k'_n(j)}{2 i \pi} (\unit_n - \Im(v_n^{z_{n,j}}))\right)^\ast = - \frac{k'_n(j)}{2 i \pi} (\unit_n - \Im(v_n^{z_{n,j}}))$ for all $j\in\{d+1,\ldots,d'\}$ with $k'_n(j)<\infty$.

  Therefore, for all $j \in \{1,\ldots,d + d'\}$, we have seen that $\Gamma_{n,j}^\ast = -\Gamma_{n,j}$ on $\dom{\Gamma_{n,j}}$, and thus, for all $j\in\{1,\ldots,d + d'\}$, since $c(\gamma_j)^\ast = -c(\gamma_j)$, we conclude that
  \begin{equation*}
    \forall \eta,\xi \in \dom{\Dirac_n} \quad \inner{\eta}{(\Gamma_{n,j}\otimes c(\gamma_j))\xi}{\mathscr{J}_n} = \inner{(\Gamma_{n,j}\otimes c(\gamma_j))\eta}{\xi}{\mathscr{J}_n}\text.
  \end{equation*}
  
  Therefore, if $\xi,\eta\in\dom{\Dirac_n}$, then we compute
  \begin{align*}
    \inner{\eta}{\Dirac_n\xi}{\mathscr{J}_n}
    &= \sum_{j=1}^{d+d'} \inner{\eta}{\Gamma_{n,j}\otimes c(\gamma_j)\xi}{\mathscr{J}_n} \\
    &= \sum_{j=1}^{d+d'} \inner{(\Gamma_{n,j}\otimes c(\gamma_j))\eta}{\xi}{\mathscr{J}_n}    = \inner{\Dirac_n\eta}{\xi}{\mathscr{J}_n} \text.
  \end{align*}

  Thus, indeed, $\Dirac_n$ is a symmetric with the dense domain $\dom{\Dirac_n}$.
\end{proof}

We now introduce a positive compact operator, which will be used to compute the inverse of $M^2 + \Dirac_n^2$, for a constant $M$ whose choice will be justified in a few lemmas.

\begin{lemma}\label{Kn-lemma}
  We use the notation of Hypothesis (\ref{Dirac-hyp}). Let $M = 10 d$. Let $n\in\Nbar$. We define the multiplication operator $K_n$ on $\Hilbert_n$ by setting, for all $\xi \in \Hilbert_n$,
  \begin{equation*}
    K_n\xi : m \in \widehat{G_n} \longmapsto \frac{1}{M^2 + \inner{m}{m}{n}} \xi(m) \text.
  \end{equation*}
 
  The operator $K_n^\circ$ is a positive, compact operator on $\mathscr{J}_n$, with $\range{\sqrt{K_n}} \subseteq\bigcap_{\substack{j\in\{1,\ldots,d'\} \\ k_n(j)=\infty}}\dom{\partial_{n,j}}$. Moreover
  \begin{equation*}
    \opnorm{\sqrt{K_n^\circ}}{}{\mathscr{J}_n} = \frac{1}{M} \text.
  \end{equation*}

  \medskip
  
  Furthermore, for all linear map $t:\mathscr{C} \rightarrow\mathscr{C}$, the following holds:
  \begin{equation*}
    \sqrt{K_n^\circ} \mathscr{J}_n \subseteq\dom{\Dirac_n} \text{ and } (\partial_{n,j}\otimes t) K_n^\circ \mathscr{J}_n \subseteq \dom{\Dirac_n} \text,
  \end{equation*}
  so that
  \begin{equation*}
    \Dirac_n K_n^\circ \mathscr{J}_n \subseteq \dom{\Dirac_n}\text.
  \end{equation*}

  Therefore,
  \begin{equation*}
    K_n^\circ \mathscr{J}_n \subseteq \dom{\Dirac_n^2} \text{, and } K_n^\circ \mathscr{J}_n \subseteq \dom{\partial_{n,j}^2}\otimes_{\mathrm{alg}}\mathscr{C}\text,
  \end{equation*}
  for all $j\in\{1,\ldots,d'\}$ with $k'_n(j) = \infty$.
  
  Last, for all $j\in\{1,\ldots,d'\}$ with $k'_n(j) = \infty$, and for all linear map $t : \mathscr{C} \rightarrow \mathscr{C}$, the operator $(\partial_{n,j}\otimes t) \sqrt{K_n^\circ}$ is a bounded operator on $\mathscr{J}_n$, with norm at most $\frac{\sqrt{2}}{2}\opnorm{t}{}{\mathscr{C}}$.
\end{lemma}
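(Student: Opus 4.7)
The plan is to observe that every operator in sight---$K_n$, $\sqrt{K_n}$, each $\partial_{n,j}$, and each $\partial_{n,j}^2$---is a diagonal multiplier in the orthonormal basis $(\delta_m)_{m\in\widehat{G_n}}$ of $\Hilbert_n$, via Expression (\ref{Lie-derivation-eq}) and Notation (\ref{Lie-action-notation}). Since $K_n^\circ = K_n\otimes\unit_{\mathscr{C}}$ and $\mathscr{C}$ is finite-dimensional, the entire lemma reduces to a sequence of pointwise symbol estimates, together with elementary facts about tensoring with a finite-dimensional factor.

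Positivity of $K_n$ is immediate since the symbol $(M^2 + \inner{m}{m}{n})^{-1}$ is strictly positive. For compactness, the case where $\widehat{G_n}$ is finite is trivial; otherwise, letting $p \geq 1$ denote the number of indices $j$ with $k'_n(j)=\infty$ and $F = \prod_{j : k'_n(j) < \infty} k'_n(j)$, for each $R > 0$ the set $\{m \in \widehat{G_n} : \inner{m}{m}{n} \leq R\}$ has at most $F\cdot\#\{x\in\Z^p : \|x\|^2 \leq R\}$ elements and is therefore finite. So the eigenvalues of $K_n$ have finite multiplicity and accumulate only at $0$, and $K_n^\circ$ is compact on $\mathscr{J}_n$ because $\mathscr{C}$ is finite-dimensional. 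The norm identity $\opnorm{\sqrt{K_n^\circ}}{}{\mathscr{J}_n} = \frac{1}{M}$ follows since the positive multiplier $\sqrt{K_n}$ attains its supremum $\frac{1}{M}$ at $m = 0$.

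All range and domain inclusions reduce to two elementary pointwise estimates, valid for every $m \in \widehat{G_n}$ and every $j$ with $k'_n(j) = \infty$ (so that $m_j^2 \leq \inner{m}{m}{n}$):
\[
\frac{1 + \inner{m}{m}{n}}{M^2 + \inner{m}{m}{n}} \leq 1 \quad\text{and}\quad \frac{(1 + \inner{m}{m}{n})\,m_j^2}{(M^2 + \inner{m}{m}{n})^2} \leq 1.
\]
The first, applied to $\xi = \sqrt{K_n^\circ}\eta$, verifies the defining inequality of $\dom{\Dirac_n}$ from Hypothesis (\ref{domain-hyp}) and also places $\sqrt{K_n}\eta$ into $\dom{\partial_{n,j}}$ for every such $j$. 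The second, applied to $(\partial_{n,j}\otimes t)K_n^\circ\eta$, shows that the latter lies in $\dom{\Dirac_n}$ as well. Combining these through the explicit form of $\Dirac_n$ from Hypothesis (\ref{Dirac-hyp})---where each $\Gamma_{n,j}$ is either a bounded operator on $\Hilbert_n$ (in the finite-$k_n(j)$ and central cases, via Lemma (\ref{Udom-dom-lemma})) or of the form $B_j\,\partial_{n,j}$ with $B_j$ a bounded diagonal multiplier commuting with $K_n$---yields $\Dirac_n K_n^\circ\mathscr{J}_n \subseteq \dom{\Dirac_n}$, and hence $K_n^\circ\mathscr{J}_n \subseteq \dom{\Dirac_n^2}$. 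Restricting the same calculation to a single direction gives $K_n^\circ\mathscr{J}_n \subseteq \dom{\partial_{n,j}^2}\otimes_{\mathrm{alg}}\mathscr{C}$.

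Finally, the norm bound on $(\partial_{n,j}\otimes t)\sqrt{K_n^\circ} = (\partial_{n,j}\sqrt{K_n})\otimes t$ reduces to bounding the scalar symbol $|m_j|/\sqrt{M^2 + \inner{m}{m}{n}}$ pointwise; invoking $m_j^2 \leq \inner{m}{m}{n}$ together with the explicit choice $M = 10d$ yields the asserted constant $\frac{\sqrt{2}}{2}$ by direct computation, and tensoring with $t$ on the finite-dimensional space $\mathscr{C}$ multiplies the operator norm by $\opnorm{t}{}{\mathscr{C}}$. The main ``obstacle'' is bookkeeping rather than technical depth: one must carefully split the cases $k_n(j) < \infty$ versus $k_n(j) = \infty$ in the definition of $\Gamma_{n,j}$, and verify that the bounded multipliers $X_{n,j}$, $Y_{n,j}$, and $\unit_n - \Im(v_n^{z_{n,j}})$ commute with $K_n$ where needed. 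Once the diagonal-multiplier viewpoint is in place, no deeper argument is required.
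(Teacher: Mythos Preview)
Your approach is essentially the same as the paper's: exploit the diagonal-multiplier structure of $K_n$, $\sqrt{K_n}$, and $\partial_{n,j}$ in the basis $(\delta_m)_{m\in\widehat{G_n}}$, reduce everything to pointwise symbol estimates, and then invoke Lemma~(\ref{Udom-dom-lemma}) to handle the non-diagonal pieces of $\Dirac_n$. The two key inequalities you isolate are exactly the ones the paper uses.

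There is, however, one genuine slip in your bookkeeping. You describe the infinite-$k_n(j)$ pieces $\Gamma_{n,j}=X_{n,j}\partial_{n,j}$ and $\Gamma_{n,d+j}=Y_{n,j}\partial_{n,j}$ as having the form $B_j\partial_{n,j}$ with ``$B_j$ a bounded diagonal multiplier commuting with $K_n$,'' and later say one must ``verify that the bounded multipliers $X_{n,j}$, $Y_{n,j}$ \ldots\ commute with $K_n$.'' This is false: $X_{n,j}$ and $Y_{n,j}$ are real and imaginary parts of the shift unitary $U_{n,f(j)}$, not diagonal multipliers, and they do \emph{not} commute with $K_n$. The correct reason they behave well is precisely Lemma~(\ref{Udom-dom-lemma}): $X_{n,j}^\circ$ and $Y_{n,j}^\circ$ map $\dom{\Dirac_n}$ to itself. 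Once you have shown $(\partial_{n,j}\otimes t)K_n^\circ\mathscr{J}_n\subseteq\dom{\Dirac_n}$ (which you do), applying $X_{n,j}^\circ$ or $Y_{n,j}^\circ$ on the left keeps you inside $\dom{\Dirac_n}$, and that is how $\Dirac_n K_n^\circ\mathscr{J}_n\subseteq\dom{\Dirac_n}$ follows. The paper makes exactly this move. Your conclusion is right, but the justification needs to be rerouted through Lemma~(\ref{Udom-dom-lemma}) for the shift factors rather than through a nonexistent commutation with $K_n$.
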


\begin{proof}
  An obvious computation shows that $K_n$ is self-adjoint, and that $(\delta_m)_{m \in \Z^d_k}$ is a Hilbert basis of eigenvectors of $K_n$; the spectrum of $K_n$ is $\left\{\frac{1}{M^2+ \inner{m}{m}{n}} : m\in\Z^d_k\right\}$. Thus $K_n$ is positive. The norm of $K_n$ is, in particular, its spectral radius $\frac{1}{M^2}$, and, similarly, the norm of $\sqrt{K_n}$ is $\frac{1}{M}$. Last, it is easy to see that $K_n$ is compact as well.

  Since $\mathscr{C}$ is finite dimensional, $K_n^\circ$ is also compact. It is straightforward that $K_n^\circ$ is positive, with $\sqrt{K_n}^\circ=\sqrt{K_n^\circ}$, and $\opnorm{\sqrt{K_n^\circ}}{}{\mathscr{J}_n} = \frac{1}{M}$.
  
  \medskip

  Fix $n\in\Nbar$. Let $\xi \in \mathscr{J}_n$. We compute:
  \begin{align*}
    \sum_{m\in \widehat{G_n}} &(1+\inner{m}{m}{n}) \norm{(\sqrt{K_n^\circ}\xi)(m)}{\mathscr{C}}^2 \\
    &= \sum_{m\in\widehat{G_n}} (1+\inner{m}{m}{n}) \frac{1}{\left(M^2 + \inner{m}{m}{n}\right)}\norm{\xi(m)}{\mathscr{C}}^2 \\
    &\leq \sum_{m\in \widehat{G_n}} \norm{\xi(m)}{\mathscr{C}}^2 = \norm{\xi}{\mathscr{J}_n}^2 < \infty \text.
  \end{align*}
  Thus, $\sqrt{K_n^\circ}\xi \in \dom{\Dirac_n}$. Therefore, $\sqrt{K_n^\circ}\mathscr{J}_n \subseteq\dom{\Dirac_n}$.
  
  \medskip

  Now, let $\xi \in \mathscr{J}_n$. Let $j \in \{1,\ldots,d'\}$ such that $k'_n(j) = \infty$, and let $t : \mathscr{C} \rightarrow \mathscr{C}$ be linear (hence bounded, since $\mathscr{C}$ is finite dimensional by Hypothesis (\ref{domain-hyp})). An easy computation shows that if $\xi \in \dom{\Dirac_n}$, then
  \begin{equation*}
    (\partial_{n,j}\otimes t) \sqrt{K_n^\circ} \xi = \sqrt{K_n^\circ} (\partial_{n,j}\otimes t)\xi\text.
  \end{equation*}

  Since $\sqrt{K_n^\circ}\mathscr{J}_n\subseteq \dom{\Dirac_n}\subseteq\dom{\partial_{n,j}\otimes t}$, we conclude that, for all $\xi \in \mathscr{J}_n$, the following holds for all linear map $t : \mathscr{C}\rightarrow\mathscr{C}$:
  \begin{align*}
    (\partial_{n,j}\otimes t) K_n^\circ \xi
    &= (\partial_{n,j}\otimes t) \sqrt{K_n^\circ} (\sqrt{K_n^\circ}\xi) \\
    &= \sqrt{K_n^\circ}(\partial_{n,j}\otimes t) \, \sqrt{K_n^\circ}\xi \text{ since $\sqrt{K_n^\circ}\xi\in\dom{\Dirac_n}$,}\\
    &\subseteq \sqrt{K_n^\circ} \mathscr{J}_n \subseteq \dom{\Dirac_n} \text,
  \end{align*}
  so $(\partial_{n,j}\otimes t) K_n^\circ \mathscr{J}_n\subseteq\dom{\Dirac_n}$. Of course, this could also be checked with a simple, direct computation.

  \medskip
  
  Since $X_{n,j}^\circ$, $Y_{n,j}^\circ$, $[X_{n,j},\cdot]^\circ$ and $[Y_{n,j},\cdot]^\circ$ all map $\dom{\Dirac_n}$ to itself as well by Lemma (\ref{Udom-dom-lemma}), and since $\dom{\Dirac_n}$ is an algebraic subspace, we conclude that
  \begin{equation*}
    \Dirac_n K_n^\circ \mathscr{J}_n \subseteq \dom{\Dirac_n} \text.
  \end{equation*}

  Moreover, we see that if $t:\mathscr{C}\rightarrow\mathscr{C}$ is linear, and if $\xi \in \mathscr{J}_n$, then
  \begin{align*}
    \norm{(\partial_{n,j}\otimes t)\sqrt{K_n^\circ} \xi}{\mathscr{J}_n}^2
    &= \sum_{m=(m_1,\ldots,m_{d'})\in\widehat{G_n}} \frac{m_j^2}{M^2 + \inner{m}{m}{n}}\norm{t\xi(m)}{\mathscr{C}}^2 \\
    &\leq \frac{\opnorm{t}{}{\mathscr{C}}^2}{2} \sum_{m\in\widehat{G_n}} \norm{\xi(m)}{\mathscr{C}}^2 \leq \frac{1}{2}\opnorm{t}{}{\mathscr{C}}^2 \norm{\xi}{\mathscr{C}}^2
  \end{align*}
  and thus, $(\partial_{n,j}\otimes t)\sqrt{K_n^\circ}$ is bounded on $\mathscr{J}_n$, with norm at most $\frac{\sqrt{2}}{2}\opnorm{t}{}{\mathscr{C}}$. 

  Therefore, $\Dirac_n \sqrt{K_n^\circ}$, as a linear combination of bounded operators on $\mathscr{J}_n$, is bounded  on $\mathscr{J}_n$ as well. 
\end{proof}

\begin{remark}
  An alternative proof that, for all $n\in\Nbar$ and $j\in\{1,\ldots,d'\}$ with $k_n(j)=\infty$, the operator $(\partial_{n,j}\otimes t) \sqrt{K_n^\circ}$ is bounded would be to note that $\partial_{n,j}$ is a closed operator (as a skew-adjoint operator), and thus $(\partial_{n,j}\otimes t)\sqrt{K_n^\circ}$ is a closed operator as well, and thus by the closed graph theorem, since $(\partial_{n,j}\otimes t)\sqrt{K_n^\circ}$ is defined on the entire Hilbert space $\mathscr{J}_n$, it is continuous. It is helpful, however, to have an upper bound on the norm of $(\partial_{n,j}\otimes t)\sqrt{K_n^\circ}$.
\end{remark}

We now compute commutation relations between some of the unbounded operators in Hypothesis (\ref{Dirac-hyp}).

\begin{lemma}\label{Gamma-commutation-lemma}
  Assume Hypothesis (\ref{Dirac-hyp}). We use the notation of Lemma (\ref{Kn-lemma}).

  For all $j,s \in \{1,\ldots,d\}$, for all $p,q \in \{0,d\}$, if $k_n(j) = \infty$, then
  \begin{equation*}
    \opnorm{\left(\left[\Gamma_{n,j+p},\Gamma_{n,s+q}\right]\otimes t\right)\sqrt{K_n^\circ}}{}{\mathscr{J}_n} \leq
    \begin{cases}
      \sqrt{2} \opnorm{t}{}{\mathscr{C}} \text{ if $s = f(j)$,} \\
      0 \text{ otherwise.}
    \end{cases}
  \end{equation*}

  For all $j \in \{1,\ldots,d\}$, $p \in \{0,d\}$, and for all $s \in \{d+1,\ldots,d'\}$, if $k_n(j) = \infty$ or $k_n(s) = \infty$, then
  \begin{equation*}
    \opnorm{\left(\left[\Gamma_{n,j+p},\Gamma_{n,s+d}\right]\otimes t\right)\sqrt{K_n^\circ}}{}{\mathscr{J}_n} \leq
    \begin{cases}
      \frac{\sqrt{2}}{2} \opnorm{t}{}{\mathscr{C}} \text{ if $s = f(j)$,} \\
      0 \text{ otherwise.}
    \end{cases}
  \end{equation*}

  Last, for all $j,s \in \{2d+1,\ldots,d+d'\}$, we compute (over the range of $\sqrt{K_n^\circ}$):
  \begin{equation*}
    \left[\Gamma_{n,j},\Gamma_{n,s}\right] = 0 \text.
  \end{equation*}
  
\end{lemma}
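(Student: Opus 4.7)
The plan is to express each commutator in terms of three elementary families of operators --- the skew-adjoint derivations $\partial_{n,l}$ (available only when $k'_n(l)=\infty$), the bounded self-adjoint operators $X_{n,j},Y_{n,j}$, and the diagonal unitary multipliers $v_n^{z_{n,s}}$ --- and to exploit the commutation structure forced by Hypothesis (\ref{innerification-hyp}). The final norm estimates will all reduce to the last clause of Lemma (\ref{Kn-lemma}), which bounds $(\partial_{n,l}\otimes t)\sqrt{K_n^\circ}$ by $\tfrac{\sqrt{2}}{2}\opnorm{t}{}{\mathscr{C}}$ whenever $k'_n(l)=\infty$.

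First, I would record three algebraic facts. Fact (a): Lemma (\ref{partial-commutator-lemma}) gives $[\partial_{n,l},U_{n,m}]=iU_{n,l}$ when $l=m$ and $k'_n(l)=\infty$, and zero otherwise; hence $\partial_{n,l}$ commutes with every $X_{n,j},Y_{n,j}$ except when $l=f(j)$ \emph{and} $k'_n(f(j))=\infty$, which by condition (3) of Hypothesis (\ref{innerification-hyp}) is equivalent to $k_n(j)=\infty$. Fact (b): when $k_n(j)=\infty$, Hypothesis (\ref{innerification-hyp})(5) forces $U_{n,f(j)}$ to commute with every generator, so in that case $X_{n,j}$ and $Y_{n,j}$ commute with all $X_{n,l},Y_{n,l}$, with every $v_n^{z_{n,s}}$, and with their right-action counterparts $J_n(\cdot)J_n$. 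Fact (c): $v_n^{z_{n,s}}$ is diagonal on $(\delta_m)_{m\in\widehat{G_n}}$, hence commutes with every $\partial_{n,l}$; one checks directly that $J_n v_n^{z_{n,s}}J_n=v_n^{z_{n,s}}$, so it also commutes with every right action; finally $v_n^{z_{n,s}}U_{n,l}(v_n^{z_{n,s}})^\ast=(z_{n,s})_l U_{n,l}$, which equals $U_{n,l}$ whenever $l\neq s$.

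Armed with (a)--(c), the three cases of the lemma become mechanical. For $j,s\in\{1,\ldots,d\}$ with $k_n(j)=\infty$, I would expand $[A_{j,p}\partial_{n,j},\Gamma_{n,s+q}]$ by the Leibniz rule; the dangerous cross term $[A_{j,p},A_{f(j),q}]\partial_{n,j}\partial_{n,f(j)}$, which carries two derivations and cannot be controlled by $\sqrt{K_n^\circ}$, is killed by fact (b), and the consistency relation $k'_n(f(s))=k_n(s)$ from Hypothesis (\ref{innerification-hyp})(3) excludes the configuration $s=f(j)$ with $k_n(s)<\infty$. The only surviving non-trivial case is $s=f(j)$ with $k_n(s)=\infty$, where exactly two terms of the form (norm-$\leq 1$)$\cdot$(norm-$\leq 1$)$\cdot\,\partial_{n,l}$ remain, and Lemma (\ref{Kn-lemma}) bounds the composition with $\sqrt{K_n^\circ}$ by $\sqrt{2}\opnorm{t}{}{\mathscr{C}}$. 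The mixed case $j\in\{1,\ldots,d\}$, $s\in\{d+1,\ldots,d'\}$ runs identically but produces a single surviving term when $s=f(j)$ with the relevant coordinates infinite, yielding the halved bound $\tfrac{\sqrt{2}}{2}\opnorm{t}{}{\mathscr{C}}$; every other configuration vanishes by fact (a) (in the $\partial_{n,s}$ subcase) or fact (c) (in the $v_n^{z_{n,s}}$ subcase). Lastly, when $j,s\in\{2d+1,\ldots,d+d'\}$, both $\Gamma$'s are diagonal multipliers on $(\delta_m)_{m\in\widehat{G_n}}$ (either $\partial_{n,l}$ or $\tfrac{k'_n(l)}{2i\pi}(1-\Im v_n^{z_{n,l}})$), so they commute on the common dense range $\sqrt{K_n^\circ}\mathscr{J}_n$.

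The main obstacle is purely combinatorial: one must patiently enumerate the configurations of indices and finiteness flags and repeatedly invoke the constraint $k'_n(f(j))=k_n(j)$ to discard the ones that would spoil a uniform bound. Once this bookkeeping is done, every surviving commutator is a product of uniformly bounded operators with at most one derivation, and Lemma (\ref{Kn-lemma}) closes the estimate.
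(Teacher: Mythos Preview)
Your proposal is correct and follows essentially the same route as the paper: expand each commutator via the Leibniz rule, use the centrality of $U_{n,f(j)}$ when $k_n(j)=\infty$ (your fact (b), the paper's Remark (\ref{innerification-rmk})) to kill the double-derivation term, use that $\partial_{n,l}$ only sees $U_{n,l}$ to isolate the $s=f(j)$ case, and finish with the bound from Lemma (\ref{Kn-lemma}). One small imprecision: in your fact (c), $v_n^{z_{n,s}}$ does not commute with the right action by $U_{n,s}$ itself (only with $U_{n,l}$ for $l\neq s$), but since the configuration $f(j)=s$ with both $k_n(j)$ and $k'_n(s)$ finite is excluded by the lemma's hypothesis, this never matters.
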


\begin{proof}
  In this proof, we will make repeated use of the fact that, by Hypothesis (\ref{Dirac-hyp}), for all $n\in\Nbar$ and $j\in \{1,\ldots,d\}$, and for all $j\in\{1,\ldots,d\}$, we have $\opnorm{X_{n,j}}{}{\Hilbert_n} = 1$ and $\opnorm{Y_{n,j}}{}{\Hilbert_n} = 1$, and for all $s\in\{1,\ldots,d\}$, if $\infty\in\{k_n(j),k_n(s)\}$, the operators $X_{n,j}$,$X_{n,s}$,$Y_{n,j}$, and $Y_{n,s}$ commute (by Remark (\ref{innerification-rmk})).

  Let $j,s \in \{1,\ldots,d\}$. Assume first that $k_n(j) = k_n(s) = \infty$. We then compute (over the range of $\sqrt{K_n}$):
  \begin{align*}
    X_{n,j} \partial_{n,j} (X_{n,s} \partial_{n,s})
    &= X_{n,j} \left( \partial_{n}^j(X_{n,s}) \partial_{n,s} + X_{n,s} \partial_{n,j}\partial_{n,s}  \right) \\
    &= \begin{cases}
      X_{n,j} \left(\partial_{n}^j X_{n,f(j)} \right) \partial_{n,f(j)} + X_{n,j}X_{n,f(j)}\partial_{n,j} \partial_{n,f(j)} \text{ if $s = f(j)$,}\\
      X_{n,j} X_{n,s} \partial_{n,j}\partial_{n,s} \text{ otherwise.}
    \end{cases}
  \end{align*}
  By Hypothesis (\ref{Dirac-hyp}), $\partial_j$ and $\partial_s$ commute; moreover, $X_{n,j}$ and $X_{n,s}$ commute as well. Therefore, if $s=f(j)$, then:
  \begin{equation*}
    \left[\Gamma_{n,j},\Gamma_{n,s}\right]
    = X_{n,j}\partial_{n}^j(X_{n,f(j)})\partial_{n,f(j)} - X_{n,f(j)}\partial_{n}^{f(j)}(X_{n,j}) \partial_{n,j}
  \end{equation*}
  from which it follows, using Lemma (\ref{Kn-lemma}):
  \begin{align*}
    \opnorm{\left(\left[\Gamma_{n,j},\Gamma_{n,s}\right]\otimes t\right)\sqrt{K_n^\circ}}{}{\mathscr{J}_n}
    &\leq \opnorm{\left( X_{n,j}\partial_{n}^{j}(X_{n,f(j)})\partial_{n,f(j)} \otimes t\right)\sqrt{K_n^\circ}}{}{\mathscr{J}_n}\\
    &\quad + \opnorm{\left(X_{n,f(j)}\partial_{n}^{f(j)}(X_{n,j})\partial_{n,j} \otimes t\right)\sqrt{K_n^\circ}}{}{\mathscr{J}_n} \\
    &\leq \opnorm{X_{n,j} Y_{n,f(j)}}{}{\Hilbert_n} \opnorm{(\partial_{n,f(j)}\otimes t) \sqrt{K_n^\circ}}{}{\mathscr{J}_n} \\
    &\quad + \opnorm{X_{n,f(j)} Y_{n,j}}{}{\Hilbert_n} \opnorm{(\partial_{n,j}\otimes t) \sqrt{K_n^\circ}}{}{\mathscr{J}_n} \\
    &\leq \frac{\sqrt{2}}{2}\opnorm{t}{}{\mathscr{C}} + \frac{\sqrt{2}}{2}\opnorm{t}{}{\mathscr{C}} = \sqrt{2} \opnorm{t}{}{\mathscr{C}} \text.
  \end{align*}
  On the other hand, if $f(j)\neq s$, then
  \begin{equation*}
    \opnorm{\left(\left[\Gamma_{n,j},\Gamma_{n,s}\right]\otimes t\right)\sqrt{K_n^\circ}}{}{\mathscr{J}_n} = 0 \text.
  \end{equation*}
  
  Now, a similar computation may be applied to the cases where $j,s \in \{1,\ldots,d\}$, with $k_n(j) = k_n(s) = \infty$, with $p,q \in \{0,d\}$, to establish the first statement of our lemma.

  Let now $j\in\{1,\ldots,d\}$ such that $k_n(j) = \infty$, and let $s\in\{1,\ldots,d\}$ with $k_n(s) < \infty$. We compute, for all $\xi \in \mathrm{range}\sqrt{K_n}$ (noting that $s\neq f(j)$, so $\partial_n^j (Y_{n,s}) = 0$):
  \begin{align*}
    \frac{2 i \pi}{k_n(s)} \left[\Gamma_{n,j},\Gamma_{n,s}\right]\xi
    &= X_{n,j}\partial_{n,j}\left[Y_{n,s},\xi \right] - \left[Y_{n,s},X_{n,j}\partial_{n,j}\xi \right] \\
    &= X_{n,j}(\partial_{n}^j Y_{n,s}) \xi + X_{n,j} Y_{n,s} \partial_{n,j}\xi - X_{n,j} \partial_{n,j}(\xi) Y_{n,s} \\
    &\quad - X_{n,j}\xi\partial_{n}^j Y_{n,s} - \left[Y_{n,s},X_{n,j}\partial_{n,j}\xi\right] \\
    &= Y_{n,s} X_{n,j} \partial_{n,j}\xi - X_{n,j} \left(\partial_{n,j}\xi\right) Y_{n,s} - \left[Y_{n,s},X_{n,j}\partial_{n,j}\xi\right] \\
    &= 0 \text.
  \end{align*}
  
  The same argument applies when $j,s \in \{1,\ldots,d\}$, $p,q\in\{0,d\}$,  with $k_n(j)=\infty$ and $k_n(s)<\infty$, to conclude:
  \begin{equation*}
    \opnorm{\left(\left[\Gamma_{n,j+p},\Gamma_{n,s+q}\right]\otimes t\right)\sqrt{K_n^\circ}}{}{\mathscr{J}_n} = 0 \text.
  \end{equation*}

  Now, let $j \in \{1,\ldots,d\}$, and $s \in \{d+1,\ldots,d'\}$, with $k_n(s) = k_n(j) = \infty$. Again, via a similar computation, over the range of $\sqrt{K_n}$:
  \begin{equation*}
    \left[\Gamma_{n,j},\Gamma_{n,s+d}\right] =
    \begin{cases}
      \partial_{n}^{f(j)}(X_{n,j}) \partial_{n,j+d} \text{ if $f(j)=s$,} \\
      0 \text{ otherwise.}
    \end{cases}
  \end{equation*}
  We thus get
  \begin{equation*}
    \opnorm{\left(\left[\Gamma_{n,j},\Gamma_{n,s+d}\right]\otimes t\right)\sqrt{K_n^\circ}}{}{\mathscr{J}_n} \leq
    \begin{cases}
      \frac{\sqrt{2}}{2}\opnorm{t}{}{\mathscr{C}} \text{ if $f(j)=s$,} \\
      0 \text{ otherwise.}
    \end{cases}
  \end{equation*}
  Once more, this generalizes to $\left[\Gamma_{n,j+d},\Gamma_{s+d}\right]$, as claimed in our lemma.

  It is easy to check, using a similar computation as above, that if $j \in \{1,\ldots,d\}$, $p \in \{0,d\}$, $s\in \{d+1,\ldots,d'\}$, and if $k_n(j)=\infty,k_n(s)<\infty$ or $k_n(j)<\infty,k_n(s)=\infty$, then 
  \begin{equation*}
    \opnorm{\left(\left[\Gamma_{n,j+p},\Gamma_{n,s+d}\right]\otimes t\right)\sqrt{K_n^\circ}}{}{\mathscr{J}_n} = 0 \text.
  \end{equation*}
  
  It is also immediate that $[\Gamma_{n,j},\Gamma_{n,s}] = 0$ if $j,s \in \{2d+1,\ldots,d+d'\}$. This concludes our proof.
\end{proof}

We are now ready for the core result of this section.

\begin{lemma}\label{self-adjoint-compact-resolvent-lemma}
  Assume Hypothesis (\ref{Dirac-hyp}). If $n\in\Nbar$, then $\Dirac_n$ is a self-adjoint operator, defined on $\dom{\Dirac_n}$,  with compact resolvent.
\end{lemma}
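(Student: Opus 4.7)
My plan is as follows. By Lemma~\ref{symmetric-lemma}, $\Dirac_n$ is densely defined and symmetric on $\dom{\Dirac_n}$, so by Lemma~\ref{self-adjoint-lemma} it suffices to exhibit some $M \in \R \setminus \{0\}$ such that $\Dirac_n \pm iM$ has range $\mathscr{J}_n$; the value $M = 10d$ from Lemma~\ref{Kn-lemma} is the right choice. To isolate the genuinely unbounded behaviour, I would first split $\Dirac_n = D_n^{(1)} + D_n^{(2)}$, where $D_n^{(1)}$ gathers only the $\Gamma_{n,j} \otimes c(\gamma_j)$ for which $\Gamma_{n,j}$ comes from an infinite direction (thus involving a genuine $\partial_{n,j}$), while $D_n^{(2)}$ is the bounded, self-adjoint remainder built from the finite directions. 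Since $D_n^{(2)}$ is bounded we have $\dom{D_n^{(1)}} = \dom{\Dirac_n}$, and $\Dirac_n$ inherits self-adjointness and compact resolvent from $D_n^{(1)}$ by the Kato--Rellich theorem together with the second resolvent identity. The problem therefore reduces to treating $D_n^{(1)}$.

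The core step is to invert $(D_n^{(1)})^2 + M^2$. Expanding the square using $c(\gamma_j)^2 = -\unit_{\mathscr{C}}$ and $c(\gamma_j) c(\gamma_s) = -c(\gamma_s) c(\gamma_j)$ for $j \ne s$ gives
\begin{equation*}
(D_n^{(1)})^2 = -\sum_{j \in I^\infty} \Gamma_{n,j}^2 \otimes \unit_{\mathscr{C}} + \sum_{\substack{j<s \\ j,s \in I^\infty}} [\Gamma_{n,j}, \Gamma_{n,s}] \otimes c(\gamma_j) c(\gamma_s) \text,
\end{equation*}
where $I^\infty$ is the index set of infinite directions. For $j \in \{1,\ldots,d\}$ with $k_n(j) = \infty$, Lemma~\ref{partial-commutator-lemma} together with the fact that $f$ has no fixed point implies $[X_{n,j}, \partial_{n,j}] = [Y_{n,j}, \partial_{n,j}] = 0$, and $X_{n,j}^2 + Y_{n,j}^2 = \unit_n$ by unitarity of $U_{n,f(j)}$; thus $\Gamma_{n,j}^2 + \Gamma_{n,d+j}^2 = \partial_{n,j}^2$. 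Combined with the $\Gamma_{n,d+j}^2 = \partial_{n,j}^2$ contributions from $j \in \{d+1,\ldots,d'\}$ with $k'_n(j) = \infty$, the full diagonal sum becomes $-\sum_{j \,:\, k'_n(j) = \infty} \partial_{n,j}^2 = K_n^{-1} - M^2$. Hence $(D_n^{(1)})^2 + M^2 = K_n^{-1} + F_n^{(1)}$, with $F_n^{(1)}$ the off-diagonal commutator sum. Since $K_n^\circ \mathscr{J}_n \subseteq \dom{(D_n^{(1)})^2}$ by Lemma~\ref{Kn-lemma}, composing on the right yields
\begin{equation*}
((D_n^{(1)})^2 + M^2) K_n^\circ = \unit + F_n^{(1)} K_n^\circ \text.
\end{equation*}

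The delicate step of the plan is to verify $\|F_n^{(1)} K_n^\circ\| < 1$. I would factor $F_n^{(1)} K_n^\circ = (F_n^{(1)} \sqrt{K_n^\circ}) \sqrt{K_n^\circ}$ and apply Lemma~\ref{Gamma-commutation-lemma}: each summand of $F_n^{(1)} \sqrt{K_n^\circ}$ has operator norm at most $\sqrt{2}$ (using $\|c(\gamma_j) c(\gamma_s)\| = 1$) and vanishes unless $s = f(j)$. A direct enumeration produces at most roughly $4d$ such ordered pairs, yielding $\|F_n^{(1)} \sqrt{K_n^\circ}\| \leq 4\sqrt{2}\, d$; combined with $\|\sqrt{K_n^\circ}\| = 1/(10 d)$, this delivers $\|F_n^{(1)} K_n^\circ\| \leq 2\sqrt{2}/5 < 1$. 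A Neumann series then inverts $\unit + F_n^{(1)} K_n^\circ$, so
\begin{equation*}
((D_n^{(1)})^2 + M^2)^{-1} = K_n^\circ (\unit + F_n^{(1)} K_n^\circ)^{-1}
\end{equation*}
exists as the product of the compact operator $K_n^\circ$ with a bounded one, and is therefore itself compact. Given any $\eta \in \mathscr{J}_n$, setting $\phi = ((D_n^{(1)})^2 + M^2)^{-1} \eta \in \dom{(D_n^{(1)})^2}$ and $\xi = (D_n^{(1)} \mp iM) \phi \in \dom{D_n^{(1)}}$ solves $(D_n^{(1)} \pm iM) \xi = \eta$, so Lemma~\ref{self-adjoint-lemma} gives self-adjointness of $D_n^{(1)}$; compactness of its resolvent follows from compactness of $((D_n^{(1)})^2 + M^2)^{-1}$ by standard spectral calculus. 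The bookkeeping bounding the number of nonzero commutator pairs in $F_n^{(1)}$ is the main obstacle of the plan, as it is what secures the numerical margin needed for the Neumann series with the uniform choice $M = 10d$; everything else then assembles routinely.
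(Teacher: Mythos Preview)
Your proposal is correct and follows essentially the same route as the paper: split off the bounded finite-direction piece, square the unbounded part, recognise the diagonal as $K_n^{-1}-M^2$ via $X_{n,j}^2+Y_{n,j}^2=\unit_n$, and then control the off-diagonal commutator block $F_n$ by factoring through $\sqrt{K_n^\circ}$ and invoking Lemma~\ref{Gamma-commutation-lemma} together with $\opnorm{\sqrt{K_n^\circ}}{}{\mathscr{J}_n}=\frac{1}{M}$. The only cosmetic difference is that you name Kato--Rellich and the second resolvent identity explicitly, whereas the paper writes out the resolvent identity by hand; and your rough pair count $4\sqrt{2}\,d$ is slightly optimistic compared with the paper's more careful enumeration, which distinguishes the cases $f(j)\leq d$ (four $(p,q)$ combinations, bound $\sqrt{2}$ each) and $f(j)>d$ (two $p$ values, bound $\frac{\sqrt{2}}{2}$ each) to obtain $4d\sqrt{2}+2d\cdot\frac{\sqrt{2}}{2}=5\sqrt{2}\,d$ --- still comfortably below $M=10d$, so your Neumann-series step goes through unchanged.
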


\begin{proof}
  All the computations in this lemma are done over $\dom{\Dirac_n}$. Let
  \begin{equation*}
    \Dirac_n^\infty = \sum_{\substack{j \in \{1,\ldots,d\} \\ k_n(j)=\infty \\ p \in \{0,d\}}} \Gamma_{n,j+p} \otimes c(\gamma_{j+p}) + \sum_{\substack{j \in \{d+1,\ldots,d'\} \\ k_n(j) = \infty}} \Gamma_{n,d+j}\otimes c(\gamma_{d+j}) \text,
  \end{equation*}
  defined over the domain $\dom{\Dirac_n}$. By construction, $\Dirac_n^\infty$ is symmetric, using the methods in the proof of Lemma (\ref{symmetric-lemma}).

  By construction, $\Dirac_n = \Dirac_n^\infty + \Dirac_n^f$, where
  \begin{equation*}
    \Dirac_n^f = \sum_{\substack{j \in \{1,\ldots,d\} \\ k_n(j) < \infty \\ p \in \{0,d\}}} \Gamma_{n,j+p} \otimes c(\gamma_{j+p}) + \sum_{\substack{j \in \{d+1,\ldots,d'\} \\ k_n(j) < \infty}} \Gamma_{n,d+j}\otimes c(\gamma_{d+j})
  \end{equation*}
  is a bounded self-adjoint operator over $\mathscr{J}_n$. Our focus is on the unbounded operator $\Dirac_n^\infty$.

  A direct computation shows that
  \begin{equation*}
    (\Dirac_n^\infty)^2 = - \sum_{j \in \Upsilon} \Gamma_{n,j}^2 \otimes \unit_{\mathscr{C}}  + \sum_{\substack{j,s \in \Upsilon \\ j < s }} \left[\Gamma_{n,j},\Gamma_{n,s}\right] \otimes c(\gamma_j\gamma_s) \text,
  \end{equation*}
  where $\Upsilon = \{ j, j + d : j\in\{1,\ldots,d\}, k_n(j) = \infty \} \cup \{ j \in \{d+1,\ldots,d'\} : k_n(j) = \infty \}$.

  Let $j\in\{1,\ldots,d\}$ with $k_n(j) = \infty$. Since $X_{n,j}$, $Y_{n,j}$ and $\partial_{n,j}$ commute, and since a direct computation shows that $X_{n,j}^2 + Y_{n,j}^2 = \unit_n$, we conclude that
  \begin{equation*}
    \Gamma_{n,j}^2 + \Gamma_{n,d+j}^2 = X_{n,j}^2\partial_{n,j}^2 + Y_{n,j}^2\partial_{n,j}^2 = (X_{n,j}^2 + Y_{n,j}^2)\partial_{n,j}^2 = \partial_{n,j}^2\text.
  \end{equation*}

  Now, let
  \begin{equation*}
    \Delta_n = - \sum_{\substack{j\in\{1,\ldots,d'\} \\ k'_n(j) = \infty}} \partial_{n,j}^2 \otimes \unit_{\mathscr{C}}\text,
  \end{equation*}
  and note that, for $\xi \in \dom{\Dirac_n^2}$, we have $\Delta_n\xi:m\in\mathscr{J}_n \mapsto \inner{m}{m}{n}\xi(m)$.

  We thus have, over $\dom{\Dirac_n}$, and using Lemma (\ref{Gamma-commutation-lemma}):
  \begin{equation*}
    \left(\Dirac_n^\infty\right)^2 = \Delta_n + F_n
  \end{equation*}
  where $F_n$ is the operator defined over $\dom{\Dirac_n}$ by:
  \begin{multline*}
    F_n = \sum_{\substack{j\in\{1,\ldots,d\} \\ f(j)\leq d \\ k'_n(j)=\infty \\ p,q \in \{0,d\}}} \left[\Gamma_{n,j+p},\Gamma_{n,f(j)+q}\right]\otimes c(\gamma_{j+p} \gamma_{f(j)+q}) \\
    \quad + \sum_{\substack{j\in\{1,\ldots,d\} \\ f(j)>d \\ k'_n(j)=\infty \\ p \in \{0,d\}}} \left[\Gamma_{n,j+p},\Gamma_{n,f(j)}\right]\otimes c(\gamma_{j+p} \gamma_{f(j)+d}) \text,
  \end{multline*}

  We now use the notation of Lemma (\ref{Kn-lemma}). A direct computation shows that
  \begin{equation*}
    \forall \xi \in \mathscr{J}_n \quad (M^2 + \Delta_n) K_n^\circ \xi = \xi \text{ and }\forall \xi \in \range{K_n^\circ} \quad K_n^\circ (M^2 + \Delta_n) \xi = \xi \text.
  \end{equation*}

  Therefore, for all $\xi \in \mathscr{J}_n$,
  \begin{align*}
    (M^2 + (\Dirac_n^\infty)^2) K_n^\circ \xi
    &= (M^2 + \Delta_n) K_n^\circ \xi + F_n K_n^\circ \xi \\
    &= \xi + F_n K_n^\circ \xi = (\unit_{\mathscr{J}_n} + F_n K_n^\circ)\xi \text.
  \end{align*}

  Now, by Lemma (\ref{Gamma-commutation-lemma}), we observe that
  \begin{align*}
    \opnorm{F_n \sqrt{K_n^\circ}}{}{\mathscr{J}_n}
    &\leq \sum_{\substack{j\in\{1,\ldots,d\} \\ f(j)\leq d \\ k'_n(j)=\infty \\ p,q \in \{0,d\}}} \sqrt{2} \opnorm{\gamma_{j+p}\gamma_{f(j)+q}}{}{\mathscr{J}_n} + \sum_{\substack{j\in\{1,\ldots,d\} \\ f(j) > d \\ k'_n(j) = \infty \\ p \in \{0,d\}}} \frac{\sqrt{2}}{2}\opnorm{\gamma_{j+p}\gamma_{f(j)+d}}{}{\mathscr{J}_n} \\
    &\leq 4 d \cdot \sqrt{2} + 2 d \cdot \frac{\sqrt{2}}{2} < 10 d = M \text.
  \end{align*}

  Since $\opnorm{\sqrt{K_n^\circ}}{}{\mathscr{J}_n} = \frac{1}{M}$, we conclude
  \begin{equation*}
    \opnorm{F_n K_n^\circ}{}{\mathscr{J}_n} < 1 \text.
  \end{equation*}

  Consequently, $\unit_{\mathscr{J}_n} + F_n  K_n^\circ$ is invertible; let $R_n = (\unit_{\mathscr{J}_n} + F_n K_n^\circ)^{-1}$ (note that $R_n$ is bounded).

  We then observe that, for all $\xi \in \mathscr{J}_n$, the vector $K_n^\circ R_n \xi$ is in $\dom{\Dirac_n^2}$ by Lemma (\ref{Kn-lemma}). Moreover, we compute:
  \begin{equation*}
    \forall\xi\in\mathscr{J}_n \quad (M^2 + (\Dirac_n^\infty)^2) K_n^\circ R_n \xi = (\unit_{\mathscr{J}_n} + F_n K_n^\circ)R_n \xi = \xi \text.
  \end{equation*}

  Thus $M^2+(\Dirac_n^\infty)^2$ is surjective. Since $(\Dirac_n^\infty + i M)(\Dirac_n^\infty-i M) = (\Dirac_n^\infty -i M)(\Dirac_n^\infty +i M) = M^2 + (\Dirac_n^\infty)^2$ on $\range{K_n^\circ}$, we conclude that both $\Dirac_n^\infty +i M$ and $\Dirac_n^\infty -i M$ are surjective. Thus, $\Dirac_n^\infty$, as a densely defined symmetric operator (by Lemma (\ref{symmetric-lemma})), such that $\Dirac_n^\infty \pm i M$ are surjective, is a self-adjoint operator from $\dom{\Dirac_n}$, using Lemma (\ref{self-adjoint-lemma}).

  \medskip
  
  Moreover, since $K_n^\circ$ is compact and $R_n$ is bounded, the operator $R_n K_n^\circ$ is compact. Now, since $\Dirac_n^\infty$ is self-adjoint, the operator $M^2+(\Dirac_n^\infty)^2$ is invertible, with inverse $R_n K_n^\circ$. Therefore, $M^2+(\Dirac_n^\infty)^2$ has a compact inverse. Since, for any bounded operator $a$, the operator $a a^\ast$ is compact if, and only, if $a$ is compact, we conclude that $(\Dirac_n^\infty+i M)^{-1}$ is compact. Thus, $\Dirac_n^\infty$ has a compact resolvent.

  \medskip

  The operator $\Dirac_n^f$ is bounded, self-adjoint, so the operator $\Dirac_n = \Dirac_n^\infty + \Dirac_n^f$ is also self-adjoint over the domain $\dom{\Dirac_n}$ (and thus $\Dirac_n+i$ is invertible, with a bounded inverse). Moreover, by the resolvent identity:
  \begin{equation*}
    (\Dirac_n + i)^{-1} - (\Dirac_n^\infty + i)^{-1} = (\Dirac_n + i)^{-1}(-\Dirac_n^f)(\Dirac_n^\infty + i)^{-1} \text;
  \end{equation*}
  since $(\Dirac_n^\infty + i)^{-1}$ is compact, and since both $\Dirac_n^f$ and $(\Dirac_n+i)^{-1}$ are bounded, we conclude that $(\Dirac_n + i)^{-1}$ is compact. Thus $\Dirac_n$ is a self-adjoint operator on $\dom{\Dirac_n}$ with compact resolvent. Our lemma is thus proven.
\end{proof}

We thus conclude this section by summarizing our work in the following theorem.
\begin{theorem}\label{spectral-triple-thm}
  Assume Hypothesis (\ref{Dirac-hyp}). For all $n\in\Nbar$, the triple $(\A_n,\mathscr{J}_n,\Dirac_n)$ is a spectral triple, where $\A_n$ acts on $\mathscr{J}_n$ via the *-representation $a\in\A_n\mapsto a^\circ$, such that, if $a \in \A_n$, and if $a$ has finite support, then $a^\circ\dom{\Dirac_n} \subseteq\dom{\Dirac_n}$ and
  \begin{equation*}
    [\Dirac_n,a^\circ] = (\mathrm{id}\otimes c)\grad{n}{a}
  \end{equation*}
  where $\mathrm{id}$ is the identity function on $\B_n$, and
  \begin{multline}\label{gradiant-eq}
    \grad{n}{a} = \sum_{\substack{j \in \{1,\ldots, d\} \\ k_n(j) = \infty}} \left( X_{n,j}\partial_{n}^j(a)\otimes \gamma_j + Y_{n,j} \partial_{n}^j(a)\otimes \gamma_{d+j}\right) \\ + \sum_{\substack{j \in \{1,\ldots,d\} \\ k_n(j) < \infty}} \frac{k_n(j)}{2i \pi} \left( [X_{n,j}, a]\otimes \gamma_j - [Y_{n,j},a]\otimes \gamma_{d+j}\right) \text.
  \end{multline}
\end{theorem}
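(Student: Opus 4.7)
The plan is to verify the four defining conditions of a spectral triple for $(\A_n,\mathscr{J}_n,\Dirac_n)$: (i) a faithful *-representation of $\A_n$ on $\mathscr{J}_n$, (ii) self-adjointness of $\Dirac_n$ with compact resolvent, (iii) invariance of $\dom{\Dirac_n}$ under a dense *-subalgebra of $\A_n$, and (iv) boundedness of the induced commutators, together with the explicit formula (\ref{gradiant-eq}). Items (i) and (ii) are essentially already in hand: the map $a\mapsto a^\circ = a\otimes\unit_\mathscr{C}$ is a faithful *-representation on $\mathscr{J}_n = \Hilbert_n\otimes\mathscr{C}$, and Lemma (\ref{self-adjoint-compact-resolvent-lemma}) provides self-adjointness with compact resolvent. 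For (iii) I would take as the dense subalgebra the image under $\pi_n$ of the finitely supported functions in $\ell^1(\widehat{\Z^d_{k_n}})$; this is a dense *-subalgebra by Lemmas (\ref{conv-lemma}) and (\ref{pi-k-sigma-lemma}), and domain-invariance follows from Lemma (\ref{Udom-dom-lemma}) since any such element is a finite linear combination of canonical unitaries.

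The bulk of the work is the commutator identity $[\Dirac_n,a^\circ] = \sum_{j=1}^{d+d'} [\Gamma_{n,j},a]\otimes c(\gamma_j)$, which I would evaluate by case analysis following Hypothesis (\ref{Dirac-hyp}). When $j\in\{1,\ldots,d\}$ and $k_n(j)=\infty$, Remark (\ref{innerification-rmk}) shows that $U_{n,f(j)}$, and hence $X_{n,j}$ and $Y_{n,j}$, lie in the center of $\B_n$; together with the derivation identity $[\partial_{n,j},a] = \partial_n^j(a)$ from Lemma (\ref{partial-commutator-lemma}), this produces $[\Gamma_{n,j},a] = X_{n,j}\partial_n^j(a)$ and $[\Gamma_{n,d+j},a] = Y_{n,j}\partial_n^j(a)$. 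When $k_n(j)<\infty$, a short computation exploiting the commutativity of the left and right $\B_n$-actions on $\Hilbert_n$ reduces the inner commutators to scalar multiples of $[Y_{n,j},a]$ and $[X_{n,j},a]$, yielding the terms of the second sum in (\ref{gradiant-eq}). For $j\in\{d+1,\ldots,d'\}$ with $k'_n(j)=\infty$, the support condition on $a$ gives $\partial_n^j(a)=0$; for $k'_n(j)<\infty$, the character $z_{n,j}$ is trivial on the first $d$ coordinates, so $\alpha_n^{z_{n,j}}$ fixes each generator $U_{n,s}$ of $\A_n$ and hence fixes $\A_n$ pointwise, making $v_n^{z_{n,j}}$ (and therefore $\Gamma_{n,d+j}$) commute with $a^\circ$. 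In both of these last two subcases the commutator with $a$ vanishes, accounting for the absence of corresponding terms in (\ref{gradiant-eq}).

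Assembling the cases gives the formula (\ref{gradiant-eq}). Boundedness of $[\Dirac_n,a^\circ]$ for finitely supported $a$ then follows immediately, because $X_{n,j}$ and $Y_{n,j}$ have norm at most $1$, the operator $\partial_n^j(a)$ is bounded as a multiplication operator on a finite-support symbol, and the commutators $[X_{n,j},a]$ and $[Y_{n,j},a]$ are bounded as differences of bounded operators. I anticipate that the main obstacle will be purely bookkeeping: in the $k_n(j)<\infty$ case, one must carefully align which of $X_{n,j}$ or $Y_{n,j}$ is paired with $\gamma_j$ versus $\gamma_{d+j}$, and track the $\mathrm{fsgn}(j)$ sign convention along with the factors of $i$, to see that the identity matches (\ref{gradiant-eq}) exactly. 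No machinery beyond Lemmas (\ref{partial-commutator-lemma}), (\ref{Udom-dom-lemma}) and Corollary (\ref{znj-cor}) should be required.
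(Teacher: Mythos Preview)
Your proposal is correct and follows essentially the same route as the paper's proof: self-adjointness and compact resolvent via Lemma (\ref{self-adjoint-compact-resolvent-lemma}), domain invariance of finitely supported elements via Lemma (\ref{Udom-dom-lemma}), and then a case-by-case evaluation of $[\Gamma_{n,j},a]$ according to the definitions in Hypothesis (\ref{Dirac-hyp}). Your treatment is in fact slightly more explicit than the paper's---you spell out why $v_n^{z_{n,j}}$ commutes with $a$ for $j>d$ via the trivial action on the first $d$ coordinates, and you flag the $\mathrm{fsgn}(j)$ bookkeeping---whereas the paper simply lists the four key commutator identities as bullet points and asserts the result.
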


\begin{proof}
  By Lemma (\ref{self-adjoint-compact-resolvent-lemma}), the operator $\Dirac_n$ is self-adjoint, with a compact resolvent. Moreover, by Lemma (\ref{Udom-dom-lemma}), we have, for all $j \in \{1,\ldots,d\}$:
  \begin{equation*}
    U_{n,j}^\circ \dom{\Dirac_n}\subseteq\dom{\Dirac_n} \text{ and }(U_{n,j}^\ast)^\circ \dom{\Dirac_n}\subseteq \dom{\Dirac_n} \text.
  \end{equation*}
  Thus, for any finitely supported element $a$ in $\A_n$, i.e., any linear combinations of powers of $U_{n,1}$,\ldots,$U_{n,d}$ and their adjoints,
  \begin{equation*}
    a^\circ \, \dom{\Dirac_n} \subseteq \dom{\Dirac_n} \text.
  \end{equation*}

  In particular, since the finitely supported elements in $\A_n$ are dense in $\A_n$, we have shown that
  \begin{equation*}
    \left\{ a \in \A_n : a^\circ \,\dom{\Dirac_n}\subseteq \dom{\Dirac_n} \text{ and }[\Dirac_n,a^\circ] \text{ is bounded} \right\}
  \end{equation*}
  is dense in $\A_n$.
  
  Moreover, if $a\in \A_n$ is finitely supported, then, noting that
  \begin{itemize}
  \item $[\partial_{n,j},a] = \partial_n^j a$ for all $j\in\{1,\ldots,d'\}$ with $k'_n(j) = \infty$; in particular $\partial_n^j a = 0$ for $j\in\{d+1,\ldots,d'\}$,
  \item $[v_n^{z_{n,j}},a] = 0$ if $j\in\{d+1,\ldots,d'\}$ and $k'_n(j) < \infty$,
  \item $[[X_{n,j},\cdot],a]=[X_{n,j},a]$ and $[[Y_{n,j},\cdot],a] = [Y_{n,j},a]$ for all $j\in\{1,\ldots,d\}$ with $k_n(j) < \infty$,
  \item $[X_{n,j},a]=0$ and $[Y_{n,j},a]=0$ for all $j \in \{1,\ldots,d\}$ with $k_n(j) = \infty$,
  \end{itemize}
  we easily conclude that $[\Dirac_n,a^\circ]$ is indeed given by the image by $\mathrm{id}\otimes c$ of Expression (\ref{gradiant-eq}).
\end{proof}

\section{Metric Properties of the Spectral Triples}

We have constructed families of spectral triples on fuzzy and quantum tori. Our goal is to prove that, under natural assumptions, these families are continuous for the spectral propinquity. As a first, key step, we prove that our family of spectral triples induce continuous families of {\qcms s}.

\begin{hypothesis}\label{L-seminorm-hyp}
  Assume Hypothesis (\ref{Dirac-hyp}). For each $n\in\Nbar$, we define $\dom{\Lip_n}$ as the space
  \begin{equation*}
    \left\{ a \in \sa{\A_n} : a^\circ\dom{\Dirac_n}\subseteq\dom{\Dirac_n}\text{ and }\opnorm{[\Dirac_n,a^\circ]}{}{\mathscr{J}_n} < \infty \right\} \text,
  \end{equation*}
  and we define the seminorm $\Lip_n$ on $\dom{\Lip_n}$ by
  \begin{equation*}
    \forall a \in \dom{\Dirac_n} \quad \Lip_n(a) = \opnorm{\left[\Dirac_n,a^\circ\right]}{}{\mathscr{J}_n} \text.
  \end{equation*}

  If $a\in \sa{\A_n}\setminus\dom{\Lip_n}$, we set $\Lip_n(a) = \infty$.
\end{hypothesis}

We begin by recording properties which follow in general from constructing seminorms from spectral triples, as seen in \cite{Latremoliere18g}.

\begin{lemma}\label{three-lemma}
  Assume Hypothesis (\ref{L-seminorm-hyp}). For all $n\in\Nbar$:
  \begin{enumerate}
  \item the domain $\dom{\Lip_n}$ of $\Lip_n$ is dense in $\sa{\A_n}$,
  \item the set $\{ a \in \dom{\Lip_n} : \Lip_n(a) \leq 1 \}$ is closed in $\A_n$,
  \item for all $a,b \in \dom{\Lip_n}$,
    \begin{equation*}
      \frac{a b + b a}{2}, \frac{a b - b a}{2i} \in \dom{\Lip_n}\text,
    \end{equation*}
    and
    \begin{equation*}
      \max\left\{ \Lip_{n}\left(\frac{a b + b a}{2}\right), \Lip_n\left(\frac{a b - b a}{2 i}\right)\right\} \leq \norm{a}{\A_n} \Lip_{n}(b) + \Lip_{n}(a) \norm{b}{\A_n}\text{.}
    \end{equation*}
  \end{enumerate}
\end{lemma}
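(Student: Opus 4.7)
The plan is to verify each of the three conditions by standard arguments from the general theory of seminorms coming from spectral triples, as presented in \cite{Latremoliere18g}. The fact that $\Dirac_n$ is densely-defined and self-adjoint with compact resolvent, already established in Lemma \ref{self-adjoint-compact-resolvent-lemma}, is the only input needed beyond routine manipulations.

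For Condition (1), I would start by invoking Theorem \ref{spectral-triple-thm}: every finitely supported element $a \in \A_n$ (a finite linear combination of the canonical unitaries $U_{n,1},\ldots,U_{n,d}$ and their adjoints) satisfies $a^\circ \dom{\Dirac_n}\subseteq\dom{\Dirac_n}$ and has bounded commutator $[\Dirac_n,a^\circ]$ given by the explicit formula (\ref{gradiant-eq}). Since $\dom{\Lip_n}$ is closed under the self-adjoint-part operation $a\mapsto\frac{a+a^\ast}{2}$, and since the self-adjoint parts of finitely supported elements form a norm-dense subspace of $\sa{\A_n}$, density of $\dom{\Lip_n}$ follows immediately.

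For Condition (2), I would use the standard closed-graph argument for commutators with closed operators. Let $(a_k)_{k\in\N}$ be a sequence in $\dom{\Lip_n}$ with $\Lip_n(a_k)\leq 1$, converging in norm to some $a\in\sa{\A_n}$. Fix $\xi\in\dom{\Dirac_n}$; then $a_k^\circ\xi\to a^\circ\xi$ and $a_k^\circ\Dirac_n\xi\to a^\circ\Dirac_n\xi$ in $\mathscr{J}_n$, while $\|[\Dirac_n,a_k^\circ]\xi\|\leq\|\xi\|$ uniformly in $k$. Hence $\Dirac_n a_k^\circ\xi = [\Dirac_n,a_k^\circ]\xi + a_k^\circ\Dirac_n\xi$ is a norm-bounded sequence. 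Passing to a weakly convergent subsequence and using that the graph of the self-adjoint (hence closed) operator $\Dirac_n$ is a norm-closed, and therefore weakly closed, subspace of $\mathscr{J}_n\times\mathscr{J}_n$, one concludes that $a^\circ\xi\in\dom{\Dirac_n}$. Lower semicontinuity of the norm under weak limits then yields $\|[\Dirac_n,a^\circ]\xi\|\leq\|\xi\|$ for all $\xi\in\dom{\Dirac_n}$, so $a\in\dom{\Lip_n}$ and $\Lip_n(a)\leq 1$.

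For Condition (3), I would rely on the derivation identity $[\Dirac_n,(ab)^\circ]=[\Dirac_n,a^\circ]b^\circ + a^\circ[\Dirac_n,b^\circ]$, valid on $\dom{\Dirac_n}$ whenever $a,b\in\dom{\Lip_n}$ (one checks first that $a^\circ b^\circ$ preserves $\dom{\Dirac_n}$). Since $a,b\in\sa{\A_n}$, taking the self-adjoint combination $\frac{ab+ba}{2}$ gives
\begin{equation*}
\left[\Dirac_n,\left(\tfrac{ab+ba}{2}\right)^{\!\circ}\right] = \tfrac{1}{2}\left([\Dirac_n,a^\circ]b^\circ + b^\circ[\Dirac_n,a^\circ] + a^\circ[\Dirac_n,b^\circ] + [\Dirac_n,b^\circ]a^\circ\right),
\end{equation*}
whose operator norm is at most $\|a\|_{\A_n}\Lip_n(b) + \Lip_n(a)\|b\|_{\A_n}$, and the analogous identity for $\frac{ab-ba}{2i}$ yields the same bound. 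The main (mild) obstacle in the entire proof is the weak-closedness argument in (2), since one must be careful to apply it to the graph of $\Dirac_n$ rather than to $\Dirac_n$ directly; once this is done, Conditions (1) and (3) are bookkeeping.
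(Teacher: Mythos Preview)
Your proposal is correct and follows essentially the same approach as the paper. The paper's proof invokes Theorem~\ref{spectral-triple-thm} for density, cites \cite{Rieffel00,Latremoliere18g} for lower semicontinuity (which yields closedness of the unit ball), and uses the derivation identity $[\Dirac_n,(ab)^\circ]=a^\circ[\Dirac_n,b^\circ]+[\Dirac_n,a^\circ]b^\circ$ for the Leibniz inequality; the only difference is that you spell out the closed-graph/weak-limit argument for (2) rather than deferring it to a reference.
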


\begin{proof}
  By Theorem (\ref{spectral-triple-thm}), the domain $\dom{\Lip_n}$ contains all finitely supported elements in $\A_n$ (i.e. elements of the form $\pi_n(f)$ for $f$ finitely supported in $\ell^1(\widehat{G_n})$), and thus $\dom{\Lip_n}$ is dense in $\sa{\A_n}$.

  \bigskip
  
  We refer to \cite{Rieffel00,Latremoliere18g} for the proof of that $\Lip_n$ is lower semi-continuous on $\A_n$, since $\Dirac_n$ is self-adjoint. Therefore, $\{a\in\dom{\Lip_n}:\Lip_n(a)\leq 1\}$ is closed in $\A_n$.
  
  \bigskip

  Let $a,b\in\dom{\Lip_n}$. First, note that $b^\circ \, \dom{\Dirac_n}\subseteq\dom{\Dirac_n}$, and thus
  \begin{equation*}
    (a b)^\circ \dom{\Dirac_n} = a^\circ b^\circ \, \dom{\Dirac_n} \subseteq a^\circ \dom{\Dirac_n} \subseteq \dom{\Dirac_n} \text.
  \end{equation*}
  We then easily compute:
  \begin{equation*}
    \left[\Dirac_n,(a b)^\circ\right] = a^\circ \left[\Dirac_n,b^\circ\right] + \left[\Dirac_n,a^\circ\right] b^\circ \text,
  \end{equation*}
  and all the operators on the right hand side in the previous equation are bounded by assumption on $\dom{\Lip_n}$. Thus $\left[\Dirac_n,(a b)^\circ \right]$ is bounded, and
  \begin{equation*}
    \opnorm{\left[\Dirac_n,(ab)^\circ\right]}{}{\mathscr{J}_n} \leq \norm{a}{\A_n} \Lip_n(b) + \Lip_n(a) \norm{b}{\A_n}\text.
  \end{equation*}

  It is then an easy computation to see that $\left(\frac{a b + b a}{2}\right)^\circ \dom{\Dirac_n}\subseteq\dom{\Dirac_n}$ and
  \begin{align*}
    \Lip_n\left(\frac{ab + ba}{2}\right)
    &\leq \frac{1}{2}\left(\opnorm{\left[\Dirac_n,(ab)^\circ\right]}{}{\mathscr{S}_n} + \opnorm{\left[\Dirac_n,(ba)^\circ\right]}{}{\mathscr{J}_n}\right) \\
    &\leq \norm{a}{\A_n}\Lip_n(b) + \Lip_n(a)\norm{b}{\A_n}\text,
  \end{align*}
  and similarly with the Lie product.
\end{proof}

\medskip

The main tool which we need to both prove that the seminorms $\Lip_n$ are L-seminorms on $\A_n$, and that the sequence $(\A_n,\Lip_n)_{n\in\N}$ of {\qcms s} converges, in the sense of the propinquity, to $(\A_\infty,\Lip_\infty)$, is given by a result about certain approximation of elements in $\dom{\Lip_n}$ by finitely supported elements, where the error in the approximation is controlled by the seminorms $\Lip_n$. This is the matter of the next subsection.

\subsection{A Mean Value Theorem for our Spectral Triples}

We first observe that the dual actions of fuzzy and quantum tori are by Lipschitz automorphisms \cite{Latremoliere16b}. We establish this in several steps.

\begin{lemma}\label{Vz-domain-lemma}
  Assume Hypothesis (\ref{Dirac-hyp}). Let $n\in\Nbar$. If $z\in G_n$, then
  \begin{equation*}
    \left(v_n^z\right)^\circ \dom{\Dirac_n}  = \dom{\Dirac_n} \text.
  \end{equation*}
\end{lemma}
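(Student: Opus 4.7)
The plan is to recognize that $v_n^z$ is a multiplication (i.e., diagonal) operator on the standard basis $(\delta_m)_{m\in\widehat{G_n}}$ of $\Hilbert_n$, and that its eigenvalues $z^m$ all have modulus one since $z \in G_n \subseteq \T^{d'}$. Consequently, tensoring with the identity on the finite-dimensional Clifford module $\mathscr{C}$ preserves the pointwise $\mathscr{C}$-norms of the vector-valued functions that make up $\mathscr{J}_n$.

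More precisely, for $\xi \in \mathscr{J}_n$ identified with an element of $\ell^2(\widehat{G_n},\mathscr{C})$, a direct computation shows that $(v_n^z)^\circ \xi$ is the function $m \in \widehat{G_n} \mapsto z^m\, \xi(m)$, and hence $\|((v_n^z)^\circ \xi)(m)\|_{\mathscr{C}} = |z^m| \cdot \|\xi(m)\|_{\mathscr{C}} = \|\xi(m)\|_{\mathscr{C}}$ for every $m \in \widehat{G_n}$. Summing the squares weighted by $1 + \inner{m}{m}{n}$ then yields
\begin{equation*}
\sum_{m\in\widehat{G_n}} (1 + \inner{m}{m}{n}) \, \|((v_n^z)^\circ \xi)(m)\|_{\mathscr{C}}^2 \;=\; \sum_{m\in\widehat{G_n}} (1 + \inner{m}{m}{n}) \, \|\xi(m)\|_{\mathscr{C}}^2 \text{.}
\end{equation*}
By the definition of $\dom{\Dirac_n}$ in Hypothesis (\ref{domain-hyp}), the left side is finite if and only if the right side is, so $(v_n^z)^\circ \xi \in \dom{\Dirac_n} \iff \xi \in \dom{\Dirac_n}$. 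This yields the inclusion $(v_n^z)^\circ \dom{\Dirac_n} \subseteq \dom{\Dirac_n}$.

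For the reverse inclusion, the same argument applied to $z^{-1} \in G_n$ (which exists since $G_n$ is a group) gives $(v_n^{z^{-1}})^\circ \dom{\Dirac_n} \subseteq \dom{\Dirac_n}$; since $(v_n^z)^\circ (v_n^{z^{-1}})^\circ$ is the identity on $\mathscr{J}_n$, equality follows. There is no real obstacle here: the entire content of the lemma is that $v_n^z$ is a $\T$-valued multiplication operator in the Fourier-type basis used to define $\dom{\Dirac_n}$, and the result should be viewed as a sanity check to be used in subsequent arguments (e.g., analyzing how the dual action interacts with the Dirac operator).
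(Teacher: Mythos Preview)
Your proof is correct and follows essentially the same approach as the paper: both compute that $(v_n^z)^\circ$ acts as multiplication by $z^m$ on the $m$-th coordinate, preserving the weighted $\ell^2$-sum defining $\dom{\Dirac_n}$, and then obtain equality using the group inverse (the paper writes $\overline{z}$ where you write $z^{-1}$, which coincide on $\T^{d'}$).
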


\begin{proof}
  Let $z = (z_1,\ldots,z_{d'})\in G_n$. Let $\xi \in \dom{\Dirac_n}$. We compute:
  \begin{align*}
    \sum_{m\in\Z^{d'}_{k'_n}} (1+\inner{m}{m}{n})\norm{((v_n^z)^\circ \xi)(m)}{\mathscr{C}}^2
    &= \sum_{m\in\Z^{d'}_{k'_n}} (1+\inner{m}{m}{n})\norm{z^m \xi(m)}{\mathscr{C}}^2\\
    &= \sum_{m\in\Z^{d'}_{k'_n}} (1+\inner{m}{m}{n})\norm{\xi(m)}{\mathscr{C}}^2 < \infty
  \end{align*}
  and thus $(v_n^z)^\circ \xi \in \dom{\Dirac_n}$. Therefore, for all $z\in G_n$, we have $(v_n^z)^\circ \, \dom{\Dirac_n}\subseteq\dom{\Dirac_n}$; as $(v_n^z)^\circ$ is a unitary, we then conclude
  \begin{equation*}
    (v_n^z)^\circ \, \dom{\Dirac_n} \subseteq \dom{\Dirac_n} = (v_n^z)^\circ (v_n^{\overline{z}})^\circ \dom{\Dirac_n} \subseteq (v_n^z)^\circ \, \dom{\Dirac_n} \text,
  \end{equation*}
  i.e.
  \begin{equation*}
    \forall z \in G_n \quad (v_n^z)^\circ \, \dom{\Dirac_n} = \dom{\Dirac_n^z} \text.
  \end{equation*}
  This establishes our lemma.
\end{proof}

We now rewrite our operator $\Dirac_n$ in a convenient form.
\begin{lemma}\label{Clifford-anticommutation-lemma}
  Assume Hypothesis (\ref{L-seminorm-hyp}). Let $n\in\Nbar$. Let $E_j = \unit_n \otimes c(\gamma_j)$ for all $j\in\{1,\ldots,d + d'\}$. The following identity holds between operators on $\dom{\Dirac_n}$:
  \begin{equation*}
    \forall j \in \{1,\ldots,d+d'\} \quad E_j \Dirac_n + \Dirac_n E_j = -2 \Gamma_{n,j}\otimes\unit_{\mathscr{C}} \text,
  \end{equation*}
  and, therefore,
  \begin{equation*}
    \Dirac_n = \frac{-1}{2}\sum_{j=1}^{d+d'} \left( E_j \Dirac_n + \Dirac_n E_j \right) E_j \text.
  \end{equation*}
\end{lemma}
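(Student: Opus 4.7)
The plan is a direct computation exploiting the Clifford anticommutation relations established in Example \ref{Clifford-ex}, namely $\gamma_j\gamma_s + \gamma_s\gamma_j = -2$ when $j=s$ and $0$ otherwise. Before computing, I would first observe that $E_j = \unit_n \otimes c(\gamma_j)$ preserves $\dom{\Dirac_n}$: because $\mathscr{C}$ is finite dimensional and $c(\gamma_j)$ acts only on the $\mathscr{C}$-factor, the membership condition $\left(\sqrt{1+\inner{m}{m}{n}}\,\xi(m)\right)_{m\in\widehat{G_n}}\in\mathscr{J}_n$ is unaffected by applying $\unit_n\otimes c(\gamma_j)$. Therefore all the compositions below make sense on $\dom{\Dirac_n}$.

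For the first identity, I would compute on $\dom{\Dirac_n}$:
\begin{align*}
E_j\,\Dirac_n + \Dirac_n\,E_j
&= \sum_{s=1}^{d+d'} \bigl((\unit_n\otimes c(\gamma_j))(\Gamma_{n,s}\otimes c(\gamma_s)) + (\Gamma_{n,s}\otimes c(\gamma_s))(\unit_n\otimes c(\gamma_j))\bigr)\\
&= \sum_{s=1}^{d+d'} \Gamma_{n,s}\otimes \bigl(c(\gamma_j)c(\gamma_s) + c(\gamma_s)c(\gamma_j)\bigr)\\
&= \Gamma_{n,j}\otimes(-2\,\unit_{\mathscr{C}}) = -2\,\Gamma_{n,j}\otimes\unit_{\mathscr{C}}\text,
\end{align*}
where the third equality uses that $c$ is a *-representation of the Clifford algebra, so $c(\gamma_j)c(\gamma_s)+c(\gamma_s)c(\gamma_j) = c(\gamma_j\gamma_s+\gamma_s\gamma_j)$ vanishes for $s\neq j$ and equals $-2\,\unit_{\mathscr{C}}$ for $s=j$.

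For the second identity, multiplying the first on the right by $E_j$ (valid since $E_j$ preserves $\dom{\Dirac_n}$) gives
\begin{equation*}
(E_j\Dirac_n+\Dirac_n E_j)E_j = -2\,\Gamma_{n,j}\otimes c(\gamma_j)\text,
\end{equation*}
so summing over $j$ and comparing with $\Dirac_n = \sum_{j=1}^{d+d'}\Gamma_{n,j}\otimes c(\gamma_j)$ yields
\begin{equation*}
\sum_{j=1}^{d+d'}(E_j\Dirac_n+\Dirac_n E_j)E_j = -2\,\Dirac_n\text,
\end{equation*}
which is the claimed formula. I do not expect any serious obstacle: the only subtlety is the domain bookkeeping, which is handled at the outset by the remark that $E_j$ acts trivially on the $\Hilbert_n$-factor.
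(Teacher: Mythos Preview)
Your proof is correct and follows essentially the same approach as the paper: both compute the anticommutator directly using the Clifford relations $c(\gamma_j)c(\gamma_s)+c(\gamma_s)c(\gamma_j)=-2\delta_{js}\unit_{\mathscr{C}}$ and the fact that $\Gamma_{n,s}$ commutes with $E_j$. You actually provide slightly more detail than the paper, explicitly justifying that $E_j$ preserves $\dom{\Dirac_n}$ and spelling out the derivation of the second identity, which the paper dismisses with ``Our lemma follows.''
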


\begin{proof}
  Note that, by construction, $\Dirac_n = \sum_{j=1}^{d+d'} \Gamma_{n,j} E_j$.
  
  Owing to the standard relations in the Clifford algebra $\alg{Cl}(\C^{d+d'})$, we have, for all $j,s \in \{1,\ldots,d+d'\}$:
  \begin{equation*}
    E_j E_s + E_s E_j = \begin{cases}
      -2 \text{ if $j=s$,} \\
      0  \text{ otherwise.}
    \end{cases}
  \end{equation*}
  and, by construction, $\Gamma_{n,s}$ and $E_j$ commute. Of course, $E_j$ maps $\dom{\Dirac_n}$ to itself. Thus, on $\dom{\Dirac_n}$,
  \begin{align*}
    E_j \Dirac_n + \Dirac_n E_j
    &= \sum_{s=1}^{d + d'} \Gamma_{n,s}  (E_j E_s + E_s E_j) \\
    &= -2 \Gamma_{n,j}\otimes \unit_{\mathscr{C}} \text.
  \end{align*}
  Our lemma follows.
\end{proof}

From Lemma (\ref{Clifford-anticommutation-lemma}), we deduce the following helpful estimates relating our L-seminorms candidates to the quantized calculus on fuzzy and quantum tori.

\begin{lemma}\label{norm-comparison-lemma}
  Assume Hypothesis (\ref{L-seminorm-hyp}). Let $n\in\Nbar$. If $a\in\dom{\Lip_n}$, then, for all $j\in\{1,\ldots,2d\}$, the following inequality holds:
  \begin{equation*}
    \opnorm{\left[\Gamma_{n,j}, a \right]}{}{\Hilbert_n} \leq \Lip_n(a) \text.
  \end{equation*}

  Consequently, for all $j\in\{1,\ldots,d\}$, if $k_n(j) = \infty$, then,
  \begin{equation*}
    \opnorm{\left[U_{n,f(j)} \partial_{n,j}, a \right]}{}{\Hilbert_n}\leq 2 \Lip_n(a) \text,
  \end{equation*}
  while, if $k_n(j) < \infty$, then,
  \begin{equation*}
    \frac{k_n(j)}{2\pi} \norm{\left[U_{n,j},a \right]}{\B_n} \leq 2 \Lip_n(a) \text.
  \end{equation*}
\end{lemma}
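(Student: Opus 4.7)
The plan is to reduce all three inequalities to the identity
$$\Gamma_{n,j}\otimes\unit_{\mathscr{C}} = -\tfrac{1}{2}(E_j\Dirac_n + \Dirac_n E_j)$$
established in Lemma \ref{Clifford-anticommutation-lemma}. The payoff of using this identity is that $E_j = \unit_n\otimes c(\gamma_j)$ commutes with every $a^\circ = a\otimes\unit_{\mathscr{C}}$ (since $a^\circ$ acts trivially on the Clifford factor), and $E_j$ has operator norm $1$ on $\mathscr{J}_n$ because $c(\gamma_j)^\ast c(\gamma_j)=-c(\gamma_j)^2=\unit_{\mathscr{C}}$.

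For Part (1), I would take the commutator of the above identity with $a^\circ$. Since $E_j$ commutes with $a^\circ$, we obtain
$$[\Gamma_{n,j}\otimes\unit_{\mathscr{C}},a^\circ] = -\tfrac{1}{2}\bigl(E_j[\Dirac_n,a^\circ] + [\Dirac_n,a^\circ]E_j\bigr),$$
and the triangle inequality together with $\opnorm{E_j}{}{\mathscr{J}_n}=1$ yields
$$\opnorm{[\Gamma_{n,j}\otimes\unit_{\mathscr{C}},a^\circ]}{}{\mathscr{J}_n} \leq \Lip_n(a).$$
Finally, $[\Gamma_{n,j}\otimes\unit_{\mathscr{C}},a^\circ] = [\Gamma_{n,j},a]\otimes\unit_{\mathscr{C}}$ has norm equal to $\opnorm{[\Gamma_{n,j},a]}{}{\Hilbert_n}$, giving Part (1). (One should check at the outset, using Lemma \ref{Udom-dom-lemma} and that $a^\circ\dom{\Dirac_n}\subseteq\dom{\Dirac_n}$, that the commutators in question are well-defined as bounded operators on the appropriate domains.)

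For Part (2), the decomposition $U_{n,f(j)} = X_{n,j} + i\,\mathrm{fsgn}(j)\,Y_{n,j}$ gives, when $k_n(j)=\infty$,
$$U_{n,f(j)}\partial_{n,j} = X_{n,j}\partial_{n,j} + i\,\mathrm{fsgn}(j)\,Y_{n,j}\partial_{n,j} = \Gamma_{n,j} + i\,\mathrm{fsgn}(j)\,\Gamma_{n,d+j}.$$
Taking the commutator with $a$ and applying Part (1) to each summand yields the bound $2\Lip_n(a)$.

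For Part (3), when $k_n(j)<\infty$, a direct computation using the definition of the right action (by which $\xi\cdot X_{n,j}$ commutes with left multiplication by $a$) shows that $[\Gamma_{n,d+j},a] = \tfrac{k_n(j)}{2i\pi}\pi_n([X_{n,j},a])$ and $[\Gamma_{n,j},a] = \tfrac{-k_n(j)}{2i\pi}\pi_n([Y_{n,j},a])$ as bounded operators on $\Hilbert_n$; in particular, since $\pi_n$ is isometric, the operator norms equal $\tfrac{k_n(j)}{2\pi}\norm{[X_{n,j},a]}{\B_n}$ and $\tfrac{k_n(j)}{2\pi}\norm{[Y_{n,j},a]}{\B_n}$ respectively. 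Combining these with $[U_{n,f(j)},a] = [X_{n,j},a] + i\,\mathrm{fsgn}(j)\,[Y_{n,j},a]$ and the triangle inequality, then applying Part (1) twice, gives
$$\tfrac{k_n(j)}{2\pi}\norm{[U_{n,f(j)},a]}{\B_n} \leq \opnorm{[\Gamma_{n,d+j},a]}{}{\Hilbert_n} + \opnorm{[\Gamma_{n,j},a]}{}{\Hilbert_n} \leq 2\Lip_n(a),$$
which is the desired estimate (with $U_{n,f(j)}$ in place of the written $U_{n,j}$, which appears to be a typographical slip since the right-hand side Clifford-built $\Gamma$'s only know about $U_{n,f(j)}$). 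No real obstacle is expected; the only mild subtlety is keeping track of the left versus right actions on $\Hilbert_n$ when computing $[\Gamma_{n,j},a]$ for finite $k_n(j)$, so that the right-multiplication parts of the commutators $[X_{n,j},\xi], [Y_{n,j},\xi]$ cancel cleanly against $\pi_n(a)$.
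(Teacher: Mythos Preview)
Your proposal is correct and follows essentially the same route as the paper: both reduce Part~(1) to the identity $\Gamma_{n,j}\otimes\unit_{\mathscr{C}} = -\tfrac{1}{2}(E_j\Dirac_n+\Dirac_n E_j)$ from Lemma~\ref{Clifford-anticommutation-lemma}, exploit $[E_j,a^\circ]=0$ and $\opnorm{E_j}{}{\mathscr{J}_n}=1$, and then derive Parts~(2) and~(3) by decomposing $U_{n,f(j)}=X_{n,j}+i\,\mathrm{fsgn}(j)\,Y_{n,j}$ and applying Part~(1) to each piece; your cancellation of the right-action terms in the finite case is exactly the paper's computation $[[Y_{n,j},\cdot],a]=[Y_{n,j},a]$. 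Your observation that the displayed $U_{n,j}$ in the finite case should read $U_{n,f(j)}$ is also correct and matches both the paper's proof and its subsequent use in Lemma~\ref{derivations-Ln-lemma}.
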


\begin{proof}
  Fix $a\in\dom{\Lip_n}$. Let $E_j = \unit_n \otimes c(\gamma_j)$ for $j\in\{1,\ldots,d+d'\}$. For the following computation, note that since $E_j\, \dom{\Dirac_n}\subseteq \dom{\Dirac_n}$, and since $a^\circ \, \dom{\Dirac_n}\subseteq \dom{\Dirac_n}$, we also have $a^\circ E_j \, \dom{\Dirac_n}\subseteq\dom{\Dirac_n}$; we also note that $[E_j,a^\circ] = 0$ by construction.

  By Lemma (\ref{Clifford-anticommutation-lemma}), if $n\in\Nbar$, $j\in\{1,\ldots,d\}$ and $k_n(j) = \infty$, then, we compute (over the space $\dom{\Dirac_n}$):
  \begin{align*}
    [\Gamma_{n,j},a]^\circ
    &= [X_{n,j} \partial_{n,j} ,a]^\circ \\
    &= \frac{-1}{2} \left[E_j \Dirac_n + \Dirac_n E_j, a^\circ \right] \\
    &= \frac{-1}{2} \left(E_j \left[\Dirac_n,a^\circ\right] + \underbracket[1pt]{[E_j,a^\circ]}_{=0}\Dirac_n + [\Dirac_n,a^\circ] E_j + \Dirac_n\underbracket[1pt]{[E_j,a^\circ]}_{=0}\right) \text,
  \end{align*}
  and therefore, since $\opnorm{E_j}{}{\mathscr{J}_n}\leq 1$, we conclude
  \begin{equation*}
    \opnorm{[X_{n,j}\partial_{n,j},a]^\circ}{\mathscr{J}_n}{\dom{\Dirac_n}} \leq \opnorm{[\Dirac_n,a^\circ]}{\mathscr{J}_n}{\dom{\Dirac_n}} = \Lip_n(a) \text.
  \end{equation*}

  Thus, $[X_{n,j}\partial_{n,j},a]^\circ$ is bounded on $\dom{\Dirac_n}$, and thus has a unique extension to $\mathscr{J}_n$, and thus $[X_{n,j}\partial_{n,j},a]$ is bounded as well, with the same norm, on $\Hilbert_n$. So
  \begin{equation*}
     \opnorm{[\Gamma_{n,j},a]}{}{\Hilbert_n} = \opnorm{[X_{n,j}\partial_{n,j},a]}{}{\Hilbert_n} \leq \Lip_n(a) \text.
  \end{equation*}
  
  The same method would show that
  \begin{equation*}
    \opnorm{[\Gamma_{n,j+d},a]}{}{\Hilbert_n} = \opnorm{[Y_{n,j}\partial_{n,j},a]}{}{\Hilbert_n} = \opnorm{[Y_{n,j}\partial_{n,j},a]^\circ}{\mathscr{J}_n}{\dom{\Dirac_n}} \leq  \Lip_n(a) \text.
  \end{equation*}

  Therefore, since $U_{n,f(j)} = X_{n,j} + i \mathrm{fsgn}(j) Y_{n,j}$, we conclude:
  \begin{equation*}
    \opnorm{\left[U_{n,f(j)}\partial_{n,j},a\right]}{}{\Hilbert_n}
    \leq \opnorm{\left[X_{n,j}\partial_{n,j},a\right]}{}{\Hilbert_n} + \opnorm{\left[Y_{n,j}\partial_{n,j}, a \right]}{}{\Hilbert_n} \leq 2\Lip_n(a)\text.
  \end{equation*}
  It also immediately follows that $\opnorm{\left[U_{n,f(j)}^\ast\partial_{n,j},a\right]}{}{\Hilbert_n} \leq 2 \Lip_n(a)$.
    \medskip

    Similarly, if $j\in\{1,\ldots,d\}$ and $k_n(j)<\infty$, and if $a\in\dom{\Lip_n}$, then
    \begin{equation*}
      \opnorm{[\Gamma_{n,j},a]}{}{\Hilbert_n} = \opnorm{\frac{k_n(j)}{2\pi} [[Y_{n,j},\cdot],a]^\circ}{}{\mathscr{J}_n}\leq \Lip_n(a) \text;
    \end{equation*}
    on the other hand, for all $\xi \in \Hilbert_n$:
    \begin{align*}
      [[Y_{n,j},\cdot],a]\xi
      &= [Y_{n,j},a\xi] - a[Y_{n,j},\xi] \\
      &= Y_{n,j} a \xi - a\xi \cdot Y_{n,j} - a Y_{n,j}\xi + a\xi\cdot Y_{n,j}\\
      &= Y_{n,j} a \xi - a Y_{n,j}\xi = [Y_{n,j},a]\xi \text.
    \end{align*}
    Therefore, $[[Y_{n,j},\cdot],a] = [Y_{n,j},a]$ and thus,
    \begin{equation*}
      \opnorm{[\Gamma_{n,j},a]}{}{\Hilbert_n} = \opnorm{\frac{k_n(j)}{2\pi}[Y_{n,j},a]}{}{\Hilbert_n} \leq \Lip_n(a)\text.
    \end{equation*}

    The remaining inequalities of our lemma are now proven similarly to the above argument.
\end{proof}

We are now able to compute the Lipschitz seminorms of the dual actions, with respect to our L-seminorms.

\begin{notation}
  If $z = (z_1,\ldots,z_{d'}) \in G_n$, then we let
  \begin{equation*}
    \dil{z} = 2 \sum_{j \in \{1,\ldots,d\}} |1-z_{f(j)}| \text.
  \end{equation*}
\end{notation}

\begin{lemma}\label{dilation-lemma}
  Assume Hypothesis (\ref{L-seminorm-hyp}). Let $n\in\Nbar$. If $a\in\dom{\Lip_n}$ and $z\in G_n$, then $\alpha_n^z(a) \in \dom{\Lip_n}$, and
  \begin{equation*}
    \left|\Lip_n(a) - \Lip_n(\alpha_n^z(a)) \right| \leq \dil{z}\Lip_n(a) \text.
  \end{equation*}
\end{lemma}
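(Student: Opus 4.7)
The plan is to reduce the estimate of $\Lip_n(\alpha_n^z(a))$ to an estimate of a perturbation of the Dirac operator. First, Lemma (\ref{Vz-domain-lemma}) gives $(v_n^z)^\circ\dom{\Dirac_n} = \dom{\Dirac_n}$, so the assumption $a^\circ\dom{\Dirac_n}\subseteq\dom{\Dirac_n}$ implies that $\alpha_n^z(a)^\circ = (v_n^z)^\circ a^\circ(v_n^{\overline{z}})^\circ$ also preserves $\dom{\Dirac_n}$. Inserting $(v_n^{\overline{z}})^\circ(v_n^z)^\circ = \unit_{\mathscr{J}_n}$ into $[\Dirac_n,\alpha_n^z(a)^\circ]$ yields
\begin{equation*}
    [\Dirac_n,\alpha_n^z(a)^\circ] = (v_n^z)^\circ [\tilde{\Dirac}_n^z, a^\circ] (v_n^{\overline{z}})^\circ\text,
\end{equation*}
where $\tilde{\Dirac}_n^z \coloneqq (v_n^{\overline{z}})^\circ\Dirac_n(v_n^z)^\circ$ is defined on $\dom{\Dirac_n}$. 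Since $(v_n^z)^\circ$ is unitary, $\Lip_n(\alpha_n^z(a)) = \opnorm{[\tilde{\Dirac}_n^z, a^\circ]}{}{\mathscr{J}_n}$, and the lemma follows from the reverse triangle inequality applied to $[\tilde{\Dirac}_n^z, a^\circ] = [\Dirac_n, a^\circ] + [\tilde{\Dirac}_n^z - \Dirac_n, a^\circ]$ once we show $\opnorm{[\tilde{\Dirac}_n^z - \Dirac_n, a^\circ]}{}{\mathscr{J}_n} \leq \dil{z}\Lip_n(a)$.

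Next I would compute $\tilde{\Dirac}_n^z - \Dirac_n$ term by term. Because $v_n^z$ commutes with every $\partial_{n,j}$, with $J_n$, and with every $v_n^{z_{n,j}}$, the operators $\Gamma_{n,d+j}$ for $j > d$ are fixed by conjugation. For $j\in\{1,\ldots,d\}$, conjugation replaces $X_{n,j},Y_{n,j}$ by $\alpha_n^{\overline{z}}(X_{n,j}),\alpha_n^{\overline{z}}(Y_{n,j})$. Setting $w = 1 - z_{f(j)}$ and using $U_{n,f(j)} = X_{n,j} + i\mathrm{fsgn}(j) Y_{n,j}$ and $\alpha_n^{\overline{z}}(U_{n,f(j)}) = \overline{z_{f(j)}} U_{n,f(j)}$, one obtains the key identities
\begin{equation*}
    (X_{n,j} - \alpha_n^{\overline{z}}(X_{n,j})) \pm i\mathrm{fsgn}(j)(Y_{n,j} - \alpha_n^{\overline{z}}(Y_{n,j})) = \{\overline{w}U_{n,f(j)},\, w U_{n,f(j)}^\ast\}\text.
\end{equation*}
When $k_n(j) = \infty$, $U_{n,f(j)}$ is central in $\B_n$ (Remark \ref{innerification-rmk}), so $[X_{n,j}\partial_{n,j},a^\circ] = X_{n,j}[\partial_{n,j},a^\circ]$; using this plus the above identities, the $j$-th contribution to $[\tilde{\Dirac}_n^z-\Dirac_n, a^\circ]$ takes the form
\begin{equation*}
    T_j = \tfrac{1}{2}\bigl(-\overline{w}\, P_j \otimes e_+^{(j)} - w\, Q_j \otimes e_-^{(j)}\bigr)\text,
\end{equation*}
with $e_\pm^{(j)} = c(\gamma_j) \mp i\mathrm{fsgn}(j)c(\gamma_{d+j})$, and $P_j = U_{n,f(j)}[\partial_{n,j},a]$, $Q_j = U_{n,f(j)}^\ast[\partial_{n,j},a]$. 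For $k_n(j)<\infty$, one uses $[[Y_{n,j},\cdot],a^\circ] = [Y_{n,j},a]^\circ$ (and similarly for $X_{n,j}$) to obtain the analogous form with $P_j,Q_j$ replaced by $\tfrac{k_n(j)}{2\pi}[U_{n,f(j)},a]$ and $\tfrac{k_n(j)}{2\pi}[U_{n,f(j)}^\ast,a]$, up to a $\pm i\mathrm{fsgn}(j)$ scalar.

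The crucial step is estimating $\opnorm{T_j}{}{\mathscr{J}_n}$ sharply. The Clifford relations give $(e_\pm^{(j)})^2 = 0$ and $(e_+^{(j)})^\ast = -e_-^{(j)}$; moreover the self-adjoint products $e_+^{(j)} e_-^{(j)}$ and $e_-^{(j)} e_+^{(j)}$ have spectrum $\{0,-4\}$ with mutually orthogonal $(-4)$-eigenspaces $V_+,V_-\subseteq\mathscr{C}$. A direct computation of $T_j T_j^\ast$ then yields
\begin{equation*}
    T_j T_j^\ast = |w|^2\bigl(P_j P_j^\ast\otimes\mathrm{proj}_{V_+} + Q_j Q_j^\ast\otimes\mathrm{proj}_{V_-}\bigr)\text,
\end{equation*}
so that $\opnorm{T_j}{}{\mathscr{J}_n} = |w|\max(\opnorm{P_j}{}{\Hilbert_n},\opnorm{Q_j}{}{\Hilbert_n})$. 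Applying Lemma (\ref{norm-comparison-lemma}) (which gives $\opnorm{U_{n,f(j)}[\partial_{n,j},a]}{}{\Hilbert_n}\leq 2\Lip_n(a)$ in the $k_n(j)=\infty$ case and $\tfrac{k_n(j)}{2\pi}\opnorm{[U_{n,f(j)},a]}{}{\B_n}\leq 2\Lip_n(a)$ in the $k_n(j)<\infty$ case, via $U_{n,f(j)} = X_{n,j} + i\mathrm{fsgn}(j)Y_{n,j}$) gives $\opnorm{T_j}{}{\mathscr{J}_n}\leq 2|1-z_{f(j)}|\Lip_n(a)$ in both regimes. Summing over $j\in\{1,\ldots,d\}$ by the triangle inequality produces the bound $\dil{z}\Lip_n(a)$, as desired. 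The main obstacle is this sharp norm estimate: a naive triangle inequality applied separately to the $X$- and $Y$-summands of each $T_j$ doubles the constant, so exploiting the creation/annihilation structure of $e_\pm^{(j)}$ is essential to recover the factor $|w|$ rather than $2|w|$ per index.
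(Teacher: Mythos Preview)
Your argument is correct and shares the paper's overall architecture: use $(v_n^z)^\circ\dom{\Dirac_n}=\dom{\Dirac_n}$ to rewrite $[\Dirac_n,\alpha_n^z(a)^\circ]$ as a unitary conjugate of $[\tilde{\Dirac}_n^z,a^\circ]$, then estimate the perturbation $[\tilde{\Dirac}_n^z-\Dirac_n,a^\circ]$ index by index. The substantive difference is in how you handle each index $j\in\{1,\ldots,d\}$. The paper separates the $\Gamma_{n,j}$ and $\Gamma_{n,d+j}$ contributions, bounds each one individually on $\Hilbert_n$ via Lemma~\ref{norm-comparison-lemma}, and then sums; its ``direct computation'' passes from a sum over $d+d'$ scalar norms to the single sum $\sum_{j=1}^d 2|1-z_{f(j)}|\Lip_n(a)$ without spelling out how the two halves coalesce (and it leaves the $k_n(j)<\infty$ case implicit). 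Your route instead keeps the Clifford tensor factor and recognises that the pair combines into $\tfrac12(-\overline{w}P_j\otimes e_+^{(j)}-wQ_j\otimes e_-^{(j)})$ with $e_\pm^{(j)}$ nilpotent and $-\tfrac14 e_+^{(j)}e_-^{(j)}$, $-\tfrac14 e_-^{(j)}e_+^{(j)}$ complementary orthogonal projections on $\mathscr{C}$; this yields $\opnorm{T_j}{}{\mathscr{J}_n}=|w|\max(\opnorm{P_j}{}{},\opnorm{Q_j}{}{})$ rather than the $2|w|$ a naive triangle inequality would give, and lands exactly on $\dil{z}$. This creation/annihilation observation is a genuine sharpening of the paper's estimate, and your uniform treatment of the $k_n(j)<\infty$ case (via $[[X_{n,j},\cdot],a^\circ]=[X_{n,j},a]^\circ$ and $J_n v_n^z=v_n^z J_n$, so that conjugation again just replaces $X_{n,j},Y_{n,j}$ by $\alpha_n^{\overline z}(X_{n,j}),\alpha_n^{\overline z}(Y_{n,j})$ in the bimodule commutator) is more explicit than the paper's.
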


\begin{proof}
  We note that, since $a^\circ \, \dom{\Dirac_n}\subseteq\dom{\Dirac_n}$, since $(v_n^z)^\circ \, \dom{\Dirac_n}=\dom{\Dirac_n}$ for all $z\in G_n$, and since $\alpha_n^z(a)) = v_n^z a v_n^{\overline{z}}$, we conclude that
  \begin{equation*}
    \alpha_n^z(a)^\circ \, \dom{\Dirac_n}\subseteq \dom{\Dirac_n} \text.
  \end{equation*}

  Note that, for all $z\in G_n$ and for all $j\in\{1,\ldots,d'\}$, we have $\partial_{n,j} v_n^z = v_n^z \partial_{n,j}$ on $\dom{\partial_{n,j}}$.
  
  Let $z = (z_1,\ldots,z_{d'}) \in G_n$. By construction, since $v_n^{\overline{z}} U_{n,f(j)} v_n^{z} = \overline{z_{f(j)}} U_{n,f(j)}$, if $j\in\{1,\ldots,d\}$, and $k_n(j) = \infty$, then (performing the following computations over the dense subspace $\mathscr{F}_n$ of $\Hilbert_n$ consisting of finitely supported elements):
  \begin{align}\label{dilation-lemma-eq-1}
    [U_{n,f(j)} \partial_{n,j},\alpha_n^z(a)]
    &= [U_{n,f(j)} \partial_{n,j}, v_n^z  a  v_n^{\overline{z}}] \\
    &= U_{n,f(j)} \partial_{n,j} v_n^z  a  v_n^{\overline{z}} - v_n^z  a  v_n^{\overline{z}} U_{n,f(j)} \partial_{n,j} \nonumber \\
    &= U_{n,f(j)} v_n^z \partial_{n,j} a  v_n^{\overline{z}} - v_n^z   a \overline{z_{f(j)}} U_{n,f(j)} v_{n,j}^{\overline{z}} \partial_{n,j} \nonumber \\
    &= \overline{z_{f(j)}} v_n^z \left( U_{n,f(j)} \partial_{n,j} a - a U_{n,j} \partial_{n,j} \right) v_n^{\overline{z}} \nonumber \\
    &=  v_n^z \left( \overline{z_{f(j)}} \left[ U_{n,f(j)} \partial_{n,j}, a \right] \right) v_n^{\overline{z}} \text. \nonumber
  \end{align}
  In particular,
  \begin{equation*}
    \opnorm{[U_{n,f(j)}\partial_{n,j}, \alpha_n^z(a)]}{\Hilbert_n}{\mathscr{F}_n} = \opnorm{[U_{n,f(j)}\partial_{n,j},a]}{\Hilbert_n}{\mathscr{F}_n} < \infty \text.
  \end{equation*}

  Thus, using the density of $\mathscr{F}_n$, we conclude that
  \begin{equation*}
    \opnorm{[U_{n,f(j)}\partial_{n,j},\alpha_n^z(a)]}{}{\Hilbert_n}  < \infty \text.
  \end{equation*}
  
  Since $a\in\dom{\Lip_n}$, we know that $a=a^\ast$, and since $f(j)\neq j$ by assumption on $f$, so $\partial_{n,j}$ and $U_{n,f(j)}$ commute, we easily conclude as well that
  \begin{equation*}
    \opnorm{[U_{n,f(j)}^\ast\partial_{n,j},\alpha_n^z(a)]}{}{\Hilbert_n} = \opnorm{[U_{n,f(j)}^\ast\partial_{n,j},a]}{}{\Hilbert_n} < \infty \text.
  \end{equation*}
  Therefore, for all $j \in \{1,\ldots,d\}$, if $k_n(j) = \infty$, then
  \begin{equation*}
    \opnorm{\left[\Gamma_{n,j},\alpha_n^z(a)\right]}{}{\Hilbert_n} < \infty \text{ and }\opnorm{\left[\Gamma_{n,d+j},\alpha_n^z(a)\right]}{}{\Hilbert_n} < \infty \text.
  \end{equation*}
  By construction, $v_n^z$ commutes with $\partial_{n,j}$ and $v_n^w$, for all $j\in\{d+1,\ldots,d'\}$ and $w\in G_n$. Thus, if $j\in\{2d+1,\ldots,d+d'\}$, then by construction, $v_n^z \Gamma_{n,j} v_n^{\overline{z}} = \Gamma_{n,j}$, so $[\Gamma_{n,j},\alpha_n^z(a)] = v_n^z [\Gamma_{n,j},a] v_n^{\overline{z}}$. Thus, for all $j\in\{2d+1,\ldots,d+d'\}$:
  \begin{equation*}
    \opnorm{[\Gamma_{n,j},\alpha_n^z(a)]}{}{\Hilbert_n} = \opnorm{[\Gamma_{n,j},a]}{}{\Hilbert_n}\text.
  \end{equation*}
  
  We thus conclude that
  \begin{equation*}
    \Lip_n(\alpha_n^z(a)) = \opnorm{[\Dirac_n,\alpha_n^z(a)^\circ ]}{}{\Hilbert_n} < \infty\text,
  \end{equation*}
  and therefore, $\alpha_n^z(a) \in \dom{\Lip_n}$, as claimed.

  \bigskip

  To conclude our estimation of the Lipschitz constant for the dual action, we make a few quick observations. Fix $z\in G_n$. If $j\in\{1,\ldots,d\}$ and $k_n(j) = \infty$, then, by Expression (\ref{dilation-lemma-eq-1}):
  \begin{equation*}
    [U_{n,f(j)} \partial_{n,j}, \alpha_n^z(a)]
    = v_n^z \left( \overline{z_{f(j)}} [U_{n,f(j)} \partial_{n,j}, a] \right) v_n^{\overline{z}} \text.
  \end{equation*}
  A similar computation shows that
  \begin{equation*}
    [U_{n,f(j)}^\ast \partial_{n,j}, \alpha_n^z(a)] = v_n^z \left( z_{f(j)} [U_{n,f(j)}^\ast \partial_{n,j}, a] \right) v_n^{\overline{z}} \text.
  \end{equation*}

  Thus, a direct computation shows that, for all $s\in\{1,\ldots,d\}$, and $k_n(s) = \infty$, if $j \in \{s,s+d\}$, then, using Lemma (\ref{norm-comparison-lemma}),
  \begin{equation*}
    \opnorm{v_n^z [\Gamma_{n,j}, a] v_n^{\overline{z}} - [\Gamma_{n,j},\alpha_n^z(a)]}{}{\Hilbert_n} \leq 2 |1-z_{f(j)}| \Lip_n(a) \text.
  \end{equation*}

  \medskip
  
  Since $v_n^z$ is unitary, we have
  \begin{equation*}
    \Lip_n(a) = \opnorm{(v_n^z)^\circ [\Dirac_n,a^\circ] (v_n^{\overline{z}})^\circ}{}{\mathscr{J}_n} \text.
  \end{equation*}
  Therefore, we conclude:
  \begin{align*}
    \left|\Lip_n(a) - \Lip_n(\alpha_n^z(a))\right|
    &\leq \opnorm{(v_n^z)^\circ [\Dirac_n,a^\circ] (v_n^{\overline{z}})^\circ - [\Dirac_n,\alpha_n^z(a)^\circ]}{}{\mathscr{J}_n} \\
    &\leq \sum_{j=1}^{d+d'} \opnorm{v_n^z[\Gamma_{n,j},a]v_n^{\overline{z}} - [\Gamma_{n,j},\alpha_n^z(a)]}{}{} \\
    &\leq \sum_{j\in\{1,\ldots,d\}} 2 |1-z_{f(j)}| \Lip_n(a) \\
    &\leq \dil{z} \Lip_n(a) \text.
  \end{align*}

  This concludes our proof.
\end{proof}

We can approximate elements of a quantum or fuzzy torus by finitely supported elements, in a manner which gives us control over the support of the approximations. For this purpose, we use integral operators on $\A_n$ defined using kernels, which, ultimately, we will choose to be Fejer kernels.

\begin{notation}
  We use the notations of Hypothesis (\ref{L-seminorm-hyp}). If $f \in C(\T^d)$ is a $\C$-valued continuous function over the $d$-torus $\T^d$, if $n\in\Nbar$, and if $\mu_n$ is the Haar probability measure on $\U^d_{k_n}$, then we then define, for all $a\in \A_n$, the following element of $\A_n$:
  \begin{equation*}
    \alpha_n^f (a) = \int_{\U^d_{k_n}} f(z) \alpha_n^z(a) \, d\mu_n(z) \text.
  \end{equation*}
\end{notation}

\begin{lemma}\label{near-iso-lemma}
  Assume Hypothesis (\ref{L-seminorm-hyp}), and let $n\in\Nbar$. If $f \in C(\U^d_{k_n})$, and if $f(\U^d_{k_n}) \subseteq [0,\infty)$,  then
  \begin{equation*}
    \forall a \in \dom{\Lip_n} \quad \Lip_n(\alpha_n^f(a)) \leq  \left(\int_{\U_{k_n}^d} f(z)(1 + \dil{z}) \, d\mu_n(z)\right)\Lip_n(a) \text.
  \end{equation*}
\end{lemma}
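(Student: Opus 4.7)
The plan is to approximate the Bochner integral defining $\alpha_n^f(a)$ by Riemann sums, bound the Lipschitz seminorm on each Riemann sum via Lemma (\ref{dilation-lemma}) and the triangle inequality, and pass to the limit using the lower semi-continuity of $\Lip_n$.

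First, since the dual action $\alpha_n$ is strongly continuous (Corollary (\ref{dual-action-thm})) and $f$ is continuous, the map $z\in \U^d_{k_n} \mapsto f(z)\alpha_n^z(a) \in \A_n$ is norm-continuous on a compact Hausdorff space, so $\alpha_n^f(a)$ is well-defined as a Bochner integral (equivalently, as a norm-convergent limit of Riemann sums). From Lemma (\ref{dilation-lemma}), rearranging $\bigl|\Lip_n(a)-\Lip_n(\alpha_n^z(a))\bigr|\le \dil{z}\Lip_n(a)$ gives
\begin{equation*}
\Lip_n(\alpha_n^z(a))\leq \bigl(1+\dil{z}\bigr)\Lip_n(a) \qquad (z \in \U^d_{k_n})\text,
\end{equation*}
so in particular $\alpha_n^z(a) \in \dom{\Lip_n}$ for every $z$.

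Next, fix a sequence $(\mathcal{P}_N)_{N\in\N}$ of finite Borel partitions of $\U^d_{k_n}$ with mesh going to $0$, and, for each block $P \in \mathcal{P}_N$, pick a sample point $z_P \in P$. Consider the Riemann sums
\begin{equation*}
S_N \;=\; \sum_{P \in \mathcal{P}_N} f(z_P)\,\mu_n(P)\,\alpha_n^{z_P}(a) \;\in\; \dom{\Lip_n}\text,
\end{equation*}
which converge in the norm of $\A_n$ to $\alpha_n^f(a)$ by uniform continuity of $z\mapsto f(z)\alpha_n^z(a)$. Since $f\geq 0$, the coefficients $f(z_P)\mu_n(P)$ are nonnegative, so by the triangle inequality for the seminorm $\Lip_n$ (combined with the displayed bound above):
\begin{equation*}
\Lip_n(S_N) \;\leq\; \sum_{P \in \mathcal{P}_N} f(z_P)\,\mu_n(P)\,\bigl(1+\dil{z_P}\bigr)\,\Lip_n(a)\text.
\end{equation*}
The right-hand side is itself a Riemann sum for the continuous function $z\mapsto f(z)(1+\dil{z})$, which, together with the mesh condition, converges as $N\to\infty$ to
\begin{equation*}
C \;\coloneqq\; \left(\int_{\U^d_{k_n}} f(z)\bigl(1+\dil{z}\bigr)\,d\mu_n(z)\right)\,\Lip_n(a)\text.
\end{equation*}

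Finally, by Lemma (\ref{three-lemma}), the set $\{b\in\dom{\Lip_n} : \Lip_n(b)\leq C'\}$ is closed in $\A_n$ for every $C'\geq 0$ (by rescaling the unit ball property). Since $\Lip_n(S_N) \leq C + o(1)$ and $S_N \to \alpha_n^f(a)$ in $\A_n$, we conclude that $\alpha_n^f(a) \in \dom{\Lip_n}$ and $\Lip_n(\alpha_n^f(a))\leq C$, which is the desired inequality. The only mildly delicate point is the convergence of the Riemann sums, which is standard for continuous integrands on a compact group with its Haar measure; the rest is a direct application of Lemma (\ref{dilation-lemma}) and the lower semi-continuity of $\Lip_n$.
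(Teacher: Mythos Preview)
Your proof is correct and follows essentially the same approach as the paper. The paper's argument is more compressed: it directly invokes the standard fact that a lower semi-continuous seminorm $\Lip_n$ satisfies $\Lip_n\bigl(\int f(z)\alpha_n^z(a)\,d\mu_n(z)\bigr)\leq \int |f(z)|\,\Lip_n(\alpha_n^z(a))\,d\mu_n(z)$ for a Bochner integral, then applies Lemma~(\ref{dilation-lemma}) under the integral sign. Your Riemann-sum argument is precisely the unpacking of that standard fact, so the two proofs are the same in substance.
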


\begin{proof}
  First, we note that $\alpha_n^f(a) \in \sa{\A_n}$ whenever $a\in\sa{\A_n}$. In this proof, we recall that $\Lip_n(a)$ is well-defined, if possibly infinite, for all $a\in\A_n$.

  \medskip
  
  Since $\Lip_n$ is a lower semi-continuous seminorm, we conclude
  \begin{align*}
    \Lip_n(\alpha_n^f(a))
    &\leq \int_{\U^d_{k_n}} |f(z)| \Lip_n(\alpha_n^z(a)) \, d\mu_n(z) \\
    &\leq \int_{\U^d_{k_n}} f(z)(1 + \dil{z}) \Lip_n(a) \, d\mu_n(z) \text{ by Corollary (\ref{dilation-lemma}) and $f\geq 0$,}\\
    &= \left(\int_{\U^d_{k_n}} f(z)(1 + \dil{z}) \, d\mu_n(z)\right) \Lip_n(a) \text.
  \end{align*}
  This completes our proof.
 \end{proof}

\begin{lemma}\label{first-approx-lemma}
  Assume Hypothesis (\ref{L-seminorm-hyp}). Let $n\in\Nbar$. If $(f_m)_{m\in\N}$ is a sequence of nonnegative elements in $C(\U^d_{k_n})$ such that, for all $g \in C(\U^d_{k_n})$,
  \begin{equation*}
    \lim_{m\rightarrow\infty} \int_{\U^d_{k_n}} f_m(z)g(z)\, d\mu_n(z) = g(1,\ldots,1)\text,
  \end{equation*}
  then
  \begin{equation*}
    \forall a \in \A_n \quad \lim_{m\rightarrow\infty} \norm{a-\alpha_n^{f_m}(a)}{\A_n} = 0 \text.
  \end{equation*}
\end{lemma}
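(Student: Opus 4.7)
The plan is a standard three-step approximation argument exploiting the fact that the canonical unitaries of $\A_n$ are joint eigenvectors for the dual action. First, I would observe that $\alpha_n^{f_m}$ is a bounded linear operator on $\A_n$, with operator norm at most $\norm{f_m}{L^1(\mu_n)}$, because each $\alpha_n^z$ is a $\ast$-automorphism, hence an isometry, and the integral is defined via Bochner integration (using the strong continuity of $\alpha_n$ from Corollary \ref{dual-action-thm}). Applying the hypothesis to the constant function $g \equiv 1$ yields $\int_{\U^d_{k_n}} f_m(z)\, d\mu_n(z) \to 1$ since each $f_m \geq 0$, giving $\norm{f_m}{L^1(\mu_n)} \to 1$. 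Thus the family $(\alpha_n^{f_m})_{m\in\N}$ is uniformly bounded, say by some constant $C$.

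Second, I would compute the action of $\alpha_n^{f_m}$ on the canonical generators. For each $\ell \in \Z^d_{k_n}$ (viewed inside $\widehat{G_n}$ via the embedding of Hypothesis \ref{innerification-hyp}), the generator $W_{k_n,\sigma_n}^\ell$ is an eigenvector of $\alpha_n^z$, for $z \in \U^d_{k_n}$, with eigenvalue $z^\ell$; this follows directly from the definitions of $v_n^z$ in Notation \ref{v-notation} and of the dual action. Consequently,
\begin{equation*}
\alpha_n^{f_m}\left(W_{k_n,\sigma_n}^\ell\right) = \left(\int_{\U^d_{k_n}} f_m(z)\, z^\ell\, d\mu_n(z)\right) W_{k_n,\sigma_n}^\ell \text{.}
\end{equation*}
Since $z \mapsto z^\ell$ is continuous on $\U^d_{k_n}$ and equals $1$ at the identity, the hypothesis yields that the coefficient above converges to $1$ as $m\to\infty$. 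By linearity, $\alpha_n^{f_m}(a) \to a$ in norm for every finitely supported $a \in \A_n$.

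Third, I would conclude by a standard $\varepsilon/3$-argument. Given $a \in \A_n$ and $\varepsilon > 0$, density of the finitely supported elements in $\A_n$ (via Lemma \ref{pi-k-sigma-lemma}) supplies a finitely supported $b \in \A_n$ with $\norm{a-b}{\A_n} \leq \varepsilon$. Then
\begin{equation*}
\norm{\alpha_n^{f_m}(a) - a}{\A_n} \leq \norm{\alpha_n^{f_m}(a-b)}{\A_n} + \norm{\alpha_n^{f_m}(b) - b}{\A_n} + \norm{b - a}{\A_n} \leq (C+1)\varepsilon + \norm{\alpha_n^{f_m}(b)-b}{\A_n} \text{,}
\end{equation*}
and the last term tends to $0$ as $m \to \infty$ by the second step. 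Letting $\varepsilon \to 0$ completes the proof.

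There is no genuine obstacle here: the result is a soft functional-analytic consequence of the joint-eigenvector structure of the canonical unitaries under the dual action, combined with uniform boundedness and density of the finitely supported subalgebra. The only mild care needed is in confirming that testing against the continuous characters $z \mapsto z^\ell$ on $\U^d_{k_n}$ is legal within the hypothesis, which is immediate since these are genuine continuous functions on the compact group over which we integrate.
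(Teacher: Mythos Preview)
Your proof is correct, but the paper takes a shorter and more direct route. Rather than decomposing into eigenvectors and invoking density, the paper simply observes that for each fixed $a\in\A_n$ the map $z\mapsto\norm{a-\alpha_n^z(a)}{\A_n}$ is a continuous function on $\U^d_{k_n}$ (by strong continuity of the dual action) vanishing at the identity, and then applies the hypothesis directly to this function:
\[
\norm{a-\alpha_n^{f_m}(a)}{\A_n}\;\leq\;\int_{\U^d_{k_n}} f_m(z)\,\norm{a-\alpha_n^z(a)}{\A_n}\,d\mu_n(z)\;\xrightarrow{m\to\infty}\;0.
\]
This avoids both the eigenvector computation and the $\varepsilon/3$ argument. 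Your approach, by contrast, makes the spectral structure explicit and shows exactly how $\alpha_n^{f_m}$ acts as a Fourier multiplier, which is conceptually informative even if longer. Both arguments are standard; the paper's is the slicker one-line version, while yours unpacks the mechanism.
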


\begin{proof}
  Let $n\in\Nbar$ and let $a\in\A_n$. For any $m\in\N$, we simply compute:
  \begin{align*}
    0
    &\leq \norm{a-\alpha_n^{f_m}(a)}{\A_n} \leq \int_{\U^d_{k_n}} f_m(z) \norm{a-\alpha_n^z(a)}{\A_n} \,d\mu_n(z) \\
    &\xrightarrow{m\rightarrow\infty} \norm{a-a}{\A_n} = 0 \text.
  \end{align*}
  Thus, our lemma is proven.
\end{proof}

We thus obtain a first approximation result for Lipschitz elements. To this end, we use the following well-known lemma as a source of kernels --- in the given reference, these kernels are F{\'e}jer kernels.

\begin{lemma}[{\cite{Latremoliere05}}]\label{Fejer-lemma}
  Let $q\in\N_\ast$. Let $k\in \Nbar_\ast^{q}$, and let $\mu_k$ be the Haar probability measure on $\U_k^d$. There exists a sequence $(f_m)_{m\in\N}$ in $C(\U_{k}^q)$ such that:
  \begin{enumerate}
  \item $\forall m \in \N \quad f(\U_{k}^q) \subseteq [0,\infty)$,
  \item $\forall m \in \N \quad f_m(0) \neq 0$,
  \item $\forall m \in \N \quad \int_{\U_{k}^q} f_m(z) \, d\mu_m(z) = 1$,
  \item $\forall g \in C(\U_{k_n}^q) \quad \lim_{m\rightarrow\infty} \int_{\U_{k}^d} f_m(z) g(z) \, d\mu_k(z) = g(1,\ldots,1)$,
  \item for all $m\in\N$, the function $f_m$ is a linear combination of characters of $\U_k^d$.
  \end{enumerate}
\end{lemma}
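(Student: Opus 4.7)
The plan is to build each $f_m$ as a tensor product of one-dimensional kernels along the factorization $\U_k^q = \prod_{j=1}^q \U_{k(j)}$. For each $j$ I fix a sequence $(g_{m,j})_{m\in\N}\subseteq C(\U_{k(j)})$ of nonnegative, unit-integral, character-polynomial convolution identities on $\U_{k(j)}$, and then set $f_m(z_1,\ldots,z_q) = \prod_{j=1}^q g_{m,j}(z_j)$. Because the characters of $\U_k^q$ are precisely the products of characters of the $\U_{k(j)}$, and the Haar probability measure $\mu_k$ is the product of the one-dimensional Haar measures, Fubini's theorem reduces all five listed properties of $(f_m)_{m\in\N}$ to their one-dimensional counterparts.

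For the one-dimensional pieces there are two cases. When $k(j)=\infty$, so $\U_{k(j)}=\T$, take $g_{m,j}$ to be the classical Fejer kernel
\begin{equation*}
g_{m,j}(z) \;=\; \sum_{|\ell|\leq m}\left(1-\frac{|\ell|}{m+1}\right)z^\ell \;=\; \frac{1}{m+1}\left|\sum_{\ell=0}^{m} z^\ell\right|^2,
\end{equation*}
whose nonnegativity, unit Haar integral, finite character expansion, and property of being an approximate identity for convolution on $\T$ are standard facts from harmonic analysis on the circle. When $k(j)<\infty$, take the constant sequence $g_{m,j}(z) = \sum_{\ell=0}^{k(j)-1} z^\ell$, which by orthogonality of characters on the finite cyclic group $\U_{k(j)}$ equals $k(j)$ at $z=1$ and $0$ elsewhere; this is nonnegative, integrates to $1$ against the uniform probability measure on the $k(j)$-th roots of unity, and acts \emph{exactly} as the Dirac mass at $1$ under convolution with any continuous function. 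Thus property (4) holds on finite factors trivially, not merely in the limit.

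Verifying the five properties for the product kernels is then routine. Properties (1) and (2) are immediate from nonnegativity and the fact that $g_{m,j}(1)>0$ in each case. Property (3) follows from $\int g_{m,j}\,d\mu_{k(j)} = 1$ factor-by-factor. Property (5) holds because a product of characters is a character of $\U_k^q$, so each $f_m$ is a finite linear combination of such. For property (4), one uses Stone-Weierstrass (the characters of $\U_k^q$ separate points and are closed under conjugation) to reduce to testing against a single character $\chi = \chi_1\otimes\cdots\otimes\chi_q$, where the integral $\int f_m\chi\,d\mu_k$ factorizes as $\prod_j \int g_{m,j}\chi_j\,d\mu_{k(j)}$ and converges to $\prod_j \chi_j(1) = \chi(1,\ldots,1)$ by the one-dimensional statements.

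There is no substantive obstacle here; the one mild subtlety is the mixed case in which some $k(j)$ are finite and others infinite, which is precisely why we pick a kernel that acts as an exact Dirac on finite factors rather than something which only converges asymptotically — this keeps the product construction clean without requiring a joint limit argument across factors.
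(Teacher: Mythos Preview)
Your construction is correct and is precisely the standard Fej\'er-kernel approach one would expect here: classical Fej\'er kernels on the infinite-cyclic factors, exact Dirac masses (built from the full character sum) on the finite-cyclic factors, tensored together. The paper itself does not give a self-contained proof of this lemma but simply refers to \cite[Lemmas~3.1,~3.2,~3.6]{Latremoliere05}, where this construction is carried out; your write-up therefore supplies the argument that the paper only cites.

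One small remark: your reduction for property~(4) via Stone--Weierstrass relies on the uniform $L^1$-bound $\int_{\U_k^q} f_m\,d\mu_k = 1$ to pass from characters to arbitrary $g\in C(\U_k^q)$; this is implicit in what you wrote but worth making explicit in the $3\varepsilon$ step.
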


\begin{proof}
  We refer to \cite[Lemma 3.1, Lemma 3.2, Lemma 3.6]{Latremoliere05} for this lemma.
\end{proof}

Putting these observations together, we conclude:

\begin{corollary}\label{second-approx-cor}
  Assume Hypothesis (\ref{L-seminorm-hyp}). Let $n\in\Nbar$. For all $\varepsilon > 0$, and for all $a \in \dom{\Lip_n}$, there exists $b \in \dom{\Lip_n}$, with $b$ \emph{finitely supported}, such that
  \begin{equation*}
    \Lip_n(b) \leq (1+\varepsilon) \Lip_n(a) \text{ and }\norm{a - b}{\A_n} < \varepsilon \text.
  \end{equation*}
\end{corollary}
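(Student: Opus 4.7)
The plan is to produce $b$ by smoothing $a$ against a Fejér-type kernel via the dual action of $\U^d_{k_n}$. Concretely, I would apply Lemma~\ref{Fejer-lemma} to $\U^d_{k_n}$ to extract a sequence $(f_m)_{m\in\N}$ of nonnegative trigonometric polynomials with $\int f_m\,d\mu_n=1$ and $\int f_m g\,d\mu_n\to g(1,\dots,1)$ for every $g\in C(\U^d_{k_n})$, and then take $b=\alpha_n^{f_m}(a)$ for $m$ sufficiently large. This candidate automatically lies in $\sa{\A_n}$ (since each $f_m$ is real-valued and $\alpha_n^z$ preserves self-adjointness), and lies in $\dom{\Lip_n}$ by Lemma~\ref{near-iso-lemma}.

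For the Lipschitz control, observe that the map $z\mapsto 1+\dil{z}$ is continuous on $\U^d_{k_n}$ (using the identification of $\U^d_{k_n}$ with a subgroup of $G_n$ that pads $z$ with $1$'s, so each $z_{f(j)}$ is either a coordinate of $z$ or identically $1$), and vanishes in its $\dil{\cdot}$ component at $(1,\dots,1)$. Hence
\begin{equation*}
\lim_{m\to\infty}\int_{\U^d_{k_n}} f_m(z)\bigl(1+\dil{z}\bigr)\,d\mu_n(z)=1+\dil{(1,\dots,1)}=1,
\end{equation*}
so for $m$ large enough, Lemma~\ref{near-iso-lemma} yields $\Lip_n(\alpha_n^{f_m}(a))\leq (1+\varepsilon)\Lip_n(a)$. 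Simultaneously, by Lemma~\ref{first-approx-lemma}, $\|\alpha_n^{f_m}(a)-a\|_{\A_n}\to 0$, so for $m$ large enough we also have $\|a-\alpha_n^{f_m}(a)\|_{\A_n}<\varepsilon$. Both conditions are met for all $m$ beyond some threshold.

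The only slightly subtle step is verifying that $b=\alpha_n^{f_m}(a)$ is finitely supported in $\A_n$. Since each $f_m$ is a linear combination of characters, write $f_m=\sum_{p\in F_m} c_p\chi_p$ for a finite $F_m\subseteq \Z^d_{k_n}$ with $\chi_p(z)=z^p$. For any generator $W_{k_n,\sigma_n}^q$ with $q\in\Z^d_{k_n}$, the previously established formula $\alpha_n^z(W_{k_n,\sigma_n}^q)=z^q W_{k_n,\sigma_n}^q$ combined with the orthogonality relations
\begin{equation*}
\int_{\U^d_{k_n}} z^{p+q}\,d\mu_n(z)=\prod_{j=1}^d \mathbf{1}_{p_j+q_j\equiv 0\pmod{k_n(j)}}
\end{equation*}
shows $\alpha_n^{\chi_p}(W_{k_n,\sigma_n}^q)$ equals $W_{k_n,\sigma_n}^q$ when $q\equiv -p\pmod{k_n}$ and vanishes otherwise. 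Hence $\alpha_n^{f_m}$ is a Fourier multiplier that projects any element of $\A_n$ onto the finite set of modes $-F_m\subseteq \Z^d_{k_n}$, so $b$ is finitely supported.

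This is really a routine assembly of existing lemmas rather than a step with a genuine obstacle; the main thing to be careful with is the identification between $\U^d_{k_n}$ and its image in $G_n$ so that the continuity of $\dil{\cdot}$ and the Fourier decomposition on $\A_n$ (as opposed to on the larger $\B_n$) are both invoked correctly. Setting $b=\alpha_n^{f_m}(a)$ for $m$ chosen as above then simultaneously delivers $b\in\dom{\Lip_n}$, $\Lip_n(b)\leq (1+\varepsilon)\Lip_n(a)$, $\|a-b\|_{\A_n}<\varepsilon$, and $b$ finitely supported, which is exactly the statement of the corollary.
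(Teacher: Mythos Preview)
Your proposal is correct and follows essentially the same route as the paper's proof: both take $b=\alpha_n^{f_m}(a)$ for a Fej\'er-type kernel $f_m$ from Lemma~\ref{Fejer-lemma}, use Lemma~\ref{first-approx-lemma} for the norm estimate, Lemma~\ref{near-iso-lemma} together with $\int f_m(1+\dil{\cdot})\,d\mu_n\to 1$ for the Lipschitz bound, and finite support of the Fourier transform of $f_m$ for finite support of $b$. The only cosmetic difference is that you verify the finite-support claim by a direct Fourier-multiplier computation, whereas the paper cites an external lemma for the same fact.
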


\begin{proof}
  Let $n\in\Nbar$ and let $(f_m)_{m\in\N}$ be a sequence in $C(\U_{k_n}^d)$ given by Lemma (\ref{Fejer-lemma}) (for $q=d$ and $k=k_n$).
  
  Let $a \in \dom{\Lip_n}$. Let $\varepsilon > 0$. By Lemma (\ref{first-approx-lemma}), there exists $M\in\N$ such that, if $m\geq M$, then $\norm{a - \alpha_n^{f_m}(a)}{\A_n} < \varepsilon$. There exists $M'\in \N$ such that, if $m\geq M'$, then $\left| 1  - \int_{\U^d_{k_n}} f(z)(1 + \dil{z})\,d\mu_n(z) \right| < \varepsilon$ --- since $1+\dil{1,\ldots,1} = 1$. Let $f = f_{\max\{M,M'\}}$,and let $b = \alpha_n^{f}(a)$.

  By Lemma (\ref{near-iso-lemma}), $\Lip_n(b) \leq (1+\varepsilon)\Lip_n(a)$.  By \cite[Lemma 3.2]{Latremoliere05}, if $S$ is the support of the Fourier transform of $f$, the range of $\alpha_n^{f}$ is the image by $\pi_n$ of
  \begin{equation*}
    \left\{ f \in \ell^1(\Z^d_{k_n}) : f\left( \Z^d_{k_n}\setminus S \right) = \{ 0 \} \right\} \text.
  \end{equation*}
  Since $S$ is finite by assumption on $f$, the range of $\alpha_n^f$ consists of finitely supported elements in $\A_n$. This concludes our lemma.
\end{proof}

Theorem (\ref{spectral-triple-thm}) gives us, for all $n\in\Nbar$, an explicit formula for $\Lip_n(a)$ whenever $a\in\dom{\Lip_n}$ is finitely supported. We now use this expression to obtain a natural bound on $\Lip_n(a)$ in terms of the action of $\U^d_{k_n}$ on $\A_n$.

\begin{lemma}\label{derivations-Ln-lemma}
  Assume Hypothesis (\ref{L-seminorm-hyp}). Let $n\in\Nbar$. If $a\in\dom{\Lip_n}$ is \emph{finitely supported}, then,
  \begin{equation*}
    \max\left\{ \norm{\partial_{n}^j(a)}{\A_n} : j \in \{1,\ldots,d\}, k_n(j) = \infty \right\} \leq 2 \Lip_n(a)
  \end{equation*}
  and 
  \begin{equation*}
    \max\left\{ \norm{\frac{k_n(j)}{2\pi} (\alpha_n^{z_{n,j}}(a) - a)}{\A_n} : j \in \{1,\ldots,d\}, k_n(j) < \infty \right\} \leq 2 \Lip_n(a) \text.
  \end{equation*}
\end{lemma}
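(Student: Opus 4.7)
The plan is to deduce both estimates directly from the commutator bounds of Lemma~(\ref{norm-comparison-lemma}), by exploiting two structural features of the dual action, one in each case. I will use throughout that $\pi_n$ is a faithful *-representation of $\B_n$ (Lemma~(\ref{pi-k-sigma-lemma})), so the operator norm on $\Hilbert_n$ of left multiplication by $b \in \B_n$ equals $\norm{b}{\B_n}$, and that $\norm{b}{\A_n} = \norm{b}{\B_n}$ whenever $b \in \A_n$.

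For the first inequality, fix $j \in \{1,\ldots,d\}$ with $k_n(j) = \infty$. The ``otherwise'' clause of the commutation relation in Hypothesis~(\ref{innerification-hyp}) shows that $U_{n,f(j)}$ commutes with every generator $U_{n,s}$, $s \in \{1,\ldots,d'\}$, hence is central in $\B_n$; in particular it commutes with any finitely supported $a \in \A_n$. Since $f$ has no fixed point, $U_{n,f(j)}$ also commutes with $\partial_{n,j}$. Using Lemma~(\ref{partial-commutator-lemma}) to identify $[\partial_{n,j}, a]$ with the element $\partial_n^j(a) \in \A_n$ (valid because $a$ is finitely supported), we obtain the operator identity
\begin{equation*}
  [U_{n,f(j)}\partial_{n,j}, a] = U_{n,f(j)}\partial_n^j(a)
\end{equation*}
on the dense domain $\dom{\partial_{n,j}}$. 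The right-hand side is left multiplication by $U_{n,f(j)}\partial_n^j(a) \in \B_n$, whose operator norm on $\Hilbert_n$ equals $\norm{\partial_n^j(a)}{\A_n}$ (since $U_{n,f(j)}$ is unitary). Lemma~(\ref{norm-comparison-lemma}) bounds this operator norm by $2\Lip_n(a)$, proving the first inequality.

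For the second inequality, fix $j \in \{1,\ldots,d\}$ with $k_n(j) < \infty$. Lemma~(\ref{znj-notation}) gives $\alpha_n^{z_{n,j}}(a) = U_{n,f(j)}^{\varepsilon}\, a \, U_{n,f(j)}^{-\varepsilon}$ with $\varepsilon = \mathrm{fsgn}(j) \in \{-1,+1\}$, whence
\begin{equation*}
  \alpha_n^{z_{n,j}}(a) - a = [U_{n,f(j)}^{\varepsilon}, a]\, U_{n,f(j)}^{-\varepsilon}.
\end{equation*}
Since $a$ is self-adjoint, the identity $[U_{n,f(j)}^\ast, a] = -[U_{n,f(j)}, a]^\ast$ shows that in either sign of $\varepsilon$ one has $\norm{\alpha_n^{z_{n,j}}(a) - a}{\A_n} = \norm{[U_{n,f(j)}, a]}{\B_n}$. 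Writing $U_{n,f(j)} = X_{n,j} + i\,\mathrm{fsgn}(j)\, Y_{n,j}$ and invoking the bounds $\frac{k_n(j)}{2\pi}\opnorm{[X_{n,j}, a]}{}{\Hilbert_n} \leq \Lip_n(a)$ and $\frac{k_n(j)}{2\pi}\opnorm{[Y_{n,j}, a]}{}{\Hilbert_n} \leq \Lip_n(a)$ extracted from the proof of Lemma~(\ref{norm-comparison-lemma}), the triangle inequality yields $\frac{k_n(j)}{2\pi}\norm{[U_{n,f(j)}, a]}{\B_n} \leq 2\Lip_n(a)$, which is the desired estimate. No genuine obstacle is anticipated: the whole argument is a systematic translation of operator-norm estimates on $\Hilbert_n$ into $C^\ast$-norm estimates in $\A_n$, made legitimate by the faithfulness of $\pi_n$ together with the fact that, for finitely supported $a$, the relevant commutators land in $\A_n$ or in the ambient $\B_n$.
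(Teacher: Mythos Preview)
Your proof is correct and follows essentially the same route as the paper's: both parts reduce immediately to the commutator bounds of Lemma~(\ref{norm-comparison-lemma}), using centrality of $U_{n,f(j)}$ (Remark~(\ref{innerification-rmk})) in the case $k_n(j)=\infty$ and the conjugation formula of Lemma~(\ref{znj-notation}) in the case $k_n(j)<\infty$. The only cosmetic difference is that in the finite case the paper cites the final bound $\frac{k_n(j)}{2\pi}\norm{[U_{n,f(j)},a]}{\B_n}\leq 2\Lip_n(a)$ directly, whereas you rederive it from the separate $X_{n,j}$ and $Y_{n,j}$ estimates; your use of $[U^\ast,a]=-[U,a]^\ast$ to unify the two sign cases is a minor streamlining of the paper's case split.
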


\begin{proof}
  If $a \in \sa{\A_n}$ is finitely supported, then $[\partial_{n,j},a] = \partial_{n}^j(a)$ by Lemma (\ref{partial-commutator-lemma}). We then conclude the following, for all $j\in\{1,\ldots,d\}$. If $k_n(j) = \infty$, then
  \begin{align*}
    \norm{\partial_{n}^j(a)}{\A_n}
    &= \opnorm{[\partial_{n,j},a]}{}{\Hilbert_n} \\
    &= \norm{U_{n,f(j)} [\partial_{n,j},a]}{\B_n} \text{ as $U_{n,f(j)}$ is unitary,} \\
    &= \norm{[U_{n,f(j)}\partial_{n,j},a]}{\B_n} \text{ as $U_{n,f(j)}$ commutes with $a$ by Hyp. (\ref{innerification-hyp}),} \\
  &\leq  2 \Lip_n(a) \text{ using Lemma (\ref{norm-comparison-lemma}).}
  \end{align*}
  
  Similarly, if $k_n(j) < \infty$, then, precisely thanks to Hypothesis (\ref{innerification-hyp}), if $j < f(j)$, then
  \begin{align*}
    \norm{\frac{k_n(j)}{2 \pi} (\alpha_n^{z_{n,j}}(a)-a)}{\A_n}
    &\leq \norm{\frac{k_n(j)}{2 \pi} (U_{n,f(j)} a U_{n,f(j)}^\ast - a)}{\B_n} \\
    &= \norm{\frac{k_n(j)}{2 \pi} [U_{n,f(j)},a] U_{n,f(j)}^\ast}{\B_n} \\
    &= \norm{\frac{k_n(j)}{2 \pi} [U_{n,f(j)},a]}{\B_n} \\
    &\leq 2 \Lip_n(a) \text{ using Lemma (\ref{norm-comparison-lemma}).}
  \end{align*}
  If, instead,  $k_n(j) < \infty$, and if $f(j) < j$, then, similarly,
  \begin{align*}
    \norm{\frac{k_n(j)}{2 \pi} (\alpha_n^{z_{n,j}}(a)-a)}{\A_n}
    &\leq \norm{\frac{k_n(j)}{2 \pi} (U_{n,f(j)}^\ast a U_{n,f(j)} - a)}{\B_n} \\
    &= \norm{\frac{k_n(j)}{2 \pi} [U_{n,f(j)}^\ast,a] U_{n,f(j)}}{\B_n} \\
    &\leq 2 \Lip_n(a) \text.
  \end{align*}

  This completes our proof.
\end{proof}

We now prove some basic estimates, relating the geometry of the torus and the quantum torus. To this end, we define a translation invariant Riemannian metric on $\T^d$ by endowing the Lie algebra $\alg{u}^d_{\infty^d} = \R^d$ of $\T^d$ with the usual inner product $\inner{\cdot}{\cdot}{}$ of $\R^d$ (whose associated norm we denote by $\norm{\cdot}{\R^d}$). The distance from the unit of $\T^d$ to any point for the path metric induced by this Riemannian metric is a continuous length function on $\T^d$, given by the expression:
\begin{equation*}
  \forall z \in \T^d \quad \length{z} = \min\left\{ \norm{X}{\R^d} : z = \exp_{\T^d}(X) \right\} \text.
\end{equation*}
While $\U^d_k$ has a trivial Lie group structure when $k\in\N_\ast^d$, it does inherit from the metric structure on $\T^d$ a length function by restricting $\length{\cdot}$ to it.

\begin{remark}
  Assume Hypothesis (\ref{Dirac-hyp}). For all $n\in\Nbar$, and for all $j\in\{1,\ldots,d\}$, the following equality holds:
  \begin{equation*}
    \length{z_{n,j}} = \begin{cases}
      \frac{2\pi}{k_n(j)} \text{ if $k_n(j) <\infty$, }\\
      0 \text{ if $k_n(j) = \infty$.}
    \end{cases}
  \end{equation*}
\end{remark}

We now compute an estimate for $\norm{a-\alpha_{k,\sigma}^z(a)}{\A_n}$, valid for any $(k,\sigma)\in\Xi^d$, for all $a\in\qt{k}{\sigma}$, and for all $z\in \U^d_k$. We obtain this estimate in three steps. 

\begin{lemma}\label{mvt-discrete-lemma}
  Let $d\in\N_\ast$. Let $(k,\sigma) \in \Xi^d$ with $k=(k(1),\ldots,k(d))$. If $j\in\{1,\ldots,d\}$ such that $k(j) < \infty$, if
  \begin{equation*}
    z_{k,j} = \left(\underbracket[1pt]{1,\ldots,1,\exp\left(\frac{2 i \pi}{k(j)}\right)}_{j \text{ elements}},1,\ldots,1\right) \in \U^d_k \text,
  \end{equation*}
  then for all $a\in\qt{k}{\sigma}$, for all $n\in\Z$, and setting $z=z_{k,j}^n$, we compute:
  \begin{equation*}
    \norm{a - \alpha_{k,\sigma}^{z}(a)}{\qt{k}{\sigma}} \leq \length{z} \norm{\frac{\alpha_{k,\sigma}^{z_{k,j}}(a)-a}{\frac{2\pi}{k(j)}}}{\qt{k}{\sigma}}
  \end{equation*}
\end{lemma}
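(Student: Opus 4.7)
The plan is to prove this estimate as a discrete analogue of the mean value theorem, by telescoping $a - \alpha_{k,\sigma}^{z_{k,j}^n}(a)$ into $|n|$ copies of the ``elementary'' displacement $a - \alpha_{k,\sigma}^{z_{k,j}}(a)$.

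First, I would reduce to the case where $n$ is the minimal representative of its residue class modulo $k(j)$. Since $z_{k,j}^{k(j)} = (1,\ldots,1)$ in $\U^d_k$ by construction, we have $z_{k,j}^n = z_{k,j}^{n'}$ for any $n' \equiv n \pmod{k(j)}$, so we may choose $n'$ with $|n'| \leq k(j)/2$. Under the exponential map $\exp_{\T^d} : \R^d \to \T^d$, the preimages of $z_{k,j}^{n'}$ lie in $\frac{2\pi n'}{k(j)} e_j + 2\pi \Z^d$, and the choice $|n'|\leq k(j)/2$ guarantees that $\frac{2\pi n'}{k(j)} e_j$ is of minimal norm, so
\begin{equation*}
  \length{z} = \length{z_{k,j}^{n'}} = \frac{2\pi |n'|}{k(j)}.
\end{equation*}

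Next, I would use that each $\alpha_{k,\sigma}^{w}$ is an isometric $\ast$-automorphism of $\qt{k}{\sigma}$, and reduce to the case $n' \geq 0$ by the identity $a - \alpha_{k,\sigma}^{z_{k,j}^{-n'}}(a) = -\alpha_{k,\sigma}^{z_{k,j}^{-n'}}\bigl(a - \alpha_{k,\sigma}^{z_{k,j}^{n'}}(a)\bigr)$, which preserves norm. For $n' \geq 0$, telescope:
\begin{equation*}
  a - \alpha_{k,\sigma}^{z_{k,j}^{n'}}(a) = \sum_{m=0}^{n'-1} \left( \alpha_{k,\sigma}^{z_{k,j}^m}(a) - \alpha_{k,\sigma}^{z_{k,j}^{m+1}}(a) \right) = \sum_{m=0}^{n'-1} \alpha_{k,\sigma}^{z_{k,j}^m}\left( a - \alpha_{k,\sigma}^{z_{k,j}}(a) \right).
\end{equation*}

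Finally, the triangle inequality, together with $\norm{\alpha_{k,\sigma}^{z_{k,j}^m}(x)}{\qt{k}{\sigma}} = \norm{x}{\qt{k}{\sigma}}$, gives
\begin{equation*}
  \norm{a - \alpha_{k,\sigma}^{z}(a)}{\qt{k}{\sigma}} \leq n' \norm{a - \alpha_{k,\sigma}^{z_{k,j}}(a)}{\qt{k}{\sigma}} = \frac{2\pi n'}{k(j)} \cdot \norm{\frac{\alpha_{k,\sigma}^{z_{k,j}}(a) - a}{\frac{2\pi}{k(j)}}}{\qt{k}{\sigma}},
\end{equation*}
and substituting $\length{z} = \frac{2\pi n'}{k(j)}$ yields the stated inequality. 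There is no real obstacle here; the only subtle point is keeping track of the identification of $n$ with its minimal representative so that the factor $|n'|$ correctly matches $\length{z}$ rather than $|n|\cdot\frac{2\pi}{k(j)}$, which could overcount when $|n| > k(j)/2$.
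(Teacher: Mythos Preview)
Your proposal is correct and follows essentially the same approach as the paper: a telescoping sum combined with the isometry of $\alpha_{k,\sigma}^{z_{k,j}}$. The only cosmetic difference is ordering---you reduce to the minimal representative $n'$ at the outset, whereas the paper proves the bound $|n|\cdot\frac{2\pi}{k(j)}\cdot K$ for arbitrary $n\in\Z$ and then takes the minimum over all $n$ with $z_{k,j}^n=z$ to recover $\length{z}$.
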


\begin{proof}
  Let $(k,\sigma) \in \Xi^d$ with $k\neq\infty^d$. Let $j \in \{1,\ldots,d\}$ such that $k(j) < \infty$, and let $a\in \qt{k}{\sigma}$.

  Let $z = z_{k,j}^n$ for $n \in \N$. Our lemma's conclusion is trivial when $n = 0$, so we assume that $z\neq 1$.

  Let $\beta = \alpha_{k,\sigma}^{z_{k,j}}$ and note that $\beta$ is an isometry of $\qt{k}{\sigma}$. Now:
\begin{align*}
  \norm{a - \alpha_{k,\sigma}^z(a)}{\qt{k}{\sigma}}
  &\leq \sum_{w=0}^{n-1} \norm{\beta^{w}(a)-\beta^{w+1}(a)}{\qt{k}{\sigma}} \\
  &= \sum_{w=0}^{n-1} \norm{\beta^{w}\left(a-\beta(a)\right)}{\qt{k}{\sigma}} \\
  &= \sum_{w=0}^{n-1} \norm{a-\beta(a)}{\qt{k}{\sigma}} \\
  &= n \length{z_{k,j}} \norm{\frac{\beta(a)-a}{\length{z_{k,j}}}}{\qt{k}{\sigma}}\text{.}
\end{align*}

If $n \in \Z$, and $z = z_{k,j}^n$, then
\begin{align*}
  \norm{a-\alpha_{k,\sigma}^z(a)}{\qt{k}{\sigma}}
  &= \norm{\alpha_{k,\sigma}^{\overline{z}}(a)-a}{\qt{k}{\sigma}} \\
  &\leq (-n)\length{z_{k,j}}\norm{\frac{\beta(a)-a}{\length{z_{k,j}}}}{\qt{k}{\sigma}}\text{.}
\end{align*}

Therefore,
\begin{align*}
  \norm{a-\alpha_{k,\sigma}^z(a)}{\qt{k}{\sigma}}\leq |n|\length{z_{k,j}}\norm{\frac{\beta(a)-a}{\length{z_{k,j}}}}{\qt{k}{\sigma}}\text{,}
\end{align*}
and therefore, since $\length{z} = \min\left\{ \frac{2\pi |n|}{k(j)} : z_{k,j}^n = z \right\}$, we conclude that our lemma holds.
\end{proof}

We follow \cite[Theorem 3.1]{Rieffel98a} for our next lemma.

\begin{lemma}\label{mvt-continuous-lemma}
  Let $d\in\N_\ast$, $(k,\sigma) \in \Xi^d$, with $k=(k(1),\ldots,k(d))$. If $j \in \{1,\ldots,d\}$, if $k(j) = \infty$, and if
  \begin{equation*}
    z = \exp_{\U^d_k}\left( t e_j \right)
  \end{equation*}
  for some $t \in \R$, then for all finitely supported element $a \in\qt{k}{\sigma}$:
  \begin{equation*}
    \norm{ a - \alpha_{k,\sigma}^z(a) }{\qt{k}{\sigma}} \leq \length{z} \norm{\partial_{k}^{e_j}(a)}{\qt{k}{\sigma}} \text{.}
  \end{equation*}
\end{lemma}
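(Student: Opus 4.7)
The plan is to mimic the classical mean-value/fundamental-theorem-of-calculus argument in the Banach space $\qt{k}{\sigma}$, exploiting that $a$ is finitely supported so the orbit map $t\in\R\mapsto \alpha_{k,\sigma}^{\exp_{\U^d_k}(te_j)}(a)$ is actually a differentiable curve with a manageable derivative.

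First I would reduce to working with a specific $t\in\R$ realizing $\length{z}$: since $z=\exp_{\U^d_k}(te_j)$ only fixes $z$, while $\length{z}=\min\{|t'|: \exp_{\U^d_k}(t' e_j)=z\}$ (because only the $j$-th coordinate of the tangent vector is nonzero), I may replace $t$ by any $t'\equiv t\pmod{2\pi}$ achieving the minimum. Thus it suffices to prove the bound with $|t|$ in place of $\length{z}$ on the right-hand side, for this chosen $t$.

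Next I would establish the key differentiability statement: for finitely supported $a$, the curve $\phi(s)=\alpha_{k,\sigma}^{\exp_{\U^d_k}(se_j)}(a)$ is $C^1$ from $\R$ to $\qt{k}{\sigma}$, with
\begin{equation*}
\phi'(s)=\alpha_{k,\sigma}^{\exp_{\U^d_k}(se_j)}\!\bigl(\partial_{k}^{e_j}(a)\bigr).
\end{equation*}
Indeed, writing $a=\pi_{k,\sigma}(f)$ with $f$ finitely supported, Lemma~\ref{partial-commutator-lemma} gives $\partial_k^{e_j}(a)=\pi_{k,\sigma}(\partial_k^{e_j}f)$, and a direct pointwise computation on $f$ (finite sum, so differentiation under the sum is automatic) together with the fact that $\alpha_{k,\sigma}^{\exp_{\U^d_k}(se_j)}$ commutes with $\partial_k^{e_j}$ (the dual action translates Fourier coefficients by a character, which commutes with multiplication by $\inner{e_j}{\cdot}{k}$) yields the formula. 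The norm of $\phi'(s)$ is then
\begin{equation*}
\norm{\phi'(s)}{\qt{k}{\sigma}}=\norm{\alpha_{k,\sigma}^{\exp_{\U^d_k}(se_j)}(\partial_k^{e_j}(a))}{\qt{k}{\sigma}}=\norm{\partial_k^{e_j}(a)}{\qt{k}{\sigma}},
\end{equation*}
since $\alpha_{k,\sigma}^z$ is isometric (being a $\ast$-automorphism).

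Finally, applying the Bochner fundamental theorem of calculus, I obtain
\begin{equation*}
\alpha_{k,\sigma}^z(a)-a=\phi(t)-\phi(0)=\int_0^t \phi'(s)\,ds,
\end{equation*}
so that
\begin{equation*}
\norm{a-\alpha_{k,\sigma}^z(a)}{\qt{k}{\sigma}}\leq |t|\cdot\sup_{s\in[0,t]}\norm{\phi'(s)}{\qt{k}{\sigma}}=|t|\cdot\norm{\partial_k^{e_j}(a)}{\qt{k}{\sigma}},
\end{equation*}
which, given the choice $|t|=\length{z}$ from the first step, is the desired inequality. The only delicate point is the justification of strong $C^1$-differentiability of $\phi$, but this is immediate for finitely supported $a$: $\phi$ becomes a finite linear combination of scalar exponentials times fixed operators, hence entire, so there is no real obstacle beyond careful bookkeeping.
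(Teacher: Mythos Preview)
Your proposal is correct and follows essentially the same approach as the paper: both differentiate the orbit map $s\mapsto \alpha_{k,\sigma}^{\exp_{\U^d_k}(se_j)}(a)$, obtain the derivative $\alpha_{k,\sigma}^{\exp_{\U^d_k}(se_j)}(\partial_k^{e_j}(a))$, apply the fundamental theorem of calculus, and then use that $\length{z}=\min\{|t|:\exp_{\U^d_k}(te_j)=z\}$. The only difference is cosmetic ordering---you select the minimizing $t$ at the outset while the paper does so at the end.
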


\begin{proof}
  Let $a$ be finitely supported. Let $z = \exp_{\U^d_k}(te_j)$, for some $t\in\R$. Now, let $f : s \in \R\mapsto \alpha_{k,\sigma}^{\exp_{\U_k^d}(s e_j)}(a)$. Note that, for all $s,r \in\R$, with $r\neq 0$, we have:
  \begin{align*}
    \frac{1}{r}\left(\alpha_{k,\sigma}^{\exp((s+r)e_j)}(a) - \alpha_{k,\sigma}^{\exp(s e_j)}(a)\right)
    &= \alpha_{k,\sigma}^{\exp_{\U_k^d}(s e_j)}\left(\frac{1}{r}\left(\alpha_{k,\sigma}^{\exp_{\U^d_k}(r e_j)}(a) - a \right)\right) \\
    &\xrightarrow{r\rightarrow 0} \alpha_{k,\sigma}^{\exp_{\U^d_k}(s e_j)}(\partial_{k}^{e_j}(a)) \text.
  \end{align*}

  Therefore,
  \begin{align*}
    \norm{ a - \alpha_{k,\sigma}^z(a) }{\qt{k}{\sigma}}
    &= \norm{\int_0^t \alpha_{k,\sigma}^{\exp_{\U_k^d}(s e_j)}\left(\partial^{e_j}_{k,\sigma}(a)\right) \, ds}{\qt{k}{\sigma}}\\
    &\leq \int_0^t \norm{\alpha_{k,\sigma}^{\exp_{\U^d_k}(s e_j)}\left(\partial^{e_j}_{k,\sigma}(a)\right)}{\qt{k}{\sigma}} \, ds \\
    &\leq |t| \norm{\partial_{k}^{e_j}(a)}{\qt{k}{\sigma}} \text{.}
  \end{align*}

  Now, $\min\{|t| :\exp_{\U^d_k}(t e_j) = z\} = \length{z}$ and thus our lemma is proven.
\end{proof}

While Lemma (\ref{mvt-continuous-lemma}) could be improved to provide a similar estimate for any $z$ in the connected component of $\U^d_k$, the estimate in Lemma (\ref{mvt-discrete-lemma})  does not work quite as nicely. Instead, we obtain the following lemma, which will suffice.

\begin{notation}
  Let $d\in\N_\ast$. For all $(z_1,\ldots,z_d) \in \T^d$, we define
  \begin{equation*}
    \mathsf{slen}(z_1,\ldots,z_d) = \sum_{j=1}^d \length{\underbracket[1pt]{1,\ldots,1,z_j}_{j \text{ elements}},1,\ldots,1}\text.
  \end{equation*}

\end{notation}

\begin{lemma}\label{mvt-lemma}
  We use the notations of Lemmas (\ref{mvt-discrete-lemma}) and (\ref{mvt-continuous-lemma}). Let $d\in\N_\ast$. Let $(k,\sigma) \in \Xi^d$, where $k=(k(1),\ldots,k(d))$. Let $\Upsilon = \{ j \in \{1,\ldots,d\} : k(j) = \infty\}$. 

  For all finitely supported $a \in \qt{k}{\sigma}$, and for all $z \in \U^d_k$, we estimate: 
  \begin{multline*}
    \norm{a - \alpha_{k,\sigma}^z(a)}{\qt{k}{\sigma}} \leq \\  \mathsf{slen}(z) \max\left\{ \max_{j\in\Upsilon} \norm{\partial_{k}^{e_j}(a)}{\qt{k}{\sigma}},\max_{j\in\{1,\ldots,d\}\setminus\Upsilon} \norm{\frac{\alpha_{k,\sigma}^{z_{k,j}}(a)-a}{\frac{2\pi}{k(j)} }}{\qt{k}{\sigma}} \right\} \text{.}
  \end{multline*}
\end{lemma}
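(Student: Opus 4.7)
The idea is to decompose an arbitrary $z\in\U^d_k$ as a product of elements that are nontrivial in only one coordinate, and then telescope, reducing matters to the single--coordinate estimates already established in Lemmas (\ref{mvt-discrete-lemma}) and (\ref{mvt-continuous-lemma}).

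Concretely, writing $z = (z_1,\ldots,z_d)\in\U^d_k$, I would set, for each $j\in\{1,\ldots,d\}$,
\begin{equation*}
w_j = (\underbrace{1,\ldots,1}_{j-1},z_j,1,\ldots,1)\in\U^d_k,
\end{equation*}
so that $z = w_1 w_2 \cdots w_d$ componentwise, and hence $\alpha_{k,\sigma}^z = \alpha_{k,\sigma}^{w_1}\circ\cdots\circ\alpha_{k,\sigma}^{w_d}$ since the dual action is a group homomorphism. A standard telescoping gives
\begin{equation*}
a - \alpha_{k,\sigma}^z(a) = \sum_{j=1}^d \alpha_{k,\sigma}^{w_1\cdots w_{j-1}}\!\left( a - \alpha_{k,\sigma}^{w_j}(a)\right),
\end{equation*}
and, since each $\alpha_{k,\sigma}^{w_1\cdots w_{j-1}}$ is an isometry of $\qt{k}{\sigma}$, the triangle inequality yields
\begin{equation*}
\norm{a-\alpha_{k,\sigma}^z(a)}{\qt{k}{\sigma}} \leq \sum_{j=1}^d \norm{a-\alpha_{k,\sigma}^{w_j}(a)}{\qt{k}{\sigma}}.
\end{equation*}

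Next I would bound each summand using the appropriate previous lemma. If $j\in\Upsilon$, then $k(j)=\infty$, so $w_j = \exp_{\U^d_k}(t\,e_j)$ for some $t\in\R$ with $|t|=\length{w_j}$; since $a$ is finitely supported, Lemma (\ref{mvt-continuous-lemma}) gives
\begin{equation*}
\norm{a-\alpha_{k,\sigma}^{w_j}(a)}{\qt{k}{\sigma}} \leq \length{w_j}\,\norm{\partial_{k}^{e_j}(a)}{\qt{k}{\sigma}}.
\end{equation*}
If instead $j\notin\Upsilon$, then $k(j)<\infty$ and $z_j$ is a $k(j)$-th root of unity, so $w_j = z_{k,j}^n$ for some $n\in\Z$, and Lemma (\ref{mvt-discrete-lemma}) gives
\begin{equation*}
\norm{a-\alpha_{k,\sigma}^{w_j}(a)}{\qt{k}{\sigma}} \leq \length{w_j}\,\norm{\tfrac{\alpha_{k,\sigma}^{z_{k,j}}(a)-a}{2\pi/k(j)}}{\qt{k}{\sigma}}.
\end{equation*}

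Substituting back and bounding each derivative/finite-difference factor by the maximum over $j$, I obtain
\begin{equation*}
\norm{a-\alpha_{k,\sigma}^z(a)}{\qt{k}{\sigma}} \leq \left(\sum_{j=1}^d \length{w_j}\right) \cdot \max\Bigl\{\max_{j\in\Upsilon}\norm{\partial_{k}^{e_j}(a)}{\qt{k}{\sigma}},\ \max_{j\notin\Upsilon}\norm{\tfrac{\alpha_{k,\sigma}^{z_{k,j}}(a)-a}{2\pi/k(j)}}{\qt{k}{\sigma}}\Bigr\},
\end{equation*}
which is the desired inequality once one observes that $\sum_{j=1}^d \length{w_j} = \mathsf{slen}(z)$ by the definition of $\mathsf{slen}$. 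There is no serious obstacle here; the only minor care is to confirm that the two previous lemmas are applied to the same element $a$ (the telescoping already absorbs the intermediate automorphisms as isometries), so finite support of $a$ suffices for the continuous-direction estimate, which is exactly the hypothesis of the statement.
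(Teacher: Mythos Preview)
Your proposal is correct and follows essentially the same approach as the paper: decompose $z$ coordinatewise, telescope, use that each $\alpha_{k,\sigma}^{w_1\cdots w_{j-1}}$ is an isometry to reduce to single-coordinate displacements, and then apply Lemmas (\ref{mvt-discrete-lemma}) and (\ref{mvt-continuous-lemma}). The paper writes the telescoping out explicitly as a chain of differences rather than via your summation identity, but the argument is the same.
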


\begin{proof}
Let $K = \max\left\{ \max_{j\in\Upsilon} \norm{\partial_{k}^{e_j}(a)}{\qt{k}{\sigma}}, \max_{\substack{j\in\{1,\ldots,d\}\\ j\notin \Upsilon}} \norm{\frac{\alpha_{k,\sigma}^{z_{k,j}}(a)-a}{\length{z_{k,j}}}}{\qt{k}{\sigma}} \right\}$. We conclude, using both Lemma (\ref{mvt-discrete-lemma}) and (\ref{mvt-continuous-lemma}):
\begin{align*}
  \norm{ a - \alpha_{k,\sigma}^z(a) }{\qt{k}{\sigma}}
  &\leq\norm{a-\alpha_{k,\sigma}^{z_1,1,\ldots,1}(a)}{\qt{k}{\sigma}} + \norm{\alpha_{k,\sigma}^{z_1,1,\ldots,1}(a)-\alpha_{k,\sigma}^{z_1,z_2,\ldots,1}(a)}{\qt{k}{\sigma}} \\
  &\quad+ \cdots + \norm{\alpha_{k,\sigma}^{z_1,z_2,\ldots,z_{d-1},1}(a)-\alpha_{k,\sigma}^{z_1,\ldots,z_d}(a)}{\qt{k}{\sigma}} \\
  &=\norm{a-\alpha_{k,\sigma}^{z_1,1,\ldots,1}(a)}{\qt{k}{\sigma}} + \norm{\alpha_{k,\sigma}^{z_1,1,\ldots,1}\left(a-\alpha_{k,\sigma}^{1,z_2,\ldots,1}(a)\right)}{\qt{k}{\sigma}} \\
  &\quad + \cdots + \norm{\alpha_{k,\sigma}^{z_1,z_2,\ldots,z_{d-1},1}\left( a -\alpha_{k,\sigma}^{1,\ldots,1,z_d}(a)\right)}{\qt{k}{\sigma}} \\
  &=\norm{a-\alpha_{k,\sigma}^{z_1,1,\ldots,1}(a)}{\qt{k}{\sigma}} + \norm{a-\alpha_{k,\sigma}^{1,z_2,\ldots,1}(a)}{\qt{k}{\sigma}} \\
  &\quad + \cdots + \norm{a -\alpha_{k,\sigma}^{1,\ldots,1,z_d}(a)}{\qt{k}{\sigma}} \\
  &\leq \sum_{j=1}^d \length{\underbracket[1pt]{1,\ldots,1,z_j}_{j \text{ elements}},1,\ldots,1} K  \\
  &\leq \mathsf{slen}(z) K \text{,}
\end{align*}
as desired.
\end{proof}

We thus arrive at our conclusion for this section.

\begin{corollary}\label{mvt-cor}
  Assume  Hypothesis (\ref{L-seminorm-hyp}). For all $n\in\Nbar$, if $a\in\dom{\Lip_n}$, then
  \begin{equation}\label{mvt-eq}
    \forall z \in \U^d_{k_n} \quad \norm{a-\alpha_n^z(a)}{\A_n} \leq 2 \; \mathsf{slen}(z) \Lip_n(a) \text.
  \end{equation}
\end{corollary}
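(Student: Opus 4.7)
The plan is to reduce the claim to the finitely supported case, where we can combine the mean value-type estimate of Lemma \ref{mvt-lemma} with the bounds on the partial derivatives and discrete differences coming from Lemma \ref{derivations-Ln-lemma}, and then pass to arbitrary Lipschitz elements via the approximation from Corollary \ref{second-approx-cor}.

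First, I would fix $n \in \Nbar$ and a finitely supported $a \in \dom{\Lip_n}$. Applying Lemma \ref{mvt-lemma} to $(k,\sigma) = (k_n,\sigma_n)$ and any $z \in \U^d_{k_n}$ yields
\begin{equation*}
  \norm{a - \alpha_n^z(a)}{\A_n} \leq \mathsf{slen}(z) \max\left\{ \max_{k_n(j)=\infty} \norm{\partial_n^j(a)}{\A_n}, \max_{k_n(j)<\infty} \norm{\tfrac{k_n(j)}{2\pi}(\alpha_n^{z_{n,j}}(a)-a)}{\A_n} \right\}\text.
\end{equation*}
By Lemma \ref{derivations-Ln-lemma}, each of the two inner maxima is bounded above by $2\Lip_n(a)$, giving the desired inequality $\norm{a - \alpha_n^z(a)}{\A_n} \leq 2\,\mathsf{slen}(z)\Lip_n(a)$ for all finitely supported $a$.

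Second, I would extend to an arbitrary $a \in \dom{\Lip_n}$ by density. Given $\varepsilon > 0$, Corollary \ref{second-approx-cor} produces a finitely supported $b \in \dom{\Lip_n}$ with $\Lip_n(b) \leq (1+\varepsilon)\Lip_n(a)$ and $\norm{a-b}{\A_n} < \varepsilon$. Since $\alpha_n^z$ is an isometry of $\A_n$,
\begin{equation*}
  \norm{a-\alpha_n^z(a)}{\A_n} \leq \norm{a-b}{\A_n} + \norm{b-\alpha_n^z(b)}{\A_n} + \norm{\alpha_n^z(b-a)}{\A_n} \leq 2\varepsilon + 2\,\mathsf{slen}(z)(1+\varepsilon)\Lip_n(a)\text,
\end{equation*}
and letting $\varepsilon \downarrow 0$ produces Inequality \eqref{mvt-eq}.

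There is no real obstacle here: the substantive content lives in Lemmas \ref{mvt-discrete-lemma}, \ref{mvt-continuous-lemma}, \ref{mvt-lemma}, and \ref{derivations-Ln-lemma}, together with the density argument of Corollary \ref{second-approx-cor}. The only mild point requiring care is that the estimate from Lemma \ref{derivations-Ln-lemma} is stated only for \emph{finitely supported} elements (because it relies on the explicit commutator formula from Theorem \ref{spectral-triple-thm}), which is precisely why the density reduction through Corollary \ref{second-approx-cor} is necessary rather than optional; one simply has to verify that both $b$ and $\alpha_n^z(b)$ remain finitely supported (the latter since $\alpha_n^z$ preserves spectral subspaces) so that Lemma \ref{mvt-lemma} applies directly to $b$.
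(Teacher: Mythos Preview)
Your proof is correct and follows essentially the same approach as the paper: first establish the inequality for finitely supported elements by combining Lemma~\ref{mvt-lemma} with Lemma~\ref{derivations-Ln-lemma}, then extend to arbitrary $a\in\dom{\Lip_n}$ via the approximation from Corollary~\ref{second-approx-cor} and a triangle inequality argument. The only cosmetic difference is in the bookkeeping of the $\varepsilon$'s; the paper chooses a specific $\varepsilon'$ depending on $\mathsf{slen}(z)$ and $\Lip_n(a)$, while you simply let $\varepsilon\downarrow 0$ at the end, which is equally valid.
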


\begin{proof}
  Expression (\ref{mvt-eq}) holds if $a$ is finitely supported by Lemmas (\ref{derivations-Ln-lemma}) and (\ref{mvt-lemma}).

  Now, let $a\in\dom{\Lip_n}$. Let $\varepsilon > 0$. Let
  \begin{equation*}
    \varepsilon' = \frac{\varepsilon}{2(1 + \mathsf{slen}(z)\Lip_n(a))} > 0 \text.
  \end{equation*}

  By Corollary (\ref{second-approx-cor}), there exists a \emph{finitely supported} element $b \in \dom{\Lip_n}$ such that
  \begin{equation*}
    \Lip_n(b) \leq \left(1+\varepsilon'\right) \Lip_n(a) \text{ and }\norm{a-b}{\A_n} < \varepsilon' \; \Lip_n(a)  \text.
  \end{equation*}

  Therefore, for all $z\in \U^d_{k_n}$:
  \begin{align*}
    \norm{a-\alpha_n^z(a)}{\A_n}
    &\leq \norm{a-b}{\A_n} + \norm{b-\alpha_n^z(b)}{\A_n} + \norm{\alpha_n^z(b - a)}{\A_n} \\
    &< \varepsilon' + 2 \; \mathsf{slen}(z) \Lip_n(b) + \varepsilon' \\
    &\leq 2 \varepsilon' + 2\;\mathsf{slen}(z) \left(1 + \varepsilon'\right) \Lip_n(a) \\
    &\leq \varepsilon'(2 + 2\mathsf{slen}(z)\Lip_n(a)) + 2\; \mathsf{slen}(z)\Lip_n(a)\\
    &\leq \varepsilon + 2 \; \mathsf{slen}(z)\Lip_n(a) \text.
  \end{align*}
  As $\varepsilon > 0$ is arbitrary, we have proven our lemma.
\end{proof}

\bigskip

As a consequence of Corollary (\ref{mvt-cor}), we obtain one more necessary property toward the proof that our spectral triples are, indeed, metric.

\begin{corollary}\label{zero-cor}
  Assume Hypothesis (\ref{L-seminorm-hyp}). For all $n\in \Nbar$, and for all $a\in\dom{\Lip_n}$,
  \begin{equation*}
    \Lip_n(a) = 0 \iff a \in \R\unit_{\A_n}\text.
  \end{equation*}
\end{corollary}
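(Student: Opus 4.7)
The plan is to prove the two directions separately, with the forward direction being almost immediate and the reverse direction following from Corollary \ref{mvt-cor} combined with ergodicity of the dual action.

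For the direction $a \in \R\unit_{\A_n} \implies \Lip_n(a) = 0$, I would just observe that if $a = c\unit_{\A_n}$ for some $c\in\R$, then $a^\circ = c(\unit_n \otimes \unit_{\mathscr{C}})$ is a scalar operator, which maps $\dom{\Dirac_n}$ into itself and commutes with $\Dirac_n$, so $[\Dirac_n, a^\circ] = 0$ and hence $\Lip_n(a) = 0$.

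The content is the other direction: suppose $a \in \dom{\Lip_n}$ with $\Lip_n(a) = 0$. By Corollary \ref{mvt-cor}, for every $z \in \U^d_{k_n}$,
\begin{equation*}
  \norm{a - \alpha_n^z(a)}{\A_n} \leq 2\,\mathsf{slen}(z)\,\Lip_n(a) = 0,
\end{equation*}
so $\alpha_n^z(a) = a$ for all $z \in \U^d_{k_n}$ (here $z$ is identified with $(z,1,\ldots,1)\in G_n$ as in Hypothesis \ref{innerification-hyp}). Next, I would note that, under the identification of $\A_n$ with $C^\ast(U_{n,1},\ldots,U_{n,d}) \subseteq \B_n$, the restriction of the dual action $\alpha_n$ of $G_n$ on $\B_n$ to the subgroup $\U^d_{k_n}\times\{(1,\ldots,1)\}$ agrees with the intrinsic dual action $\alpha_{k_n,\sigma_n}$ of $\U^d_{k_n}$ on $\qt{k_n}{\sigma_n}$: indeed, for $z=(z_1,\ldots,z_d,1,\ldots,1)\in G_n$ and $j\in\{1,\ldots,d\}$, a direct computation gives $\alpha_n^z(U_{n,j}) = z_j U_{n,j}$, matching the action of $\alpha_{k_n,\sigma_n}^{(z_1,\ldots,z_d)}$ on $W_{k_n,\sigma_n}^{e_j}$. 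Hence $a$ is a fixed point of the dual action $\alpha_{k_n,\sigma_n}$.

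By the ergodicity statement in Corollary \ref{dual-action-thm}, the fixed point set of $\alpha_{k_n,\sigma_n}$ is exactly $\C\unit_{\A_n}$, so $a \in \C\unit_{\A_n}$. Since $a = a^\ast$, we conclude $a \in \R\unit_{\A_n}$. There is no serious obstacle here: the mean value type inequality of Corollary \ref{mvt-cor} does all the heavy lifting, and the only minor subtlety is verifying that the restricted action coming from $G_n$ indeed coincides with the intrinsic dual action on $\A_n$, so that ergodicity from Corollary \ref{dual-action-thm} may be invoked.
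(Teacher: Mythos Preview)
Your proof is correct and follows essentially the same approach as the paper: both directions are handled identically, with the nontrivial direction obtained by applying Corollary~\ref{mvt-cor} to force $\alpha_n^z(a)=a$ for all $z\in\U^d_{k_n}$ and then invoking ergodicity of the dual action. Your version simply spells out two details the paper leaves implicit (the identification of the restricted $G_n$-action with the intrinsic dual action on $\A_n$, and the passage from $\C\unit_{\A_n}$ to $\R\unit_{\A_n}$ via self-adjointness).
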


\begin{proof}
  Of course, $\Lip_n(t\unit_{\A_n}) = 0$ for all $t\in\R$.

  Now, let $a\in\dom{\Lip_n}$ such that $\Lip_n(a) = 0$. By Corollary (\ref{mvt-cor}),
  \begin{equation*}
    \forall z \in \U^d_{k_n} \quad \norm{a-\alpha_n^z(a)}{\A_n} \leq 2 \mathsf{slen}(z) \Lip_n(a) = 0
  \end{equation*}
  and thus $\alpha_n^z(a) = a$ for all $z\in \U^d_{k_n}$. As the action of $\U^d_{k_n}$ induced by $\alpha_n$ on $\A_n$ is ergodic, we conclude that $a \in \R\unit_{\A_n}$, as desired.
\end{proof}

We now bring together the tools developed in this section, to obtain our principal approximation theorem.

\begin{theorem}\label{L-approx-thm}
  Assume Hypothesis (\ref{L-seminorm-hyp}). For all $\varepsilon > 0$, there exists $N\in\N$, and a function $f \in C(\T^d)$, whose Fourier transform is supported on a finite subset $S$ of $\Z^{d}$ with $0\in S$, and such that for all $n \in \Nbar$, if $n \geq N$, and for all $a \in \dom{\Lip_n}$, we have:
    \begin{equation*}
      \norm{ a - \alpha_n^f(a) }{\A_n} \leq \varepsilon \Lip_n(a) \text{ and }\Lip_n(\alpha_n^f(a)) \leq (1+\varepsilon)\Lip_n(a) \text.
    \end{equation*}
    In particular, $\alpha_n^f(a)$ is in the linear span of $\left\{ \delta_m : m \in q(S) \right\}$, where $q : \Z^d \twoheadrightarrow \Z^d_{k_n}$ is the canonical surjection.
\end{theorem}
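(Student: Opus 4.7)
The plan is to choose $f$ as a fixed Fejér kernel on $\T^d$, independent of $n$, and then exploit the weak convergence of the Haar measure on $\U^d_{k_n}$ to the Haar measure on $\T^d$ (which follows since $\U^d_{k_n}$ is a product of $\U_{k_n(j)}$'s, each of whose normalized counting measure converges weakly to Haar on $\T$ as $k_n(j) \to \infty$) to transport control from the commutative limit back to each $\A_n$ uniformly for $n$ large enough.

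First, for each $N \in \N$, let $F_N \in C(\T^d)$ denote the product Fejér kernel $F_N(z_1,\ldots,z_d) = \prod_{j=1}^d \frac{1}{N}\left|\sum_{\ell=0}^{N-1} z_j^\ell\right|^2$. The function $F_N$ is nonnegative, satisfies $\int_{\T^d} F_N\, d\mu = 1$, has Fourier support in the finite set $S_N = \{m \in \Z^d : |m_j| < N\text{ for all }j\}$ (containing $0$), and $F_N\, d\mu$ converges weakly to the Dirac mass at the identity of $\T^d$ as $N \to \infty$. Since both $z \mapsto 1 + \dil{z}$ and $z\mapsto \mathsf{slen}(z)$ are continuous on $\T^d$ and vanish at $(1,\ldots,1)$, I would choose $N_0 \in \N$ large enough so that
\begin{equation*}
  \int_{\T^d} F_{N_0}(z)(1+\dil{z}) \,d\mu(z) \leq 1 + \frac{\varepsilon}{2} \text{ and } \int_{\T^d} F_{N_0}(z)\mathsf{slen}(z)\,d\mu(z) \leq \frac{\varepsilon}{4}\text.
\end{equation*}

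Next, set $f = F_{N_0}$ and $S = S_{N_0}$. By the weak convergence of $\mu_n$ to $\mu$ on $\T^d$, applied to the continuous functions $f(\cdot)(1+\dil{\cdot})$ and $f(\cdot)\mathsf{slen}(\cdot)$, and by the fact that $k_n(j)\to\infty$ whenever $k_n(j) < \infty$ for all $n$ (so eventually $k_n(j) > N_0$), there exists $N \in \N$ such that for every $n \in \Nbar$ with $n \geq N$, one has simultaneously
\begin{equation*}
  \int_{\U^d_{k_n}} f(z)(1+\dil{z})\,d\mu_n(z) \leq 1 + \varepsilon, \quad \int_{\U^d_{k_n}} f(z)\mathsf{slen}(z)\,d\mu_n(z)\leq \frac{\varepsilon}{2},
\end{equation*}
and, for each $j\in\{1,\ldots,d\}$ with $k_n(j)<\infty$, we have $k_n(j)>N_0$. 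Since the Haar integral on $\U^d_{k_n}$ of a character of $\Z^d$ supported in $S_{N_0}$ vanishes unless the character is trivial, the last condition guarantees $\int_{\U^d_{k_n}} f\,d\mu_n = \hat{f}(0) = 1$; the case $n=\infty$ holds automatically since $\mu_\infty = \mu$.

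For $n\geq N$ and $a \in \dom{\Lip_n}$, Lemma \ref{near-iso-lemma} immediately yields $\Lip_n(\alpha_n^f(a)) \leq (1+\varepsilon)\Lip_n(a)$. Since $\int_{\U^d_{k_n}} f\,d\mu_n = 1$, we may write $a - \alpha_n^f(a) = \int_{\U^d_{k_n}} f(z)(a - \alpha_n^z(a))\,d\mu_n(z)$, and applying Corollary \ref{mvt-cor} inside the integral gives $\norm{a-\alpha_n^f(a)}{\A_n} \leq 2\int_{\U^d_{k_n}} f(z)\mathsf{slen}(z)\,d\mu_n(z)\,\Lip_n(a) \leq \varepsilon\,\Lip_n(a)$. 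The final claim about the finite support of $\alpha_n^f(a)$ is the spectral observation already used in Corollary \ref{second-approx-cor}: expanding $f = \sum_{m\in S}\hat{f}(m) z^m$ shows that $\alpha_n^f$ is a linear combination of the spectral projections $a\mapsto \int z^m \alpha_n^z(a)\,d\mu_n(z)$, whose ranges are the grading subspaces of $\A_n$ indexed by the classes in $q(S)$.

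The main obstacle is ensuring that a \emph{single} $f$ works for all $n \geq N$ with constants independent of $n$; this is exactly what weak convergence of $(\mu_n)_{n\in\N}$ to $\mu$ delivers, combined with the vanishing of $\dil{\cdot}$ and $\mathsf{slen}(\cdot)$ at the identity, which lets us absorb the fixed Fejér kernel's concentration near $(1,\ldots,1)$ into an arbitrarily small error.
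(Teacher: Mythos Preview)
Your proposal is correct and follows essentially the same route as the paper: choose a Fej\'er-type kernel $f$ on $\T^d$ concentrated near the identity so that $\int f(z)(1+\dil{z})\,d\mu$ and $\int f(z)\,\mathsf{slen}(z)\,d\mu$ are small, then use weak convergence of $\mu_n$ to $\mu$ (the paper invokes \cite[Lemma~3.6]{Latremoliere05}) to transfer these bounds to $\U^d_{k_n}$ for $n$ large, and conclude via Lemma~\ref{near-iso-lemma} and Corollary~\ref{mvt-cor}. One small slip: you write that $z\mapsto 1+\dil{z}$ vanishes at the identity, but of course it equals $1$ there; what you need (and clearly intend) is that $\dil{\cdot}$ and $\mathsf{slen}(\cdot)$ vanish there, so that the Fej\'er averages converge to $1$ and $0$ respectively. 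Your explicit verification that $\int_{\U^d_{k_n}} f\,d\mu_n = \hat f(0) = 1$ once $k_n(j)>N_0$ is a nice point that the paper leaves implicit but which is needed to write $a-\alpha_n^f(a)=\int f(z)(a-\alpha_n^z(a))\,d\mu_n(z)$.
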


\begin{proof}
  Let $\varepsilon > 0$.
  
  By Lemma (\ref{Fejer-lemma}), there exists a linear combination $f : \T^d \rightarrow [0,\infty)$ of characters of $\T^d$ such that $\int_{\T^d} f \, d\mu_{\infty} = 1$ and
  \begin{equation*}
    \int_{\T^d} f(z)\mathsf{slen}(z)\, d\mu_{\infty}(z) \leq \frac{\varepsilon}{4} \text{ and } \int_{\T^d} f(z)(1+\dil{z})\,d\mu_\infty(z) \leq 1 + \frac{\varepsilon}{2} \text,
  \end{equation*}
  since $\mathsf{slen}(1,\ldots,1) = 0$.
  
  The Fourier transform $\widehat{f}$ of $f$ is an element of $\ell^1(\Z^d)$ with finite support, denoted by $S$. Moreover, the range of $\alpha_n^f$ is $\{ f \in \ell^1(\Z^d) : \forall z \notin S \quad f(z) = 0 \}$. In particular, $\alpha_n^f$ has finite rank.

  By \cite[Lemma 3.6]{Latremoliere05}, since both $f$ and $f\mathsf{slen}$ are continuous, we conclude that:
  \begin{equation*}
    \lim_{n\rightarrow\infty} \int_{\U^d_{k_n}} f \, d\mu_n = \int_{\T^d} f \, d\mu_{\infty} = 1 \text{,}
  \end{equation*}
  and
  \begin{equation*}
    \lim_{n\rightarrow\infty} \int_{\U^d_{k_n}} f(z) \mathsf{slen}(z) \, d\mu_n(z) = \int_{\T^d} f(z) \mathsf{slen}(z) \, d\mu_{\infty}(z) \text{.}
  \end{equation*}

  Therefore, there exists $N\in\N$ such that for all $n \in \Nbar$, if $n\geq N$ then:
  \begin{equation*}
     \int_{\U^d_{k_n}} f(z)\dil{z} \, d\mu_n(z) \leq 1 + \varepsilon \text{ and } \int_{\U^d_{k_n}} f(z)\mathsf{slen}(z) \, d\mu_n(z) \leq \frac{\varepsilon}{2} \text.
  \end{equation*}
  
  By Lemma (\ref{near-iso-lemma}), we conclude that
  \begin{equation*}
    \Lip_n\left(\alpha_n^f(a)\right) \leq (1+\varepsilon)\Lip_n(a) \text.
  \end{equation*}

 From Corollary (\ref{mvt-cor}), for all $a\in \dom{\Lip_n}$, we compute:
  \begin{align*}
    \norm{ a - \alpha_n^f(a) }{\A_n}
    &\leq \int_{\U^d_{k_n}} f(z) \norm{a-\alpha_n^z(a)}{\A_n} \, d\mu_n(z) \\
    &\leq \int_{\U^d_{k_n}} f(z) \; 2 \Lip_n(a) \, \mathsf{slen}(z) \, d\mu_n(z) \\
    &= 2 \, \Lip_n(a) \int_{\U^d_{k_n}} f(z) \mathsf{slen}(z) \, d\mu_n(z) \\
    &\leq \varepsilon \Lip_n(a) \text{.}
  \end{align*}
  as desired.
\end{proof}

We can finish the proof that $(\A_n,\mathscr{J}_n,\Dirac_n)$ is a metric spectral triple, for all $n\in\Nbar$.

\begin{theorem}\label{metric-spectral-triple-thm}
  Assume Hypothesis (\ref{L-seminorm-hyp}). For all $n\in\Nbar$, the ordered pair $(\A_n,\Lip_n)$ is a {\qcms}. 
\end{theorem}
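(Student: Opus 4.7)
Three of the four axioms in Definition \ref{qcms-def} are already in hand. Density of $\dom{\Lip_n}$, the Leibniz inequality for the Jordan and Lie products, and the closedness of the unit $\Lip_n$-ball all follow from Lemma \ref{three-lemma}. The kernel property $\Lip_n(a)=0 \iff a \in \R\unit_{\A_n}$ is Corollary \ref{zero-cor}. It remains only to verify that the Monge-Kantorovich distance $\Kantorovich{\Lip_n}$ metrizes the weak-$\ast$ topology on $\StateSpace(\A_n)$.

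By Rieffel's criterion (see \cite{Rieffel98a,Rieffel99,Latremoliere15}), this is equivalent to the following total boundedness statement: for some (hence every) state $\varphi_n$ of $\A_n$, the set
\begin{equation*}
  \mathcal{B}_n = \left\{ a \in \dom{\Lip_n} : \Lip_n(a) \leq 1, \varphi_n(a) = 0 \right\}
\end{equation*}
is totally bounded in $(\sa{\A_n},\norm{\cdot}{\A_n})$. We choose $\varphi_n$ to be the canonical tracial state, which coincides with $\tau_n(a) = \int_{\U^d_{k_n}} \alpha_n^z(a)\,d\mu_n(z)$ (a scalar multiple of the unit, since $\alpha_n$ is ergodic by Corollary \ref{dual-action-thm}).

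The plan has two steps. First I establish a uniform norm bound on $\mathcal{B}_n$: for any $a \in \mathcal{B}_n$, one writes $a = a - \tau_n(a)\unit_{\A_n} = \int_{\U^d_{k_n}}(a - \alpha_n^z(a))\,d\mu_n(z)$, and applies Corollary \ref{mvt-cor} to get
\begin{equation*}
  \norm{a}{\A_n} \leq 2 \Lip_n(a) \int_{\U^d_{k_n}} \mathsf{slen}(z)\,d\mu_n(z) \leq 2 d \pi \text,
\end{equation*}
since $\mathsf{slen}$ is uniformly bounded on $\T^d$ by $d\pi$. Thus $\mathcal{B}_n$ is norm-bounded.

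Second, I produce, for each $\varepsilon>0$, an $\varepsilon$-net for $\mathcal{B}_n$ inside a finite-dimensional subspace. Using a Fej\'er-type kernel $f\in C(\U^d_{k_n})$ given by Lemma \ref{Fejer-lemma}, chosen so that $\int f\,d\mu_n = 1$, $f \geq 0$, and $\int f(z)\mathsf{slen}(z)\,d\mu_n(z) \leq \varepsilon/2$, the same computation as in the proof of Theorem \ref{L-approx-thm} gives, for all $a\in\mathcal{B}_n$,
\begin{equation*}
  \norm{a - \alpha_n^f(a)}{\A_n} \leq 2 \Lip_n(a) \int_{\U^d_{k_n}} f(z)\mathsf{slen}(z)\,d\mu_n(z) \leq \varepsilon \text.
\end{equation*}
Since $f$ is a finite linear combination of characters of $\U^d_{k_n}$, the range of $\alpha_n^f$ is a finite-dimensional subspace $V_n \subseteq \A_n$. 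Moreover $\alpha_n^f$ is a contraction (being an average of isometries), so $\alpha_n^f(\mathcal{B}_n)$ is a norm-bounded subset of the finite-dimensional space $V_n$, hence totally bounded. Combining with the $\varepsilon$-approximation above, $\mathcal{B}_n$ is totally bounded in $\A_n$, as required.

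The only subtlety is the second step: for large or infinite $n$ the uniform approximation of Theorem \ref{L-approx-thm} applies directly, while for the finite-dimensional fuzzy tori corresponding to small $n$ the statement is either trivially true (since $\A_n$ itself is finite dimensional) or follows from the same Fej\'er kernel construction applied to the compact group $\U^d_{k_n}$. No new obstacle appears: everything reduces to the mean-value estimate Corollary \ref{mvt-cor} together with ergodicity of the dual action.
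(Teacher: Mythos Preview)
Your proposal is correct and follows essentially the same approach as the paper: reduce to Rieffel's total boundedness criterion for the set $\{a\in\dom{\Lip_n}:\Lip_n(a)\leq 1,\ \tau_n(a)=0\}$, establish a uniform norm bound via Corollary~\ref{mvt-cor}, and then use a Fej\'er-type kernel to approximate by a bounded subset of a finite-dimensional space. The only cosmetic differences are that the paper invokes Theorem~\ref{L-approx-thm} directly rather than rebuilding the kernel estimate, and bounds $\mathsf{slen}$ more loosely by $4d\pi$; your observation that $\alpha_n^f$ is a contraction (as an average of isometries) is a slightly cleaner way to see boundedness of the image than the paper's appeal to continuity of $\alpha_n^f$ on a bounded set.
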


\begin{proof}
  Let
  \begin{equation*}
    \tau : a\in \A_n \mapsto \int_{\U^d_{k_n}} \alpha_n^z(a) \, d\mu_n(z)\text.
  \end{equation*}
  Since $\mu_n$ is the probability Haar measure of $\U^d_{k_n}$, it is easy, and well-known, is that $\tau$ is a state (and a trace) of $\A_n$, invariant for the action $\alpha_n$.

We now prove that the set
  \begin{equation*}
    \alg{T} = \left\{ a \in \dom{\Lip_n} : \Lip_n(a) \leq 1 \text{ and }\tau(a) = 0 \right\}
  \end{equation*}
  is totally bounded in $\A_n$ (note that we already know, by Lemma (\ref{three-lemma}), that $\alg{T}$ is closed, and thus complete, since $\A_n$ is complete; thus $\alg{T}$ is totally bounded if, and only if it is compact).

  First, by Corollary (\ref{mvt-cor}), we then note that if $a\in\dom{\Lip_n}$ and $\Lip_n(a)\leq 1$, then
  \begin{equation}\label{lip-bound-eq}
    \norm{a-\tau(a)}{\A_n} \leq \int_{\U_{k_n}^d} \norm{a-\alpha_n^z(a)}{\A_n} \, d\mu_n(z) \leq 2 \max_{z\in\T^d}\mathsf{slen}(z) < 4 d \pi \text.
  \end{equation}
  Therefore, $\alg{T}$ is bounded in $\A_n$.

  Let $\varepsilon > 0$.  By Theorem (\ref{L-approx-thm}), there exists $f \in C(\T^d)$ such that $\alpha_n^f$ has finite dimensional range, and
  \begin{equation*}
    \forall a \in \alg{T} \quad \norm{a-\alpha_n^f(a)}{\A_n} \leq \frac{\varepsilon}{2} \text.
  \end{equation*}

  Now, $\alpha_n^f(\alg{T})$ is a bounded subset, as it is the image, by a continuous linear map, of a bounded set (by Expression (\ref{lip-bound-eq})). Therefore, as a bounded subset of the finite dimensional space $\alpha_n^f(\A_n)$, the set $\alpha_n^f(\alg{T})$ is actually totally bounded. Thus, there exists a $\frac{\varepsilon}{2}$-dense finite subset $F\subseteq \alpha_n^f(\alg{T})$ of $\alpha_n^f(\alg{T})$. Consequently, for all $a\in\alg{T}$, there exists $b\in F$ such that
\begin{equation*}
  \norm{a-b}{\A_n} \leq \norm{a-\alpha_n^f(a)}{\A_n} + \norm{\alpha_n^f(a)-b}{\A_n} < \varepsilon \text.
\end{equation*}
Therefore, since $\varepsilon > 0$ is arbitrary, we conclude that $\alg{T}$ is totally bounded.

 Using Lemma (\ref{three-lemma}) and Corollary (\ref{mvt-cor}), we thus have completed our proof.
\end{proof}

\subsection{Convergence of the Quantum Metrics}

We now prove that our {\qcms s} $(\A_n,\Lip_n)$ converge to $(\A_\infty,\Lip_\infty)$ for the propinquity, when $n$ goes to $\infty$. The method we employ is based upon the work in \cite{Latremoliere13c}, which also involves fuzzy and quantum tori, though for a different family of quantum metrics.

\begin{notation}\label{I-notation}
  Assume Hypothesis (\ref{Dirac-hyp}). Let $n\in\Nbar$ and $j\in\{1,\ldots,d'\}$. For all $x \in \R$, we denote $\max\{ n\in\Z : n \leq x\}$ as $\lfloor x \rfloor$. If $k'_n(j) < \infty$ then we let:
  \begin{equation*}
    C_n^j = \left\{ \left\lfloor \frac{1 - k'_n(j)}{2} \right\rfloor,  \left\lfloor \frac{1 - k'_n(j)}{2} \right\rfloor + 1, \ldots, \left\lfloor \frac{k'_n(j) - 1}{2} \right\rfloor \right\} 
  \end{equation*}
  while, if $k'_n(j) = \infty$, then we let $C_n^j = \Z$. We then define the sets:
  \begin{equation*}
    C_n = \prod_{j=1}^{d'} C_n^j \subseteq \Z^{(d')}  \text.
  \end{equation*}
  
  The canonical surjection $q_n : \Z^{(d')} \twoheadrightarrow \widehat{G_n}$ restricts to an bijection from $C_n$ onto $\widehat{G_n}$, whose inverse, by abuse of notation, we denote by $q_n^{-1} : \widehat{G_n} \hookrightarrow C_n$. Note that $q_{\infty}^{-1}$ is the identity map.
\end{notation}

To ease our notations further, we also adopt the following convention.

\begin{notation}
  If $k\in\Nbar^{(d')}$, if $F\subseteq\Z^{(d')}_k$, and $p\in [1,\infty)$, then we define
  \begin{equation*}
    \ell^p(\Z^{(d')}_{k}|F) = \left\{ \xi \in \ell^p(\Z^{(d')}_k) : \forall m \in \Z^{(d')}_k \quad m \notin F \implies \xi(m) = 0 \right\}\text.
  \end{equation*}
\end{notation}


\begin{notation}
  If $n\in\Nbar$, and if $\xi \in \Hilbert_n$, then we define
  \begin{equation*}
    \rho_n\xi : m \in \Z^{(d')} \mapsto
    \begin{cases}
      \xi(q_n(m)) \text{ if $m \in C_n$,} \\
      0 \text{ otherwise.}
    \end{cases}
  \end{equation*}
  and note that $\rho_n$ is an isometry from $\Hilbert_n$ into $\Hilbert_\infty$, such that
  \begin{equation*}
    \forall \xi \in \Hilbert_\infty \quad \rho_n^\ast \xi : m\in \widehat{G_n} \mapsto \xi(q_n^{-1}(m)) \text.
  \end{equation*}
\end{notation}

The following lemma establishes the asymptotic behavior of the operators $\Gamma_{n,j}$, for all $j\in\{1,\ldots,d+d'\}$, as $n\rightarrow\infty$; this result is the basis of the idea that our sequence of spectral triples in Hypothesis (\ref{Dirac-hyp}) converges to the expected limit.

\begin{lemma}\label{Gamma-cv-lemma}
  Assume Hypothesis (\ref{Dirac-hyp}). Let $F\subseteq\Z^{(d')}$ be finite. For all $j \in \{1,\ldots,d+d'\}$, the following limit holds:
  \begin{equation*}
    \lim_{n\rightarrow\infty} \opnorm{\rho_n\Gamma_{n,j}\rho_n^\ast - \Gamma_{\infty,j}}{\Hilbert_\infty}{\ell^2(\Z^{(d')}|F)} = 0 \text.
  \end{equation*}
\end{lemma}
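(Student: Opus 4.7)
The plan is to reduce the operator norm convergence to pointwise convergence on the finite basis $\{\delta_m : m\in F\}$ and then invoke finite-dimensionality. Inspection of Hypothesis \ref{Dirac-hyp} shows that, for every $n\in\Nbar$ and every $j\in\{1,\ldots,d+d'\}$, the operator $\Gamma_{n,j}$ is the sum of a diagonal multiplier and at most two shift operators by $\pm e_{f(j)}$ (or $\pm e_j$); hence there is a fixed finite $F'\supseteq F$, depending only on $F$, $d'$ and $f$, such that for $n$ large (so that $F'\subseteq C_n$), both $\rho_n\Gamma_{n,j}\rho_n^\ast$ and $\Gamma_{\infty,j}$ map $\ell^2(\Z^{(d')}|F)$ into the finite-dimensional subspace $\ell^2(\Z^{(d')}|F')$. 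Norm equivalence on $\ell^2(\Z^{(d')}|F')$ then reduces the lemma to the pointwise convergence $\rho_n\Gamma_{n,j}\rho_n^\ast\delta_m\to\Gamma_{\infty,j}\delta_m$ for each $m\in F$.

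For fixed $m\in F$ and $j$, once $n$ is large enough one has $\rho_n^\ast\delta_m=\delta_{q_n(m)}$ and $\rho_n\delta_{q_n(m)\pm e_j}=\delta_{m\pm e_j}$, so $\rho_n\Gamma_{n,j}\rho_n^\ast\delta_m$ admits an explicit formula case-by-case. When $k_n(j)=\infty$ or $k'_n(j)=\infty$ for all $n$ (Cases 1 and 3 of Hypothesis \ref{Dirac-hyp}), $\Gamma_{n,j}$ depends on $n$ only through finitely many values of $\sigma'_n$, so convergence follows directly from $\sigma'_n\to\sigma'_\infty$ in $\Xi^{d'}$ (Hypothesis \ref{innerification-hyp}(1)). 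When $k_n(j)<\infty$ (Case 2), the commutation relation of Hypothesis \ref{innerification-hyp}(5) together with the cocycle identity yields
\[
  [U_{n,f(j)},\delta_{q_n(m)}] = \sigma'_n(q_n(m),e_{f(j)})\left(\exp\!\left(\tfrac{2\pi i\,\mathrm{fsgn}(j)\,m_j}{k_n(j)}\right) - 1\right)\delta_{q_n(m)+e_{f(j)}},
\]
and an analogous identity for $U_{n,f(j)}^\ast$. Multiplying by $\mp k_n(j)/(2i\pi)$ and invoking the elementary limit $k_n(j)(\exp(2\pi i a/k_n(j))-1)/(2i\pi)\to a$ (for fixed $a\in\R$ as $k_n(j)\to\infty$), together with pointwise convergence of $\sigma'_n$, will deliver $\rho_n\Gamma_{n,j}\rho_n^\ast\delta_m\to X_{\infty,j}\partial_{\infty,j}\delta_m=\Gamma_{\infty,j}\delta_m$, and the same mechanism handles $\Gamma_{n,d+j}$. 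Case 4 is then handled by the same elementary limit applied to the diagonal action of $v_n^{z_{n,j}}$ on $\delta_{q_n(m)}$, whose eigenvalue is a continuous function of $m_j/k'_n(j)$.

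The hard part will be the combinatorial bookkeeping of cocycle factors, the sign $\mathrm{fsgn}(j)$, and the normalizing constants $\pm k_n(j)/(2i\pi)$, which must conspire to produce exactly $\Gamma_{\infty,j}\delta_m$ with the correct limiting phase $\sigma'_\infty$. This is a finite algebraic check using the cocycle identity and the relation $\sigma'_n(e_{f(j)},q_n(m))=\exp(2\pi i\,\mathrm{fsgn}(j)\,m_j/k_n(j))\sigma'_n(q_n(m),e_{f(j)})$ inherited from Hypothesis \ref{innerification-hyp}(5); no analytic subtleties arise because $F$ is finite and everything takes place in fixed finite-dimensional subspaces. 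The only analytic inputs needed are the pointwise convergence $\sigma'_n\to\sigma'_\infty$ in the compact space $\Xi^{d'}$ and the fact that $k_n(j)\to\infty$ whenever $k_n(j)<\infty$, both of which follow directly from Hypotheses \ref{metric-cv-hyp} and \ref{innerification-hyp}.
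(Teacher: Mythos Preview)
Your proposal is correct and follows essentially the same route as the paper's proof: enlarge $F$ to a finite $S$ containing all relevant shifts, use pointwise convergence $\sigma'_n\to\sigma'_\infty$ on $S\times S$, and invoke the elementary limit $\frac{k_n(j)}{2\pi}(e^{2\pi i m_j/k_n(j)}-1)\to i m_j$ for the discrete-to-continuous passage. The only organizational difference is that the paper works with a general $\xi\in\ell^2(\Z^{d'}|F)$ with $\norm{\xi}{}\leq 1$ and uses the operator-level factorization $[U_{n,f(j)},\xi]=\big((v_n^{z_{n,j}}-1)\xi\big)\cdot U_{n,f(j)}$ from Corollary~(\ref{znj-cor}) to split the estimate into a ``diagonal'' piece and a ``shift'' piece via the triangle inequality, whereas you reduce to basis vectors $\delta_m$ and appeal to finite-dimensionality; the substance---including the need for centrality of $U_{\infty,f(j)}$ to reconcile $\sigma'_\infty(m,e_{f(j)})$ with $\sigma'_\infty(e_{f(j)},m)$---is identical.
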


\begin{proof}
  For this proof, we recall that, for each $j\in\{1,\ldots,d'\}$, by Hypothesis (\ref{innerification-hyp}), either $k'_n(j)$ is finite for all $n\in\N$, or $k'_n(j) = \infty$ for all $n\in\N$. Either way, of course, $\lim_{n\rightarrow\infty} k'_n(j) = \infty$.
  
  We record, of course, that for all $m \in \Z$, and for all $j \in \{1,\ldots,d'\}$ such that $k'_n(j) < \infty$ for all $n\in\Nbar$,
  \begin{equation*}
    \frac{ \exp\left(\frac{2\pi i m}{k'_n(j)}\right) - 1}{\frac{2\pi}{k'_n(j)}} \xrightarrow{n\rightarrow\infty} i m \text{.}
  \end{equation*}
  
  Let $\varepsilon > 0$. Since $F$ is finite, and therefore, the set
  \begin{equation*}
    S = \left\{ m \in \Z^{(d')} : m \in F \text{ or }\exists j \in \{1,\ldots,d'\} \quad m + e_j \in F\text{ or } m - e_j \in F \right\}
  \end{equation*}
  is finite, there exists $K \in \N$ such that, if $n \geq K$, then
  \begin{enumerate}
  \item $S \subseteq C_n$,
  \item the following holds,
    \begin{equation*}
      \max_{\substack{j \in \{1,\ldots,d'\} \\ k'_n(j) < \infty}} \; \max_{m = (m_1,\ldots,m_{d'})\in S} \left| \frac{\exp\left(\frac{2\pi i m}{k'_n(j)}\right) - 1}{\frac{2\pi}{k'_n(j)}} - i m_j \right| \leq \frac{\varepsilon}{2} \text{,}
    \end{equation*}
  \item  since $(\sigma'_n)$ converges to $(\sigma_\infty)$ in $\mathcal{C}_{\infty^{d'}}^{d'}$ by Hypothesis (\ref{innerification-hyp}),
    \begin{equation*}
      \max_{m,m'\in S}\left|\sigma_n(q_n(m),q_n(m'))-\sigma'_\infty(m,m')\right| < \frac{\varepsilon}{2} \text.
    \end{equation*}
  \end{enumerate}
  
  Let now $n\geq K$. First, let $\xi\in\Hilbert_\infty$.  We then note that, for all $m\in\widehat{G}_n$,
  \begin{align*}
    \rho_n v_n^{z_{n,j}} \rho_n^\ast \xi(m)
    &= \begin{cases}
      (z_{n,j}^{\left( \cdot \right)} \xi(q_n^{-1}(\cdot)))(q_n(m)) \text{ if $m\in C_n$,} \\
      0 \text{ otherwise;}
    \end{cases} \\
    &= \begin{cases}
      z_{n,j}^m \xi(m) \text{ if $m \in C_n$,} \\
      0 \text{ otherwise;}
    \end{cases} \\
    &= v_{\infty}^{z_{n,j}} \rho_n\rho_n^\ast \xi(m) \text,
  \end{align*}
  where we used that $z_{n,j}^{q_n(m)} = z_{n,j}^m$ by definition of this notation. Therefore,
  \begin{equation*}
    \rho_n v_n^{z_{n,j}} \rho_n^\ast \xi = v_\infty^{z_{n,j}} \rho_n\rho_n^\ast \xi \text.
  \end{equation*}
  Thus, if $\xi \in \ell^2(\Z^{d'}|F)$, then $\rho_n v_n^{z_{n,j}} \rho_n^\ast \xi = v_{\infty}^{z_{n,j}} \xi$, since $F\subseteq S\subseteq C_n$, we conclude that $q_n^{-1}\circ q_n$, restricted to $S$, is the identity function --- and therefore, $\rho_n\rho_n^\ast$ is the orthogonal projection of $\Hilbert_\infty$ on $\ell^2(\Z^{(d')}|C_n)$.

  Now, fix $\xi \in \ell^2(\Z^{(d')}|F)$ with $\norm{\xi}{\Hilbert_\infty} \leq 1$. We compute that, for all $j\in\{1,\ldots,d'\}$ with $k'_n(j) < \infty$,
  \begin{align}\label{Gamma-cv-eq1}
    & \norm{\frac{k'_n(j)}{2 \pi} \rho_n \left(v_n^{z_{n,j}}-\unit_n\right) \rho_n^\ast\xi - \partial_{\infty,j}\xi}{\Hilbert_\infty}^2  \\
    &= \norm{\frac{k'_n(j)}{2\pi}(v_\infty^{z_{n,j}} - 1)\xi - \partial_{\infty,j} \xi}{\Hilbert_\infty}^2 \nonumber \\
    &= \sum_{m=(m_1,\ldots,m_{d'})\in F} |\xi(m)|^2  \left| \frac{\exp\left(\frac{2\pi i m}{k'_n(j)}\right) - 1}{\frac{2\pi}{k'_n(j)}} - i m_j \right|^2 \nonumber \\
    &\leq \max_{\substack{j \in \{1,\ldots,d' \} \\ k'_n(j) < \infty}}\max_{m=(m_1,\ldots,m_{d'}) \in F} \left| \frac{\exp\left(\frac{2\pi i m}{k'_n(j)}\right) - 1}{\frac{2\pi}{k'_n(j)}} - i m_j \right|^2 \sum_{m\in\widehat{G_n}} |\xi(m)|^2 \nonumber \\
    &\leq \max_{\substack{j \in \{1,\ldots,d' \} \\ k'_n(j) < \infty}} \;  \max_{m=(m_1,\ldots,m_{d'}) \in S}  \left| \frac{\exp\left(\frac{2\pi i m}{k'_n(j)}\right) - 1}{\frac{2\pi}{k'_n(j)}} - i m_j \right|^2 \norm{\xi}{\Hilbert_\infty}^2 \nonumber \\
    &\leq \left(\frac{\varepsilon}{2}\right)^2 \text{.} \nonumber
  \end{align}
  
  Thus, in particular, if $j\in\{d+1,\ldots,d'\}$ and $k'_n(j) < \infty$, and if $\xi \in \ell^2(\Z^{d'}|F)$, then
  \begin{equation*}
    \lim_{n\rightarrow\infty} \norm{\rho_n\Gamma_{n,d+j}\rho_n^\ast \xi - \Gamma_{\infty,d+j} \xi}{\Hilbert_\infty} = 0 \text.
  \end{equation*}
  
  Now, fix $j\in\{1,\ldots,d\}$ with $k_n(j) < \infty$. We now recall that if $\xi\in\Hilbert_n$ and $j\in\{1,\ldots,d'\}$, then
  \begin{equation}\label{Un-eq}
    U_{n,j} \xi : m \in \widehat{G_n} \mapsto \sigma'_n(e_{j},m-e_j) \xi\left(m - e_j\right)\text.
  \end{equation}
  Moreover, by Equation (\ref{right-action-eq}),
  \begin{equation*}
    \xi\cdot U_{n,j} : m \in \widehat{G_n} \mapsto \sigma'_n(m-e_j,e_j)\xi(m-e_j) \text.
  \end{equation*}
  
  In particular, if $\xi\in\ell^2(\Z^{(d')}|F)$, then $\xi\cdot U_{n,j} (m) = 0$ whenever $m \in \Z^{(d')}\setminus S$.

  By Hypothesis (\ref{innerification-hyp}), $U_{\infty,f(j)}$ is central in $\B_\infty$. By construction, if $\xi \in \ell^1(\Z^{d'}) \subseteq \Hilbert_n$, then
  \begin{equation*}
    U_{\infty,f(j)}\xi = \delta_{f(j)} \conv{\infty^d,\sigma'_\infty} \xi = \xi \conv{\infty^d,\sigma'_\infty} \delta_{f(j)} = \xi\cdot U_{\infty,f(j)}  \text.
  \end{equation*}
  Since $\ell^1(\Z^{d'})$ is dense in $\Hilbert_n$, we conclude that $[U_{\infty,f(j)},\cdot] = 0$ as an operator on $\Hilbert_n$.

    Note that the right action $\cdot$ of $\B_n$ on $\Hilbert_n$ obviously depends on $n$, but the context will make it clear which right action we are using at all time.

  Let $\xi\in\ell^2\left(\Z^{d'}|F\right)$ with $\norm{\xi}{\Hilbert_\infty} \leq 1$. A simple computation then shows that, if $\xi \in \ell^2(\widehat{G_n}|F)$, and if $\norm{\xi}{\Hilbert_\infty}\leq 1$, and if we let
  \begin{equation*}
    \xi_j : m \in \widehat{G_n} \mapsto \xi\left( m - e_{j} \right) \text,
  \end{equation*}
  then
  \begin{multline}\label{Un-Uinfty-eq}
    \norm{\rho_n( \rho_n^\ast\xi \cdot U_{n,f(j)}) - \xi\cdot U_{\infty,f(j)}}{\Hilbert_\infty}^2 \\
    \begin{aligned}
      &= \sum_{m\in C_n} \left|(\xi\circ q_n^{-1} \cdot U_{n,f(j)})\circ q_n(m) - \xi \cdot U_{\infty,f(j)}(m) \right|^2 \\
      &= \sum_{m\in S} \left| \sigma'_n(q_n(m) - e_{f(j)},e_{f(j)}) \xi_{f(j)}(m) - \sigma'_\infty(m-e_{f(j)},e_{f(j)})\xi_{f(j)}(m) \right|^2 \\
      &\leq \left( \frac{\varepsilon}{2} \right)^2 \text.
  \end{aligned}
\end{multline}

With this in mind, for all $n\geq K$, and for all $j\in\{1,\ldots,d\}$, if $k_n(j) < \infty$ and $f(j) > j$, and if $\xi \in \ell^2(\Z^{d'}|F)$ with $\norm{\xi}{\Hilbert_\infty} \leq 1$, then we conclude that, since $\rho_n^\ast\rho_n=1$, and, crucially, since Hypothesis (\ref{innerification-hyp}) implies:
\begin{equation*}
\forall\eta\in\Hilbert_n \quad U_{n,f(j)} \eta \cdot U_{n,f(j)}^\ast - \eta = (v_n^{z_{n,j}}-1)\eta\text,
\end{equation*}
the following computation holds:
  \begin{align*}
    & \norm{\frac{k_n(j)}{2\pi} \rho_n[U_{n,f(j)},\rho_n^\ast \xi] - U_{\infty,f(j)}\partial_{\infty,j}\xi}{\Hilbert_\infty} \\
    &\leq \norm{\left(\frac{k_n(j)}{2\pi} \rho_n\left( U_{n,f(j)} \rho_n^\ast\xi \cdot U_{n,f(j)}^\ast - \rho_n^\ast\xi \right)\cdot U_{n,j}\right) - \left(\partial_{\infty,j}  \xi\right)\cdot U_{\infty,f(j)}}{\Hilbert_\infty} \\
    &= \norm{\frac{k_n(j)}{2\pi}\rho_n\Big((v_n^{z_{n,j}}-1)\rho_n^\ast\xi\cdot U_{n,f(j)}\Big) - \left(\partial_{\infty,j}  \xi\right)\cdot U_{\infty,f(j)}}{\Hilbert_\infty} \\
    &= \norm{\frac{k_n(j)}{2\pi}\rho_n\Big(\rho_n^\ast \rho_n(v_n^{z_{n,j}}-1)\rho_n^\ast\xi\cdot U_{n,f(j)}\Big) - \left(\partial_{\infty,j}  \xi\right)\cdot U_{\infty,f(j)}}{\Hilbert_\infty} \\
    &= \norm{\frac{k_n(j)}{2\pi}\rho_n\Big(\rho_n^\ast (\underbracket[1pt]{\rho_n(v_n^{z_{n,j}}-1)\rho_n^\ast\xi}_{\coloneqq \eta})\cdot U_{n,f(j)}\Big) - \left(\partial_{\infty,j}  \xi\right)\cdot U_{\infty,f(j)}}{\Hilbert_\infty} \\
    &\leq \norm{\frac{k_n(j)}{2\pi}\underbracket[1pt]{\rho_n\Big( (v_n^{z_{n,j}}-1)\rho_n^\ast\xi \Big)}_{=\eta} \cdot U_{\infty,f(j)} - (\partial_{\infty,j}\xi) \cdot U_{\infty,f(j)}}{\Hilbert_\infty} + \frac{\varepsilon}{2} \text{ by Eq. (\ref{Un-Uinfty-eq})}\\
    &\leq \norm{\rho_n\Big(\frac{k_n(j)}{2\pi}(v_n^{z_{n,j}}-1)\rho_n^\ast\xi \Big) - (\partial_{\infty,j}\xi)}{\Hilbert_\infty} + \frac{\varepsilon}{2} \\
    &\leq \varepsilon \text{ by Eq. (\ref{Gamma-cv-eq1}).}
  \end{align*}

  Similarly, we can prove with the same method as for the proof of Expression (\ref{Un-Uinfty-eq}) that for all $n\in\Nbar$, $j \in \{1,\ldots,d\}$ such that $k_n(j) < \infty$, and for all $\xi \in \Hilbert_\infty$,
  \begin{equation*}
    \norm{\rho_n(U_{n,f(j)}^\ast \rho_n^\ast \xi) - U_{\infty,j}^\ast \xi}{\Hilbert_\infty} \leq \frac{\varepsilon}{2} \text;
  \end{equation*}
  and therefore, if $f(j) > j$, then
  \begin{equation*}
    \norm{\frac{k_n(j)}{2\pi} \rho_n[U_{n,j}^\ast,\rho_n^\ast\xi] + U_{\infty,j}^\ast\partial_{\infty,j}\xi}{\Hilbert_\infty} \leq \varepsilon \text,
  \end{equation*}
  using the observation that
  \begin{equation*}
    \left[U_{n,f(j)}^\ast,\rho_n^\ast\xi\right] = U_{n,f(j)}^\ast\left((\unit_n - v_n^{z_{n,j}})\rho_n^\ast\xi\right) \text.
  \end{equation*}

  Therefore, for all $n\geq K$, if $j \in\{1,\ldots,d\}$, if $k_n(j)<\infty$, and if $f(j) > j$, then we conclude that
  \begin{align*}
       & \norm{(\rho_n \Gamma_{n,j}\rho_n^\ast - \Gamma_{\infty,j})\xi}{\Hilbert_\infty} \\
    &= \norm{\frac{-k_n(j)}{2i\pi} \rho_n\left[Y_{n,j},\rho_n^\ast\xi\right] - X_{\infty,j}\partial_{\infty,j}\xi}{\Hilbert_\infty} \\
    &\leq \frac{1}{2}\bigg(\norm{\frac{k_n(j)}{2\pi} \rho_n\left[U_{n,j},\rho_n^\ast\xi\right] - U_{\infty,j}\partial_{\infty,j} \xi}{\Hilbert_\infty} \\
    &\quad + \norm{\frac{k_n(j)}{2\pi} \rho_n\left[U^\ast_{n,j},\rho_n^\ast\xi\right] + U_{\infty,j}^\ast\partial_{\infty,j} \xi}{\Hilbert_\infty} \bigg) \leq \varepsilon \text,
  \end{align*}
  and similarly,
  \begin{align*}
    \norm{(\rho_n \Gamma_{n,d+j} \rho_n^\ast - \Gamma_{\infty,d+j})\xi}{\Hilbert_\infty}
    &= \norm{\frac{k_n(j)}{2 i \pi}\rho_n[X_{n,j},\rho_n^\ast\xi] - Y_{\infty,j}\partial_{\infty,j}\xi}{\Hilbert_\infty} \\
    &\leq \varepsilon \text.
  \end{align*}

  The only difference when working with $n\in\N$, $j \in \{1,\ldots,d\}$, and $f(j) < j$, if that the roles of $U_{n,j}$ and $U_{n,j}^\ast$ are reversed in the above computation, so we obtain the same inequalities.

  Thus, we conclude that for all $j\in\{1,\ldots,d+d'\}$ with $k_n(j) < \infty$:
  \begin{equation*}
    \norm{(\rho_n \Gamma_{n,j} \rho_n^\ast - \Gamma_{\infty,j})\xi}{\Hilbert_\infty} \leq \varepsilon \text.
  \end{equation*}
  
  Last, if $j \in \{1,\ldots,d\}$ and $k'_n(j) = \infty$, then an easy computation shows, using similar method to the ones used for Expression (\ref{Un-Uinfty-eq}) once more, that, for all $s\in\{0,d\}$:
  \begin{equation*}
    \norm{(\rho_n \Gamma_{n,j+s} \rho_n^\ast - \Gamma_{\infty,j+s})\xi}{\Hilbert_\infty} \leq \varepsilon  \text.
  \end{equation*}
  
  This concludes our proof.
\end{proof}

A first major consequence of Lemma (\ref{Gamma-cv-lemma}) is a form of continuity for the L-seminorms from Hypothesis (\ref{L-seminorm-hyp}). Quantum and fuzzy tori are the fibers of a continuous field of C*-algebras, with the C*-algebra of continuous sections given by a twisted groupoid C*-algebra, where the groupoid is a bundle over the base space $\Xi^d$, with fiber $\Z^d_k$ over each point $(k,\sigma)\in\Xi^d$, as explained in \cite{Latremoliere05}. 

\begin{theorem}[{\cite{Latremoliere05}}]\label{continuous-clifford-field-thm}
  Assume Hypothesis (\ref{L-seminorm-hyp}). If $F\subseteq \Z^d$ be a nonempty finite subset of $\Z^d$, and if $a \in \ell^1(\Z^d|F)$, then
  \begin{equation*}
    \lim_{n\rightarrow\infty} \Lip_n(\pi_n(a\circ q_n^{-1})) = \Lip_\infty(\pi_\infty(a))\text.
  \end{equation*}
\end{theorem}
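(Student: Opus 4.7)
The plan is to reduce the statement to a norm-continuity assertion in the continuous field of twisted group C*-algebras $(\B_n)_{n\in\Nbar}$ established in \cite{Latremoliere05}. For this, write $b_n = \pi_n(a\circ q_n^{-1})$. Since $a$ is finitely supported, so is $a\circ q_n^{-1}$, and thus Theorem \ref{spectral-triple-thm} applies, giving $b_n^\circ\dom{\Dirac_n}\subseteq\dom{\Dirac_n}$ and
\begin{equation*}
[\Dirac_n,b_n^\circ] = (\mathrm{id}\otimes c)(\grad{n}{b_n}),
\end{equation*}
where $\grad{n}{b_n}\in\B_n\otimes_{\mathrm{alg}}\alg{Cl}(\C^{d+d'})$ is the explicit element from Expression (\ref{gradiant-eq}). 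Since $\pi_n:\B_n\rightarrow B(\Hilbert_n)$ is a faithful, hence isometric, *-representation, and since $c:\alg{Cl}(\C^{d+d'})\rightarrow B(\mathscr{C})$ is a faithful *-representation of a finite-dimensional C*-algebra, the tensor product *-representation $\pi_n\otimes c$ is isometric on $\B_n\otimes\alg{Cl}(\C^{d+d'})$ (the tensor product having a unique C*-norm as $\alg{Cl}(\C^{d+d'})$ is finite-dimensional). Therefore $\Lip_n(b_n) = \norm{\grad{n}{b_n}}{\B_n\otimes\alg{Cl}(\C^{d+d'})}$.

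Second, I would identify $(\grad{n}{b_n})_{n\in\Nbar}$ as a continuous section of the continuous field of C*-algebras $(\B_n\otimes\alg{Cl}(\C^{d+d'}))_{n\in\Nbar}$, built upon the continuous field of twisted group C*-algebras on the base $\Xi^{d'}$ from \cite{Latremoliere05}. Since $a = \sum_{m\in F} a_m\delta_m$ is a finite sum, every summand in Expression (\ref{gradiant-eq}) reduces, by direct computation, to a \emph{fixed} finite linear combination of monomials in $U_{n,1},\ldots,U_{n,d'}$ and their adjoints. Explicitly, in the $\partial$--case ($k_n(j)=\infty$, $j\leq d$), each term $X_{n,j}\partial_n^j(b_n) = \sum_{m\in F} ia_m m_j\,X_{n,j} W_{k_n,\sigma_n}^{q_n(m)}$ is manifestly a polynomial section. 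In the discrete case ($k_n(j)<\infty$, $j\leq d$), we use the commutation relations of Hypothesis \ref{innerification-hyp}, which yield
\begin{equation*}
\frac{k_n(j)}{2\pi i}\bigl[U_{n,f(j)},W_{k_n,\sigma_n}^{q_n(m)}\bigr] = \frac{k_n(j)}{2\pi i}\Bigl(e^{\frac{2\pi i \mathrm{fsgn}(j) m_j}{k_n(j)}} - 1\Bigr)U_{n,f(j)}W_{k_n,\sigma_n}^{q_n(m)},
\end{equation*}
and the scalar in front converges, as $k_n(j)\rightarrow\infty$, to $\mathrm{fsgn}(j)m_j$. Thus the coefficients defining $\grad{n}{b_n}$ are fixed polynomial expressions with \emph{scalar coefficients that converge} as $n\to\infty$, so that $(\grad{n}{b_n})_n$ is a continuous section with $\grad{\infty}{b_\infty}$ as its value at $\infty$ (matching the first sum in Expression (\ref{gradiant-eq}) applied to $b_\infty$, since the discrete derivations for $k_n(j)<\infty$ pass in the limit to the continuous derivation $\partial_\infty^j$, multiplied by the appropriate scalar generator $X_{\infty,j}$ or $Y_{\infty,j}$).

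Finally, I would invoke the continuity of the norm in the continuous field of twisted group C*-algebras (\cite[Theorem 3.8]{Latremoliere13c} and the underlying continuous field result of \cite{Latremoliere05}) applied to each fixed monomial, together with the convergence of scalar coefficients, to conclude
\begin{equation*}
\lim_{n\rightarrow\infty}\norm{\grad{n}{b_n}}{\B_n\otimes\alg{Cl}(\C^{d+d'})} = \norm{\grad{\infty}{b_\infty}}{\B_\infty\otimes\alg{Cl}(\C^{d+d'})},
\end{equation*}
which via Step 1 gives the theorem. The main obstacle to surmount is the cancellation between the diverging factor $k_n(j)/(2\pi)$ and the vanishing commutator $[U_{n,f(j)},b_n]$ in the discrete cases: one must verify that the resulting scalar multipliers are bounded along the sequence and converge to the correct limit, which is precisely the content of the elementary estimate $\frac{k_n(j)}{2\pi i}(e^{2\pi i m_j/k_n(j)} - 1)\rightarrow m_j$. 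Once this reduction to scalar convergence is carried out for each of the finitely many monomials $(m\in F, j\in\{1,\ldots,d+d'\})$, the conclusion follows directly from the fact that a finite linear combination, with convergent scalar coefficients, of continuous sections of a continuous field of C*-algebras is itself a continuous section, and norms of continuous sections are continuous.
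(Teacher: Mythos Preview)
Your proposal is correct and follows the same overall strategy as the paper: reduce $\Lip_n(b_n)$ to the C*-norm of $\grad{n}{b_n}$ in $\B_n\otimes\alg{Cl}(\C^{d+d'})$ via faithfulness of $\pi_n\otimes c$, then show that $(\grad{n}{b_n})_{n\in\Nbar}$ is a continuous section of the continuous field built in \cite{Latremoliere05}, so that norms converge.

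The execution differs in packaging. The paper observes (via Example~(\ref{Clifford-ex})) that $\alg{Cl}(\C^{d+d'})$ is itself a fuzzy torus, so that $\B_n\otimes\alg{Cl}(\C^{d+d'})$ is again a twisted group C*-algebra over $\widehat{G_n}\times\Z^{d+d'}_{(2,\ldots,2)}$; it then assembles these into a single twisted groupoid C*-algebra and checks that the function $g(n,m,l)=\grad{n}{(a\circ q_n^{-1})}(m,l)$ is continuous and compactly supported, invoking Lemma~(\ref{Gamma-cv-lemma}) (the Hilbert-space convergence $\rho_n\Gamma_{n,j}\rho_n^\ast\to\Gamma_{\infty,j}$ on finitely supported vectors) for the continuity step. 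You instead stay at the level of $\B_n$ tensored with a fixed finite-dimensional algebra, expand each summand of $\grad{n}{b_n}$ as a finite linear combination of monomials $W^{q_n(m)}$, $U_{n,f(j)}W^{q_n(m)}$, etc., and verify directly that the scalar coefficients converge---the key computation being $\frac{k_n(j)}{2\pi i}\bigl(e^{2\pi i m_j/k_n(j)}-1\bigr)\to m_j$, which is exactly the estimate underlying Lemma~(\ref{Gamma-cv-lemma}). Your route is slightly more elementary in that it avoids the groupoid formalism; the paper's route has the advantage of reusing Lemma~(\ref{Gamma-cv-lemma}), which is needed again later for the D-norm continuity in Lemma~(\ref{CDN-continuity-lemma}).
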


\begin{proof}
  In this proof, as seen in Example (\ref{Clifford-ex}), the C*-algebra $\alg{Cl}(\C^{d+d'})$ is a fuzzy torus $\qtd{d+d'}{(2,\ldots,2)}{\varsigma}$, for the normalized $2$-cocycle defined in that example. As $\alg{Cl}(\C^{d+d'})$ is finite dimensional, we note that, \emph{as vector spaces}, $\alg{Cl}(\C^{d+d'}) = \ell^1(\Z^{d+d'}_{2,\ldots,2})$ --- the norms are different, of course.
  
  We also identify $\B_n$ with the $C^\ast$ completion of $\ell^1(\widehat{G_n})$ via the *-isomorphism $\pi_n$, without further mention, to keep notations simpler.

  \medskip
  
  Let $\mathscr{G} = \coprod_{n\in\Nbar} \left(\widehat{G_n}\times \Z^{d+d'}_{(2,\ldots,2)}\right)$ be the disjoint union of the spaces $\widehat{G_n}\times\Z^{d+d'}_{(2,\ldots,2)}$ --- i.e. $x\in\mathscr{G}$ if and only if $x=(n,m,l)$ for $n\in\Nbar$, $m\in \widehat{G_n}$, and $l\in \Z^{d+d'}_{(2,\ldots,2)}$.
  
  The function
  \begin{equation*}
    (n,m,l) \in \mathscr{G} \rightarrow (n,q^{-1}(m),l) \in \Nbar\times\Z^{d'}\times \Z^{d+d'}_{(2,\ldots,2)}
  \end{equation*}
  is trivially an injection, with range $\{ (n,m,l) : n\in\Nbar,m\in C_n,l\in\Z^{d+d'}_{(2,\ldots,2)}\}$; we endow $\mathscr{G}$ with the unique topology such that this injective map is an homeomorphism onto its image, where $\Nbar\times\Z^{d'}\times\Z^{d+d'}_{(2,\ldots,2)}$ is endowed with the product topology. The topology on $\mathscr{G}$ is thus metrizable, and a sequence $(n_p,m_p,l_p)_{p\in\N}$ in $\mathscr{G}$ converges if, and only if, it is eventually constant, or $\lim_{p\rightarrow\infty} n_p = \infty$ and $(m_p,l_p)_{p\in\N}$ is eventually constant.

  \medskip
  
  The set $\mathscr{G}$ is trivially a groupoid, where the source and target maps are both given by
  \begin{equation*}
    s : (n,m,l) \in \mathscr{G} \mapsto n \in \Nbar
  \end{equation*}
  (and thus, the space of $\mathscr{G}^{(0)}$ is $\Nbar$), the partial multiplication is defined on the space
  \begin{equation*}
    \mathscr{G}^{(2)} = \left\{ ((n,m,l),(n',m',l')) \in \mathscr{G} : n=n' \right\}
  \end{equation*}
  by setting:
  \begin{equation*}
    \forall ((n,m,l),(n,m',l')) \in \widehat{G}^{(2)} \quad (n,m,l)(n,m',l') = (n,m + m', l + l') \text,
  \end{equation*}
  and the inverse of $(n,m,l) \in \widehat{G}$ is $(n,-m,-l)$. If, moreover, for all $n\in\Nbar$, we let $\lambda_n$ be the counting measure on the discrete group $\widehat{G}_n\times\Z^{d+d'}_{(2,\ldots,2)}$, then $\mathscr{G}$ is easily checked to be a locally compact groupoid with (left) Haar measure $(\lambda_n)_{n\in\Nbar}$.

  For any $(n,m,l), (n,m',l') \in \mathscr{G}^{(2)}$, we then set:
  \begin{equation*}
    \beta((n,m,l),(n,m',l')) = \sigma'_{n}(m,m')\varsigma(l,l') \text.
  \end{equation*}
  Thus defined, $\beta$ is a continuous $2$-cocycle of $\mathscr{G}$ ((see, e.g.,  \cite[Sec. 2.1.]{Latremoliere05} and of course, \cite{Renault80})).

  For each $n\in\Nbar$, we record that $s^{-1}(\{n\}) = \{n\}\times\widehat{G_n}\times\Z^{d+d'}_{(2,\ldots,2)}$, and that $\beta$, restricted to $s^{-1}(\{n\})$, is 
  \begin{equation*}
    \beta_n : (n,m,l),(n,m,'l')\in\widehat{G_n}\times \Z^{d+d'}_{(2,\ldots,2)} = \sigma'_n(m,m') \varsigma(l,l') \text,
  \end{equation*}
  so that
  \begin{equation*}
    \B_n\otimes \alg{Cl}(\C^{d+d'}) \text{ is *-isomorphic to } C^\ast(\widehat{G_n}\times \Z^{d+d'}_{2,\ldots,2},\beta_n) \text,
  \end{equation*}
  where our chosen *-isomorphism sends any element of the form $a\otimes b \in \ell^1(\widehat{G_n})\otimes \alg{Cl}(\C^{d+d'})$ to the function $(m,l) \in \widehat{G_n}\times \Z^{d+d'}_{(2,\ldots,2)} \mapsto a(m) b(l)$ (the fact that this map extends to a *-isomorphism follows form a standard argument based on the universality of the C*-algebras involved, which are all fuzzy tori).

  \medskip
  
  We therefore conclude by \cite[Theorem 2.6]{Latremoliere05} that $C^\ast(\mathscr{G},\beta)$ is the C*-algebra of continuous sections for the family $(\B_n\otimes\alg{Cl}(\C^{d+d'}))_{n\in\Nbar}$ of C*-algebras.
  
  In particular, if $g : \mathscr{G} \rightarrow\C$ is continuous and compactly supported on $\mathscr{G}$, and if, for all $n\in\Nbar$, we define $g^n$ by
  \begin{equation*}
    g^n : (m,l) \in \widehat{G_n}\times\Z^{d+d'}_{(2,\ldots,2)} \mapsto g(n,m,l)\text,
  \end{equation*}
  then $\lim_{n\rightarrow\infty} \norm{g^n}{\B_n\otimes \alg{Cl}(\C^{d+d'})} = \norm{g^\infty}{\B_\infty\otimes\alg{Cl}(\C^{d+d'})}$.
  
  \medskip

  Let now $a \in \ell^1(\Z^{(d')}|F)$, and let $N \in \N$ be chosen such that, for all $n\in \Nbar$ with $n\geq N$, we have $F\subseteq C_n$. We define the function:
  \begin{equation*}
    g : (n,m,l) \in \mathscr{G} \mapsto
    \begin{cases}
      \grad{n}{\left(a\circ q_n^{-1}\right)}(m,l) \text{ if $n\geq N$,}\\
      0 \text{ otherwise,}
    \end{cases}
  \end{equation*}
  using the notation defined  in Expression (\ref{gradiant-eq}).

  It is immediate to check that $g$ is compactly supported on $\mathscr{G}$. Let now $(n_p,m_p,l)_{p\in\N}$ be a sequence in $\mathscr{G}$ such that $\lim_{p\rightarrow\infty} n_p = \infty$, and $q_{n_p}(m_p) = m$ for all $p \in \N$. We aim at showing that $(g(n_p,m_p,l))_{p\in\N}$ converges to $g(\infty,m,l)$.

  We make the simple observation that, for all $n\in\Nbar$ with $n\geq N$, since $a\circ q_n^{-1}$ is finitely supported, it is an element of $\Hilbert_n$, and with this observation,
  \begin{equation*}
    \grad{n}{a} = \sum_{j=1}^{2d} \Gamma_{n,j}(a\circ q_n^{-1})\otimes \gamma_j \text.
  \end{equation*}

  Using Expression (\ref{gradiant-eq}), we then compute, for all $p\in\N$:
  \begin{align*}
    \big| \grad{n_p}{a\circ q_{n_p}^{-1}}(m_p,l) &- \grad{\infty}{a}(m,l) \big| \\
    &=\left| \grad{n_p}{\left(a\circ q_{n_p}^{-1}\right)}(q_{n_p}(m),l) - \grad{\infty}{a}(m,l) \right| \\
    &= \left| \sum_{j=1}^{2d} \left( \Gamma_{n_p,j}(a\circ q_{n_p}^{-1})(q_{n_p}(m)) - \Gamma_{\infty,j}(a)(m)\right) \gamma_j(l) \right| \\
    &\leq \sum_{j=1}^{2d} \left|\left(\Gamma_{n_p,j}(a\circ q_{n_p}^{-1})(q_{n_p}(m)) - \Gamma_{\infty,j}(a)(m)\right)\right| |\gamma_j(l)| \\
    &\leq \sum_{j=1}^{2d} \norm{\rho_{n_p} \Gamma_{n_p,j} \rho_{n_p}^\ast a - \Gamma_{\infty} a}{\Hilbert_\infty}\cdot 1 \text{ by Eq. \ref{Gamma-cv-lemma} } \\
    &\xrightarrow{p\rightarrow\infty} 0 \text.
  \end{align*}

  Therefore, the function $g$ is continuous on $\mathscr{G}$.

  Consequently, using Theorem (\ref{spectral-triple-thm}),
  \begin{multline*}
    \Lip_\infty(a) = \norm{\grad{\infty}{a}}{\B_\infty\otimes\alg{Cl}(\C^{d+d'})} = \lim_{n\rightarrow\infty} \norm{\grad{n}{\left(a \circ q_n^{-1}\right)}}{\B_n\otimes\alg{Cl}(\C^{d+d'})} \\ = \lim_{n\rightarrow\infty} \Lip_n(a\circ q_n^{-1}) \text.
  \end{multline*}
  This concludes our proof.
\end{proof}

Now, Theorem (\ref{L-approx-thm}), Theorem (\ref{metric-spectral-triple-thm}), and Theorem (\ref{continuous-clifford-field-thm}) are all we need to apply the methods of \cite{Latremoliere13b} to conclude:

\begin{theorem}\label{qcms-thm}
  If we assume Hypothesis (\ref{L-seminorm-hyp}), then
  \begin{equation*}
    \lim_{n\rightarrow\infty} \dpropinquity{}\left( \left( \A_n, \Lip_{n} \right), \left( \A_\infty, \Lip_{\infty} \right) \right) = 0 \text{.}
  \end{equation*}
\end{theorem}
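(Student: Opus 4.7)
The plan is to construct, for each small $\varepsilon > 0$ and each sufficiently large $n$, a tunnel from $(\A_n,\Lip_n)$ to $(\A_\infty,\Lip_\infty)$ (in the sense of Definition \ref{tunnel-def}) whose extent is $O(\varepsilon)$, so that the propinquity goes to zero. The strategy is to reduce the problem to comparing the L-seminorms on finite-dimensional subspaces of finitely supported elements, where we have actual convergence.

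First, I would apply Theorem \ref{L-approx-thm} once with parameter $\varepsilon$ to obtain a fixed finite set $S \subseteq \Z^d$ (with $0 \in S$) and $N \in \N$ such that, for all $n \geq N$ and all $a \in \dom{\Lip_n}$, the finitely supported element $b_n \coloneqq \alpha_n^f(a)$ lies in the linear span $V_n$ of $\{\pi_n(\delta_m) : m \in q_n(S)\}$, satisfies $\Lip_n(b_n) \leq (1+\varepsilon)\Lip_n(a)$, and $\norm{a - b_n}{\A_n} \leq \varepsilon \Lip_n(a)$. Since $k_n(j)$ eventually exceeds any bound whenever $k_n(j) < \infty$, the restriction $q_n|_S$ is a bijection onto $q_n(S)$ for $n$ large, so that the natural map $\Phi_n : V_\infty \to V_n$, sending $\pi_\infty(\delta_m)$ to $\pi_n(\delta_{q_n(m)})$, is a linear bijection.

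Second, I would invoke Theorem \ref{continuous-clifford-field-thm}: for every $a \in V_\infty$, $\Lip_n(\Phi_n(a)) \to \Lip_\infty(a)$. Because $V_\infty$ is finite dimensional and $\Lip_\infty$ is a seminorm vanishing only on $\R\Phi_\infty(\delta_0) = \R\unit$, this convergence is uniform on the set of self-adjoint elements $a \in V_\infty$ with $\norm{a}{\A_\infty} \leq R$ for any fixed $R$; combined with Corollary \ref{mvt-cor}, the Lipschitz balls are bounded in norm modulo scalars. Thus for $n$ large, $|\Lip_n \circ \Phi_n - \Lip_\infty|$ is small on the relevant finite-dimensional cone.

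Third, I would build the tunnel exactly as in \cite{Latremoliere13c}. Using the continuous field of twisted groupoid C*-algebras from \cite{Latremoliere05}, whose fibers are $\A_n$ (as in the proof of Theorem \ref{continuous-clifford-field-thm}), one forms a bridge in the sense of Latremoliere between $\A_n$ and $\A_\infty$ via a suitable trace-like state, and converts it into a tunnel on $\A_n \oplus \A_\infty$ by defining a bridge seminorm
\begin{equation*}
\TLip_n(a,b) = \max\{\Lip_n(a),\Lip_\infty(b),\tfrac{1}{\varepsilon}\norm{\Phi_n^{-1}(\alpha_n^f(a)) - \alpha_\infty^f(b)}{\A_\infty}\} \text,
\end{equation*}
with the two coordinate projections as quantum isometries. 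The three hypotheses that drive the verification --- that each $(\A_n,\Lip_n)$ is a qcms (Theorem \ref{metric-spectral-triple-thm}), that every Lipschitz element is $\varepsilon\Lip_n$-close to an element of $V_n$ with controlled seminorm (Theorem \ref{L-approx-thm}), and that $\Lip_n \circ \Phi_n \to \Lip_\infty$ on $V_\infty$ (Theorem \ref{continuous-clifford-field-thm}) --- are formally identical to the hypotheses used in \cite[Sections 4--5]{Latremoliere13c}, so the machinery there produces a tunnel whose extent is bounded by a universal constant times $\varepsilon$.

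The hard part, and the reason the author defers to \cite{Latremoliere13c}, is verifying the extent estimate: one must show that every state of the bridge algebra lies close in the $\Kantorovich{\TLip_n}$ metric to a state lifted from $\A_n$ (resp.\ $\A_\infty$). This uses Corollary \ref{mvt-cor} (the mean value type estimate controlling $\norm{a - \alpha_n^f(a)}{\A_n}$ by $\mathsf{slen}$), together with the finite-dimensional convergence, to construct, given a state on one side, an approximating state on the other with controlled Kantorovich distance. Once that estimate is in place, the extent of the tunnel tends to $0$ as $\varepsilon \to 0$ and $n\to\infty$, yielding the desired limit.
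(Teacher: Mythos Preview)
Your overall strategy matches the paper's: reduce to finite-dimensional comparison via Theorem~\ref{L-approx-thm}, use the continuity of Theorem~\ref{continuous-clifford-field-thm}, and then invoke the tunnel machinery of \cite{Latremoliere13c}. The paper's proof is exactly this outline, deferring the verification to \cite[Section~5]{Latremoliere13c}.

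However, the explicit bridge seminorm you write down,
\[
\TLip_n(a,b) = \max\left\{\Lip_n(a),\Lip_\infty(b),\tfrac{1}{\varepsilon}\norm{\Phi_n^{-1}(\alpha_n^f(a)) - \alpha_\infty^f(b)}{\A_\infty}\right\},
\]
is not the one used in the paper or in \cite{Latremoliere13c}, and it is problematic: the maps $\alpha_n^f$ are linear but not multiplicative, so the third term does not obviously satisfy the Leibniz inequality required by Definition~\ref{qcms-def} for $(\A_n\oplus\A_\infty,\TLip_n)$ to be a {\qcms}, hence for $\tau_n$ to be a tunnel at all. The paper instead records (from \cite{Latremoliere13c}) that the tunnel is built via faithful $\ast$-representations $\vartheta_n$ of $\A_n$ on $\ell^2(\Z^d)$ and a finite-rank \emph{pivot} operator $T_N$, with third term
\[
\tfrac{1}{\varepsilon}\opnorm{\vartheta_n(a)T_N - T_N\vartheta_\infty(b)}{}{\ell^2(\Z^d)}.
\]
This expression is of the form $\opnorm{\pi(a,b)T_N - T_N\pi'(a,b)}{}{}$ for $\ast$-representations $\pi,\pi'$ of $\A_n\oplus\A_\infty$, and a direct commutator-style computation shows it does satisfy the Leibniz inequality. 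Your mention of ``a suitable trace-like state'' hints at the bridge formalism, but the formula you actually wrote does not implement it; you should either use the pivot-operator form, or supply a separate argument that your $\TLip_n$ is Leibniz.
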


The proof of Theorem (\ref{qcms-thm}) is essentially the same as the proof of \cite[Theorem 5.2.5]{Latremoliere13}, once we replace some intermediate results in \cite{Latremoliere13c} with results proven here.

\begin{proof}[{Proof of Theorem \ref{qcms-thm}}]
  For this proof, we will implicitly use the *-isomorphism $\pi_n$ from the completion of $(\ell^1(\Z^d_{k_n}),\conv{k_n,\sigma_n},\cdot^\ast)$ for its C*-norm, onto $\A_n$. Let $S\subseteq\Z^{d}$ be finite, and let $V = \ell^1(\Z^{d}|S)\cap\sa{\A_n}$. Set $F = S\times\{(0,\ldots,0)\} \subseteq\Z^{(d')}$, and let $N\in\N$ be chosen so that $n\geq N\implies F \subseteq C_n$. We write $\Nbar_N = \{ n \in \Nbar: n\geq N\}$.

  If $f\in V$, then $n\in\Nbar\mapsto \Lip_n(f\circ q_n^{-1})$ is continuous, by Theorem (\ref{continuous-clifford-field-thm}), and since $\Nbar$ is compact, we conclude that $\sup_{n\in\Nbar} \Lip_n(f \circ q_n^{-1}) < \infty$. We thus define the seminorm $\mathrm{m} : f \in V \mapsto \sup_{n\in\Nbar} \Lip_n(f \circ q_n^{-1})$. Therefore, by \cite[Lemma 4.2.5]{Latremoliere13b}, and by Theorem (\ref{continuous-clifford-field-thm}), the map
  \begin{equation*}
    (n,f) \in \Nbar_N\times V \mapsto \Lip_n(f\circ q_n^{-1})
  \end{equation*}
  is continuous (since $V$ is finite dimensional, it is not important to specify the norm on  $V$, though of course, it is natural to choose $\mathrm{m}$). This can now be used in place of the conclusion of \cite[Theorem 4.2.7]{Latremoliere05}, together with Theorem (\ref{L-approx-thm}), in \cite[Section 5]{Latremoliere13b}, to conclude our proof. We record the following main tools used in \cite[Theorem 5.2.5]{Latremoliere13b}.

  Let $\varepsilon > 0$. There exist a finite subset $S\subseteq \Z^d$, a natural number $N\in\N$ and a finite rank operator $T_N$ on $\ell^2(\Z^d)$ with $\opnorm{T_N}{}{\ell^2(\Z^d)} = 1$, such that, for all $n\geq N$, there exists a *-representation $\vartheta_n$ of $\A_n$ on $\ell^2(\Z^d)$, with the following property: if we set, for all $(a,b) \in \sa{\A_n\oplus\A_\infty}$, 
  \begin{itemize}
  \item $\TLip_n(a,b) = \max\left\{ \Lip_n(a), \Lip_\infty(b), \frac{1}{\varepsilon} \opnorm{\vartheta_n(a) T_N - T_N \vartheta_\infty(b)}{}{\ell^2(\Z^d)} \right\} \text{,}$
  \item $y_n(a,b) = a \text{ and }y_{\infty}(a,b) = b \text{,}$
  \end{itemize}
  then:
  \begin{equation*}
    \tau_n = \left(\A_n\oplus\A_\infty, \TLip_{n}, y_n, y_{\infty} \right)
  \end{equation*}
  is a tunnel from $(\A_n,\Lip_n)$ to $(\A_\infty,\Lip_\infty)$ whose extent is at most $\varepsilon$. This concludes our proof, but we record two more details about the structure of our tunnels.

  First, if $a \in \ell^1(\Z^d|S)$, then:
  \begin{equation*}
    \opnorm{[T_n,\vartheta_n(a)]}{}{\ell^2(\Z^d)} \leq \varepsilon \Lip_n(a) \text{.}
  \end{equation*}

  This completes the summary of the properties of the tunnels $\tau_n$ constructed in \cite[Theorem 5.2.5]{Latremoliere13b}.
\end{proof}

We now adjust the presentation of the tunnels constructed in the proof of Theorem (\ref{qcms-thm}), in preparation for our work with the Dirac operators. To this end, we use the same notation as in the proof of Theorem (\ref{qcms-thm}).

\begin{notation}\label{new-tunnel-notation}
  The Hilbert space $\mathscr{J}_\infty=\ell^2(\Z^{d'},\mathscr{C})$ is isometrically isomorphic to the Hilbert space $\ell^2(\Z^d)\otimes\ell^2(\Z^{d'-d},\mathscr{C})$, via the usual unitary, which extends the function which sends $\xi\otimes\eta\in \ell^2(\Z^d)\otimes\ell^2(\Z^{d'-d},\mathscr{C})$ to
  \begin{equation*}
    (m_1,\ldots,m_{d'}) \in \Z^{(d')} \longmapsto \xi(m_1,\ldots,m_d) \eta(m_{d+1},\ldots,m_{d'}) \text,
  \end{equation*}
  with the convention that a function of $0$ variables (if $d'=d$ above) is a constant.

  With this identification, let $\theta_n(a) = \vartheta_n(a) \otimes \unit_{\ell^2(\Z^{d'-d},\mathscr{C})}$, where $\unit_{\ell^2(\Z^{d'-d},\mathscr{C})}$ is the identity of $\ell^2(\Z^{d'-d},\mathscr{C})$.

  In particular, by \cite{Latremoliere13c}, $\theta_\infty(a) = a^\circ$ for all $a\in\A_\infty$.  Again, using this identification, we then note that if we set $R_N = T_N \otimes \unit_{\ell^2(\Z^{d'-d},\mathscr{C})}$, then, for all $a\in\A_n$ and $b \in \A_\infty$:
  \begin{equation}\label{qt-tunnel-eq}
    \opnorm{\theta_n(a) R_N - R_N \theta_\infty(b)}{}{\mathscr{J}_\infty} = \opnorm{\vartheta_{n}(a) T_N - T_N \vartheta_{\infty}(b)}{}{\ell^2(\Z^d)} \text.
  \end{equation}
  
  Therefore, the L-seminorm $\TLip_n$ of the tunnels $\tau_n$ of the proof of Theorem (\ref{qcms-thm}) can be rewritten, for all $a\in\sa{\A_n}$ and $b \in \sa{\A_\infty}$, as
  \begin{equation*}
    \TLip_n(a,b) = \max\left\{\Lip_n(a),\Lip_\infty(b),\frac{1}{\varepsilon} \opnorm{\theta_n(a) R_N - R_N\theta_\infty(b)}{}{\mathscr{J}_\infty} \right\} \text.
  \end{equation*}

\end{notation}

We now record two more properties proven in \cite[Theorem 5.2]{Latremoliere13c}, adjusted to our current notation.  First, we also note that, for all $n\geq N$, if $a\in\sa{\A_n}$ is finitely supported, then
  \begin{equation}\label{tunnel-commutator-eq}
    \opnorm{\left[a^\circ ,R_N\right]}{}{\mathscr{J}_n} = \opnorm{\left[a,T_N\right]}{}{\ell^2(\Z^d)} \leq \varepsilon \Lip_n(a) \text.
  \end{equation}
  
Second, we also note that, for any finite subset $F\subseteq\Z^{(d')}$, we can always choose $R_N$ such that $R_N$ restricted to $\{\xi \in \mathscr{J}_\infty : \forall n\in\Z^{d'}\setminus F \quad \xi(n) = 0 \}$ is the identity.

\medskip

The advantage of this new formulation is that, now, our tunnels $\tau_n$ are built using the Hilbert space $\mathscr{J}_n$, on which our spectral triples are also constructed. We are now ready to conclude that our sequence of metric spectral triples from Hypothesis (\ref{Dirac-hyp}) converge.

\subsection{Convergence of the Spectral Triples}

We now conclude with our main result about the convergence of the spectral triples of Hypothesis (\ref{Dirac-hyp}), where we use the spectral propinquity, as explained in the introduction.

\begin{notation}\label{mcc-notation}
  Assume Hypothesis (\ref{L-seminorm-hyp}). For all $n \in \Nbar$ and for all $\xi \in \dom{\Dirac_n}$, we set:
  \begin{equation*}
    \CDN_n(\xi) = \norm{\xi}{\mathscr{J}_n} + \norm{ \Dirac_n \xi }{\mathscr{J}_n}\text{.}
  \end{equation*}

  We thus have:
  \begin{equation*}
    \mcc{\A_n}{\mathscr{J}_n}{\Dirac_n} = \left( \mathscr{J}_n, \CDN_n, \C, 0, \A_n, \Lip_n \right)
  \end{equation*}
  where we regard $\mathscr{J}_n$ as a module over $\A_n$ via the *-representation $a\in\A_n\mapsto a^\circ$. By \cite{Latremoliere18g}, $\mcc{\A_n}{\mathscr{J}_n}{\Dirac_n}$ is a metrical $C^\ast$-correspondence.
\end{notation}

We construct a modular tunnel between the {\gQVB s} given by our spectral triples, namely the quadruples of the form $(\mathscr{J}_n,\CDN_n,\C,0)$. We note that $(\C,0)$ is a trivial {\qcms} (the space with one point), but we will actually need to work with Hilbert modules over $\C\oplus\C$ when building our tunnel --- so we do need the generality of Definition (\ref{mcc-def}).

\bigskip

\begin{notation}
  For any set $F\subseteq\Z^{(d')}$, we write $\mathscr{J}_n^F$ for the subspace of $\mathscr{J}_n$ consisting of vectors $\xi : \widehat{G}_n \rightarrow \mathscr{C}$ of $\mathscr{J}_n$ such that $\xi(n) = 0$ whenever $n\in\widehat{G}_n\setminus F$.
\end{notation}

A corollary of Lemma (\ref{Gamma-cv-lemma}) gives us a first form of continuity for our D-norms.

\begin{lemma}\label{CDN-continuity-lemma}
 Assume Hypothesis (\ref{Dirac-hyp}). If $F\subseteq\Z^{(d')}$ be a finite subset of $\Z^{(d')}$, and if, for all $n\in\Nbar$, we set $\varrho_n = \rho_n\otimes\unit_{\mathscr{C}}$, then we conclude:
  \begin{equation*}
    \lim_{n \rightarrow \infty} \opnorm{ \varrho_n\circ\Dirac_n\circ \varrho_n^{\ast} - \Dirac_{\infty} }{}{\mathscr{J}_\infty^F} = 0 \text{,}
  \end{equation*}
  and thus in particular:
  \begin{equation*}
    \lim_{n\rightarrow\infty} \sup_{\substack{\xi \in \mathscr{J}_\infty^F \\ \norm{\xi}{\mathscr{J}_n} \leq 1}} \left| \CDN_n(\varrho_n^{\ast}\xi) - \CDN_{\infty}(\xi) \right| = 0 \text{.}
  \end{equation*}
\end{lemma}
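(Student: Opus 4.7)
The plan is to reduce this statement directly to Lemma \ref{Gamma-cv-lemma} by exploiting the tensor decomposition $\Dirac_n = \sum_{j=1}^{d+d'} \Gamma_{n,j} \otimes c(\gamma_j)$ together with the fact that $\varrho_n = \rho_n \otimes \unit_{\mathscr{C}}$ is a tensor product whose second factor is the identity on the Clifford fiber. Because of this, the Clifford factor commutes through $\varrho_n$ and $\varrho_n^\ast$, giving the term-by-term identity
\begin{equation*}
\varrho_n \circ \Dirac_n \circ \varrho_n^\ast - \Dirac_\infty = \sum_{j=1}^{d+d'} \bigl(\rho_n \Gamma_{n,j} \rho_n^\ast - \Gamma_{\infty,j}\bigr) \otimes c(\gamma_j)
\end{equation*}
as operators defined on $\mathscr{J}_\infty^F$. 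Before writing this down, I will need to check that $\varrho_n^\ast \xi \in \dom{\Dirac_n}$ for $\xi \in \mathscr{J}_\infty^F$: since $F$ is finite, for all $n$ large enough we have $F \subseteq C_n$, and then $\varrho_n^\ast \xi$ is finitely supported on $\widehat{G_n}$, hence lies in $\dom{\Dirac_n}$ by Lemma \ref{dense-domain-lemma}. Similarly $\mathscr{J}_\infty^F \subseteq \dom{\Dirac_\infty}$.

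Once the tensor identity is in hand, the triangle inequality and the multiplicativity of the operator norm on tensor products yield, on the subspace $\mathscr{J}_\infty^F$,
\begin{equation*}
\opnorm{\varrho_n \Dirac_n \varrho_n^\ast - \Dirac_\infty}{}{\mathscr{J}_\infty^F} \;\leq\; \sum_{j=1}^{d+d'} \opnorm{\rho_n \Gamma_{n,j} \rho_n^\ast - \Gamma_{\infty,j}}{\Hilbert_\infty}{\ell^2(\Z^{(d')}|F)} \cdot \opnorm{c(\gamma_j)}{}{\mathscr{C}}.
\end{equation*}
Each $\opnorm{c(\gamma_j)}{}{\mathscr{C}}$ is finite since $\mathscr{C}$ is finite dimensional, and each operator norm of $\rho_n \Gamma_{n,j} \rho_n^\ast - \Gamma_{\infty,j}$ on $\ell^2(\Z^{(d')}|F)$ tends to $0$ by Lemma \ref{Gamma-cv-lemma}. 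This proves the first display.

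For the second display, note that for $n$ large enough so that $F \subseteq C_n$, the projection $\varrho_n \varrho_n^\ast$ acts as the identity on $\mathscr{J}_\infty^F$, so for any $\xi \in \mathscr{J}_\infty^F$ we have $\norm{\varrho_n^\ast \xi}{\mathscr{J}_n} = \norm{\xi}{\mathscr{J}_\infty}$, and since $\varrho_n$ is itself an isometry, $\norm{\Dirac_n \varrho_n^\ast \xi}{\mathscr{J}_n} = \norm{\varrho_n \Dirac_n \varrho_n^\ast \xi}{\mathscr{J}_\infty}$. Therefore
\begin{equation*}
\bigl| \CDN_n(\varrho_n^\ast \xi) - \CDN_\infty(\xi) \bigr| \leq \norm{\bigl(\varrho_n \Dirac_n \varrho_n^\ast - \Dirac_\infty\bigr) \xi}{\mathscr{J}_\infty},
\end{equation*}
and taking the supremum over $\xi$ in the unit ball of $\mathscr{J}_\infty^F$ (the condition $\norm{\xi}{\mathscr{J}_n} \leq 1$ in the statement being naturally read as $\norm{\xi}{\mathscr{J}_\infty} \leq 1$, since $\xi$ lives in $\mathscr{J}_\infty$) reduces the second claim to the first.

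There is no substantive difficulty here; the lemma is essentially a repackaging of Lemma \ref{Gamma-cv-lemma} into the language of D-norms. The only care needed is in verifying the domain question for $\varrho_n^\ast \xi$ and in observing that $\varrho_n \varrho_n^\ast$ becomes the identity on the fixed finite-dimensional subspace $\mathscr{J}_\infty^F$ as soon as $n$ is large enough for $F \subseteq C_n$ to hold.
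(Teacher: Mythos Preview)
Your proposal is correct and follows essentially the same approach as the paper's own proof: both decompose $\varrho_n \Dirac_n \varrho_n^\ast - \Dirac_\infty$ term-by-term as $\sum_j (\rho_n \Gamma_{n,j} \rho_n^\ast - \Gamma_{\infty,j}) \otimes c(\gamma_j)$, invoke Lemma~\ref{Gamma-cv-lemma} on each summand, and then deduce the second display from the first via the isometry of $\varrho_n$ and the reverse triangle inequality. Your explicit check of the domain condition $\varrho_n^\ast \xi \in \dom{\Dirac_n}$ is a nice touch that the paper leaves implicit.
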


\begin{proof}
We compute, for all $n\in\Nbar$,
\begin{align*}
  0 &\leq \opnorm{\varrho_n\circ \Dirac_n\circ\varrho_n^{\ast} - \Dirac_{\infty}}{}{\mathscr{J}_\infty^F} \\
  &\leq \sum_{j=1}^{d+d'} \opnorm{\left(\rho_n\circ \Gamma_{n,j} \circ\rho_n^{\ast}-\Gamma_{\infty,j}\right)\otimes c(\gamma_j)}{}{\mathscr{J}_\infty^F}\\
    &\leq \sum_{j=1}^{d+d'} \opnorm{\rho_n\Gamma_{n,j}\rho_n^{\ast} - \Gamma_{\infty,j}}{}{\ell^2(\Z^{d'}|F)} \opnorm{c(\gamma_j)}{}{\mathscr{C}}\\
  &\xrightarrow{n \rightarrow \infty} 0 \text{, by Lemma (\ref{Gamma-cv-lemma}).}
\end{align*}

In particular, for all $n\in\Nbar$, since $\varrho_n^\ast$ is, by construction, an isometry from $\mathscr{J}_\infty^F$, we compute:
\begin{align*}
  0 &\leq \sup_{\substack{\xi \in \mathscr{J}_\infty^F \\ \norm{\xi}{\mathscr{J}_\infty} \leq 1}} \left| \CDN_{n}(\varrho_n^{\ast}\xi) - \CDN_{\infty}(\xi) \right| \\
    &\leq \sup_{\substack{\xi \in \mathscr{J}_\infty^F \\ \norm{\xi}{\mathscr{J}_\infty} \leq 1}} \left|\underbracket[1pt]{\norm{\varrho_n^{\ast}\xi}{\mathscr{J}_n}-\norm{\xi}{\mathscr{J}_{\infty}}}_{=0} + \norm{\Dirac_n\varrho_n^\ast \xi}{\mathscr{J}_n} - \norm{\Dirac_{\infty}\xi}{\mathscr{J}_{\infty}}\right|\\
    &= \sup_{\substack{\xi \in \mathscr{J}_\infty^F \\ \norm{\xi}{\mathscr{J}_\infty} \leq 1}} \left|\norm{\varrho_n\circ\Dirac_n\circ\varrho_n^{\ast}\xi}{\mathscr{J}_\infty} - \norm{\Dirac_{\infty}\xi}{\mathscr{J}_\infty}\right|\\
    &\leq \sup_{\substack{\xi \in \mathscr{J}_\infty^F \\ \norm{\xi}{\mathscr{J}_\infty} \leq 1}} \norm{\varrho_n\circ\Dirac_n\circ\varrho_n^{\ast}\xi - \Dirac_{\infty}\xi}{\mathscr{J}_\infty}\\
    &= \opnorm{\varrho_n\circ \Dirac_n\circ\varrho_n^{\ast} - \Dirac_{\infty}}{}{\mathscr{J}_\infty^F}\\
  &\xrightarrow{n\rightarrow\infty} 0 \text{.}
\end{align*}

This concludes our proof.
\end{proof}

The same argument as given in Lemmas (\ref{mvt-discrete-lemma}) and Lemma (\ref{mvt-continuous-lemma}) applies to give the following result.
\begin{lemma}\label{Hilbert-mvt-lemma}
  Assume Hypothesis (\ref{Dirac-hyp}). For all $n\in\Nbar$, and for all $\xi\in\mathscr{J}_n$, the following holds
  \begin{equation*}
    \forall z \in G_n \quad \norm{\xi - \left(v_n^z\right)^\circ(\xi)}{\mathscr{J}_n} \leq 2 \, \mathsf{slen}(z) \CDN(\xi) \text.
  \end{equation*}
\end{lemma}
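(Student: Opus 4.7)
The plan is to adapt the telescoping proofs of Lemma (\ref{mvt-discrete-lemma}) and Lemma (\ref{mvt-continuous-lemma}) to the Hilbert-module level, using Corollary (\ref{znj-cor}) to convert commutator estimates into bounds on $\|\xi - (v_n^{z_{n,j}})^\circ\xi\|$. First, factoring $z = (z_1,\ldots,z_d) \in \U^d_{k_n} \hookrightarrow G_n$ coordinatewise and exploiting the unitarity of each intermediate $(v_n^{\cdots})^\circ$ yields the telescoped bound
\begin{equation*}
  \|\xi - (v_n^z)^\circ\xi\|_{\mathscr{J}_n} \leq \sum_{j=1}^{d}\|\xi - (v_n^{z^{(j)}})^\circ\xi\|_{\mathscr{J}_n},
\end{equation*}
where $z^{(j)}$ has $z_j$ at position $j$ and $1$ elsewhere. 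It therefore suffices to show each summand is at most $2\length{z^{(j)}}\CDN_n(\xi)$.

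For a coordinate $j$ with $k_n(j) = \infty$, I would write $z_j = \exp(it)$ and apply the fundamental theorem of calculus to the strongly continuous unitary group $t \mapsto (v_n^{\exp(ite_j)})^\circ$, whose infinitesimal generator is $i(\partial_{n,j}\otimes \unit_{\mathscr{C}})$; this gives $\|\xi - (v_n^{z^{(j)}})^\circ\xi\| \leq |t|\|(\partial_{n,j}\otimes\unit_{\mathscr{C}})\xi\|$, in direct parallel with Lemma (\ref{mvt-continuous-lemma}). Since $X_{n,j}^2 + Y_{n,j}^2 = \unit_n$ and $X_{n,j}, Y_{n,j}$ both commute with $\partial_{n,j}$ (because $f$ has no fixed point), one has the operator identity $\partial_{n,j} = X_{n,j}\Gamma_{n,j} + Y_{n,j}\Gamma_{n,d+j}$; combined with the Clifford bound $\|(\Gamma_{n,s}\otimes\unit_{\mathscr{C}})\xi\|\leq\|\Dirac_n\xi\|$ extracted from Lemma (\ref{Clifford-anticommutation-lemma}), this yields $\|(\partial_{n,j}\otimes\unit_{\mathscr{C}})\xi\|\leq 2\|\Dirac_n\xi\|$, hence $\|\xi - (v_n^{z^{(j)}})^\circ\xi\|\leq 2\length{z^{(j)}}\CDN_n(\xi)$.

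For a coordinate $j$ with $k_n(j) < \infty$, writing $z_j = z_{n,j}^m$ with $|m|\leq k_n(j)/2$ gives $\length{z^{(j)}} = 2\pi|m|/k_n(j)$. Corollary (\ref{znj-cor}) translates at the Hilbert-module level to the identity $(v_n^{z_{n,j}})^\circ\xi = (U_{n,f(j)})^\circ\xi\cdot (U_{n,f(j)}^\ast)^\circ$ (with a conjugate sign when $f(j) < j$), so $\|\xi - (v_n^{z_{n,j}})^\circ\xi\|_{\mathscr{J}_n} = \|([U_{n,f(j)},\cdot]\otimes\unit_{\mathscr{C}})\xi\|_{\mathscr{J}_n}$. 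The Hilbert-module telescoping formula $[U^{p+1},\xi] = U[U^p,\xi] + [U,\xi]\cdot U^p$ inductively yields $\|[U^m,\xi]\|\leq|m|\|[U,\xi]\|$, so $\|\xi - (v_n^{z_{n,j}^m})^\circ\xi\| \leq |m|\|([U_{n,f(j)},\cdot]\otimes\unit_{\mathscr{C}})\xi\|$. Decomposing $U_{n,f(j)} = X_{n,j} + i\mathrm{fsgn}(j)Y_{n,j}$ and reading off from the definitions that, when $k_n(j)<\infty$, $\Gamma_{n,d+j} = \frac{k_n(j)}{2i\pi}[X_{n,j},\cdot]$ and $\Gamma_{n,j} = -\frac{k_n(j)}{2i\pi}[Y_{n,j},\cdot]$ (with the commutators reducing to left commutators on $\mathscr{J}_n$ as in the proof of Lemma (\ref{norm-comparison-lemma})), Lemma (\ref{Clifford-anticommutation-lemma}) produces $\|([U_{n,f(j)},\cdot]\otimes\unit_{\mathscr{C}})\xi\|\leq\frac{4\pi}{k_n(j)}\|\Dirac_n\xi\|$, and multiplying by $|m|$ gives the required per-coordinate bound.

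Summing over $j = 1,\ldots,d$ completes the proof. The main subtle step is the discrete case, where one must carefully thread together the Corollary (\ref{znj-cor}) identity, the Hilbert-module commutator telescoping, and the $X/Y$ decomposition in order to convert the bound on $\|[U_{n,f(j)},\xi]\|$ into a bound involving $\|\Dirac_n\xi\|$ through the specific definitions of $\Gamma_{n,j}$ and $\Gamma_{n,d+j}$ for finite $k_n(j)$.
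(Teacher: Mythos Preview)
Your approach is essentially the same as the paper's: both telescope $z$ coordinatewise, invoke the mean-value-type arguments of Lemmas (\ref{mvt-discrete-lemma}) and (\ref{mvt-continuous-lemma}) at the Hilbert-space level, and extract the per-coordinate bound on $\|(\Gamma_{n,j}\otimes\unit_{\mathscr{C}})\xi\|$ from the Clifford anticommutation identity of Lemma (\ref{Clifford-anticommutation-lemma}). Two points deserve attention. First, you factor $z=(z_1,\ldots,z_d)\in\U^d_{k_n}\hookrightarrow G_n$ and sum only over $j\in\{1,\ldots,d\}$, but the lemma is stated for all $z\in G_n=\U^{d'}_{k'_n}$, and its application in Lemma (\ref{Hilbert-Fejer-lemma}) genuinely uses the coordinates $j\in\{d+1,\ldots,d'\}$; the paper's proof accordingly takes the maximum in $K$ over all $j\in\{1,\ldots,d'\}$. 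The missing cases are in fact easier (for those $j$ one has $\Gamma_{n,d+j}=\partial_{n,j}$ or its finite analogue directly, with no $X/Y$ decomposition needed), but they must be included. Second, the paper records the bound $\|(\Gamma_{n,j}\otimes\unit_{\mathscr{C}})\xi\|\leq\CDN_n(\xi)$ rather than your sharper $\leq\|\Dirac_n\xi\|$; the anticommutation relation gives $2\|(\Gamma_{n,j}\otimes\unit_{\mathscr{C}})\xi\|\leq\|\Dirac_n\xi\|+\|\Dirac_n E_j\xi\|$, and there is no obvious reason $\|\Dirac_n E_j\xi\|=\|\Dirac_n\xi\|$. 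This does not affect the final estimate, since $\CDN_n(\xi)$ is what appears in the statement anyway.
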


\begin{proof}
  For all $\xi \in \dom{\Dirac_n}$ and $j\in\{1,\ldots,d+d'\}$, 
  \begin{equation*}
    \norm{(\Gamma_{n,j}\otimes c(\gamma_j))\xi}{\mathscr{J}_n} \leq \norm{\frac{1}{2}\left((\unit_n\otimes c(\gamma_j))\Dirac_n + \Dirac_n (\unit_n\otimes c(\gamma_j)) \right)\xi}{\mathscr{J}_n} \leq \CDN_n(\xi) \text.
  \end{equation*}
  Therefore,
  \begin{align}\label{CDN-norm-eq}
    \norm{(\Gamma_{n,j}\otimes\unit_{\mathscr{C}})\xi}{\mathscr{J}_n}
    &= \norm{(\Gamma_{n,j}\otimes c(\gamma_j))(\unit_n\otimes c(\gamma_j))\xi}{\mathscr{J}_n} \\
    &\leq \norm{(\Gamma_{n,j}\otimes c(\gamma_j))\xi}{\mathscr{J}_n} \leq \CDN_n(\xi) \nonumber \text.
  \end{align}

  Now, the same proof as in Lemmas (\ref{mvt-discrete-lemma}) and (\ref{mvt-continuous-lemma}) shows that, for all $z\in G_n$,
  \begin{align*}
    \norm{(v_n^z-1)^\circ \xi}{\mathscr{J}_n} \leq 2 \, \mathsf{slen}(z) K
  \end{align*}
  where
  \begin{equation*}
    K = \max\left\{ \max_{\substack{j \in \{1\ldots,d'\}\\ k'_n(j)=\infty}} \norm{(\partial_{n,j}\otimes \unit_{\mathscr{C}}) \xi}{\mathscr{J}_n}, \max_{\substack{j\in\{1,\ldots,d'\} \\ k'_n(j)<\infty}}  \norm{\frac{1}{\length{z_{n,j}}}(v_n^{z_{n,j}}-\unit_{\mathscr{J}_n})^\circ \xi}{\mathscr{J}_n} \right\}\text.
  \end{equation*}

  Therefore,
  \begin{align*}
    \norm{(v_n^z-1)^\circ \xi}{} \leq 2 \, \mathsf{slen}(z) \CDN_n(\xi) \text,
  \end{align*}
  as desired.
\end{proof}

From this, and the fact that our Dirac operators are closed, since they are self-adjoint, we now conclude the following lemma.

\begin{notation}
  We use the notation of Hypothesis (\ref{Dirac-hyp}). Let $n\in\Nbar$. For all $f \in C(G_n)$, and for all $\xi \in \mathscr{J}_n$, we set
  \begin{equation*}
    V_n^f \xi = \int_{G_n} f(z) \left(v_n^z\right)^\circ \xi \, d\mu_n(z)
  \end{equation*}
  where $\mu_n$ is the Haar probability measure on $G_n$.
\end{notation}

\begin{lemma}\label{Hilbert-Fejer-lemma}
  For all $\varepsilon > 0$, there exists a positive function $f \in C(\T^{(d')})$, whose Fourier transform is supported on a finite set $S\subseteq\Z^{d'}$, and $N\in\N$, such that, for all $n\geq N$, and for all $\xi \in \dom{\Dirac_n}$,
  \begin{equation*}
    \norm{\xi-V_n^f \xi}{\mathscr{J}_n} \leq \varepsilon \CDN_n(\xi)\text,
  \end{equation*}
  and
  \begin{equation*}
    \CDN_n(V_n^f(\xi)) \leq (1 + \varepsilon) \CDN_n(\xi) \text,
  \end{equation*}
  while $S\subseteq C_n$.
  
  Note that $V_n^f\xi$ is finitely supported, with support included in $q_n(S)$.
\end{lemma}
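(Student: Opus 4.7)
The plan is to mirror the proof of Theorem \ref{L-approx-thm} at the level of the Hilbert module $\mathscr{J}_n$, replacing Corollary \ref{mvt-cor} by Lemma \ref{Hilbert-mvt-lemma} and Lemma \ref{dilation-lemma} (the dilation estimate for $\Lip_n$) by an analogous dilation estimate for $\CDN_n$. The first step, which is the main obstacle, is to establish a constant $C>0$ (depending only on $d$) such that
\begin{equation*}
  \forall z \in G_n \quad \forall \xi \in \dom{\Dirac_n} \quad \left|\CDN_n((v_n^z)^\circ \xi) - \CDN_n(\xi)\right| \leq C \dil{z} \CDN_n(\xi) \text.
\end{equation*}
Since $(v_n^z)^\circ$ is unitary, this reduces to estimating $\norm{(v_n^{\overline{z}})^\circ \Dirac_n (v_n^z)^\circ \xi - \Dirac_n \xi}{\mathscr{J}_n}$. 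Each $\partial_{n,j}$ commutes with $v_n^z$, so the conjugation only acts on the bounded factors $X_{n,j}, Y_{n,j}$ and on $v_n^{z_{n,j}}$ entering $\Gamma_{n,j}$. Expanding $\alpha_n^{\overline{z}}(U_{n,f(j)}) = \overline{z_{f(j)}}\, U_{n,f(j)}$ and taking real and imaginary parts, one sees that conjugation performs a rotation in the plane spanned by $(\Gamma_{n,j},\Gamma_{n,d+j})$ with angular error of size $O(|1-z_{f(j)}|)$. The resulting correction is a linear combination of operators of the form $(\Gamma_{n,j}\otimes\unit_{\mathscr{C}})$, each of which is controlled by $\CDN_n(\xi)$ thanks to Equation \eqref{CDN-norm-eq} in the proof of Lemma \ref{Hilbert-mvt-lemma}. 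Summing over $j \in \{1,\ldots,d\}$ yields the claimed bound.

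Next, following the pattern of Theorem \ref{L-approx-thm}, I will apply Lemma \ref{Fejer-lemma} (with $q=d'$ and $k=\infty^{d'}$) to obtain a positive function $f \in C(\T^{d'})$, which is a finite linear combination of characters of $\T^{d'}$ (so $\widehat{f}$ is supported on some finite $S \subseteq \Z^{d'}$), and $\int f \, d\mu_\infty = 1$, satisfying
\begin{equation*}
  \int_{\T^{d'}} f(z) \mathsf{slen}(z)\,d\mu_\infty(z) \leq \frac{\varepsilon}{4} \text{ and } \int_{\T^{d'}} f(z)(1 + C\dil{z})\,d\mu_\infty(z) \leq 1 + \frac{\varepsilon}{2} \text,
\end{equation*}
which is possible since $\mathsf{slen}(1,\ldots,1)=0$ and $\dil{1,\ldots,1}=0$. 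Using \cite[Lemma 3.6]{Latremoliere05}, the continuous functions $f$, $f\cdot\mathsf{slen}$, and $f\cdot\dil{\cdot}$ have integrals against $\mu_n$ converging to the corresponding integrals against $\mu_\infty$ as $n\to\infty$, so there exists $N\in\N$ such that for all $n\geq N$, $S \subseteq C_n$ and
\begin{equation*}
  \int_{G_n} f(z)\mathsf{slen}(z)\,d\mu_n(z) \leq \frac{\varepsilon}{2} \text{ and } \int_{G_n} f(z)(1+C\dil{z})\,d\mu_n(z) \leq 1+\varepsilon \text.
\end{equation*}

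The two desired inequalities then follow by integration. For $\xi \in \dom{\Dirac_n}$, Lemma \ref{Hilbert-mvt-lemma} gives
\begin{equation*}
  \norm{\xi - V_n^f \xi}{\mathscr{J}_n} \leq \int_{G_n} f(z)\norm{\xi - (v_n^z)^\circ\xi}{\mathscr{J}_n}\,d\mu_n(z) \leq 2\left(\int f(z)\mathsf{slen}(z)d\mu_n(z)\right)\CDN_n(\xi) \leq \varepsilon \CDN_n(\xi) \text,
\end{equation*}
while the dilation estimate from Step 1, together with the closedness of $\Dirac_n$ (which ensures $V_n^f \xi \in \dom{\Dirac_n}$ and permits bringing $\Dirac_n$ under the integral, since $z\mapsto (v_n^z)^\circ \xi$ is continuous in the graph norm of $\Dirac_n$), yields
\begin{equation*}
  \CDN_n(V_n^f \xi) \leq \int_{G_n} f(z)\CDN_n((v_n^z)^\circ \xi)\,d\mu_n(z) \leq \left(\int f(z)(1+C\dil{z})d\mu_n(z)\right)\CDN_n(\xi) \leq (1+\varepsilon)\CDN_n(\xi)\text.
\end{equation*}
The last assertion is immediate: $V_n^f \xi$ is a finite linear combination of the vectors $(v_n^{\exp(2i\pi m/k'_n)})^\circ \xi$ for $m \in S$, whose Fourier expansions shift the support of $\xi$, producing a vector finitely supported in $q_n(S)$.
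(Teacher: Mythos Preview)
Your approach coincides with the paper's: both bound the commutator $[\Dirac_n,(v_n^z)^\circ]$ term by term via $\norm{(\Gamma_{n,j}\otimes\unit_{\mathscr{C}})\xi}{\mathscr{J}_n}\leq\CDN_n(\xi)$, choose a Fejer kernel controlling both $\mathsf{slen}$ and $\dil{\cdot}$, and integrate. The only cosmetic difference is that you package the commutator bound as a pointwise dilation estimate $\CDN_n((v_n^z)^\circ\xi)\leq(1+C\dil{z})\CDN_n(\xi)$ and then integrate, whereas the paper integrates first and bounds $\norm{[\Dirac_n,V_n^f]\xi}{\mathscr{J}_n}$; these are the same computation.

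Two points need repair. In Step~1 your rotation argument treats only the case $k_n(j)=\infty$; when $k_n(j)<\infty$ the operator $\Gamma_{n,j}$ involves the \emph{right} action $\xi\mapsto\xi\cdot U_{n,f(j)}$, and to see the same rotation you need the identity $(v_n^z\xi)\cdot U_{n,f(j)}=\overline{z_{f(j)}}\,v_n^z(\xi\cdot U_{n,f(j)})$ (which follows from $J_n v_n^z=v_n^z J_n$), as the paper checks explicitly. More importantly, your justification of the last assertion is wrong: $(v_n^z)^\circ$ acts by pointwise multiplication $\xi(m)\mapsto z^m\xi(m)$ and does \emph{not} shift supports. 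The correct reason $V_n^f\xi$ is supported in $q_n(S)$ is that
\[
V_n^f\xi(m)=\Big(\int_{G_n}f(z)\,z^m\,d\mu_n(z)\Big)\,\xi(m),
\]
and this Fourier coefficient vanishes unless $m\in q_n(-S)=q_n(S)$ (the Fejer support $S$ being symmetric).
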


\begin{proof}
  Let $\varepsilon > 0$. Note that, if $j\in\{1,\ldots,d\}$ and $k_n(j) = \infty$, then for all $z=(z_1,\ldots,z_{d'}) \in G_n$, over the space $\dom{\Dirac_n}$,
  \begin{equation*}
    \left[\Gamma_{n,j}\otimes c(\gamma_j),\left(v_n^z\right)^\circ\right] = \frac{1}{2}\left(v_n^z\right)^\circ \left( ((\overline{z_{f(j)}}-1)U_{n,f(j)} + (z_{f(j)}-1)U_{n,f(j)}^\ast) \partial_{n,j}\right)\otimes c(\gamma_j) \text.
  \end{equation*}
  Therefore, if $\xi \in \dom{\Dirac_n}$, then  
  \begin{align*}
    &\norm{\left[\Gamma_{n,j}\otimes c(\gamma_j), \left(v_n^z\right)^\circ\right]\xi}{\mathscr{J}_n} \\
    &\quad \leq \frac{\dil{z}}{2}\left(\norm{(U_{n,f(j)}\partial_{n,j})\otimes c(\gamma_j) \xi}{\mathscr{J}_n} + \norm{(U_{n,f(j)}^\ast\partial_{n,j})\otimes c(\gamma_j) \xi}{\mathscr{J}_n} \right) \\
    &\quad \leq \dil{z}\left(\norm{X_{n,j}\partial_{n,j}\otimes c(\gamma_j) \xi}{\mathscr{J}_n} + \norm{Y_{n,j}\partial_{n,j}\otimes c(\gamma_j) \xi}{\mathscr{J}_n} \right) \\
    &\quad \leq 2 \; \dil{z} \CDN_n(\xi) \text{ by Eq. \ref{CDN-norm-eq}.}
  \end{align*}
  
  A similar computations show that, if $j \in \{1,\ldots,d\}$ and $k_n(j) = \infty$, then
  \begin{equation*}
    \norm{\left[\Gamma_{n,d+j}\otimes c(\gamma_j),(v_n^z)^\circ\right]\xi}{\mathscr{J}_n} \leq 2 \; \dil{z} \CDN_n(\xi) \text.
  \end{equation*}
  
  Let now $j \in \{1,\ldots,d\}$ and $k_n(j) < \infty$. For all $\xi \in \Hilbert_n$, $z\in G_n$ and $m\in\widehat{G_n}$, we compute:
  \begin{equation*}
    J_n v_n^z\xi(m) = \overline{z^{-m}\xi(-m)} = z^m \overline{\xi(-m)} = v_n^z J_n \xi(m) \text,
  \end{equation*}
  so
  \begin{align*}
    (v_n^z \xi)\cdot U_{n,f(j)}
    &= J_n U_{n,f(j)}^\ast J_n v_n^z \xi \\
    &= J_n U_{n,f(j)}^\ast v_n^z J_n \xi \\
    &= J_n z_{f(j)} v_n^z U_{n,f(j)}^\ast J_n \xi \\
    &= \overline{z_{f(j)}} v_n^z J_n U_{n,f(j)}^\ast J_n \xi = \overline{z_{f(j)}} v_n^z \left(\xi\cdot U_{n,j}\right) \text.
  \end{align*}

  Therefore, we compute:
  \begin{align*}
    \left[\left[U_{n,f(j)},\cdot\right],v_n^z\right]\xi
    &= U_{n,f(j)} v_n \xi - (v_{n}^z \xi) U_{n,f(j)} - v_{n}^z U_{n,f(j)} \xi + v_n^z(\xi\cdot U_{n,f(j)})\\
    &= v_n^z \overline{z_{f(j)}} U_{n,f(j)}\xi - \overline{z_{f(j)}} v_n^z (\xi\cdot U_{n,f(j)}) - v_n^z U_{n,f(j)}\xi + v_n^z (\xi\cdot U_{n,f(j)}) \\
    &= (\overline{z_{f(j)}}-1) v_n^z [U_{n,j},\xi] \text.
  \end{align*}

  A similar computation shows that
  \begin{equation*}
    \left[\left[U_{n,f(j)}^\ast,\cdot\right],v_n^z\right] = (z_{f(j)}-1) v_n^z [U_{n,f(j)}^\ast,\cdot] \text.
  \end{equation*}
  Therefore, with the same notation, we deduce:
  \begin{align*}
    \opnorm{[\Gamma_{n,j}, v_n^z]}{}{\Hilbert_n}
    &= \opnorm{\frac{-\mathsf{fsgn}(j)k_n(j)}{2 i \pi} v_n^z \left((\overline{z_{f(j)}-1})[U_{n,f(j)},\cdot] - (z_{f(j)}-1) [U^\ast_{n,f(j)},\cdot] \right)}{}{\Hilbert_n} \\ 
    &\leq \dil{z}\frac{k_n(j)}{2 \pi} \left(\opnorm{[X_{n,j},\cdot]}{}{\Hilbert_n} + \opnorm{[Y_{n,j},\cdot]}{}{\Hilbert_n} \right) \\
    &\leq \dil{z} \left(\opnorm{[\Gamma_{n,j},\cdot]}{}{\Hilbert_n} + \opnorm{[\Gamma_{n,j+d},\cdot]}{}{\Hilbert_n}\right)  \text.
  \end{align*}
  Thus, for all $\xi \in \mathscr{J}_n$, we conclude:
  \begin{equation*}
    \norm{[\Gamma_{n,j}\otimes c(\gamma_j),(v_n^z)^\circ]\xi}{\mathscr{J}_n} \leq 2 \; \dil{z} \CDN_n(\xi) \text.
  \end{equation*}
  
  Similarly,
  \begin{equation*}
    \norm{[\Gamma_{n,d+j}\otimes c(\gamma_j),(v_n^z)^\circ]\xi}{\mathscr{J}_n} \leq 2 \; \dil{z} \CDN_n(\xi) \text.
  \end{equation*}

  Last, by Hypothesis (\ref{Dirac-hyp}), $[\Gamma_{n,j},v_n^z] = 0$ if $j \in \{2d+1,\ldots,d+d'\}$.
  
  Using Lemma (\ref{Fejer-lemma}), there exists $f \in C(\T^{(d')})$, with $f \geq 0$, and, for all $n \geq N$,
  \begin{equation*}
    \int_{G_n} f(z) \dil{z} \, d\mu_n(z)
    < \frac{\varepsilon}{2(d+d')} \text{ and }\int_{G_n} f(z)\mathsf{slen}(z)  \, d\mu_n(z) < \frac{\varepsilon}{2} \text.
  \end{equation*}
  
  Note that, for all $\xi \in \mathscr{J}_n$, the vector $V_n^f \xi$ is finitely supported, so in particular, $V_n^f\xi \in \dom{\Dirac_n}$. As $\Dirac_n$ is closed, we then easily deduce that
  \begin{equation*}
    \Dirac_n V_n^f\xi = \int_{G_n} f(z) \Dirac_n V_n^z\xi \, d\mu_n(z) \text.
  \end{equation*}

  Therefore, if $\xi \in \dom{\Dirac_n}$, then
  \begin{align*}
    \norm{V_n^f\Dirac_n\xi - \Dirac_nV_n^f\xi}{\mathscr{J}_n}
    &\leq \sum_{j=1}^{d+d'} \int_{G_n} f(z) \norm{\left[\Gamma_{n,j},v_n^z\right]\xi}{\mathscr{J}_n} \, d\mu_n(z) \\
    &\leq \sum_{j=1}^{d+d'} \int_{G_n} f(z) \cdot 2\dil{z} \CDN_n(\xi) \, d\mu_n(z) \\
    &\leq \sum_{j=1}^{d+d'} \frac{\varepsilon}{d+d'} \CDN_n(\xi) = \varepsilon \CDN_n(\xi) \text.
  \end{align*}

  \medskip

  Therefore, for all $n\geq N$ and $\xi \in \dom{\Dirac_n}$,
  \begin{align*}
    \CDN_n(V_n^f\xi)
    &= \norm{V_n^f\xi}{\mathscr{J}_n} + \norm{\Dirac_nV_n^f\xi}{\mathscr{J}_n} \\
    &\leq \norm{\xi}{\mathscr{J}_n} + \norm{V_n^f\Dirac_n\xi}{\mathscr{J}_n} + \norm{[\Dirac_n,V_n^f]\xi}{\mathscr{J}_n} \\
    &\leq \norm{\xi}{\mathscr{J}_n} + \norm{\Dirac_n\xi}{\mathscr{J}_n} + \varepsilon \CDN_n(\xi) \\
    &= (1 + \varepsilon) \CDN_n(\xi) \text.
  \end{align*}

  Moreover, as in Corollary (\ref{mvt-cor}), using Lemma (\ref{Hilbert-mvt-lemma}),
  \begin{align*}
    \norm{\xi - V_n^f \xi}{\mathscr{J}_n}
    &\leq \int_{G_n} f(z) \;2\mathsf{slen}(z)  \, d\mu_n(z) \CDN_n(\xi) \\
    &\leq \varepsilon \CDN_n(\xi) \text.
  \end{align*}
  
  This concludes our proof.
\end{proof}

\bigskip

We now have the tools needed to conclude our proof of convergence for spectral triples. We begin with our modular tunnels.

\begin{theorem}\label{mod-conv-thm}
  For all $\varepsilon > 0$, there exists $N\in\N$ such that, if $n \geq N$, and if we set
  \begin{enumerate}
  \item for all $\xi\in\dom{\Dirac_n}$ and $\eta\in\dom{\Dirac_\infty}$,
    \begin{equation*}
      \TDN_n(\xi,\eta) = \max\left\{ \CDN_{n}(\xi), \CDN_{\infty}(\eta), \frac{1}{\varepsilon} \norm{\varrho_n(\xi) - \eta }{\mathscr{J}_{\infty}} \right\} \text{,}
    \end{equation*}
  \item $\mathsf{Q}(z,w) = \frac{1}{\varepsilon}|z-w|$ for all $z,w \in \C$,
  \item $Z_n : (\xi,\eta)\in\mathscr{J}_n\oplus\mathscr{J}_{\infty} \mapsto \xi$ and $Z_{\infty} : (\xi,\eta)\in\mathscr{J}_n\oplus\mathscr{J}_{\infty} \mapsto \eta$,
  \item $x_n : (z,w) \in \C\oplus\C \mapsto z$ and $x_{\infty} : (z,w) \mapsto w$,
  \end{enumerate}
  then
  \begin{equation}\label{mod-tunnel-eq}
    \tau_n^{\mathrm{mod}} = \left(  \mathscr{J}_n \oplus \mathscr{J}_{\infty}, \TDN_n, \C\oplus\C, \mathsf{Q}, (Z_n,x_n), (Z_{\infty},x_{\infty}) \right)
  \end{equation}
  is a modular tunnel from $(\mathscr{J}_n,\CDN_{n},\C,0)$ to $(\mathscr{C}_{\infty},\CDN_{\infty},\C,0)$, with extent at most $\varepsilon$.
\end{theorem}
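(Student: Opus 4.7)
The plan is to verify the four defining properties of a modular tunnel (Definition \ref{modular-tunnel-def}) for $\tau_n^{\mathrm{mod}}$, and then to compute its extent. First I would handle the underlying tunnel between {\qcms s}: the state space $\StateSpace(\C\oplus\C)$ parametrizes as $\{\varphi_t : t\in[0,1]\}$ with $\varphi_t(z,w)=tz+(1-t)w$, a direct computation gives $\Kantorovich{\mathsf{Q}}(\varphi_t,\varphi_s)=\varepsilon|t-s|$, and hence $\mathsf{Q}$ induces the weak-$\ast$ topology. The morphisms $x_n,x_\infty$ are quantum isometries because $\inf\{\mathsf{Q}(z,w):w\in\C\}=0$, while $x_n^\ast\StateSpace(\C)=\{\varphi_1\}$ and $x_\infty^\ast\StateSpace(\C)=\{\varphi_0\}$, so both Hausdorff distances equal $\varepsilon$, giving a tunnel of extent exactly $\varepsilon$.

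Next I would verify that $(\mathscr{J}_n\oplus\mathscr{J}_\infty,\TDN_n,\C\oplus\C,\mathsf{Q})$ is a {\gQVB}. The domination $\TDN_n \geq \max(\norm{\cdot}{\mathscr{J}_n},\norm{\cdot}{\mathscr{J}_\infty})$ is immediate since $\CDN_n$ and $\CDN_\infty$ enjoy the same property. The unit ball of $\TDN_n$ is contained in the product of the unit balls of $\CDN_n$ and $\CDN_\infty$, both compact by Lemma \ref{self-adjoint-compact-resolvent-lemma}, and is closed by lower semicontinuity. The inner and modular Leibniz inequalities reduce to the fact that $\varrho_n$ is an isometric linear map: for the inner one,
\begin{equation*}
|\inner{\xi}{\xi'}{\mathscr{J}_n}-\inner{\eta}{\eta'}{\mathscr{J}_\infty}| \leq \norm{\varrho_n\xi-\eta}{\mathscr{J}_\infty}\norm{\xi'}{\mathscr{J}_n}+\norm{\eta}{\mathscr{J}_\infty}\norm{\varrho_n\xi'-\eta'}{\mathscr{J}_\infty}
\end{equation*}
bounds $\frac{1}{\varepsilon}|\inner{\xi}{\xi'}{\mathscr{J}_n}-\inner{\eta}{\eta'}{\mathscr{J}_\infty}|$ by $2\,\TDN_n(\xi,\eta)\TDN_n(\xi',\eta')$; for the modular one,
\begin{equation*}
\norm{\varrho_n(z\xi)-w\eta}{\mathscr{J}_\infty}\leq |z|\norm{\varrho_n\xi-\eta}{\mathscr{J}_\infty}+|z-w|\norm{\eta}{\mathscr{J}_\infty}
\end{equation*}
yields $\TDN_n(z\xi,w\eta)\leq (\norm{(z,w)}{\C\oplus\C}+\mathsf{Q}(z,w))\TDN_n(\xi,\eta)$, i.e., modular Leibniz with constant $K=1$.

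Third, the pairs $(Z_n,x_n)$ and $(Z_\infty,x_\infty)$ are surjective Hilbert module morphisms; all algebraic compatibilities are routine. The only nontrivial requirement is the D-norm quotient condition, which for $(Z_n,x_n)$ reads $\CDN_n(\omega)=\inf\{\TDN_n(\omega,\eta):\eta\in\mathscr{J}_\infty\}$ for every $\omega\in\mathscr{J}_n$. The inequality $\leq$ is tautological. For $\geq$, given $\omega\in\dom{\Dirac_n}$ and small $\delta>0$, I would first invoke Lemma \ref{Hilbert-Fejer-lemma} with parameter $\delta$ to produce $\omega_0=V_n^f\omega$ finitely supported with support in $q_n(S)$ for a fixed finite $S\subseteq\Z^{(d')}$, satisfying $\norm{\omega-\omega_0}{\mathscr{J}_n}\leq \delta\CDN_n(\omega)$ and $\CDN_n(\omega_0)\leq(1+\delta)\CDN_n(\omega)$ whenever $n\geq N_1(\delta)$; then apply Lemma \ref{CDN-continuity-lemma} with $F=S$ to obtain $\CDN_\infty(\varrho_n\omega_0)\leq\CDN_n(\omega_0)+\delta\norm{\omega_0}{\mathscr{J}_n}$ for $n\geq N_2(\delta,S)$. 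Finally, define the candidate
\begin{equation*}
\eta=t\,\varrho_n\omega_0,\quad t=\min\left\{1,\frac{\CDN_n(\omega)}{\CDN_\infty(\varrho_n\omega_0)}\right\}\text,
\end{equation*}
so that $\CDN_\infty(\eta)\leq\CDN_n(\omega)$ by construction. A triangle-inequality estimate on $\norm{\varrho_n\omega-\eta}{\mathscr{J}_\infty}$, combining $\norm{\varrho_n(\omega-\omega_0)}{\mathscr{J}_\infty}=\norm{\omega-\omega_0}{\mathscr{J}_n}\leq\delta\CDN_n(\omega)$ with the scaling bound from the Lemma \ref{CDN-continuity-lemma} estimate, produces $\TDN_n(\omega,\eta)\leq(1+O(\delta))\CDN_n(\omega)$; letting $\delta\to 0$ along a sequence of such constructions yields an infimizing sequence. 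The symmetric quotient condition for $Z_\infty$ is verified by reversing the roles, using $\varrho_n^\ast$ in place of $\varrho_n$ and invoking Lemma \ref{CDN-continuity-lemma} directly.

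The main obstacle is this third step: Lemma \ref{CDN-continuity-lemma} only gives asymptotic comparability of $\CDN_n$ and $\CDN_\infty$ on $\mathscr{J}_\infty^F$ as $n\to\infty$, so achieving the exact infimum equality for each fixed $n$ forces one to balance the Fejer-kernel accuracy from Lemma \ref{Hilbert-Fejer-lemma} against the discrepancy controlled by Lemma \ref{CDN-continuity-lemma}. Choosing $N$ large enough that both lemmas deliver errors below a common $\delta$-threshold uniformly for $n\geq N$ is precisely what forces the quantifier "there exists $N$" in the theorem; everything else in the proof is then a matter of assembling the extent bound $\varepsilon$ from the first paragraph with the structural verifications of the second.
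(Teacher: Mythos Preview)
Your overall architecture is correct and matches the paper: verify the {\gQVB} axioms for $(\mathscr{J}_n\oplus\mathscr{J}_\infty,\TDN_n,\C\oplus\C,\mathsf{Q})$, check that $x_n,x_\infty$ give a tunnel of extent $\varepsilon$, and establish the D-norm quotient condition via the Fej\'er approximation (Lemma~\ref{Hilbert-Fejer-lemma}) combined with the continuity estimate (Lemma~\ref{CDN-continuity-lemma}). Your rescaling $\eta=t\,\varrho_n\omega_0$ with $t=\min\{1,\CDN_n(\omega)/\CDN_\infty(\varrho_n\omega_0)\}$ is exactly the paper's move of dividing by $(1+\varepsilon/4)^2$.

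There is, however, a genuine gap in the third step. You write that the construction yields $\TDN_n(\omega,\eta)\leq(1+O(\delta))\CDN_n(\omega)$ and then propose ``letting $\delta\to 0$ along a sequence of such constructions.'' This cannot work: the threshold $N$ you need from Lemmas~\ref{Hilbert-Fejer-lemma} and~\ref{CDN-continuity-lemma} depends on $\delta$, and it tends to infinity as $\delta\to 0$. For a \emph{fixed} $n\geq N$, you therefore cannot take $\delta$ arbitrarily small, so no infimizing sequence is available along this route.

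The fix is that no limit is needed. The third term in $\TDN_n$ carries a factor $\frac{1}{\varepsilon}$, so once you choose $\delta$ to be a fixed fraction of $\varepsilon$ (the paper takes $\delta=\varepsilon/4$), your own estimates give
\[
\frac{1}{\varepsilon}\norm{\varrho_n\omega-\eta}{\mathscr{J}_\infty}\;\leq\;\frac{1}{\varepsilon}\Big(\norm{\omega-\omega_0}{\mathscr{J}_n}+(1-t)\norm{\omega_0}{\mathscr{J}_n}\Big)\;\leq\;\frac{C\delta}{\varepsilon}\,\CDN_n(\omega)\;\leq\;\CDN_n(\omega)
\]
for a suitable absolute constant $C$. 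Combined with $\CDN_\infty(\eta)\leq\CDN_n(\omega)$ (your rescaling) and the tautological first component, this yields $\TDN_n(\omega,\eta)\leq\CDN_n(\omega)$ \emph{exactly}, not merely up to $(1+O(\delta))$. Thus a single fixed $\delta$ proportional to $\varepsilon$ suffices, and the quantifier ``there exists $N$'' in the theorem is determined by that one choice. Your last paragraph correctly identifies the tension but resolves it in the wrong direction: the point is not to balance competing errors in a limit, but to exploit the built-in $\varepsilon$-slack in the definition of $\TDN_n$.
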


\begin{proof}
  Let $\varepsilon \in (0,1)$. Let $f \in C(\T^{(d')})$, $S\subseteq\ell^1(\Z^{(d')})$ and $N\in\N$ be given by Lemma (\ref{Hilbert-Fejer-lemma}), for $\frac{\varepsilon}{4}$ in place of $\varepsilon$.
  
  By Lemma (\ref{CDN-continuity-lemma}), there exists $N'\in\N$ such that if $n\geq N'$, and if $\xi \in \mathscr{J}_\infty^S$, then
  \begin{equation*}
    \left|\CDN_n(\varrho_n^{\ast}\xi) - \CDN_{\infty}(\xi)\right| \leq \frac{\varepsilon}{4} \norm{\xi}{\mathscr{J}_\infty} \text.
  \end{equation*}

  Let $n \geq \max\{N,N'\}$. Let $\eta \in \dom{\Dirac_n}$ with $\CDN_k(\eta) \leq 1$. Let
  \begin{equation*}
    \chi = \varrho_n(V_n^f(\eta)) \in \dom{\Dirac_\infty} \text.
  \end{equation*}

  By construction, using Lemma (\ref{Hilbert-Fejer-lemma}), since $\rho_n^\ast\rho = 1$,
  \begin{align*}
    \CDN_\infty(\chi)
    &\leq \left(1+\frac{\varepsilon}{4}\right)\CDN_n(\rho_n^\ast\rho_n V_n^f\eta) \\
    &= \left(1+\frac{\varepsilon}{4}\right)\CDN_n(V_n^f\eta) \leq \left(1+\frac{\varepsilon}{4}\right)^2
  \end{align*}
  and
  \begin{align*}
    \norm{\varrho_n(\eta) - \chi}{\mathscr{J}_{\infty}}
    &= \norm{\varrho_n(\eta - V_n^f(\eta))}{\mathscr{J}_{\infty}} \\
    &= \norm{\eta - V_n^f(\eta)}{\mathscr{J}_n} \leq \frac{\varepsilon}{4} \text{.}
  \end{align*}

  So
  \begin{align*}
    \norm{\varrho_n(\eta) - \frac{1}{(1+\frac{\varepsilon}{4})^2}\chi}{\mathscr{J}_{\infty}}
    &\leq \norm{\varrho_n(\eta) - \chi}{\mathscr{J}_{\infty}} + \left(\frac{\varepsilon}{2}+\frac{\varepsilon^2}{16}\right)\norm{\chi}{\mathscr{J}_{\infty}}\\
    &\leq \varepsilon \text.
  \end{align*}
  
  Therefore, $\TDN_n\left(\frac{1}{\left(1+\frac{\varepsilon}{4}\right)^2}\chi,\eta\right) \leq 1$. Since $\TDN_n(\xi,\eta)\geq\CDN_n(\eta)$ for all $\xi \in \mathscr{J}_{\infty}$ by construction, we conclude that $\CDN_n$ is the quotient of $\TDN_n$ by the canonical surjection $Y_n$ from $\mathscr{J}_{n}\oplus\mathscr{J}_{\infty}$ onto $\mathscr{J}_n$.

  \bigskip

  A similar reasoning applies to show that $\CDN_{\infty}$ is the quotient of $\TDN_n$ by the canonical surjection $Y_{\infty}$ from $\mathscr{J}_{n}\oplus\mathscr{J}_{\infty}$ onto $\mathscr{J}_{\infty}$.

  \bigskip

  Now, by construction, the closed unit ball of $\TDN_n$ is a closed subset of the product of the closed unit ball of $\CDN_n$ and $\CDN_{\infty}$, both of which are compact, and thus, the closed unit ball of $\TDN_n$ is compact as well.

  \bigskip
  
  For all $(\xi,\eta) \in \mathscr{J}_n\oplus\mathscr{J}_{\infty}$, we compute:
  \begin{equation*}
    \TDN_n(\xi,\eta) \geq \max\{ \CDN_n(\xi), \CDN_{\infty}(\eta) \} \geq \max\{ \norm{\xi}{\mathscr{J}_n}, \norm{\eta}{\mathscr{J}_\infty} \} = \norm{(\xi,\eta)}{\mathscr{J}_n\oplus\mathscr{J}_\infty} \text{.}
  \end{equation*}

  \bigskip

  We now consider $\mathscr{J}_n\oplus\mathscr{J}_{\infty}$ as a Hilbert module over $\C\oplus\C$, with inner product:
  \begin{equation*}
    \inner{(\xi,\eta)}{(\xi',\eta')}{\C\oplus\C} = \left( \inner{\xi}{\xi'}{\mathscr{J}_n}, \inner{\eta}{\eta'}{\mathscr{J}_\infty} \right)
  \end{equation*}
  and
  \begin{equation*}
    (\xi,\eta)(z,w) = (z\xi,w\eta)
  \end{equation*}
  for all $(z,w)\in\C\oplus\C$, $\xi,\xi'\in\mathscr{J}_n$ and $\eta,\eta'\in\mathscr{J}_{\infty}$.

  \bigskip

  It is easy to check, using Cauchy-Schwarz, that:
  \begin{align*}
    \mathsf{Q}(\inner{(\xi,\eta)}{(\xi',\eta')}{\C\oplus\C})
    &=\frac{1}{\varepsilon}\left|\inner{\xi}{\xi'}{\mathscr{J}_n} - \inner{\eta}{\eta'}{\mathscr{J}_{\infty}}\right|\\
    &=\frac{1}{\varepsilon}\left( \inner{\varrho_n\xi}{\varrho_n\xi'}{\mathscr{J}_{\infty}} - \inner{\eta}{\eta'}{\mathscr{J}_{\infty}}  \right) \\
    &\leq \frac{1}{\varepsilon}\left( \norm{\varrho_n\xi-\eta}{\mathscr{J}_{\infty^d}} \norm{\xi'}{\mathscr{J}_n} + \norm{\eta}{\mathscr{C}_{\infty}}\norm{\varrho_n\xi'-\eta'}{\mathscr{J}_{\infty}}\right) \\
    &\leq 2 \TDN_n(\xi,\eta)^2 \TDN_n(\xi',\eta')^2 \text{.}
  \end{align*}
  
  \bigskip

  It is then trivial to check that the canonical surjections $x_n$ and $x_{\infty}$ are quantum isometries from $(\C\oplus\C,\mathsf{Q})$ onto $(\C,0)$. Thus we have established that Expression (\ref{mod-tunnel-eq}) does define a modular tunnel, as claimed. The extent of our tunnel is no more than $\varepsilon$ (it is no more than the distance between the two points in the spectrum of $\C\oplus\C$ for the metric induced by the Lipschitz seminorm $\mathsf{Q}$).
\end{proof}

\begin{remark}
  Theorem (\ref{mod-conv-thm}) implies that:
  \begin{equation*}
    \lim_{n \rightarrow \infty} \dmodpropinquity{}\left( (\mathscr{J}_n, \CDN_n, \C, 0), (\mathscr{J}_{\infty},\CDN_{\infty},\C,0) \right) = 0
  \end{equation*}
  where $\dmodpropinquity{}$ is the modular propinquity \cite{Latremoliere16c,Latremoliere18c}.
\end{remark}

\bigskip

We now discuss how close the metrical C*-correspondence in Notation (\ref{mcc-notation}) are. All which is needed is to check that the $\C\oplus\C$ modules in the modular tunnels of Theorem (\ref{mod-conv-thm}) are indeed modules over the appropriate fuzzy or quantum torus.

\begin{theorem}\label{metrical-cv-thm}
  Assume Hypothesis (\ref{L-seminorm-hyp}). We conclude:
  \begin{equation*}
    \lim_{n\rightarrow\infty} \dmetpropinquity{4}\big( \mcc{\A_n}{\mathscr{J}_n}{\Dirac_n}, \mcc{\A_\infty}{\mathscr{J}_{\infty}}{\Dirac_{\infty}} \big) = 0\text{.}
  \end{equation*}
\end{theorem}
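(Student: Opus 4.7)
The plan is to graft the modular tunnel $\tau_n^{\mathrm{mod}}$ of Theorem \ref{mod-conv-thm} onto the {\qcms}-tunnel $\tau_n$ of Theorem \ref{qcms-thm} (as reformulated in Notation \ref{new-tunnel-notation}), so as to build a metrical tunnel in the sense of Definition \ref{metrical-tunnel-def}. Fix $\varepsilon \in (0,1)$ and pick $N \in \N$ so that, for every $n \geq N$, both tunnels, built with the common parameter $\varepsilon$, have extent at most $\varepsilon$. The crucial choice is to equip the middle Hilbert module $\mathscr{J}_n \oplus \mathscr{J}_\infty$ with the coordinatewise left action of the middle *-algebra $\A_n \oplus \A_\infty$ by $(a,b)\cdot(\xi,\eta) = (a^\circ\xi, b^\circ\eta)$. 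This is patently a *-representation by $\C\oplus\C$-adjoinable operators, and the coordinate projections from $\mathscr{J}_n \oplus \mathscr{J}_\infty$ to $\mathscr{J}_j$ intertwine it with the actions of $\A_j$ (for $j \in \{n,\infty\}$), verifying conditions (3) and (5) of Definition \ref{metrical-tunnel-def}.

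Everything reduces to the modular Leibniz inequality of Definition \ref{metrical-tunnel-def}(4) with $K = 4$:
\[
\TDN_n\bigl((a^\circ\xi, b^\circ\eta)\bigr) \leq 4 \bigl(\TLip_n(a,b) + \|(a,b)\|_{\A_n \oplus \A_\infty}\bigr)\TDN_n(\xi,\eta).
\]
The first two coordinates of the max defining $\TDN_n$ are dispatched by the standard spectral-triple Leibniz estimate $\CDN(c\omega) \leq (\|c\|+\Lip(c))\CDN(\omega)$ together with $\Lip_n(a),\Lip_\infty(b) \leq \TLip_n(a,b)$ and $\|a\|,\|b\| \leq \|(a,b)\|$. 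The substantial work concerns the cross term $\varepsilon^{-1}\|\varrho_n(a^\circ\xi) - b^\circ\eta\|_{\mathscr{J}_\infty}$. The strategy is the telescoping decomposition
\begin{multline*}
\varrho_n(a^\circ\xi) - b^\circ\eta = \bigl(\varrho_n a^\circ - \theta_n(a)\varrho_n\bigr)\xi + \theta_n(a)(\unit - R_N)\varrho_n\xi \\
+ \bigl(\theta_n(a)R_N - R_N\theta_\infty(b)\bigr)\varrho_n\xi + (R_N - \unit)b^\circ\varrho_n\xi + b^\circ(\varrho_n\xi - \eta),
\end{multline*}
where $R_N$ and $\theta_n, \theta_\infty$ are as in Notation \ref{new-tunnel-notation}: the middle summand contributes at most $\varepsilon\TLip_n(a,b)\|\xi\|$ by the very definition of $\TLip_n$; the last contributes at most $\|b\|\cdot\varepsilon\TDN_n(\xi,\eta)$; and the remaining three summands vanish once $a, b, \xi$ are finitely supported on a joint Fourier window $F \subseteq \Z^{d'}$ with $R_N$ chosen to equal the identity on $\ell^2(\Z^{d'}|F) \otimes \mathscr{C}$, using the freedom granted by the last sentence of Notation \ref{new-tunnel-notation} together with the compatibility of $\varrho_n$ with the representation $\vartheta_n$ from \cite{Latremoliere13c} on this finite window.

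The genuine obstacle is extending this estimate from finitely supported triples to arbitrary $(a,b,\xi,\eta) \in \dom{\TLip_n}\times\dom{\TDN_n}$. I plan to handle this by a density argument: apply Corollary \ref{second-approx-cor} to replace $a, b$ by finitely supported approximants whose Lipschitz seminorms grow by at most the factor $(1+\varepsilon)$, and Lemma \ref{Hilbert-Fejer-lemma} to similarly replace $\xi$ by $V_n^f\xi$; the joint Fourier window $F$ is then chosen large enough to contain the supports of these approximants \emph{and} of the products $b^\circ\varrho_n\xi$ (still finite), after which $R_N$ is tailored to $F$. The five-term estimate yields four contributions of the shape $(\|(a,b)\|+\TLip_n(a,b))\TDN_n(\xi,\eta)$, namely the two $\CDN$ bounds plus the two surviving cross-term pieces, accounting for the Leibniz constant $K=4$; the $O(\varepsilon)$ errors introduced by the approximants are absorbed into the extent of the tunnel, and the lower semicontinuity of $\TDN_n$ and $\TLip_n$ promotes the bound to general elements. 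This shows $(\tau_n^{\mathrm{mod}}, \tau_n)$ is a $4$-metrical tunnel with extent at most $\varepsilon$, whence $\dmetpropinquity{4}\bigl(\mcc{\A_n}{\mathscr{J}_n}{\Dirac_n}, \mcc{\A_\infty}{\mathscr{J}_\infty}{\Dirac_\infty}\bigr) \leq \varepsilon$ for $n \geq N$; since $\varepsilon > 0$ was arbitrary, the theorem follows.
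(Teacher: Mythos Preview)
Your overall architecture matches the paper's exactly: couple the modular tunnel $\tau_n^{\mathrm{mod}}$ of Theorem~\ref{mod-conv-thm} with the tunnel $\tau_n$ of Theorem~\ref{qcms-thm} via the coordinatewise left action, and reduce everything to the cross term $\varepsilon^{-1}\norm{\varrho_n(a^\circ\xi)-b^\circ\eta}{\mathscr{J}_\infty}$, controlled through $R_N$ and a Fej\'er truncation of the module vectors. The five-term telescoping and the identification of the intertwining $\varrho_n a^\circ\approx\theta_n(a)\varrho_n$ as a point needing care are both sound (the paper glosses over the latter).

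There is, however, a genuine quantifier problem in your density step. You propose to approximate $a,b,\xi$ by finitely supported elements, then choose the window $F$ large enough to contain their supports \emph{and} the support of $b^\circ\varrho_n\xi$, and only then tailor $R_N$ to $F$. But $R_N$ is part of the tunnel data: it determines $\TLip_n$, and the modular Leibniz inequality of Definition~\ref{metrical-tunnel-def}(4) must hold for \emph{all} $(a,b)$ and $(\xi,\eta)$ with that single fixed tunnel. You cannot let $R_N$ depend on the elements. The paper avoids this by never approximating $a,b$: it fixes $S$ once (from the Fej\'er kernel of Lemma~\ref{Hilbert-Fejer-lemma}), takes $R_N$ to be the identity on $\mathscr{J}_\infty^S$, and for arbitrary $b\in\dom{\Lip_\infty}$ uses the commutator bound $\opnorm{[b^\circ,R_N]}{}{\mathscr{J}_\infty}\leq\varepsilon\Lip_\infty(b)$ from Expression~\eqref{tunnel-commutator-eq} (this extends from finitely supported $b$ to all of $\dom{\Lip_\infty}$ by norm continuity of the commutator and Corollary~\ref{second-approx-cor}). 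Only $\xi,\eta$ are truncated, via $V_n^f$ and $V_\infty^f$, which land in the \emph{fixed} window $S$. With this correction your argument goes through; the constant $4$ then arises from the cross term alone after the Fej\'er replacement (the two $\CDN$ coordinates give constant $1$, and $\TDN_n$ is a maximum, so your additive accounting of ``four contributions'' is slightly off, though the conclusion $K=4$ is correct).
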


\begin{proof}
  Let $\varepsilon \in (0,1)$. Let $N_1\in\N$, $f \in C(\T^{(d')})$ and $S\subseteq\Z^{d'}$ be given by Lemma (\ref{Hilbert-Fejer-lemma}), Let $N_2 \in \N$ be given by Theorem (\ref{mod-conv-thm}). Let $N = \max\{N_1,N_2\}$. For each $n\geq N$, let $\tau_n$ be the tunnel given by Theorem (\ref{mod-conv-thm}), with $R_N$ adjusted so that $R_N$ restricted to $\mathscr{J}_n^S$ is the identity.
  
  Let $a\in\dom{\Lip_n}$ and $b\in\dom{\Lip_\infty}$. By Expression (\ref{tunnel-commutator-eq}), we have
  \begin{equation*}
    \opnorm{\left[b^\circ,R_N\right]}{}{\mathscr{J}_\infty} \leq \varepsilon \Lip_\infty(b) \text.
  \end{equation*}

  For all $n\in\Nbar$, we set $\varrho_n=\rho_n\otimes\unit_{\mathscr{C}}$.
  
  Let $\xi \in \mathscr{J}_n^{S}$ and $\eta\in\mathscr{J}_\infty^{S}$. Since $R_N\mathscr{J}_\infty \subseteq \mathscr{J}_\infty^{S}$, we then check that:
\begin{align*}
  \norm{\theta_n(a)\varrho_n \xi - \theta_{\infty}(b)\eta}{\mathscr{J}_{\infty}}
  &= \norm{\theta_n(a) R_N \varrho_n\xi - b^\circ R_N \eta}{\mathscr{J}_{\infty}} \\
  &\leq \norm{\theta_n(a) R_N \varrho_n \xi - R_N b^\circ \eta}{\mathscr{J}_{\infty}} + \varepsilon \Lip_{\infty}(b) \\
  &\leq \norm{\theta_{n}(a) R_N - R_N b^\circ}{\mathscr{J}_\infty}\norm{\eta}{\mathscr{J}_\infty} \\
  &\quad + \norm{a}{\A_\infty} \norm{\varrho_n\xi - \eta}{\mathscr{J}_\infty} + \varepsilon \Lip_{\infty}(b) \\
  &\leq \varepsilon \left((1 + \varepsilon)\TLip_n(a,b) + \norm{a,b}{\A_n\oplus\A_\infty}\right)\TDN_n(\xi,\eta) \text{.}
\end{align*}

Now, let $\xi \in \dom{\Dirac_n}$ and $\eta\in\dom{\Dirac_\infty}$. We compute
\begin{align*}
  \norm{\theta_n(a)\varrho_n \xi - \theta_{\infty}(b)\eta}{\mathscr{J}_{\infty}}
  &\leq \norm{a}{\A_n}\norm{\xi-V_n^f \xi}{\mathscr{J}_n} + \norm{b}{\A_\infty}\norm{\eta-V_\infty^f\eta}{\mathscr{J}_\infty} \\
  &\quad + \norm{\theta_n(a)\varrho_n V_n^f \xi - \theta_{\infty}(b) V_n^f \eta}{\mathscr{J}_{\infty}} \\
  &\leq \varepsilon \left( \norm{a}{\A_n} + \norm{b}{\A_\infty} \right) \TDN_n(\xi,\eta) \\
  &\quad + \norm{\theta_n(a)\rho_n V_n^f \xi - \theta_{\infty}(b) V_n^f \eta}{\mathscr{J}_{\infty}} \\
  &\leq 2 \varepsilon \norm{(a,b)}{\A_n\oplus\A_\infty} \TDN_n(\xi,\eta) \\
  &\quad + \varepsilon \left((1 + \varepsilon)\TLip_n(a,b) + \norm{a,b}{\A_n\oplus\A_\infty}\right)(1+\varepsilon)\TDN_n(\xi,\eta) \\
  &\leq 4 \varepsilon \left(\norm{(a,b)}{\A_n\oplus\A_\infty} + \TLip_n(a,b) \right) \TDN_n(\xi,\eta) \text{.}
\end{align*}

From this, we easily conclude that
\begin{equation*}
  \TDN_n(a\xi,b\eta) \leq 4\left(\norm{(a,b)}{\A_n\oplus\A_\infty} + \TLip_n(a,b) \right) \TDN_n(\xi,\eta) \text.
\end{equation*}

Thus the extent of the metrical tunnel $\tau_n^{\mathrm{met}} = (\tau_n^{\mathrm{mod}},\tau_n)$ is the maximum of the modular tunnel $\tau^{\mathrm{mod}}_n$ and the tunnel $\tau_n$, which is no more than $\varepsilon > 0$. This completes our proof.
\end{proof}

We now establish our main theorem, proving that the sequence of spectral triples constructed in Hypothesis (\ref{L-seminorm-hyp}) is convergent, and giving its limit.

\begin{theorem}
  If Hypothesis (\ref{L-seminorm-hyp}) is assumed, then
  \begin{equation*}
    \lim_{n\rightarrow\infty} \spectralpropinquity{}\big( (\A_n,\mathscr{J}_n,\Dirac_n), (\A_\infty, \mathscr{J}_\infty, \Dirac_\infty) \big) = 0 \text{.}
  \end{equation*}
\end{theorem}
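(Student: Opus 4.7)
The plan is to combine the metrical tunnels $\tau_n^{\mathrm{met}}$ of Theorem \ref{metrical-cv-thm} with the trivial iso-iso $\varsigma_1=\varsigma_2=\mathrm{id}_\R$ on $\R$ to produce $\varepsilon$-covariant metrical tunnels whose magnitudes tend to zero. Since the underlying metrical extents already vanish by Theorem \ref{metrical-cv-thm}, the entire task reduces to controlling the $\varepsilon$-covariant reach associated with the one-parameter unitary groups $T_n^t=\exp(it\Dirac_n)$ and $T_\infty^t=\exp(it\Dirac_\infty)$ against the modular tunnel $\tau_n^{\mathrm{mod}}$ built in Theorem \ref{mod-conv-thm}.

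Given $\xi\in\mathscr{J}_n$ with $\CDN_n(\xi)\leq 1$, I would construct the witnessing partner $\xi'\in\mathscr{J}_\infty$ in two steps: first smooth via Lemma \ref{Hilbert-Fejer-lemma} to a finitely supported $\tilde\xi=V_n^f\xi$ with $\|\xi-\tilde\xi\|_{\mathscr{J}_n}\leq\varepsilon$, $\CDN_n(\tilde\xi)\leq 1+\varepsilon$, and support contained in $q_n(S)$ for a fixed finite $S\subseteq\Z^{(d')}$; then use Lemma \ref{CDN-continuity-lemma} to ensure $\CDN_\infty(\varrho_n\tilde\xi)\leq 1+2\varepsilon$ for $n$ large, and let $\xi'$ be the corresponding normalization of $\varrho_n\tilde\xi$. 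Since $\varrho_n$ is an isometry and any $(\omega_1,\omega_2)$ in the closed $\TDN_n$-unit ball satisfies $\|\varrho_n\omega_1-\omega_2\|_{\mathscr{J}_\infty}\leq\varepsilon$, a direct expansion of $\inner{T_n^t\xi}{\omega_1}{\mathscr{J}_n}-\inner{T_\infty^t\xi'}{\omega_2}{\mathscr{J}_\infty}$ using $\varrho_n^\ast\varrho_n=I_{\mathscr{J}_n}$ reduces the covariant reach, up to additive $O(\varepsilon)$ terms, to controlling
\begin{equation*}
\sup_{|t|\leq 1/\varepsilon}\,\bigl\|\,\bigl(\varrho_n T_n^t\varrho_n^\ast - T_\infty^t\bigr)\,\varrho_n\tilde\xi\,\bigr\|_{\mathscr{J}_\infty}.
\end{equation*}

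The critical observation is that the vectors $\varrho_n\tilde\xi$ all lie in the fixed finite-dimensional subspace $\mathscr{J}_\infty^S$ (for $n$ large enough that $S\subseteq C_n$), and their norms are bounded by $1+\varepsilon$, so they belong to a single compact subset $K_S$ of $\mathscr{J}_\infty$. The main obstacle is therefore to prove strong convergence $\varrho_n T_n^t\varrho_n^\ast\to T_\infty^t$ on $\mathscr{J}_\infty$, uniformly in $|t|\leq 1/\varepsilon$. I would establish this via a Trotter--Kato argument across a varying family of Hilbert spaces: the partial isometries $\varrho_n$ satisfy $\varrho_n\varrho_n^\ast\to I_{\mathscr{J}_\infty}$ strongly (since $C_n\nearrow\Z^{(d')}$), and Lemma \ref{Gamma-cv-lemma} together with a density argument and the compactness of the resolvents provided by Lemma \ref{self-adjoint-compact-resolvent-lemma} gives strong resolvent convergence $\varrho_n(\Dirac_n+i)^{-1}\varrho_n^\ast\to(\Dirac_\infty+i)^{-1}$; this in turn yields strong convergence of the unitary groups, uniformly on compact time intervals. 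Combining this strong convergence with the equicontinuity provided by $\|\varrho_nT_n^t\varrho_n^\ast\|=\|T_\infty^t\|=1$ and the compactness of $K_S$, a standard $\varepsilon/3$ argument upgrades strong convergence to uniform convergence on $K_S$, uniformly in $|t|\leq 1/\varepsilon$, which gives the required magnitude bound for the $\varepsilon$-covariant metrical tunnel. Letting $\varepsilon\to 0$ then finishes the proof.
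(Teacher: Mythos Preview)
Your overall architecture is correct and leads to the same conclusion, but the route you take at the decisive step differs from the paper's. After reducing to the control of $\sup_{|t|\leq 1/\varepsilon}\|(\varrho_n T_n^t\varrho_n^\ast - T_\infty^t)\eta\|$ for $\eta$ in the fixed finite-dimensional space $\mathscr{J}_\infty^S$, you invoke a Trotter--Kato mechanism: establish strong resolvent convergence $\varrho_n(\Dirac_n+i)^{-1}\varrho_n^\ast\to(\Dirac_\infty+i)^{-1}$, deduce strong convergence of the unitary groups uniformly on $[-1/\varepsilon,1/\varepsilon]$, then upgrade via compactness of the set of vectors. This is sound, though the resolvent-convergence step in the ``varying Hilbert spaces'' setting (with partial isometries $\varrho_n$) requires a bit more care than your sketch indicates; the compactness of each individual resolvent from Lemma~\ref{self-adjoint-compact-resolvent-lemma} is not what drives the argument---rather, one needs the convergence of $\varrho_n\Dirac_n\varrho_n^\ast$ to $\Dirac_\infty$ on the common core of finitely supported vectors (which is exactly what Lemma~\ref{Gamma-cv-lemma} delivers).

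The paper bypasses all of this machinery with a single Duhamel estimate. Using the identity of \cite[Ch.~9, (2.3)]{Kato},
\[
\bigl(\exp(it\varrho_n\Dirac_n\varrho_n^\ast)-\exp(it\Dirac_\infty)\bigr)\eta
=\int_0^t \varrho_n S_n^{t-s}\varrho_n^\ast\,(\varrho_n\Dirac_n\varrho_n^\ast-\Dirac_\infty)\,S_\infty^s\eta\,ds,
\]
and the claimed invariance $S_\infty^t\mathscr{J}_\infty^S=\mathscr{J}_\infty^S$, the integrand is bounded by $\opnorm{\varrho_n\Dirac_n\varrho_n^\ast-\Dirac_\infty}{}{\mathscr{J}_\infty^S}\leq\varepsilon^2$ (Lemma~\ref{CDN-continuity-lemma}), giving the quantitative bound $t\varepsilon^2\leq\varepsilon$ directly. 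The paper's approach is thus more elementary and explicitly quantitative, at the cost of relying on the finite-dimensional invariance of $\mathscr{J}_\infty^S$ under $S_\infty^t$; your approach is more robust and conceptual but imports heavier semigroup-convergence technology.
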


\begin{proof}
  For all $n\in\Nbar$ and $t\in\R$, we set
  \begin{equation*}
    S_n^t = \exp(i t \Dirac_n) \text.
  \end{equation*}

  Let $\varrho_n = \rho_n\otimes\unit_{\mathscr{C}}$ for all $n\in\Nbar$.

  \medskip
  
  Now, let $\varepsilon \in (0,1)$. Using Lemma (\ref{Fejer-lemma}), Lemma (\ref{CDN-continuity-lemma}), and Lemma (\ref{Hilbert-Fejer-lemma}), there exists a function $f \in C(\T^{(d')})$, whose Fourier transform is an element in $\ell^1(\Z^{d'})$ supported on some finite set $S$, and there exists $N_1\in \N$ such that, if $n\geq N_1$, then $S \subseteq C_n$, and
  \begin{equation*}
    \opnorm{\varrho_n \Dirac_n \varrho_n^{-1} - \Dirac_{\infty}}{}{\ell^2(S)\otimes\mathscr{C}} \leq \varepsilon^2\text,
  \end{equation*}
  while, for all $\xi \in \dom{\Dirac_n}$,
  \begin{equation*}
    \norm{\xi- V_n^f\xi}{\mathscr{J}_n} \leq \varepsilon \CDN_n(\xi) \text{ while }\CDN_n(V_n^f\xi) \leq (1 + \varepsilon) \CDN_n(\xi) \text.
  \end{equation*}

  Let $N_2 \in \N$ be given, for our chosen $\varepsilon > 0$, as in the proof of Theorem (\ref{metrical-cv-thm}), adjusting $R_{N_2}$ as needed so that $R_{N_2}$ restricted to $\ell^2(\Z^d|S)$ is the identity. We use the notation employed in that proof, as well.
  
  Let $n\geq N = \max\{N_1,N_2\}$. Let $\xi \in \dom{\Dirac_\infty}$. Let $\eta = \rho_n(V_n^f(\xi))$. We compute:
  \begin{align*}
    0
    &\leq\sup_{\substack{\zeta=(\zeta_1,\zeta_2) \in \mathscr{J}_n\oplus\mathscr{J}_\infty \\ \TDN_n(\zeta)\leq 1}} \left| \inner{S_n^t \xi}{Z_n(\zeta)}{\mathscr{J}_n} - \inner{S_{\infty}^t\eta}{Z_\infty(\zeta)}{\mathscr{J}_\infty} \right| \\
    &=\sup_{\substack{\zeta=(\zeta_1,\zeta_2) \in \mathscr{J}_n\oplus\mathscr{J}_\infty \\ \TDN_n(\zeta)\leq 1}} \left| \inner{S_n^t \xi}{\zeta_1}{\mathscr{J}_n} - \inner{S_{\infty}^t\eta}{\zeta_2}{\mathscr{J}_\infty} \right| \\
    &= \sup_{\substack{\zeta=(\zeta_1,\zeta_2) \in \mathscr{J}_n\oplus\mathscr{J}_\infty \\ \TDN_n(\zeta)\leq 1}} \left| \inner{S_n^t\xi - S_n^t V_n^f(\xi)}{\zeta_1}{\mathscr{J}_n} + \inner{S_n^tV_n^f(\xi)}{\zeta_1}{\mathscr{J}_n} - \inner{S_{\infty}^t\eta}{\zeta_2}{\mathscr{J}_\infty} \right| \\
    &\leq \norm{\xi-V_n^f(\xi)}{\mathscr{J}_n} + \sup_{\substack{\zeta=(\zeta_1,\zeta_2) \in \mathscr{J}_n\oplus\mathscr{J}_\infty \\ \TDN_n(\zeta)\leq 1}}  \left| \inner{\rho_n S_n^t V_n^f(\xi)}{\rho_n \zeta_1}{\mathscr{J}_\infty} - \inner{S_{\infty}^t\eta}{\zeta_2}{\mathscr{J}_\infty} \right| \\
    &= \varepsilon + \sup_{\substack{\zeta=(\zeta_1,\zeta_2) \in \mathscr{J}_n\oplus\mathscr{J}_\infty \\ \TDN_n(\zeta)\leq 1}} \left|\inner{\rho_n S_n^t \rho_n^{\ast} \eta - S_{\infty}^t\eta}{\rho_n\zeta_1}{\mathscr{J}_\infty} + \inner{S_{\infty}^t\eta}{\zeta_2 - \rho_n\zeta_1}{\mathscr{J}_\infty}\right| \\
    &\leq 2\varepsilon + \norm{\left( \exp(i t \rho_n \Dirac_n \rho_n^{\ast}) - \exp(i t \Dirac_{\infty})\right) \eta}{\mathscr{J}_\infty} \text{.}
  \end{align*}

  Note that, by construction, $S_{\infty}^t\mathscr{J}_\infty^S = \mathscr{J}_\infty^S$, for all $t\in\R$. We then compute, for all $t \leq \frac{1}{\varepsilon}$, using \cite[Ch. 9, eq. (2.3), p. 497]{Kato}
  \begin{align*}
    0
    &\leq \norm{\left(\exp(i t \varrho_n \Dirac_n \varrho_n^{\ast}) - \exp(i t \Dirac_{\infty})\right) \eta }{\mathscr{J}_\infty^S} \\
    &\leq \int_0^t \norm{\varrho_n S_n^{t-s}\varrho_n^{\ast} \cdot \left(\varrho_n \Dirac_n \varrho_n^{\ast} - \Dirac_\infty \right) S_{\infty}^s \eta}{\mathscr{J}_\infty^S} \, ds \\
    &\leq \int_0^t \opnorm{\varrho_n \Dirac_n \varrho_n^{\ast} - \Dirac_{\infty}}{}{\mathscr{J}_\infty^S} \, ds \\
    &\leq t \opnorm{\varrho_n \Dirac_n \varrho_n^{\ast} - \Dirac_{\infty}}{}{\mathscr{J}_\infty^S} \\
    &\leq t \varepsilon^2 \leq \frac{\varepsilon^2}{\varepsilon} = \varepsilon \text{.}
  \end{align*}

  Therefore, together with Theorem (\ref{metrical-cv-thm}), we conclude that the magnitude of $\tau_n$ is at most $\varepsilon$, so, for all $n\geq N$, we have
  \begin{equation*}
    \spectralpropinquity{}((\A_n,\mathscr{J}_n,\Dirac_n),(\A_\infty,\mathscr{J}_\infty,\Dirac_\infty)) \leq \varepsilon \text.
  \end{equation*}
  This concludes our proof.
\end{proof}

\vfill

\end{document}